\documentclass[twoside,reqno]{amsart}

\usepackage{amsmath}
\usepackage{amssymb}
\usepackage{amsthm}
\usepackage{mathdots}
\usepackage{mathtools}
\usepackage{graphicx}

\DeclareFontFamily{U}{mathx}{}
\DeclareFontShape{U}{mathx}{m}{n}{<-> mathx10}{}
\DeclareSymbolFont{mathx}{U}{mathx}{m}{n}
\DeclareMathAccent{\widehat}{0}{mathx}{"70}
\DeclareMathAccent{\widecheck}{0}{mathx}{"71}
\usepackage{xcolor}

\usepackage[colorlinks=true]{hyperref}

\newtheorem{theorem}{Theorem}[section]
\newtheorem{proposition}[theorem]{Proposition}
\newtheorem{lemma}[theorem]{Lemma}
\newtheorem{corollary}[theorem]{Corollary}

\theoremstyle{definition}
\newtheorem{definition}[theorem]{Definition}
\newtheorem{remark}[theorem]{Remark}

\numberwithin{equation}{section}

\newcommand{\TtA}{T_{\mkern-2mu\widetilde{A}}}

\newcommand{\kip}{[\,\cdot\, , \cdot\,]}

\newcommand{\sigp}%
{\mathop{\sigma_{\mkern-2.5mu\raisebox{-0.1ex}{\scalebox{.63}{$\mathrm{p}$}}}%
\mkern-1.15mu}\nolimits}

\newcommand{\sigc}%
{\mathop{\sigma_{\mkern-2.75mu\raisebox{-0.1ex}{\scalebox{.65}{$\mathrm{c}$}}}%
\mkern-1.15mu}\nolimits}

\newcommand{\sigr}%
{\mathop{\sigma_{\mkern-2.75mu\raisebox{-0.1ex}{\scalebox{.65}{$\mathrm{r}$}}}%
\mkern-1.15mu}\nolimits}

\newcommand{\iu}{\mathrm{i}}

\def\-{\raisebox{.05pt}{-}}

\newcommand{\lt}{<} \newcommand{\gt}{>}

 \DeclareMathOperator{\BEP}{BEP}

 \DeclareMathOperator{\sgn}{sgn}
 \DeclareMathOperator{\diag}{diag}
 \DeclareMathOperator{\codim}{codim}
 \DeclareMathOperator{\ran}{ran}
 \DeclareMathOperator{\dom}{dom}
 \DeclareMathOperator{\nul}{nul}
 \DeclareMathOperator{\mul}{mul}
 \DeclareMathOperator{\rank}{rank}
 \DeclareMathOperator{\row}{row}

\allowdisplaybreaks

\makeatletter \renewcommand{\p@enumii}{} \makeatother
\makeatletter \renewcommand{\p@enumiii}{} \makeatother

\begin{document}

\title[Linearization and Spectral Equivalence]{ %
Operators without eigenvalues in finite-dimensional vector spaces: \\ Linearization and Spectral Equivalence}

\author{B.~\'{C}urgus}
\address{Department of Mathematics, Western Washington University, Bellingham, WA 98225, USA} \email{\tt curgus@wwu.edu}
\author{A.~Dijksma}
\address{Bernoulli Institute of Mathematics, Computer Science and Artificial Intelligence \\ University of Groningen \\
P.O. Box 407 \\
9700 AK Groningen, The Netherlands} \email{\tt
a.dijksma@rug.nl}

\dedicatory{In memory of Heinz Langer, close friend, \\ dear colleague and inspiring teacher}

\begin{abstract}
In this paper $S$ is a closed symmetric linear relation in a Krein space $\mathfrak H$ with adjoint $S^*$, finite and equal defect numbers $d$, and a boundary mapping $\mathsf{b}: S^* \rightarrow \mathbb{C}^{2d}$ with Gram matrix $\mathsf Q$. We introduce a class $\mathbb A_{S,\mathsf{b}}$ of self-adjoint extensions of $S$ in a Krein space $\widetilde{\mathfrak H}$ containing $\mathfrak H$ as a Krein subspace of finite codimension, together with a class $\mathbb{P}_{\mathsf Q}$ of $d \times 2d$ matrix polynomials. Both classes are equipped with natural equivalence relations. Given $\mathcal{P}(z) \in \mathbb{P}_{\mathsf Q}$, we consider a boundary eigenvalue problem defined by the condition $\mathcal{P}(z)\mathsf{b}(\{f,g\})=0$, $\{f,g\}\in S^*$, and look for its linearizations. By a linearization we mean a linear relation $\widetilde{A} \in \mathbb{A}_{S, \mathsf b}$ such that the Shtraus extension $T_{\widetilde A}(z)$ of $S$ determined by $\widetilde A$ coincides, for all $z \in \overline{\mathbb C}$, with the linear relation $\big\{\{f,g\}\in S^*\,:\,\mathcal P(z)\mathsf b(\{f,g\})=0\big\}$. We prove that this correspondence defines a bijection between the equivalence classes in $\mathbb A_{S, \mathsf b}$ and those in $\mathbb P_{\mathsf Q}$. Moreover, we provide a condition under which the resulting linearizations are spectrally equivalent to the boundary eigenvalue problem: their regular and spectral points coincide, and for each eigenvalue in $\overline{\mathbb C}$ there is a bijection between their Jordan chains.
\end{abstract}

\subjclass[2020]{15A03, 46C20, 46E22, 47A06, 47A45, 47B50, 47B25}

\keywords{Matrix polynomial, Krein space, Pontryagin space, Reproducing kernel, Canonical space of vector polynomials, Symmetric linear relation, Boundary mapping, Gram matrix, Boundary eigenvalue problem, Self-adjoint extension, Linearization, Spectrum, Jordan chain, Spectral equivalence}
\maketitle

\tableofcontents

\section{Introduction}

Throughout this paper $S$ is a symmetric linear relation in a Krein space $\mathfrak H$ with equal defect numbers $d \in \mathbb{N}$, adjoint $S^*$ and boundary mapping $\mathsf b: S^*\rightarrow \mathbb C^{2d}$ whose Gram matrix is denoted by $\mathsf Q$. In Subsection~\ref{subs-classP} of Section~\ref{classP} we define a class $\mathbb P_{\mathsf Q}$ of $d\times 2d$ matrix polynomials  $\mathcal{P}(z)$. Using these ingredients we define in Subsection~\ref{bep} a boundary eigenvalue problem $\BEP\mkern-2mu(S, \mathsf b, \mathcal{P}(z))$. We introduce its regular and spectral points and Jordan chains of generalized eigenvectors at an eigenvalue $\lambda \in \overline{\mathbb C}$. The first two notions can be defined via the closed linear extension
\begin{equation*} % \label{TP}
T_{S, \mathsf b,\mathcal{P}}(z) = \Big\{\{f,g\}\in S^*\,:\, \mathcal{P}(z)\mathsf b(\{f,g\})=0\Big\}, \quad z \in \overline{\mathbb C},
\end{equation*}
of $S$ in $\mathfrak H$.

The purpose of this paper is to construct a self-adjoint extension $\widetilde A$ of $S$ in a Krein space $\widetilde{\mathfrak H}$ which contains $\mathfrak H$ as a closed Krein subspace of finite codimension, such that
\begin{equation}\label{TATP}
\TtA(z)=T_{S, \mathsf b, \mathcal{P}}(z), \quad z \in \overline{\mathbb C},
\end{equation}
where the left-hand side stands for the \emph{family of Shtraus extensions} of $S$ in $\mathfrak H$ associated with $\widetilde A$ defined by
\begin{equation*} % \label{Tagain}
\TtA(z) =
\left\{
\begin{array}{ll}
\Bigl\{\{\widetilde P_\mathfrak H \widetilde f, \widetilde P_\mathfrak H \widetilde g\} \,:\, 
\{\widetilde f, \widetilde g\} \in \widetilde A,\ \widetilde g - z  \widetilde f \in \mathfrak H \Bigr\}, & z \in \mathbb C,
\\[3mm]
\Bigl\{\{ \widetilde f, \widetilde P_\mathfrak H \widetilde g\} \,:\, 
\{\widetilde f, \widetilde g\} \in \widetilde A,\ \widetilde f \in \mathfrak H\Bigr\} 
= \widetilde P_\mathfrak H \widetilde A|_{\mathfrak H}, & z=\infty.
\end{array}
\right.
\end{equation*}
Here $\widetilde{P}_{\mathfrak H}$ is the orthogonal projection in $\widetilde{\mathfrak H}$ onto $\mathfrak H$. A self-adjoint extension $\widetilde{A}$ of $S$ satisfying \eqref{TATP} will be called a \emph{linearization} of the boundary eigenvalue problem $\BEP\mkern-2mu\bigl(S, \mathsf b, \mathcal{P}(z)\bigr)$. We look for linearizations within the class $\mathbb{A}_{S, \mathsf b}$ defined in Subsection~\ref{subs-classA}. Both classes $\mathbb{P}_{\mathsf Q}$ and $\mathbb{A}_{S, \mathsf b}$ are equipped with natural equivalence relations. The linear relations in \eqref{TATP} depend only on the equivalence classes and not on the particular representatives. In fact, we prove a stronger statement: the equality \eqref{TATP} establishes a bijective correspondence between the equivalence classes of $\mathcal{P}(z) \in \mathbb{P}_{\mathsf Q}$ and those of $\widetilde A \in \mathbb A_{S, \mathsf b}$; see Theorem~\ref{X}.

In addition, for the boundary eigenvalue problem $\BEP\mkern-2mu\bigl(S, \mathsf b, \mathcal{P}(z)\bigr)$,   we determine when it is {\it spectrally equivalent} to its linearization $\widetilde A$. By spectral equivalence we mean that their regular and spectral points coincide and that there is a bijective correspondence between their Jordan chains; see Theorem~\ref{th-speq}.

Linearizations of a boundary eigenvalue problem have been extensively studied in \cite[Theorems~7.1 and~7.4]{DLdS84} from a general point of view. There $\widetilde{\mathfrak H}$ is a Pontryagin space and $\mathfrak H$ is a Hilbert space. In \cite[Theorem~7.1]{DLdS84} $S$ is a closed symmetric linear relation in $\mathfrak H$ with finite, not necessarily equal, defect numbers $d^+$ and $d^-$ and $\mathsf b: S^* \rightarrow \mathbb C^{d^++d^-}$ is a boundary mapping. The theorem states that the equality
\begin{equation} \label{eq-cUb}
T(z) = \Bigl\{
\{f,g\}\in S^* \,:\, \mathcal U(z)\mathsf b(\{f,g\})=0 \Bigr\} \quad \text{for essentially all} \quad z \in \mathbb{C} \setminus\mathbb{R}
\end{equation}
establishes a bijective correspondence between, on the one hand, families \(T(z)\) of
Shtraus extensions of \(S\) defined by closely connected (or minimal) self-adjoint
extensions \(\widetilde A\) of \(S\) in \(\widetilde{\mathfrak H}\) and, on the other hand,
equivalence classes of locally holomorphic matrix functions \(\mathcal U(z)\) on
\(\mathbb C\setminus\mathbb R\), whose size depends on whether \(z\) belongs to the upper
or lower half-plane and which satisfy certain additional properties. In \cite[Theorem~7.4]{DLdS84} the problem is reduced to the polynomial case: $T(z)$ corresponds to a self-adjoint extension $\widetilde A$ of $S$  with $\dim (\widetilde{\mathfrak H} \ominus \mathfrak{H}) < \infty$ if and only if $d^+=d^-$ and $\mathcal U(z)$ can be chosen as a matrix polynomial.

In the present paper we study the direct correspondence between the polynomial $\mathcal P(z)$ and the linearization $\widetilde A$ and we give explicit formulas for the linearizations; see Step~3 in the proof of Theorem~\ref{X}. Our approach is more direct (``bottom-up''), more detailed (including full rank conditions and statements for all $z \in \overline{\mathbb C}$), and relies primarily on tools from linear algebra. We use several results from our previous papers such as the Coupling Theorem~\ref{Coupling}, the Reproducing Kernel Canonical Subspace Theorem~\ref{t-fCrks} and the Model Theorem~\ref{t-poso} which we recall in Appendix~\ref{AppA}. Sufficient details of other results are given to make the paper reasonably self-contained.

Results on self-adjoint linearizations of boundary eigenvalue problems, such as those in
\cite{DLdS84}, also appear in the papers and lecture notes \cite{DLdS86}-\cite{DL95} by Heinz Langer and coauthors. In these publications $S$ is a symmetric linear relation in a Hilbert space sometimes with equal defect numbers so that $S$ can be associated with differential operators or relations; see, for example, \cite{DLdS872}, \cite{DLdS88} and \cite{DL95}. The linearizations $\widetilde A$ are self-adjoint relations in Pontryagin or Krein spaces. The associated Shtraus extensions $\TtA$ of $S$ are characterized by means of characteristic functions of a unitary colligation whose outer spaces are the defect spaces of $S$ at a fixed nonreal point; see \cite{DLdS86}, \cite{DLdS862} and \cite{DLdS87}.

Spectral equivalence results similar to those in Section~\ref{speceq} also appear in, for example, \cite{DLdS872} and \cite{DLdS93}. There differential operators are considered with eigenvalue-dependent boundary conditions determined by locally holomorphic matrix functions, and the relation between the Jordan chains of the operator and those of its linearization is established by different methods. Denote by $R(z)$ the generalized resolvent of $S$ on $\mathfrak H$, that is, the compressed resolvent of the linearization on $\mathfrak H$.  Then in \cite[Proposition~5.3]{DLdS872} it is assumed that for some function $f(z)$ with values in $\mathfrak H$, $R(z)f(z)$ admits an expansion around the isolated eigenvalue. The proof of \cite[Theorem~8.3]{DLdS93} makes use of poles and pole functions of the matrix  Nevanlinna function appearing in the integral representation of $R(z)$ described in earlier sections of that paper.

In \cite{Tr} a nonself-adjoint linearization is considered for a boundary eigenvalue problem associated with a pair of regular differential operators on a compact interval with boundary conditions in which the eigenvalue appears polynomially. Completeness of Jordan chains and spectral properties are established using a detailed asymptotic analysis of the associated Green's function.

We briefly describe the contents of the paper.

In Subsection~\ref{subs-not} below we fix the notation used throughout the paper.

In Subsection~\ref{subs-classP} we define the class $\mathbb{P}_{\mathsf Q}$ of $d \times 2d$ matrix polynomials $\mathcal P(z)$ and prove their basic properties. On $\mathbb{P}_{\mathsf Q}$ we define an equivalence relation. We introduce the row-reversal matrix polynomial $\mathcal R(z)$ of $\mathcal P(z)$. It too belongs to $\mathbb P_\mathsf Q$ and is used to describe the Jordan chains of the boundary eigenvalue problem $\BEP\mkern-2mu\bigl(S, \mathsf b, \mathcal{P}(z)\bigr)$ at the eigenvalue $\infty$ in Subsection~\ref{bep}. Subsection~\ref{subs-two} contains two closely related theorems. The first one provides a factorization of $\mathcal P(z)$ in which one factor contains a matrix polynomial now of smaller size and without constant rows; its proof is long and given in Appendix~\ref{proof}. The second presents a list of formulas for canonical symmetric and self-adjoint extensions of $S$. Both theorems play a key role in the proof of Theorem~\ref{X}.

In Subsection~\ref{partition} we recall the definitions of regular and spectral points in $\overline{\mathbb C}$ of a closed linear relation $R$ in a Krein space, in particular of a self-adjoint relation, as well as the definition of a Jordan chain at an eigenvalue in $\overline{\mathbb C}$. In Subsection~\ref{bep} we introduce the same notions for the boundary eigenvalue problem $\BEP\mkern-2mu\bigl(S, \mathsf{b}, \mathcal{P}(z)\bigr)$, the main object in this paper often abbreviated as $\BEP$.

In Subsection~\ref{subs-classA} we define the class $\mathbb A_{S, \mathsf b}$ of self-adjoint extensions $\widetilde A$ of $S$ in a Krein space $\widetilde{\mathfrak H}$ containing $\mathfrak H$ as a Krein subspace satisfying conditions  (\ref{def-bbA-i1})-(\ref{def-bbA-i4}) of Definition~\ref{def-bbA}. Conditions (\ref{def-bbA-i3}) and (\ref{def-bbA-i4}) replace the closely connectedness condition referred to in the sentence containing \eqref{eq-cUb}; for more details see \cite[Proposition~4.1]{CDCAOT}. They are important and the first part of the title of this paper refers to them. In Proposition~\ref{pr-bbAbasics} we consider the canonical symmetric extension $S_0=\widetilde{A} \cap \mathfrak H^2$ of $S$, sometimes called the trace of $\widetilde A$ in $\mathfrak H$. We use it to find representations of $\widetilde A$ and of the corresponding Shtraus subspace $\TtA(z)$ as extensions of $S_0$. It can be expressed in terms of Shtraus extensions; see formula \eqref{nonsymm}. On $\mathbb A_{S, \mathsf b}$ we define an equivalence relation in Subsection~\ref{subs-classA} and show in Subsection~\ref{subs-mainth} that two of its elements are equivalent if and only if their associated Shtraus extensions coincide; see Theorem~\ref{th-EqSt}. In Sections~\ref{linearization} and~\ref{speceq} we present the main results of this paper: the Linearization Theorem~\ref{X} expressed in terms of a binary relation and the Spectral Equivalence Theorem~\ref{th-speq}.

We assume the reader is familiar with elements of operator theory in indefinite inner product spaces, as in \cite{AI}, \cite{Bo}, \cite{IKL} and \cite{GhB}. 

This paper completes a series of four papers---the first three are \cite{CDLAA20}, \cite{CDLAA23} and \cite{CDCAOT}---with as a common theme: Operators without eigenvalues in finite-dimensional vector spaces. A model for such an operator is the operator $S_\mu$ of multiplication by the independent variable in a canonical vector space $\mathfrak C_\mu$ of vectors whose entries are polynomials, see Definition~\ref{def-Cmu} in Appendix~\ref{AppA}. The Model Theorem~\ref{t-poso} then provides the connection with the class $\mathbb{A}_{S,\mathsf{b}}$ in Definition~\ref{def-bbA} and the class $\mathbb{P}_{\mathsf Q}$ in Definition~\ref{def-bbP}. This connection is one of the main ingredients in the linearization and spectral-equivalence results proved below. 

\subsection{Notation} \label{subs-not}
By $\mathbb{N}$, $\mathbb{R}$ and $\mathbb{C}$ we denote the sets of positive integers, real numbers and complex numbers, and  $\overline{\mathbb{C}} = \mathbb{C}\cup \{\infty\}$ is the one point compactification of $\mathbb{C}$, with the conventions $1/0=\infty$ and $1/\infty=0$.  For $z \in \overline{\mathbb{C}}$, $z^* \in \overline{\mathbb{C}}$ denotes the complex conjugate of $z$, we have $\infty^* = \infty$. If $\Omega \subseteq \overline{\mathbb C}$, then $\Omega^*=\{z^* : z \in \Omega\}$. The asterisk as a superscript is also used to denote the adjoint of a matrix.

Recall that a closed linear relation \(T\) in a Krein space \(\mathfrak{K}\) is a closed subspace of \(\mathfrak{K}^2\). A closed linear operator in \(\mathfrak{K}\) is viewed as a closed linear relation via its graph in \(\mathfrak{K}^2\). For the usual linear operations with closed linear relations, the definitions of the domain \(\dom T\), the range  \(\ran T\), the null-space \(\nul T\), the inverse \(T^{-1}\), and the adjoint \(T^*\) we refer to \cite{DS1}. By \(I_{\mathfrak{K}}\) we denote the identity operator on \(\mathfrak{K}\).

For a set \(X\), we denote its cardinality by \(\#X\). For sets \(X\) and \(Y\), we use \(X\subset Y\) to denote strict inclusion (proper subset), and \(X\subseteq Y\) for inclusion allowing equality.

For $n \in \mathbb{N}$ we denote by $\mathsf{I}_n$ and $\mathsf{0}_n$ the $n \times n$ identity and zero matrix. Otherwise, $\mathsf{0}$ denotes a matrix whose size is implied by the context. For constant matrices we use the sans-serif font and for matrix polynomials we use the calligraphic font accompanied with the variable. For an $m\times n$ matrix $\mathsf{M}$ and $k \in \{1,\ldots,m\}$ by $\bigl.\mathsf{M}\bigr|_k$ we denote the $k$-th row of $\mathsf{M}$. By $\row \mathsf{M}$ we denote the space spanned by the rows $\mathsf{M}\bigl.\bigl.\bigr|_k$, $k \in \{1, \dots , m\}$. By $\ran \mathsf{M}$ we denote the range (the column space) of $\mathsf{M}$.

For $m,n \in \mathbb{N}$ we denote by $\mathbb{C}^{m \times n}[z]$ the space of all matrix polynomials with coefficients in $\mathbb{C}^{m \times n}$. The \emph{degree of such a polynomial} is $-\infty$ if it is the zero polynomial, otherwise it is the highest power of $z$ for which the corresponding matrix coefficient is nonzero. A square matrix polynomial is called \emph{unimodular} if its determinant is a nonzero scalar. We write $\mathbb{C}^m[z]$ for $\mathbb{C}^{m \times 1}[z]$. For vector functions $a(z)$ and $b(z)$, the identity $a(z)\equiv b(z)$ stands for the proposition $a(z)= b(z)$ for all $z \in \mathbb{C}$. Let $\mathcal{S}(z) \in \mathbb{C}^{m \times n}[z]$ be a nonzero matrix polynomial of degree $s$; thus $s \in \{0\}\cup\mathbb{N}$. If
\[
 \mathcal{S}(z)
 = \mathsf{S}_0 + \mathsf{S}_1 z + \cdots + \mathsf{S}_s z^s, \quad \mathsf{S}_j \in \mathbb{C}^{m \times n}, \quad j\in\{0,\ldots,s\} \quad \text{with} \quad \mathsf{S}_s \neq \mathsf{0}
\]
is the expansion of $\mathcal S(z)$  in powers of $z$ and $\mathsf{C}_{\mathcal{S}}$ stands for its $(s+1)m \times n$ {\it coefficient matrix}:

\begin{equation*}  % \label{eqP}
\mathsf{C}_{\mathcal{S}} = \begin{bmatrix} \mathsf{S}_0 \\ \mathsf{S}_1 \\ \vdots \\ \mathsf{S}_s
\end{bmatrix},
\end{equation*}
then
\begin{equation}\label{eq-kTzkT}
\bigcap_{z \in \mathbb{C}} \nul \mathcal{S}(z)
= \nul \mathsf{C}_{\mathcal S}.
\end{equation}

Assume no row of \(\mathcal S(z)\) is identically zero; equivalently, for every
\(k\in\{1,\ldots,m\}\) there exists \(z\in\mathbb C\) such that
\(\mathcal S(z)\bigl.\bigr|_k\neq\mathsf 0\). For every $k \in \{1,\ldots,m\}$ we set
\[
\sigma_k = \deg \bigl(\mathcal{S}(z)\bigl.\bigr|_k \bigr),
\]
and, since the rows of $\mathcal{S}(z)$ are nonzero, we have $\sigma_k \in \{0\}\cup\mathbb{N}$. By definition of the degree of a row polynomial, for every $k \in \{1,\ldots,m\}$ we have
\[
\mathsf{S}_{\sigma_k}\mkern-2mu\bigl.\bigr|_k \neq \mathsf{0} \quad \text{and} \quad
\mathsf{S}_{l}\mkern-1mu\bigl.\bigr|_k = \mathsf{0}
\]
for all $l \in \mathbb{N}$ such that $\sigma_k \lt l \leq s$. That is, for all  $k \in \{1,\ldots,m\}$ we have
\[
\bigl.\mathcal{S}(z)\bigr|_k = \sum_{j=0}^{\sigma_k} \bigl(\bigl.\mathsf{S}_{j}\bigr|_k\bigr) \mkern 2mu z^j
\]
and $\mathsf{S}_{\sigma_k}\bigl.\bigr|_k$ is the leading coefficient of the row polynomial $\mathcal{S}(z)\bigl.\bigr|_k$.

We define the $m\times n$ matrix $\mathsf{S}_{\infty}$ to be the matrix consisting of the leading coefficients of the rows $\bigl.\mathcal{S}(z)\bigr|_k$ with $k \in \{1,\ldots,m\}$. Specifically,
\begin{equation}\label{Sinfinity}
 \mathsf S_\infty = \begin{bmatrix}
\mathsf{S}_{\sigma_1}\mkern-3mu\bigl.\bigr|_1 \\[2pt]
\mathsf{S}_{\sigma_2}\mkern-3mu\bigl.\bigr|_2 \\[2pt]
\vdots \\[2pt]
\mathsf{S}_{\sigma_m}\mkern-3mu\bigl.\bigr|_m
\end{bmatrix}
= \lim_{z \rightarrow \infty}\diag (z^{-\sigma_1}, \ldots, z^{-\sigma_m})
\mathcal{S}(z).
\end{equation}

\section{The class \texorpdfstring{$\mathbb{P}_{\mathsf{Q}}$}{PsubQ} of matrix polynomials} \label{classP}

\subsection{Definition and basic properties} \label{subs-classP}

\begin{definition} \label{def-bbP}
Let $d \in \mathbb{N}$ and let $\mathsf{Q}$ be an invertible  self-adjoint $2d \times 2d$ matrix with $d$ positive and $d$ negative eigenvalues. By $\mathbb{P}_\mathsf{Q}$ we denote the set of $d \times 2d$ matrix polynomials $\mathcal{P}(z)$ with the following properties
\begin{enumerate}
\renewcommand*\theenumi{\alph{enumi}}
\renewcommand*\labelenumi{\rm{(\theenumi)}}
\setlength{\itemsep}{1pt}% \itemsep0em
 \item \label{i-t-m-a}
$\mathcal{P}(z) \mathsf{Q}^{-1}{\mathcal{P}}(z^*)^* = \mathsf{0}$ \
for all $z \in \mathbb{C}$,
 \item \label{i-t-m-b}
$\rank {\mathcal{P}}(z) = d$ \ for all $z \in \mathbb{C}$,

 \item \label{i-t-m-c}
$\rank \mathsf P_{\infty} = d$,

\item \label{i-t-m-d}
$\deg \mathcal{P}(z)\mkern-1mu \bigl.\bigr|_1 \ \geq \ \cdots \ \geq \ \deg \mathcal{P}(z)\mkern-1mu \bigl.\bigr|_{d} \ \geq \ 0 $.
\end{enumerate}
\end{definition}

Items (\ref{i-t-m-a})-(\ref{i-t-m-d}) and \cite[Theorem~3.2]{CDLAA23} imply that if $\mathbb{W}$ is any subset of $\mathbb{C}$ with $\# \mathbb{W} \gt \deg \mathcal{P}(z)$,  then
\begin{equation}\label{eq-i-t-m-e}
\dim \Big( \bigcap_{z \in \mathbb W}\nul \mathcal{P}(z)\Big) = \#\Bigl\{j \in \{1, \ldots, d\}\,:\, \deg \mathcal{P}(z)\mkern-1mu\bigl.\bigr|_j=0 \Bigr\}.
\end{equation}

\begin{remark}
The rank and row-degree conditions {\rm(\ref{i-t-m-b})-(\ref{i-t-m-d})} in
Definition~\ref{def-bbP} are, in fact, largely normalization conditions. Indeed,
\cite[Theorem~4.3]{CDLAA20} shows that if $\mathcal{T}(z)\in\mathbb C^{d\times 2d}[z]$ satisfies $\rank \mathcal{T}(z)=d$ for some $z\in\mathbb C$, then $\mathcal{T}(z)$ admits a factorization
\[
\mathcal{T}(z)=\mathcal{W}(z)\mathcal{P}(z), \qquad z\in\mathbb C,
\]
where $\mathcal{W}(z)\in\mathbb C^{d\times d}[z]$ satisfies $\det\mathcal{W}(z)\not\equiv 0$ and $\mathcal{P}(z)\in\mathbb C^{d\times 2d}[z]$ satisfies {\rm(\ref{i-t-m-b})-(\ref{i-t-m-d})}. Moreover, this factor $\mathcal{P}(z)$ is essentially unique: if also $\mathcal{T}(z) = \mathcal{W}_1(z)\mathcal{P}_1(z)$ with $\det\mathcal{W}_1(z)\not\equiv 0$ and $\mathcal{P}_1(z)$ satisfying {\rm(\ref{i-t-m-b})-(\ref{i-t-m-d})}, then
$\mathcal{P}_1(z)=\mathcal{V}(z)\mathcal{P}(z)$ for some unimodular $d\times d$ matrix polynomial $\mathcal{V}(z)$.
\end{remark}

\begin{definition} \label{def-bbPeq}
Two polynomials $\mathcal{P}(z)$ and $\mathcal{S}(z)$ from $\mathbb{P}_\mathsf{Q}$ are called \emph{equivalent}, in notation $\mathcal{P}(z) \sim  \mathcal{S}(z)$, if there exists a unimodular $d \times d$ matrix polynomial $\mathcal W(z)$ such that
\begin{equation}\label{equi}
\mathcal{S}(z) = \mathcal{W}(z)\mathcal{P}(z) \quad
\text{for all} \quad z \in \mathbb{C}.
\end{equation}
For $\mathcal{P}(z) \in \mathbb{P}_{\mathsf{Q}}$ its equivalence class relative to \(\sim\) is denoted by \([\mathcal{P}(z)]\). We denote the set of all \(\sim\)-equivalence classes in  \(\mathbb{P}_{\mathsf{Q}}\) by \(\mathbb{P}_{\mathsf{Q}}/\mathord{\sim}\).
\end{definition}

\begin{lemma} \label{4statements}
The following statements hold.

\begin{enumerate}
\renewcommand*\theenumi{\roman{enumi}}
\renewcommand*\labelenumi{\rm{(\theenumi)}}
\setlength{\itemsep}{1pt}% \itemsep0em

\item \label{4statements-i1}
The relation $\sim$ is an equivalence relation on $\mathbb{P}_{\mathsf{Q}}$.
\end{enumerate}

Assume $\mathcal{P}(z)$ and $\mathcal{S}(z)$ from $\mathbb{P}_{\mathsf{Q}}$ are equivalent and \eqref{equi} holds for some unimodular $d \times d$ matrix polynomial $\mathcal W(z)$ and set $\mu_j = \deg \mathcal{P}(z)\mkern-1mu\bigl.\bigr|_j$ for \(j \in \{1,\ldots,d\}\).

\begin{enumerate}
\setcounter{enumi}{1}
\renewcommand*\theenumi{\roman{enumi}}
\renewcommand*\labelenumi{\rm{(\theenumi)}}
\setlength{\itemsep}{1pt}% \itemsep0em

\item \label{4statements-i2}
Then also $\deg \mathcal{S}(z)\mkern-1mu\bigl.\bigr|_j= \mu_j$ for all $j \in \{1, \ldots, d\}.$

\item \label{4statements-i3}
If $\mathcal{W}(z)=\big[ w_{jk}(z) \big]_{j,k=1}^d$ and $\mathcal{W}(z)^{-1}=\big[ w^{jk}(z) \big]_{j,k=1}^d$, then
\begin{equation} \label{coeff}
\max\bigl\{ \deg w_{jk}(z), \, \deg w^{jk}(z) \bigr\} \leq \mu_j-\mu_k  \quad \text{for all} \quad j,k \in \{1, \ldots, d\}.
\end{equation}

\item \label{4statements-i4}
If $\mathcal D(z)=\diag(z^{\mu_1}, \ldots,z^{\mu_d})$, then the $d \times d$ matrix
\[
\mathsf{W} = \lim_{z \rightarrow \infty} \mathcal D(z)^{-1}\mathcal W(z)\mathcal D(z)
\]
is well-defined, it equals $\mathsf{S}_\infty \mathsf P_\infty^*\left(\mathsf P_\infty\mathsf P_\infty^*\right)^{-1}$ and is invertible. Moreover,
\(\mathsf{S}_\infty = \mathsf{W} \mathsf{P}_\infty.\)
\end{enumerate}
\end{lemma}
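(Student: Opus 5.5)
Part (i) is routine: reflexivity uses $\mathcal W(z)=\mathsf I_d$. Symmetry uses that the inverse of a unimodular polynomial is again a unimodular polynomial, by the adjugate formula and the constancy of the determinant. Transitivity uses that a product of unimodular polynomials is unimodular.

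The heart of the matter is (iii), from which (ii) and (iv) follow. I would use the \emph{predictable degree property} of row-reduced polynomials. Condition (c) says $\mathsf P_\infty$ has full row rank $d$, so $\mathcal P(z)$ is row reduced. Hence for any row vector polynomial $a(z)=(a_1(z),\ldots,a_d(z))$,
\[
\deg\bigl(a(z)\mathcal P(z)\bigr)=\max_k\bigl(\deg a_k(z)+\mu_k\bigr).
\]
To see this, let $N$ be the right-hand maximum and let $K$ be the set of indices attaining it. The coefficient of $z^N$ in $a(z)\mathcal P(z)$ is $\sum_{k\in K} c_k\,\mathsf P_\infty|_k$, where $c_k\neq 0$ is the leading coefficient of $a_k$. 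This sum is nonzero because the rows of $\mathsf P_\infty$ are linearly independent.

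Now the $j$-th row of $\mathcal S(z)$ is $\sum_k w_{jk}(z)\mathcal P(z)|_k$, so
\[
\deg \mathcal S(z)|_j=\max_k\bigl(\deg w_{jk}+\mu_k\bigr).
\]
Let $\nu_j=\deg \mathcal S(z)|_j$; this gives $\deg w_{jk}\le \nu_j-\mu_k$. Symmetrically, $\mathcal S(z)$ also lies in $\mathbb P_{\mathsf Q}$ and so is row reduced, which gives $\deg w^{jk}\le \mu_j-\nu_k$.

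To prove (ii), i.e.\ $\nu=\mu$, I would argue by a permutation/determinant count. Since $\det\mathcal W$ is a nonzero constant, some permutation $\pi$ has $w_{j\pi(j)}\neq 0$ for all $j$. Then $\nu_j\ge\mu_{\pi(j)}$ for every $j$, and the same argument with $\mathcal W^{-1}$ gives a permutation $\rho$ with $\mu_j\ge\nu_{\rho(j)}$.

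Both sequences are nonincreasing by (d). From $\nu_j\ge\mu_{\pi(j)}$ for a bijection $\pi$, a standard counting argument shows that the sorted sequences satisfy $\nu_j\ge\mu_j$ for all $j$: the number of $j$ with $\nu_j\ge t$ is at least the number with $\mu_j\ge t$. The reverse inequality holds by symmetry, so $\nu=\mu$. This gives (ii), and substituting into the degree bounds above gives (iii).

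For (iv), bound (iii) shows that the $(j,k)$ entry of $\mathcal D^{-1}\mathcal W\mathcal D$ is $z^{\mu_k-\mu_j}w_{jk}(z)$, which is a polynomial in $1/z$. Its limit therefore exists and equals the coefficient of $z^{\mu_j-\mu_k}$ in $w_{jk}$. Next I would write
\[
\mathcal D^{-1}\mathcal S=(\mathcal D^{-1}\mathcal W\mathcal D)(\mathcal D^{-1}\mathcal P)
\]
and let $z\to\infty$. Using \eqref{Sinfinity} together with (ii), this yields $\mathsf S_\infty=\mathsf W\mathsf P_\infty$.

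Since $\mathsf P_\infty$ has full row rank, $\mathsf P_\infty\mathsf P_\infty^*$ is invertible. Multiplying on the right by $\mathsf P_\infty^*(\mathsf P_\infty\mathsf P_\infty^*)^{-1}$ gives the stated formula for $\mathsf W$. Finally, $\rank\mathsf S_\infty=d$ by (c), so $\mathsf W$ must be invertible.

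The main obstacle is the permutation argument in (ii). I would make it rigorous through the counting characterization of sorted sequences. An alternative route, if that becomes awkward, is to note that $\sum_j\mu_j=\deg\det$ of any $d\times d$ nonsingular minor-maximizing combination, i.e.\ that $\sum_j\mu_j$ is an invariant of row-reduced forms; this would give equality of the sums and then of the sequences.
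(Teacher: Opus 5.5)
Your proof is correct. For (iii) it is the paper's argument: the predictable degree property applied to $\mathcal S=\mathcal W\mathcal P$ and to $\mathcal P=\mathcal W^{-1}\mathcal S$. You prove that property directly from the full row rank of $\mathsf P_\infty$, whereas the paper cites it.

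The genuine differences are in (ii) and (iv).

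For (ii), the paper simply cites \cite[Corollary~4.2]{CDLAA20}. You instead derive it from the same degree identity, using a nonvanishing term $\prod_j w_{j\pi(j)}$ of the Leibniz expansion and a counting argument based on the ordering (d). This makes the lemma self-contained. A slightly shorter finish is available: $\nu_j\ge\mu_{\pi(j)}$ and $\mu_j\ge\nu_{\rho(j)}$ force $\sum_j\nu_j=\sum_j\mu_j$. Hence $\nu_j=\mu_{\pi(j)}$ for every $j$, and (d) then gives $\nu=\mu$.

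For (iv), the paper obtains existence of the limit and the formula for $\mathsf W$ at once. It writes $\mathcal D^{-1}\mathcal W\mathcal D=\mathcal D^{-1}\mathcal S\,\mathcal M^*(\mathcal M\mathcal M^*)^{-1}$ with $\mathcal M=\mathcal D^{-1}\mathcal P$, which does not rely on (iii). It then gets invertibility from $\det(\mathcal D^{-1}\mathcal W\mathcal D)=\det\mathcal W\equiv w\neq 0$.

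You proceed differently in (iv):
\begin{itemize}
\item You get existence entrywise from the degree bounds of (iii). This also identifies each entry of $\mathsf W$ as the coefficient of $z^{\mu_j-\mu_k}$ in $w_{jk}(z)$.
\item You derive $\mathsf S_\infty=\mathsf W\mathsf P_\infty$ first and read off the formula for $\mathsf W$ by right multiplication.
\item You deduce invertibility from $\rank\mathsf S_\infty=d$, that is, from property (c) of $\mathcal S$ rather than from unimodularity of $\mathcal W$.
\end{itemize}
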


\begin{proof}
(\ref{4statements-i1}) The class of unimodular $d \times d$ matrix polynomials is closed under taking inverses and products.  This implies
that the relation $\sim$ in $\mathbb{P}_{\mathsf{Q}}$ is reflexive, symmetric and transitive, hence an equivalence relation.

(\ref{4statements-i2}) This is proved in \cite[Corollary~4.2]{CDLAA20}.

(\ref{4statements-i3}) Properties (\ref{i-t-m-b}) and (\ref{i-t-m-c}) of $\mathcal{P}(z)\in \mathbb{P}_{\mathsf{Q}}$ imply that  $\mathcal{P}(z)$ has the predictable degree property; see \cite[Theorem~A.2]{McE} (or \cite[Theorem~4.1]{CDLAA20}). This means that if $u(z)=\bigl[ u_k(z)\bigr]_{k=1}^d$ is a $1\times d$ matrix polynomial, then
\[
\deg u(z)\mathcal{P}(z) = \max \bigl\{ \deg u_k(z)+\mu_k\,:\, k \in \{1,\ldots, d\} \bigr\}.
\]
From this, (\ref{4statements-i2}) and \eqref{equi} we obtain that for $j \in \{1,\ldots,d\}$
\[
\mu_j = \deg \mathcal{S}(z)\bigl.\bigr|_j
= \deg \mathcal{W}(z)\mkern-1mu\bigl.\bigr|_j \mathcal{P}(z)
= \max \Bigl\{ \deg w_{jk}(z) + \mu_k \,:\, k \in \{1,\ldots, d\}\Bigr\}.
\]
This implies \eqref{coeff} for \(\mathcal{W}(z)\). The proof for \(\mathcal{W}(z)^{-1}\) is analogous.

(\ref{4statements-i4})
Set $\mathcal{M}(z)=\mathcal{D}(z)^{-1}\mathcal{P}(z)$ for \(z\in\mathbb{C}\mkern-1mu\setminus\mkern-2mu\{0\}\), so that $\lim_{z \rightarrow \infty}\mathcal{M}(z) = \mathsf{P}_\infty$. Since the $d \times 2d$ matrices $\mathcal{M}(z), z\in\mathbb{C}\mkern-1mu\setminus\mkern-2mu\{0\}$, and $\mathsf{P}_\infty$ have full rank,  the $d \times d$ matrices $\mathcal{M}(z)\mathcal{M}(z)^*$ and $\mathsf{P}_\infty\mathsf{P}_\infty^*$ are invertible.

It follows from \eqref{equi} that
\begin{equation}\label{eq-needl}
\mathcal{D}(z)^{-1}\mathcal{W}(z) \mathcal{D}(z)\mathcal{M}(z) = \mathcal{D}(z)^{-1} \mathcal{S}(z) \quad
\text{for all} \quad z \in \mathbb{C}\setminus\{0\},
\end{equation}
and hence
\[
\mathcal{D}(z)^{-1}\mathcal{W}(z)\mathcal{D}(z)
=
\mathcal{D}(z)^{-1}\mathcal{S}(z)\mathcal{M}(z)^*
\bigl(\mathcal{M}(z)\mathcal{M}(z)^*\bigr)^{-1}
\quad
\text{for all} \quad z \in \mathbb{C}\setminus\{0\}.
\]
Letting \(z\to \infty\), yields that $\mathsf{W}$ is well-defined and $\mathsf{W} = \mathsf{S}_\infty \mathsf{P}_\infty^* \bigl(\mathsf{P}_\infty \mathsf{P}_\infty^*\bigr)^{-1}$. To prove that \(\mathsf{W}\) is invertible, observe that since $\mathcal{W}(z)$ is unimodular we have \(\det \mathcal{W}(z) \equiv w \in \mathbb{C}\mkern-1mu\setminus\mkern-2mu\{0\}\). Therefore
\begin{align*}
0 \neq w & = \lim_{z \rightarrow \infty} \det \mathcal W(z) \\
 & = \lim_{z \rightarrow \infty} \det \left(
  \mathcal D(z)^{-1}\mathcal W(z)\mathcal D(z)\right) \\
 & = \det \bigl(\mathsf{S}_\infty \mathsf{P}_\infty^*%
 (\mathsf{P}_\infty \mathsf{P}_\infty^*)^{-1}\bigr)\\
 & = \det \mathsf{W}.
\end{align*}

The equality \(\mathsf{W} \mathsf{P}_\infty = \mathsf{S}_\infty\) follows from \eqref{eq-needl} by letting \(z\to \infty\).
\end{proof}

\begin{definition}
The \emph{row-reversal} $\mathcal R(z)$ of $\mathcal{P}(z) \in \mathbb{P}_{\mathsf{Q}}$ is the $d \times 2d$ matrix polynomial:
\begin{equation*}
\mathcal{R}(z) = \begin{cases}
\diag \bigl( z^{\mu_1}, \ldots, z^{\mu_d} \bigr) \mathcal{P}(1/z)
 & \text{for} \quad  z \in \mathbb{C}\setminus\{0\},  \\[9pt]
\displaystyle\lim_{z\to \infty} \diag \bigl( z^{\mu_1}, \ldots, z^{\mu_d} \bigr) \mathcal{P}(1/z)
=   \mathsf{P}_\infty & \text{for} \quad z = 0,
\end{cases}
\end{equation*}
where $\mu_j = \deg \mathcal{P}(z)\bigl.\bigr|_j$ for $j \in\{1, \ldots, d\}$.
\end{definition}

\begin{lemma} \label{reverse}
The following statements hold.

\begin{enumerate}
\renewcommand*\theenumi{\roman{enumi}}
\renewcommand*\labelenumi{\rm{(\theenumi)}}
\setlength{\itemsep}{1pt}% \itemsep0em

\item \label{reverse-i1}
If $\mathcal R(z)$ is the row-reversal of $\mathcal{P}(z) \in \mathbb{P}_{\mathsf{Q}}$, then $\mathcal R(z) \in \mathbb{P}_{\mathsf{Q}}$, \(\mathsf{R}_\infty = \mathcal{P}(0)\), and
\[
\deg \mathcal{R}(z)\mkern-1mu\bigl.\bigr|_j = \deg \mathcal{P}(z)\mkern-1mu\bigl.\bigr|_j, \quad    j \in \{1,\ldots,d\}.
\]

\item \label{reverse-i11}
The row-reversal of $\mathcal R(z)$ is $\mathcal{P}(z)$.

\item \label{reverse-i2}
If $\mathcal{P}(z)$ and $\mathcal{S}(z)$ in $\mathbb{P}_{\mathsf{Q}}$ are equivalent, then so are their row-reversals $\mathcal{R}(z)$ and $\mathcal{T}(z)$.

\item \label{reverse-i3}
For every \(z \in \mathbb{C}\setminus\{0\}\) we have $\nul \mathcal{R}(1/z) = \nul \mathcal{P}(z)$.
\end{enumerate}
\end{lemma}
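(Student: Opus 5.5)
The plan is to argue directly from the coefficient expansion of the rows of $\mathcal P(z)$, reducing each item to the corresponding property of $\mathcal P(z)$ at the point $1/z$, plus continuity at $z=0$. Write $\mathcal D(z)=\diag(z^{\mu_1},\ldots,z^{\mu_d})$ and
\[
\mathcal P(z)\bigl.\bigr|_j=\sum_{i=0}^{\mu_j}\bigl(\mathsf P_i\bigl.\bigr|_j\bigr)z^i .
\]
Then for $z\neq 0$ we have
\[
\mathcal R(z)\bigl.\bigr|_j=\sum_{i=0}^{\mu_j}\bigl(\mathsf P_i\bigl.\bigr|_j\bigr)z^{\mu_j-i}.
\]
This is a polynomial in $z$ whose value at $z=0$ is $\mathsf P_{\mu_j}\bigl.\bigr|_j$, so it agrees with the definition $\mathcal R(0)=\mathsf P_\infty$. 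Hence $\mathcal R(z)$ is a matrix polynomial. Its row $j$ has degree at most $\mu_j$, and the coefficient of $z^{\mu_j}$ is $\mathsf P_0\bigl.\bigr|_j=\mathcal P(0)\bigl.\bigr|_j$. By (\ref{i-t-m-b}) at $z=0$ the matrix $\mathcal P(0)$ has full rank $d$, so none of its rows vanishes. Therefore $\deg\mathcal R(z)\bigl.\bigr|_j=\mu_j$ and $\mathsf R_\infty=\mathcal P(0)$.

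For (\ref{reverse-i1}) it then remains to check (\ref{i-t-m-a})--(\ref{i-t-m-d}) for $\mathcal R(z)$.
\begin{itemize}
\item For (\ref{i-t-m-a}) and $z\neq0$, use $\mathcal R(z^*)^*=\mathcal P\bigl((1/z)^*\bigr)^*\mathcal D(z)$. This gives $\mathcal R(z)\mathsf Q^{-1}\mathcal R(z^*)^*=\mathcal D(z)\,\mathcal P(1/z)\mathsf Q^{-1}\mathcal P((1/z)^*)^*\,\mathcal D(z)=\mathsf 0$. The identity extends to $z=0$ because the left side is a polynomial.
\item Rank (\ref{i-t-m-b}) holds for $z\neq0$ since $\mathcal D(z)$ is invertible and $\rank\mathcal P(1/z)=d$. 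At $z=0$ it holds because $\mathcal R(0)=\mathsf P_\infty$ has rank $d$ by (\ref{i-t-m-c}).
\item Condition (\ref{i-t-m-c}) for $\mathcal R$ follows from $\mathsf R_\infty=\mathcal P(0)$ together with (\ref{i-t-m-b}) for $\mathcal P$.
\item Condition (\ref{i-t-m-d}) holds because the row degrees are unchanged.
\end{itemize}

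Item (\ref{reverse-i11}) is now immediate. The row degrees of $\mathcal R$ are the $\mu_j$, so for $z\neq0$ we get $\mathcal D(z)\mathcal R(1/z)=\mathcal D(z)\mathcal D(1/z)\mathcal P(z)=\mathcal P(z)$, and at $z=0$ we get $\mathsf R_\infty=\mathcal P(0)$. Item (\ref{reverse-i3}) is also immediate: $\nul\mathcal R(1/z)=\nul\bigl(\mathcal D(1/z)\mathcal P(z)\bigr)=\nul\mathcal P(z)$, since $\mathcal D(1/z)$ is invertible for $z\neq0$.

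The main step is (\ref{reverse-i2}). Let $\mathcal S(z)=\mathcal W(z)\mathcal P(z)$ with $\mathcal W(z)$ unimodular. By Lemma~\ref{4statements}(\ref{4statements-i2}), $\mathcal S$ has the same row degrees $\mu_j$, so its row-reversal is $\mathcal T(z)=\mathcal D(z)\mathcal S(1/z)$. For $z\neq0$ this gives $\mathcal T(z)=\mathcal V(z)\mathcal R(z)$ with
\[
\mathcal V(z)=\mathcal D(z)\mathcal W(1/z)\mathcal D(z)^{-1}, \qquad \mathcal V(z)_{jk}=z^{\mu_j-\mu_k}w_{jk}(1/z).
\]
By \eqref{coeff}, $\deg w_{jk}\le\mu_j-\mu_k$. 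Hence each entry of $\mathcal V(z)$ is a polynomial: it is zero when $\mu_j<\mu_k$, since then $w_{jk}\equiv0$. Moreover $\det\mathcal V(z)=\det\mathcal W(1/z)$ is a nonzero constant, so $\mathcal V(z)$ extends to a unimodular matrix polynomial. The identity $\mathcal T(z)=\mathcal V(z)\mathcal R(z)$ then extends to $z=0$ by continuity, which proves $\mathcal R(z)\sim\mathcal T(z)$. The only delicate point is the polynomiality of $\mathcal V$, and the degree bound \eqref{coeff} is exactly what handles it.
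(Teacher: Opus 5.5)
Your proposal is correct and follows essentially the same route as the paper: the row degrees of $\mathcal R(z)$ and $\mathsf R_\infty=\mathcal P(0)$ come from the full rank of $\mathcal P(0)$, and for (iii) you use the same conjugated matrix $\mathcal V(z)=\mathcal D(z)\mathcal W(1/z)\mathcal D(z)^{-1}$, whose polynomiality rests on the degree bound \eqref{coeff} from Lemma~\ref{4statements}(\ref{4statements-i3}). The only difference, which is harmless, is at $z=0$: you obtain $\mathcal V(0)$ and the identity $\mathcal T(0)=\mathcal V(0)\mathcal R(0)$ by polynomial continuity, whereas the paper identifies $\mathcal V(0)=\mathsf S_\infty\mathsf P_\infty^*(\mathsf P_\infty\mathsf P_\infty^*)^{-1}$ explicitly via Lemma~\ref{4statements}(\ref{4statements-i4}).
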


\begin{proof} We use the notation of the previous lemma.

(\ref{reverse-i1})
We have
\[
\mathsf{R}_\infty = \lim_{z \rightarrow \infty} \mathcal{D}(z)^{-1}\mathcal{R}(z)
= \lim_{z \rightarrow \infty} \mathcal{P}(1/z) = \mathcal{P}(0) = \mathsf{P}_0.
\]
Since $\rank \mathcal{P}(0) = d$ the last equality implies $\deg \mathcal{R}(z)\mkern-1mu\bigl.\bigr|_j =\mu_j$ for all  $j\in\{1, \ldots, d\}$. The remaining properties of \(\mathbb{P}_{\mathsf{Q}}\) are straightforward consequences of the definition.

Statement (\ref{reverse-i11}) follows from (\ref{reverse-i1}) and the definition of the row-reversal.

(\ref{reverse-i2}) We claim that if $\mathcal V(z)$ is the $d\times d$ matrix
 \begin{equation*}
 \mathcal V(z)=\left\{
\begin{array}{lll} \mathcal D(z) \mathcal W(1/z)\mathcal{D}(z)^{-1}, &
z \in \mathbb{C}\mkern-2mu\setminus\mkern-2mu\{0\}, \\[2mm]
\mathsf{S}_\infty \mathsf P_\infty^*\left(\mathsf P_\infty\mathsf P_\infty^*\right)^{-1}, & z=0,
\end{array}
\right.
\end{equation*}
then $\mathcal{V}(z)$ is a unimodular $d \times d$ matrix polynomial and $\mathcal{V}(z)\mathcal{R}(z) = \mathcal{T}(z)$, $z \in \mathbb{C}$. That it is a matrix polynomial follows from Lemma~\ref{4statements}(\ref{4statements-i3}) and (\ref{4statements-i4}). The latter implies that $\lim_{z \rightarrow 0} \mathcal V(z) = \mathsf{W} = \mathcal{V}(0)$. Since $\mathcal{W}(z)$ is unimodular, $\mathcal{V}(z)$ is also unimodular.

It is straightforward to verify the equality $\mathcal{V}(z)\mathcal{R}(z) = \mathcal{T}(z)$ for $z\neq 0$. That it also holds for $z=0$ follows by taking the limit $z \rightarrow 0$ in this equality, using $\mathcal R(0)=\mathsf P_\infty$ and  $\mathcal{T}(0) = \mathsf{S}_\infty$, and Lemma~\ref{4statements}(\ref{4statements-i4}).

(\ref{reverse-i3})
By definition of the reversals, for \(z \in \mathbb{C}\setminus\{0\}\) we have \(\mathcal{R}(1/z) = \mathcal{D}(z)^{-1}\mathcal{P}(z)\) and (\ref{reverse-i3}) follows since \(\mathcal{D}(z)\) is invertible.
\end{proof}

\subsection{Two closely related theorems} \label{subs-two}

We continue with a factorization of $\mathcal{P}(z)\in\mathbb{P}_{\mathsf Q}$. Using the factors obtained here, we construct in Section~\ref{linearization} (see Step~3 of
Theorem~\ref{X}) a linearization $\widetilde{A}\in\mathbb{A}_{S,\mathsf b}$ of the boundary eigenvalue problem introduced in Section~\ref{RegSpec}. The following theorem is a polynomial analog of \cite[Theorem~5.1]{ACD}. Since its proof is lengthy, we defer it to Appendix~\ref{proof}.

\begin{theorem}\label{modelP}
Let $d_0,d\in\mathbb{N}$ with $d_0<d$. Let $\mathsf Q$ be a self-adjoint invertible $2d\times 2d$ matrix with $d$ positive and $d$ negative eigenvalues. Let $\mathcal P(z)$ be a $d\times 2d$ matrix polynomial of degree $p\in\mathbb N$. For $j \in \{1,\ldots,d\}$ set
\(
\mu_j =\deg \mathcal P(z)\bigl.\bigr|_j \in \{0\}\cup\mathbb{N},
\)
and assume that $\mathcal{P}(z) \in \mathbb{P}_{\mathsf Q}$, where in
item {\rm (\ref{i-t-m-d})} of Definition~{\rm\ref{def-bbP}} we assume
\begin{equation*} % \label{eq-abcd-i5}
p = \mu_1 \geq  \cdots \geq \mu_{d_0} \gt \mu_{d_0+1} = \cdots = \mu_d = 0.
\end{equation*}
Let $\mathsf B$ be the $(d-d_0)\times 2d$ constant matrix consisting of the bottom
$d-d_0$ rows of $\mathcal P(z)$.

Then there exist a unimodular $d \times d$ matrix polynomial $\mathcal W(z)$,
a $d_0 \times 2d_0$ matrix polynomial $\widehat{\mathcal P}(z)$, and a
$2d_0 \times 2d$ matrix $\mathsf B_0$ such that
\begin{equation}\label{reprP}
\mathcal W(z)\mathcal{P}(z) =
 \begin{bmatrix}  \widehat{\mathcal{P}}(z) & \mathsf{0} \\[5pt]
 \mathsf{0} & \mathsf{I}_{d-d_0} \end{bmatrix}
\begin{bmatrix}  \mathsf B_0 \\[5pt] \mathsf B  \end{bmatrix},
\end{equation}
and
\begin{enumerate}
\renewcommand*\theenumi{\roman{enumi}}
\renewcommand*\labelenumi{\rm{(\theenumi)}}
\item \label{abcd-c1}
the $(d+d_0) \times 2d$ matrix $\displaystyle\begin{bmatrix}  \mathsf B_0 \\ \mathsf B  \end{bmatrix}$ has full rank $d+d_0$,

\item \label{abcd-c2}
$\mathsf{B} \mathsf{Q}^{-1} \mathsf{B}^* = \mathsf{0}$ and
$\mathsf{B}_0 \mathsf{Q}^{-1} \mathsf{B}^* = \mathsf{0}$,

\item\label{abcd-c3}
$\mathsf B_0\,\mathsf Q^{-1}\mathsf B_0^*$ is invertible with $d_0$ positive and $d_0$ negative eigenvalues,

\item \label{abcd-c4}
$\widehat{\mathcal{P}}(z) \mathsf Q_0^{-1} {\widehat{\mathcal{P}}}(z^*)^* = \mathsf{0}$ for all $z \in \mathbb{C}$, where $\mathsf{Q}_0=\bigl(\mathsf{B}_0 \mathsf{Q}^{-1} \mathsf{B}_0^*\bigr)^{-1}$,

\item \label{abcd-c5}
 $\rank {\widehat{\mathcal{P}}}(z) = d_0$ for all $z \in \mathbb{C}$,

\item \label{abcd-c6}
$\rank \widehat{\mathsf{P}}_{\infty} = d_0$, where
$\widehat{\mathsf{P}}_{\infty} =\lim_{z \rightarrow \infty}\diag\bigl(z^{-\mu_1}, \ldots, z^{-\mu_{d_0}}\bigr)\widehat{\mathcal{P}}(z)$,

\item \label{abcd-c7}
the row degrees of $\widehat{\mathcal{P}}(z)$ satisfy
\[
p = \deg \widehat{\mathcal{P}}(z)\bigl.\bigr|_1 = \mu_1 \geq \cdots \geq \deg \widehat{\mathcal{P}}(z)\bigl.\bigr|_{d_0} = \mu_{d_0} \geq 1.
\]
\end{enumerate}
\end{theorem}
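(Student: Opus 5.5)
The plan is to split off the constant rows of $\mathcal P(z)$ and handle the remaining rows by linear algebra with the indefinite form $x\mapsto x\mathsf Q^{-1}y^*$ on row vectors in $\mathbb C^{1\times 2d}$. Write $\mathcal P(z)=\begin{bmatrix}\mathcal P_{\rm t}(z)\\ \mathsf B\end{bmatrix}$, where $\mathcal P_{\rm t}(z)$ consists of the top $d_0$ rows. Condition (\ref{i-t-m-a}) of Definition~\ref{def-bbP}, read blockwise, gives three facts:
\[
\mathsf B\mathsf Q^{-1}\mathsf B^*=\mathsf 0,\qquad \mathcal P_{\rm t}(z)\mathsf Q^{-1}\mathsf B^*\equiv\mathsf 0,\qquad \mathcal P_{\rm t}(z)\mathsf Q^{-1}\mathcal P_{\rm t}(z^*)^*\equiv\mathsf 0.
\]
Condition (\ref{i-t-m-b}) shows $\rank\mathsf B=d-d_0$. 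Put $\mathcal R=\row\mathsf B$, which is a neutral subspace of dimension $d-d_0$. Let $\mathcal R^{[\perp]}=\{x: x\mathsf Q^{-1}\mathsf B^*=\mathsf 0\}$; it has dimension $d+d_0$ and contains $\mathcal R$.

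Choose $\mathsf B_0$ to be any $2d_0\times 2d$ matrix whose rows form a basis of a complement of $\mathcal R$ in $\mathcal R^{[\perp]}$. Then (\ref{abcd-c1}) holds, and (\ref{abcd-c2}) holds by construction. For (\ref{abcd-c3}), note that the form restricted to $\row\mathsf B_0$ is isomorphic to the form induced on $\mathcal R^{[\perp]}/\mathcal R$. The isotropic part of $\mathcal R^{[\perp]}$ is $\mathcal R^{[\perp]}\cap\mathcal R^{[\perp][\perp]}=\mathcal R$, so the induced form is nondegenerate. To count its inertia, embed $\mathcal R$ into a hyperbolic subspace $\mathcal R\dotplus\mathcal R'$, which has inertia $(d-d_0,d-d_0)$. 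Its orthogonal complement is a nondegenerate space isometric to $\mathcal R^{[\perp]}/\mathcal R$. Since the total inertia is $(d,d)$, the complement has inertia $(d_0,d_0)$. I expect this inertia count to be the main conceptual step; it is standard Krein-space linear algebra, but it has to be stated cleanly.

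Each row of $\mathcal P_{\rm t}(z)$ lies in $\mathcal R^{[\perp]}=\row\mathsf B_0\dotplus\row\mathsf B$. Since taking coordinates in the basis formed by the rows of $\mathsf B_0$ and $\mathsf B$ is a constant linear map, we obtain a unique decomposition $\mathcal P_{\rm t}(z)=\widehat{\mathcal P}(z)\mathsf B_0+\mathcal C(z)\mathsf B$ with polynomial matrices $\widehat{\mathcal P}(z)$ and $\mathcal C(z)$. Because $\begin{bmatrix}\mathsf B_0\\ \mathsf B\end{bmatrix}$ has full row rank, the row degrees of $\widehat{\mathcal P}(z)$ and of $\mathcal C(z)$ are at most $\mu_j$ in row $j$. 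Set
\[
\mathcal W(z)=\begin{bmatrix}\mathsf I_{d_0}&-\mathcal C(z)\\ \mathsf 0&\mathsf I_{d-d_0}\end{bmatrix},
\]
which is unimodular and gives \eqref{reprP}.

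The remaining items follow from this representation.
\begin{itemize}
\item For (\ref{abcd-c4}), take the top-left block of $\mathcal W(z)\mathcal P(z)\mathsf Q^{-1}\bigl(\mathcal W(z^*)\mathcal P(z^*)\bigr)^*=\mathsf 0$ and use (\ref{abcd-c2}). This gives $\widehat{\mathcal P}(z)\mathsf Q_0^{-1}\widehat{\mathcal P}(z^*)^*\equiv\mathsf 0$.
\item For (\ref{abcd-c5}), $\rank\mathcal W(z)\mathcal P(z)=d$ and the full rank of $\begin{bmatrix}\mathsf B_0\\ \mathsf B\end{bmatrix}$ force $\rank\widehat{\mathcal P}(z)=d_0$.
\item For (\ref{abcd-c7}), use the predictable degree property from the proof of Lemma~\ref{4statements}(\ref{4statements-i3}). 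It shows that row $j\le d_0$ of $\mathcal W(z)\mathcal P(z)$ has degree $\max\{\mu_j,\deg\mathcal C(z)|_j+0\}=\mu_j$. Hence $\deg\widehat{\mathcal P}(z)|_j\mathsf B_0=\mu_j$, and since $\mathsf B_0$ has full row rank, $\deg\widehat{\mathcal P}(z)|_j=\mu_j$.
\item For (\ref{abcd-c6}), let $\mathcal D(z)=\diag(z^{\mu_1},\ldots,z^{\mu_d})$. The entries of $\mathcal D(z)^{-1}\mathcal W(z)\mathcal D(z)$ are $w_{jk}(z)z^{\mu_k-\mu_j}$, which are bounded as $z\to\infty$ by the degree bounds above. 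The limit $\mathsf W_\infty$ is therefore block upper triangular with identity diagonal blocks, hence invertible. It follows that
\[
\begin{bmatrix}\widehat{\mathsf P}_\infty\mathsf B_0\\ \mathsf B\end{bmatrix}=\lim_{z\to\infty}\mathcal D(z)^{-1}\mathcal W(z)\mathcal P(z)=\mathsf W_\infty\mathsf P_\infty
\]
has rank $d$, and so $\rank\widehat{\mathsf P}_\infty=d_0$.
\end{itemize}

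The only genuinely delicate points are the inertia claim (\ref{abcd-c3}) and the fact that the row degrees are preserved under $\mathcal W(z)$, which rests on $\deg\mathcal C(z)|_j\le\mu_j$ together with the predictable degree property.
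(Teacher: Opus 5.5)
Your proof is correct, and it reaches the same factorization by a more elementary route than Appendix~\ref{proof}. The two proofs agree structurally. Both build a block unipotent $\mathcal W(z)$ that removes the $\row\mathsf B$-component of the top $d_0$ rows. The paper's $\row\mathsf B_0=\row\mathsf C_{\mathcal T}=\row\bigl[\begin{smallmatrix}\mathsf S_\infty\\ \mathsf S_b\end{smallmatrix}\bigr]$ is just one particular complement of $\row\mathsf B$ in $\row\mathsf C_{\mathcal P}$, and $\row\mathsf C_{\mathcal P}$ coincides with your $(\row\mathsf B)^{[\perp]}$.

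The difference lies in how the ingredients are justified. The paper works inside $\row\mathsf C_{\mathcal P}$, so it needs three things. First, $\rank\mathsf C_{\mathcal P}=d+d_0$, which comes from \cite[Theorem~3.2]{CDLAA23}. Second, the rank count $\rank\mathsf C_{\mathcal T}=2d_0$. Third, for (iii), an application of Lemma~\ref{le-scks} to $\mathsf Q\nul\mathsf C_{\mathcal T}$. That space is the orthogonal complement of $\ran\mathsf B_0^*$ in $(\mathbb C^{2d},\kip_{\mathsf Q^{-1}})$, i.e.\ a hyperbolic subspace containing $\ran\mathsf B^*$, which is the mirror image of your $\mathcal R\dotplus\mathcal R'$.

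You only use that the values of $\mathcal P_{\mathrm t}(z)$ lie in $(\row\mathsf B)^{[\perp]}$, which is immediate from condition (a), together with a trivial dimension count. You then read off (iii) from the nondegenerate quotient $(\row\mathsf B)^{[\perp]}/\row\mathsf B$. This is self-contained, avoids the external rank result, and shows that any complement works.

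What the paper's specific choice buys is that its complement contains $\row\mathsf S_\infty$. Hence $\deg\mathcal A(z)|_k<\mu_k$ and $\mathsf T_\infty=\mathsf S_\infty$, which makes (vi) and (vii) immediate. Its $\mathsf B_0$ also has $\bigcap_{z}\nul\widehat{\mathcal P}(z)=\{0\}$ built in.

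With an arbitrary complement, the leading row coefficients are shifted by combinations of the rows of $\mathsf B$. You therefore need the predictable degree property and the block triangular limit $\mathsf W_\infty$, which you correctly supply. For your $\widehat{\mathcal P}(z)$, the trivial common null space follows only a posteriori from (iv)--(vii), as in Remark~\ref{rem2}. The theorem as stated does not require it.
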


\begin{remark} \label{rem1}
The matrix polynomial on the right-hand side of \eqref{reprP} belongs to $\mathbb{P}_{\mathsf{Q}}$ and hence this matrix polynomial and $\mathcal{P}(z)$ are equivalent. By Lemma~\ref{reverse} the same holds for their row-reversals, that is there exists a unimodular $d \times d$ matrix polynomial $\mathcal{V}(z)$ such that
\begin{equation}\label{reprR}
\mathcal{V}(z)\mathcal{R}(z) =
 \begin{bmatrix}  \widehat{\mathcal{R}}(z) & 0 \\[5pt] 0 & \mathsf{I}_{d-d_0} \end{bmatrix}
\begin{bmatrix}  \mathsf B_0 \\[5pt] \mathsf B  \end{bmatrix},
\end{equation}
where $\mathcal R(z)$ (respectively, $\widehat{\mathcal{R}}(z)$) is the row-reversal of $\mathcal{P}(z)$ (respectively, $\widehat{\mathcal{P}}(z)$). According to Lemma~\ref{4statements}(\ref{4statements-i3}) the entries of $\mathcal{W}(z)$ in \eqref{reprP} and $\mathcal{V}(z)$ in \eqref{reprR} satisfy \eqref{coeff}.
\end{remark}

\begin{remark} \label{rem2}
The properties \eqref{abcd-c4}-\eqref{abcd-c7} in Theorem~\ref{modelP} imply that $\widehat{\mathcal{P}}(z)\in {\mathbb P}_{\mathsf{Q}_0}$. Hence \eqref{eq-i-t-m-e} and (\ref{abcd-c7}) yield that for any $\mathbb{W} \subseteq \mathbb{C}$ with $\# \mathbb W \gt\deg \widehat{\mathcal{P}}(z)$
\begin{equation}\label{={0}}
\bigcap_{z \in \mathbb W}\nul \widehat{\mathcal{P}}(z)=\{0\}.
\end{equation}
\end{remark}

The theorem below is a reformulation of \cite[Lemmas~3.4 and~3.5]{CDR}. These lemmas were originally stated for relations $S$ and $T$ in a Hilbert space, but using indefinite inner products as in \cite[Subsection~2.1]{CDCAOT} their validity can be established in an indefinite setting as well. We have included  (\ref{list-iB})(\ref{list-iB-i3}), a classical result, to make the theorem complete. The Hilbert space version of (\ref{list-iB})(\ref{list-iB-i3}) is well known; see \cite[Theorem~XII.4.30]{DFII}, and \cite[Theorem~2.3]{CDCAOT} for the Krein space version. Special cases of the theorem below will be used to construct a linearization $\widetilde{A} \in \mathbb{A}_{S,\mathsf{b}}$ of a given $\BEP$ with $\mathcal{P}(z) \in \mathbb{P}_{\mathsf{Q}}$  and to prove a converse construction; see Theorem~\ref{X}.

\begin{theorem} \label{list}
Let $S$ be a closed symmetric linear relation in a Krein space $\mathfrak H$ with defect numbers $d^{\pm} \in \mathbb{N}$, and set  $\delta=d^++d^-$. Let $\tau$ be an integer with $0 < \tau < \delta $, and let $\mathsf{b}: S^*
\rightarrow \mathbb C^\delta$ be a boundary mapping for $S$ with Gram matrix $\mathsf{Q}$. In what follows $T$  is a closed linear relation in $\mathfrak H$.

\begin{enumerate}
\renewcommand*\theenumi{\Roman{enumi}}
\renewcommand*\labelenumi{\rm{(\theenumi)}}
\setlength{\itemsep}{1pt}%

\item \label{list-iA}
The following statements are equivalent.

\begin{enumerate}
\renewcommand*\theenumii{\arabic{enumii}}
\renewcommand*\labelenumii{\rm{({\small \theenumii})}}
\setlength{\itemsep}{1pt}%
\item \label{list-iia}
$S \subset T \subset S^*$ and $\dim T/S = \tau$.
\item \label{list-iib}
There exists a $(\delta-\tau)\times \delta$ matrix $\mathsf A$ with full rank $\delta-\tau$ such that
\[
 T = \Bigl\{ \{f,g\} \in S^*\,:\, \mathsf A \mathsf b (\{f,g\})=0 \Bigr\}.
\]
\item \label{list-iic}
There exists a $\tau\times \delta$ matrix $\mathsf B$ with full rank $\tau$ such that
\[
T^* = \Bigl\{ \{f,g\} \in S^*\,:\, \mathsf{B} \mathsf{b}(\{f,g\})=0 \Bigr\}.
\]
\end{enumerate}

\item \label{list-iB}
Assume {\rm (\ref{list-iia})-(\ref{list-iic})} in {\rm (\ref{list-iA})}. The following statements hold.

\begin{enumerate}
\renewcommand*\theenumii{\alph{enumii}}
\renewcommand*\labelenumii{\rm{(\theenumii)}}
\setlength{\itemsep}{1pt}%
\item \label{list-iB-i1}
$\mathsf B \mathsf Q^{-1}\mathsf A^* = \mathsf{0}$, and $\mathsf A$ and $\mathsf B$ are unique up to multiplication from the left by an invertible matrix.

\item \label{list-iB-i2}
If there exists a $\tau \times \delta$ matrix $\mathsf C$ with full rank $\tau$ such that $\mathsf{C} \mathsf{Q}^{-1}\mathsf{A}^* = \mathsf{0}$ and
$V=\bigl\{\{f,g\} \in S^*\,:\, \mathsf{C} \mathsf{b}(\{f,g\}) = 0\bigr\}$, then $V=T^*$.

\item \label{list-iB-i3}
$T=T^*$ if and only if $d^+ = d^- = \tau$ and $\mathsf{A} \mathsf{Q}^{-1}\mathsf{A}^* = \mathsf{0}$.

\item \label{list-iB-i4}
$T \subseteq T^*$ if and only if $\mathsf{B} \mathsf{Q}^{-1}\mathsf{B}^* = \mathsf{0}$.
\end{enumerate}

\item \label{list-iC}
Assume {\rm (\ref{list-iia})-(\ref{list-iic})} in {\rm (\ref{list-iA})} and $T \subset T^*$, strict inclusion. Then the following statements hold.

\begin{enumerate}
\renewcommand*\theenumii{\alph{enumii}}
\renewcommand*\labelenumii{\rm{(\theenumii)}}
\setlength{\itemsep}{1pt}
\item \label{list-iC-i1}
$\tau \leq \min\{d^+,d^-\}$ and the defect numbers of $T$ are $e^\pm =d^\pm-\tau$.
\item \label{list-iC-i2}
The full rank matrix $\mathsf A$ can be written in the block form $\begin{bmatrix} \mathsf{B}_0 \\ \mathsf{B}\end{bmatrix}$, where the submatrix $\mathsf B_0$ satisfies the following properties:

\begin{enumerate}
\renewcommand*\theenumiii{\roman{enumiii}}
\renewcommand*\labelenumiii{\rm{(\theenumiii)}}

\item \label{list-iC-i2-i1}
$\mathsf B_0$ is a $\rho \times \delta$ matrix and $\rank \mathsf B_0=\rho$, where $\rho=e^+ + e^-$;

\item \label{list-iC-i2-i2}
$\mathsf{B} \mathsf{Q}^{-1} \mathsf{B}_0^* = \mathsf{0}$;

\item \label{list-iC-i2-i3}
$\mathsf{B}_0 \mathsf{Q}^{-1} \mathsf{B}_0^*$ is an invertible $\rho \times \rho$ matrix;

\item \label{list-iC-i2-i4}
$\mathsf{b}_0 := \mathsf{B}_0 \mathsf{b}\bigl.\bigr|_{T^*}$ is a boundary mapping for $T$ with Gram matrix $\mathsf{Q}_0 := \bigl(\mathsf{B}_0\mathsf{Q}^{-1}\mathsf{B}_0^*\bigr)^{-1}$.
\end{enumerate}
\end{enumerate}

\end{enumerate}
\end{theorem}

\begin{remark}
Although Theorem~\ref{list} is stated for \(0 < \tau < \delta\), notice that part~(\ref{list-iC}) also holds for \(\tau=0\). In that case \(T=S\), \(\rho=\delta\), and one may take \(\mathsf A=\mathsf B_0=\mathsf{I}_\delta\); hence \(\mathsf b_0=\mathsf b\) and \(\mathsf Q_0=\mathsf Q\). The matrices \(\mathsf{A}, \mathsf{B},\) and \(\mathsf{B}_0\) in Theorem~\ref{list} are unique if we require them to be in reduced row echelon form.
\end{remark}

The theorems in this section are connected because they describe symmetric extensions of $S$ and their adjoints in terms of a pair of matrices $\mathsf B_0$ and $\mathsf B$. In Step 2 of Theorem~\ref{X} the symmetric linear relation $S_0=\widetilde A \cap \mathfrak H^2$ and its adjoint $S_0^*=\big\{\{\widetilde P_{\mathfrak H}f,\widetilde P_{\mathfrak H} g\}\,:\, \{f,g\}\in \widetilde A\big\}$ satisfy $S\subset S_0 \subset S_0^* \subset S^*$ and can be described by Theorem~\ref{list} by matrices $\mathsf{B}_0$ and $\mathsf{B}$. On the other hand in Step 3 of Theorem~\ref{X} where the starting point is a given  polynomial $\mathcal{P}(z) \in \mathbb{P}_{\mathsf Q}$, the representation \eqref{reprP} and the intersection \eqref{={0}} imply that
\begin{align*}
S_0 & = \bigcap_{z \in \mathbb W}  T_{S, \mathsf b,\mathcal{P}}(z) \\
& = \bigcap_{z \in \mathbb W} \Big\{\{f,g\} \in S^*\,:\,
\mathcal{P}(z) \mathsf{b}(\{f,g\}) = 0 \Big\} \\
& = \bigcap_{z \in \mathbb W} \Big\{\{f,g\} \in S^*\,:\, \mathsf{B}  \mathsf{b}(\{f,g\}) = 0, \ \widehat{\mathcal P}(z)\mathsf{B}_0 \mathsf{b}(\{f,g\}) = 0 \Big\}\\
& = \bigg\{\{f,g\} \in S^*\,:\, \begin{bmatrix} \mathsf B_0 \\ \mathsf B \end{bmatrix} \mathsf{b}(\{f,g\})=0 \bigg\}
\end{align*}
and so $S_0$ and $S_0^*$ satisfy \eqref{eq-S0B0B} and \eqref{S0*}.

The following corollary, and in particular its proof, establishes a further connection between the two theorems.

\begin{corollary}\label{co-tt}
Let \(\mathsf{P}\in \mathbb{P}_{\mathsf{Q}}\) be a constant \(d\times 2d\) matrix, and denote by \(\mathsf{B}\) the \((d-d_0)\times 2d\) matrix consisting of the bottom \(d-d_0\) rows of \(\mathsf{P}\). Then there exist an invertible \(d\times d\) matrix \(\mathsf{W}\), a \(d_0\times 2d_0\) matrix \(\widehat{\mathsf{P}}\) of rank \(d_0\), and a \(2d_0\times 2d\) matrix \(\mathsf{B}_0\) such that
\begin{equation} \label{eq-co-tt}
\mathsf{W}\mathsf{P}=
\begin{bmatrix}
\widehat{\mathsf{P}} & \mathsf{0}\\
\mathsf{0} & \mathsf{I}_{d-d_0}
\end{bmatrix}
\begin{bmatrix}
\mathsf{B}_0\\
\mathsf{B}
\end{bmatrix},
\end{equation}
and
\begin{enumerate}
\renewcommand*\theenumi{\roman{enumi}}
\renewcommand*\labelenumi{\rm{(\theenumi)}}
\item\label{co-tt-c1}
the \((d+d_0)\times 2d\) matrix \(\begin{bmatrix}\mathsf{B}_0\\ \mathsf{B}\end{bmatrix}\) has full row rank \(d+d_0\),
\item\label{co-tt-c2}
\(\mathsf{B}\mathsf{Q}^{-1}\mathsf{B}^*=\mathsf{0}\) and \(\mathsf{B}_0\mathsf{Q}^{-1}\mathsf{B}^*=\mathsf{0}\),
\item\label{co-tt-c3}
\(\mathsf{B}_0\,\mathsf{Q}^{-1}\mathsf{B}_0^*\) is invertible with \(d_0\) positive and \(d_0\) negative eigenvalues,
\item\label{co-tt-c4}
\(\widehat{\mathsf{P}}\,\mathsf{Q}_0^{-1}\widehat{\mathsf{P}}^{*}=\mathsf{0}\), where \(\mathsf{Q}_0=\bigl(\mathsf{B}_0\mathsf{Q}^{-1}\mathsf{B}_0^*\bigr)^{-1}\).
\end{enumerate}
\end{corollary}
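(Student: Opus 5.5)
Since $\mathsf{P}$ is constant there is no polynomial structure to deal with, so I will prove this by direct linear algebra on row spaces with respect to the Hermitian form $[x,y]=x\mathsf{Q}^{-1}y^*$ on row vectors $x,y\in\mathbb{C}^{1\times 2d}$. This form is nondegenerate with $d$ positive and $d$ negative squares. I read $d_0\in\{0,\ldots,d\}$ as the given integer determining the split of $\mathsf{P}$. Write $\mathsf{P}=\begin{bmatrix}\mathsf{T}\\ \mathsf{B}\end{bmatrix}$ with $\mathsf{T}$ the top $d_0$ rows. I expect to be able to take $\mathsf{W}=\mathsf{I}_d$ and $\widehat{\mathsf{P}}$ defined by $\mathsf{T}=\widehat{\mathsf{P}}\mathsf{B}_0$; if needed, a block upper-triangular $\mathsf{W}=\begin{bmatrix}\mathsf{I}&-\mathsf{X}\\ \mathsf{0}&\mathsf{I}\end{bmatrix}$ would absorb a component of $\mathsf{T}$ along $\row\mathsf{B}$.

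\textbf{Step 1: the neutral subspace and its orthogonal complement.} Put $V=\row\mathsf{B}$. Property (\ref{i-t-m-b}) gives $\rank\mathsf{P}=d$, so $\dim V=d-d_0$ and $\row\mathsf{T}\cap V=\{0\}$. Property (\ref{i-t-m-a}) says $\mathsf{P}\mathsf{Q}^{-1}\mathsf{P}^*=\mathsf{0}$, so $\row\mathsf{P}$ is neutral. In particular $\mathsf{B}\mathsf{Q}^{-1}\mathsf{B}^*=\mathsf{0}$, which is the first half of (\ref{co-tt-c2}).

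Let $V^{[\perp]}=\{x: x\mathsf{Q}^{-1}\mathsf{B}^*=0\}$. By nondegeneracy $\dim V^{[\perp]}=2d-(d-d_0)=d+d_0$ and $(V^{[\perp]})^{[\perp]}=V$. Hence $V\subseteq V^{[\perp]}$ is exactly the isotropic part of the form restricted to $V^{[\perp]}$. Neutrality of $\row\mathsf{P}$ also gives $\row\mathsf{T}\subseteq V^{[\perp]}$.

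\textbf{Step 2: choice of $\mathsf{B}_0$.} Since $\row\mathsf{T}\cap V=\{0\}$ and both lie in $V^{[\perp]}$, I choose a complement $N$ of $V$ in $V^{[\perp]}$ with $\row\mathsf{T}\subseteq N$. This is possible: extend a basis of $\row\mathsf{T}\oplus V$ to a basis of $V^{[\perp]}$ and let $N$ be spanned by the basis of $\row\mathsf{T}$ together with the added vectors. Then $\dim N=2d_0$. Let $\mathsf{B}_0$ be the $2d_0\times 2d$ matrix whose rows form a basis of $N$, and define $\widehat{\mathsf{P}}$ as the unique $d_0\times 2d_0$ matrix with $\mathsf{T}=\widehat{\mathsf{P}}\mathsf{B}_0$; its rank is $d_0$ because $\rank\mathsf{T}=d_0$. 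This gives \eqref{eq-co-tt} with $\mathsf{W}=\mathsf{I}_d$.

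The remaining items then follow. Item (\ref{co-tt-c1}) holds because $N\oplus V=V^{[\perp]}$ has dimension $d+d_0$. The second half of (\ref{co-tt-c2}) holds because $N\subseteq V^{[\perp]}$. Item (\ref{co-tt-c4}) is the computation $\widehat{\mathsf{P}}\mathsf{Q}_0^{-1}\widehat{\mathsf{P}}^*=\widehat{\mathsf{P}}\mathsf{B}_0\mathsf{Q}^{-1}\mathsf{B}_0^*\widehat{\mathsf{P}}^*=\mathsf{T}\mathsf{Q}^{-1}\mathsf{T}^*=\mathsf{0}$.

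\textbf{Step 3 (main obstacle): item (\ref{co-tt-c3}).} Invertibility of $\mathsf{B}_0\mathsf{Q}^{-1}\mathsf{B}_0^*$ follows because $N$ is a complement of the isotropic part $V$ in $V^{[\perp]}$, so the form on $N$ is nondegenerate. For the inertia, I will argue as follows. Since $N$ is nondegenerate, $\mathbb{C}^{1\times 2d}=N\,[\dotplus]\,N^{[\perp]}$. Moreover $V\subseteq N^{[\perp]}$, and $N^{[\perp]}$ has dimension $2d-2d_0$. Thus $V$ is a neutral subspace of half the dimension of the nondegenerate space $N^{[\perp]}$, so $N^{[\perp]}$ has inertia $(d-d_0,d-d_0)$. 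Subtracting from the inertia $(d,d)$ of the whole space shows that $N$ has inertia $(d_0,d_0)$. The same count also settles the boundary cases $d_0=0$ and $d_0=d$ without separate treatment.
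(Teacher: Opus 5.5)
Your proof is correct, and it takes a different route from the paper's.

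\textbf{The paper's route.} The paper argues operator-theoretically and assumes $0<d_0<d$. It applies Theorem~\ref{list} with $\tau=d-d_0$ to the symmetric extension $T$ of $S$ defined by $T^*=\{\{f,g\}\in S^*:\mathsf{B}\mathsf{b}(\{f,g\})=0\}$, and takes $\mathsf{B}_0$ from part~(\ref{list-iC})(\ref{list-iC-i2}). It reads off the inertia in (\ref{co-tt-c3}) from the fact that $\mathsf{b}_0=\mathsf{B}_0\mathsf{b}|_{T^*}$ is a boundary mapping for $T$, whose defect numbers are $d_0$. It obtains $\widehat{\mathsf{P}}$ by describing the self-adjoint relation $\{\{f,g\}\in S^*:\mathsf{P}\mathsf{b}(\{f,g\})=0\}$ as a canonical self-adjoint extension of $T$. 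Finally it gets $\mathsf{W}$ by comparing null spaces, using surjectivity of $\mathsf{b}$.

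\textbf{Your route.} You work directly in the finite-dimensional space $(\mathbb{C}^{1\times 2d},\, x\mathsf{Q}^{-1}y^*)$, in the spirit of Step~3 of the proof of Theorem~\ref{modelP} and Lemma~\ref{le-scks}. Your $\mathsf{B}_0$ spans a complement of the isotropic part $\row\mathsf{B}$ of $(\row\mathsf{B})^{[\perp]}$. The inertia then follows by counting: $\row\mathsf{B}$ is a neutral subspace of half the dimension of the nondegenerate space $(\row\mathsf{B}_0)^{[\perp]}$.

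\textbf{What each approach buys.} Your version has several advantages:
\begin{enumerate}
\item It is self-contained and purely matrix-theoretic, which fits the statement, since no $S$ or $\mathsf{b}$ appears in it.
\item It covers $d_0=0$ and $d_0=d$ uniformly.
\item You choose $\row\mathsf{B}_0$ to contain the rows of the top $d_0\times 2d$ block of $\mathsf{P}$, so you get the sharper normalization $\mathsf{W}=\mathsf{I}_d$. In the paper, $\mathsf{B}_0$ comes from Theorem~\ref{list} independently of those top rows, so a nontrivial invertible $\mathsf{W}$ is generally needed.
\end{enumerate}
The paper's route buys interpretation. It exhibits $\mathsf{B}_0$ as the matrix producing the boundary mapping $\mathsf{b}_0$ of the intermediate symmetric relation $T$, and $\widehat{\mathsf{P}}$ as the boundary condition describing the self-adjoint relation relative to $T$. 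That is precisely the link between Theorems~\ref{modelP} and~\ref{list} the corollary is meant to display.
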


\begin{proof}
We use Theorem~{\rm\ref{list}} where we assume that \(d^+ = d^- = d\) and \(\tau = d - d_0\) with \(0 \lt d_0 \lt d\). Since \(\mathsf{P}\mathsf{Q}^{-1}\mathsf{P}^*=\mathsf{0}\) we have  \(\mathsf{B}\mathsf{Q}^{-1}\mathsf{B}^*=\mathsf{0}\).  Let \(T\) be the symmetric closed linear relation  extension of \(S\) such that
\[
T^* = \Bigl\{ \{f,g\} \in S^* : \mathsf{B}\mathsf{b}(\{f,g\}) = 0 \Bigr\}.
\]
By Theorem~\ref{list}(\ref{list-iC})(\ref{list-iC-i1}), the defect numbers of \(T\) are \(d_0\). By part~(\ref{list-iC})(\ref{list-iC-i2}) of the same theorem, there exists a \(2d_0\times 2d\) matrix \(\mathsf B_0\) such that (\ref{co-tt-c1}) and (\ref{co-tt-c2}) hold and \(\mathsf B_0\mathsf Q^{-1}\mathsf B_0^*\) is invertible. Further, $\mathsf{b}_0 = \mathsf{B}_0 \mathsf{b}\bigl.\bigr|_{T^*}$ is a boundary mapping for $T$ with Gram matrix $\mathsf{Q}_0 =  \bigl(\mathsf{B}_0\mathsf{Q}^{-1}\mathsf{B}_0^*\bigr)^{-1}$. Therefore \(\mathsf{B}_0\,\mathsf{Q}^{-1}\mathsf{B}_0^*\) has \(d_0\) positive and \(d_0\) negative eigenvalues.

Set
\begin{equation} \label{eq-AP}
A = \Bigl\{ \{f,g\} \in S^* : \mathsf{P}\mathsf{b}(\{f,g\}) = 0 \Bigr\}.
\end{equation}
Then \(A\) is self-adjoint by Theorem~\ref{list}(\ref{list-iB})(\ref{list-iB-i3}).
Since \(\mathsf{B}\) consists of the bottom \(d-d_0\) rows of \(\mathsf{P}\) we have
\[
S \subset T \subset A \subset T^* \subset S^*.
\]
As \(A\) is a self-adjoint extension of \(T\), there exists a \(d_0\times 2d_0\) matrix \(\widehat{\mathsf{P}}\) of rank \(d_0\) such that \(\widehat{\mathsf{P}} \mathsf{Q}_0^{-1} \widehat{\mathsf{P}}^* = \mathsf{0}\) and
\[
A = \Bigl\{ \{f,g\} \in T^* : \widehat{\mathsf{P}} \mathsf{b}_0(\{f,g\}) = 0 \Bigr\}.
\]
Rewriting the last equality using the preceding formulas for \(T^*\) and \(\mathsf{b}_0\) we obtain
\begin{align*}
  A & =  \Bigl\{ \{f,g\} \in S^* : \mathsf{B} \mathsf{b}(\{f,g\}) = 0,  \widehat{\mathsf{P}}\mathsf{B}_0 \mathsf{b}(\{f,g\}) = 0 \Bigr\} \\
 & =  \Bigl\{ \{f,g\} \in S^* : \begin{bmatrix}
\widehat{\mathsf{P}} & \mathsf{0}\\
\mathsf{0} & \mathsf{I}_{d-d_0}
\end{bmatrix}
\begin{bmatrix}
\mathsf{B}_0\\
\mathsf{B}
\end{bmatrix}\mathsf{b}(\{f,g\}) = 0 \Bigr\}.
\end{align*}
Since \(\mathsf{b}: S^* \to \mathbb{C}^{2d}\) is a surjection, it follows from \eqref{eq-AP} that the matrix \(\mathsf{P}\) and the matrix product on the last displayed line above have the same null space. Therefore there exists an invertible \(d\times d\) matrix \(\mathsf{W}\) such that \eqref{eq-co-tt} holds.
\end{proof}

\section{The boundary eigenvalue problem \texorpdfstring{$\BEP\mkern-2mu\bigl(S, \mathsf{b}, \mathcal{P}(z)\bigr)$}{BEP(S,b,P(z))}} \label{RegSpec}

\subsection{Partition of the complex plane}\label{partition}
We recall the well-known partition of the complex plane into four disjoint sets: the \emph{resolvent set} and the \emph{point, continuous and residual spectrum} of a closed linear relation $R$ in a Krein space $\mathfrak K$ which, in the same order, are defined as follows (see \cite[p.~106]{CoSn}):
\begin{align*}
\rho(R)&= \Bigl\{
\lambda \in \mathbb C : \nul(R-\lambda I_{\mathfrak K}) = \{0\}, \
(R-\lambda I_{\mathfrak K})^{-1}\ \text{is a bounded operator on} \ \mathfrak{K}
\Bigr\},
\\
\sigp(R)
& =\Bigl\{ \lambda \in \mathbb C: \nul(R-\lambda I_{\mathfrak K}) \neq \{0\}\Bigr\},
\\
\sigc(R)
& = \Bigl\{
\lambda \in \mathbb C : \nul(R-\lambda I_{\mathfrak K}) = \{0\},\
\overline{\ran}(R-\lambda I_{\mathfrak K}) = \mathfrak K, \
\ran(R-\lambda I_{\mathfrak K})\neq \mathfrak K \Bigr\}, \\
& = \Bigl\{
\lambda \in \mathbb C : \nul(R-\lambda I_{\mathfrak K}) = \{0\},\
\overline{\ran}(R-\lambda I_{\mathfrak K})=\mathfrak K, \
\lambda \not \in \rho(R)
\Bigr\},
\\
\sigr(R)
&= \Bigl\{
\lambda \in \mathbb C : \nul(R-\lambda I_{\mathfrak K}) = \{0\},\ \overline{\ran}(R -\lambda I_{\mathfrak K})\neq \mathfrak{K} \Bigr\}.
\end{align*}
We include $\infty$ in the sets $\rho(R)$, $\sigp(R)$, $\sigc(R)$ and $\sigr(R)$ if and only if \(0\) belongs to $\rho(R^{-1})$, $\sigp(R^{-1})$, $\sigc(R^{-1})$, and $\sigr(R^{-1})$, respectively. We use the same notation for these extended sets. Thus
\begin{equation}\label{C4sets}
\overline{\mathbb{C}} = \rho(R) \cup \sigp(R) \cup \sigc(R) \cup\sigr(R), \quad \text{disjoint union.}
\end{equation}
For $\lambda \in \rho(R)$ the resolvent operator $(R-\lambda I_{\mathfrak K})^{-1}$ is a bounded operator on $\mathfrak K$. If $\infty \in \rho(R)$, then the resolvent operator of $R$ at $\infty$ is by definition the resolvent operator of $R^{-1}$ at $0$: $(R^{-1}-0 I_{\mathfrak K})^{-1} = R$ and $R$ is a bounded operator on $\mathfrak K$.

Furthermore, \(\infty \in \sigc(R)\) if and only if \(R\) is a densely defined operator with \(\dom R \subset \mathfrak{K}\), a proper subset.

The points in $\rho(R)$ are called \emph{regular points} of $R$. The set $\sigma(R):= \overline{\mathbb C}\setminus \rho(R)$ is called the \emph{spectrum} of $R$ and its elements  are called \emph{spectral points} of $R$. The points in $\sigp(R)$ are called \emph{eigenvalues} of $R$. Hence $\lambda \in \mathbb C$ is an eigenvalue of $R$ if there is a nonzero vector $h_0 \in \mathfrak K$ such  that $\{h_0,\lambda  h_0\} \in R$; $ h_0$ is called an \emph{eigenvector of $R$ corresponding to  $\lambda$}. A sequence
\begin{equation} \label{eqJcA}
h_0, h_1, \ldots, h_n \in \mathfrak{K}
\end{equation}
is a \emph{Jordan chain at the eigenvalue $\lambda$ of $R$} if
\[
h_{-1} = 0, \quad h_0 \neq 0, \quad  \bigl\{h_k, \lambda h_k+ h_{k-1} \bigr\} \in R \quad \text{for all} \quad k \in \{0,1,\ldots,n\};
\]
here $h_0$ is an eigenvector of $R$ at $\lambda$ and the vectors $h_1, \ldots, h_n$ are called \emph{generalized eigenvectors of $R$}. The number $n+1$ is the \emph{maximal length of the Jordan chain} \eqref{eqJcA} if and only if $h_n \not\in \ran\bigl(R - \lambda I_{\mathfrak K}\bigr)$. The point $\infty$ is an eigenvalue of $R$ if there is a nonzero vector $h_0 \in \mathfrak K$ such that $\{0, h_0\} \in R$; $ h_0$ is called an \emph{eigenvector of $R$ corresponding to $\infty$}. The sequence in  \eqref{eqJcA}
is a \emph{Jordan chain at the eigenvalue $\infty$ of $R$} if
\[
h_{-1} = 0, \quad h_0 \neq 0, \quad  \{h_{k-1}, h_k\} \in R \quad \text{for all} \quad k \in \{0,1,\ldots,n\};
\]
here $h_0$ is an eigenvector of $R$ at $\infty$ and the vectors $h_1, \ldots, h_n$ are called \emph{generalized eigenvectors of $R$}. The number $n+1$ is the \emph{maximal length of the Jordan chain} \eqref{eqJcA} if and only if $h_n \not\in \dom R $. Clearly, $\infty$ is an eigenvalue of $R$ and has a Jordan chain associated with it if and only if $0$ is an eigenvalue of $R^{-1}$ with the same Jordan chain.

We continue with a simple lemma.

\begin{lemma} \label{lemR}
Assume $R$ is a self-adjoint linear relation in a Krein space $\mathfrak{K}$. Then
\begin{equation} \label{eq-sr}
\sigr(R)\subseteq \mathbb{C}\setminus \mathbb{R},
\end{equation}
and the following equalities are equivalent:
\begin{enumerate}
\renewcommand*\theenumi{\roman{enumi}}
\renewcommand*\labelenumi{\rm{(\theenumi)}}
\setlength{\itemsep}{1pt}
\item \label{pr-rs-i11}
$\sigp(R) =\sigp(R)^*$.
\item \label{pr-rs-i12}
$\sigr(R)=\emptyset$.
\item \label{pr-rs-i13}
$\overline{\mathbb C}=\rho(R) \cup \sigp(R)\cup \sigc(R)$, \ disjoint union.
\end{enumerate}
Moreover, these equalities are equivalent to the corresponding ones with \(R^{-1}\) in place of \(R\).
\end{lemma}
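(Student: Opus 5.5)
The plan rests on one standard fact about a self-adjoint relation \(R=R^*\) in the Krein space \(\mathfrak K\): for \(\lambda\in\mathbb C\),
\[
\bigl(\ran(R-\lambda I_{\mathfrak K})\bigr)^{[\perp]}=\nul(R^*-\lambda^* I_{\mathfrak K})=\nul(R-\lambda^* I_{\mathfrak K}).
\]
This is verified directly from the definition of the adjoint via the indefinite inner product. Since the Krein space inner product is nondegenerate, it follows that
\[
\overline{\ran}(R-\lambda I_{\mathfrak K})\neq\mathfrak K \iff \nul(R-\lambda^* I_{\mathfrak K})\neq\{0\}.
\]

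\textbf{Step 1: the characterization of \(\sigr(R)\).} Combining the equivalence above with the definitions gives
\[
\lambda\in\sigr(R)\iff \lambda\notin\sigp(R)\ \text{and}\ \lambda^*\in\sigp(R), \qquad \lambda\in\mathbb C.
\]
For \(\lambda=\infty\), apply the same reasoning to \(R^{-1}\), which is again self-adjoint since \((R^{-1})^*=(R^*)^{-1}\), at the point \(0=0^*\). In all cases we obtain
\[
\sigr(R)=\sigp(R)^*\setminus\sigp(R)\quad\text{in }\overline{\mathbb C}.
\]

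\textbf{Step 2: inclusion \eqref{eq-sr}.} If \(\lambda\in\mathbb R\cup\{\infty\}\), then \(\lambda^*=\lambda\). So \(\lambda\) cannot lie in \(\sigp(R)^*\) without also lying in \(\sigp(R)\). Hence \(\sigr(R)\) contains no real point and not \(\infty\), which gives \eqref{eq-sr}.

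\textbf{Step 3: the equivalences.} First, (\ref{pr-rs-i12}) is equivalent to (\ref{pr-rs-i13}) by the disjoint decomposition \eqref{C4sets}.

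Next, (\ref{pr-rs-i11}) implies (\ref{pr-rs-i12}) immediately from the formula in Step 1.

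Conversely, assume \(\sigr(R)=\emptyset\). By Step 1 this means \(\sigp(R)^*\subseteq\sigp(R)\). Applying the involution \(*\) gives \(\sigp(R)\subseteq\sigp(R)^*\), and so the two sets are equal.

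\textbf{Step 4: the statement for \(R^{-1}\).} We have \(\sigp(R^{-1})=\{1/\lambda:\lambda\in\sigp(R)\}\), and similarly for the residual spectrum. This follows from \(\nul(R^{-1}-\lambda^{-1}I_{\mathfrak K})=\nul(R-\lambda I_{\mathfrak K})\) for \(\lambda\neq0,\infty\), together with the definitions at the points \(0\) and \(\infty\). Since \((1/\lambda)^*=1/\lambda^*\), each of the three conditions for \(R\) transforms into the corresponding condition for \(R^{-1}\).

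\textbf{Main obstacle.} The only delicate point is the careful handling of \(\infty\) in Steps 1 and 4. For the range closure at \(\infty\) one must use \(R^{-1}\) and the point \(0\), rather than attempting to work with \(R\) directly.
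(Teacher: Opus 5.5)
Your proof is correct, but it is organized differently from the paper's. You turn the single relation $\bigl(\ran(R-\lambda I_{\mathfrak K})\bigr)^{[\perp]}=\nul(R-\lambda^* I_{\mathfrak K})$ into the exact identity $\sigr(R)=\sigp(R)^*\setminus\sigp(R)$ on $\overline{\mathbb C}$, treating $\infty$ via $R^{-1}$ at $0$. From this identity everything follows at once:
\begin{itemize}
\item self-conjugate points (real points and $\infty$) cannot lie in $\sigp(R)^*\setminus\sigp(R)$, which gives \eqref{eq-sr};
\item (i)$\Leftrightarrow$(ii) is immediate;
\item (ii)$\Leftrightarrow$(iii) is just the partition \eqref{C4sets}.
\end{itemize}

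The paper instead proves \eqref{eq-sr} piecewise. It first shows $\infty\notin\sigr(R)$, because a nonzero $f$ orthogonal to $\dom R$ gives $\{0,f\}\in R^*=R$. It then gets $0\notin\sigr(R)$ by passing to $R^{-1}$, and real $\lambda$ by replacing $R$ with $R-\lambda I_{\mathfrak K}$. It then runs the cycle (i)$\Rightarrow$(ii)$\Rightarrow$(iii)$\Rightarrow$(i). There (i)$\Rightarrow$(ii) is essentially one half of your identity, and (iii)$\Rightarrow$(i) uses that $\rho(R)$ and $\sigc(R)$ are symmetric with respect to the real axis.

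Your route is shorter and yields a sharper statement. It also never needs the conjugation symmetry of $\rho(R)$, which requires its own argument, e.g.\ that $(R-\lambda^*I_{\mathfrak K})^{-1}=\bigl((R-\lambda I_{\mathfrak K})^{-1}\bigr)^*$ is bounded. The paper's route has that symmetry of $\rho(R)$ and $\sigc(R)$ as a by-product. The passage to $R^{-1}$ is the same in both proofs.

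One point of wording: nondegeneracy of the inner product alone only gives that a dense range has trivial orthogonal companion. The direction you actually use, $\overline{\ran}(R-\lambda I_{\mathfrak K})\neq\mathfrak K\Rightarrow\nul(R-\lambda^* I_{\mathfrak K})\neq\{0\}$, needs the Krein space structure. If $x\neq0$ is orthogonal to the range in the Hilbert inner product associated with a fundamental symmetry $J$, then $Jx$ is Krein-orthogonal to it. This is the same fact underlying the paper's \eqref{wellknown}.
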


\begin{proof}
We first show $\infty \not\in \sigr(R)$. If $\infty \in \sigr(R)$, then $0\in \sigr(R^{-1})$ which implies
\[
\mathfrak K \neq \overline{\ran}\, R^{-1}=\overline{\dom}\, R,
\]
hence there is a nonzero vector $f \in (\dom\,R)^\perp$. It follows that $\{0, f\} \in R^*=R$. Hence $0 \in \sigp(R^{-1}) \cap \sigr(R^{-1})=\emptyset$. This contradiction implies that $\infty \not\in \sigr(R)$.

It follows that $\infty \not\in \sigr(R^{-1})$, because with $R$ also $R^{-1}$ is self-adjoint. Hence $0\notin \sigr(R)$. Let $\lambda\in\mathbb{R}$. Replacing $R$ by the self-adjoint relation $R-\lambda I_{\mathfrak{K}}$ we find that $0\notin \sigr(R-\lambda I_{\mathfrak{K}})$; hence, equivalently, $\lambda \notin \sigr(R)$. We conclude that $\sigr(R)\cap(\mathbb{R}\cup\{\infty\})=\emptyset$,  that is, \eqref{eq-sr} holds.

In what follows, we use the fact that $\nul(R-\lambda I_{\mathfrak{K}})$ is closed in $\mathfrak{K}$ and apply the well-known identities: For all $\lambda \in \mathbb{C}$ we have 
\begin{equation}\label{wellknown}
\nul(R-\lambda I_{\mathfrak{K}})
= \bigl(\ran(R-\lambda^* I_{\mathfrak{K}})\bigr)^{\perp}\mkern-2mu, \quad \overline{\ran}(R-\lambda I_{\mathfrak{K}})
= \bigl(\nul(R-\lambda^* I_{\mathfrak{K}})\bigr)^{\perp}\mkern-2mu.
\end{equation}

Assume (\ref{pr-rs-i11}). To prove (\ref{pr-rs-i11})$\Rightarrow$(\ref{pr-rs-i12}), we prove its contrapositive. Assume that $\sigr(R) \neq \emptyset$ and $\lambda \in \sigr(R)$. Then, by what we just proved, $\lambda \in \mathbb{C}\setminus\mathbb{R}$, and $\lambda \notin  \sigp(R)$. By the second equality in \eqref{wellknown}, $\nul(R-\lambda^* I_{\mathfrak{K}}) = \bigl(\overline{\ran}(R-\lambda I_{\mathfrak{K}})\bigr)^{\perp}\neq \{0\}$. Hence $\lambda^* \in \sigp(R) = \sigp(R)^*$ whereas $\lambda \notin  \sigp(R)$, that is, (\ref{pr-rs-i11}) is not true.

Assume (\ref{pr-rs-i12}). Then (\ref{pr-rs-i13}) follows from the partition \eqref{C4sets}.

Assume (\ref{pr-rs-i13}). Then (\ref{pr-rs-i11}) follows from the fact that the sets $\rho(R)$ and $\sigc(R)$ are symmetric with respect to the real axis. The latter follows from the former, the equalities in \eqref{wellknown} and $\infty=\infty^*$.

The last statement follows from the fact that (\ref{pr-rs-i11}) holds if and only if \(\sigp(R^{-1}) =\sigp(R^{-1})^*\).
\end{proof}

\begin{remark} \label{re-defop}
The equivalent conditions in Lemma~\ref{lemR} play an essential role in the proofs of spectral equivalence in Section~\ref{speceq}. A particularly important class of operators satisfying (\ref{pr-rs-i11})-(\ref{pr-rs-i13})  in Lemma~\ref{lemR} consists of definitizable self-adjoint operators in Krein spaces. By definition their resolvent sets are nonempty. This notion was introduced by Heinz Langer in~\cite{Langer1965} and has been studied extensively in subsequent work; see, for example,~\cite{Langer1982}. It was extended to
self-adjoint linear relations in~\cite{DS2}. Moreover, if $R$ is a definitizable self-adjoint relation in $\mathfrak K$, then by \cite[Proposition~5.2]{DS2} the nonreal spectrum of $R$ is finite, symmetric with respect to the real axis, and consists only of eigenvalues. Thus (\ref{pr-rs-i11}) and, consequently,  (\ref{pr-rs-i12}) and (\ref{pr-rs-i13}) in Lemma~\ref{lemR} hold for \(R\).
\end{remark}

\subsection{The \texorpdfstring{$\BEP\mkern-2mu\bigl(S, \mathsf{b}, \mathcal{P}(z)\bigr)$}{BEP(S,b,P(z))}, its spectrum and Jordan chains} \label{bep}

Let $S$ be a closed symmetric linear relation in a Krein space
$\bigl(\mathfrak H,\kip_{\mathfrak H}\bigr)$ with equal defect numbers
$d\in\mathbb N$ and adjoint $S^*$. Denote by $\mathsf b: S^* \rightarrow \mathbb C^{2d}$ a boundary mapping for $S$. By definition it is a linear and surjective mapping with kernel $\nul \mathsf b=S$. With $\mathsf b$ there is associated
the Lagrange identity
\[
\iu \big( [f,k ]_\mathfrak{H}- [ g, h ]_\mathfrak{H} \big) =
 \mathsf{b}(\{h,k\})^*\mathsf{Q}\mathsf{b}(\{f,g\}) \quad 
  \text{for all} \quad \{f,g\}, \{h,k\}\in S^*,
 \]
in which $\mathsf Q$ is an invertible self-adjoint $2d\times 2d$ matrix with $d$
positive and $d$ negative eigenvalues, called the \emph{Gram matrix for} $\mathsf b$.

In this subsection we consider the following Boundary Eigenvalue Problem associated with $S$, $\mathsf b$ and $\mathcal{P}(z) \in \mathbb{P}_{\mathsf{Q}}$.

\medskip

\noindent \textbf{Boundary Eigenvalue Problem} $\BEP\mkern-2mu\bigl(S, \mathsf b, \mathcal{P}(z)\bigr)$\textbf{.} \emph{For all $\lambda \in \mathbb C$ and  all $h \in \mathfrak H$ determine the existence and the uniqueness of a solution $\{f,g\} \in \mathfrak H^2$ (or simply $f \in \mathfrak H$) of the system}
\begin{equation} \label{BEPsys2}
\{f,g\} \in S^*, \quad g-\lambda f= h \quad \text{and}   \quad
\mathcal{P}(\lambda)\mathsf{b}(\{f,g\}) = 0.
\end{equation}
{\em For $\lambda=\infty$ and all $h \in \mathfrak H$ determine the existence and uniqueness of a solution $\{h, f\} \in \mathfrak H^2$ (or simply $f \in \mathfrak H$) of the system}
\begin{equation} \label{BEPsys2inf}
\{h, f\} \in S^* \quad \text{and}   \quad
\mathsf P_{\mkern-2mu\infty}\mathsf{b}(\{h, f\}) = 0.
\end{equation}

\medskip

When the context is clear we abbreviate $\BEP\mkern-2mu\bigl(S, \mathsf b, \mathcal{P}(z)\bigr)$ to $\BEP$.  With the $\BEP$ we associate the family $ T_{S,\mathsf{b},\mathcal{P}} (z)$ of closed linear extensions of $S$ in $\mathfrak H$ by
\begin{equation}\label{preShtraus}
T_{S,\mathsf{b},\mathcal{P}}(z)
= \Big\{ \{f,g\}\in S^* \, :\, \mathcal{P}(z)\mathsf b(\{f,g\})=0 \Big\}, \quad z \in \overline{\mathbb C}.
\end{equation}
As with $\BEP$, when the context is clear we abbreviate $T_{S,\mathsf{b},\mathcal{P}}(z)$ to $T_{\mathcal{P}}(z)$.

\begin{lemma} \label{le-PeSSts}
Let \(\mathcal{P}(z), \mathcal{S}(z) \in \mathbb{P}_{\mathsf{Q}}\). Then \(T_{S,\mathsf{b},\mathcal{P}}(z) = T_{S,\mathsf{b},\mathcal{S}}(z)\) for all \(z \in \mathbb{C}\) if and only if \(\mathcal{P}(z) \sim \mathcal{S}(z)\).
\end{lemma}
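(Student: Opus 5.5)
The ``if'' direction is immediate. If $\mathcal S(z)=\mathcal W(z)\mathcal P(z)$ with $\mathcal W(z)$ unimodular, then $\mathcal W(z)$ is invertible for every $z\in\mathbb C$. Hence $\mathcal S(z)\mathsf b(\{f,g\})=0$ if and only if $\mathcal P(z)\mathsf b(\{f,g\})=0$, and the definition \eqref{preShtraus} gives $T_{S,\mathsf b,\mathcal P}(z)=T_{S,\mathsf b,\mathcal S}(z)$ for all $z\in\mathbb C$.

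For the ``only if'' direction we first pass from relations to null spaces. Since $\mathsf b:S^*\to\mathbb C^{2d}$ is surjective with kernel $S$, equality $T_{\mathcal P}(z)=T_{\mathcal S}(z)$ is equivalent to $\nul\mathcal P(z)=\nul\mathcal S(z)$. Indeed, $\mathsf b$ maps $T_{\mathcal P}(z)$ onto $\nul\mathcal P(z)$, and $T_{\mathcal P}(z)=\mathsf b^{-1}(\nul\mathcal P(z))$. Both matrices have full row rank $d$ by (\ref{i-t-m-b}), so equal null spaces give equal row spaces. Hence for each $z\in\mathbb C$ there is a unique invertible $d\times d$ matrix $\mathcal W(z)$ with $\mathcal S(z)=\mathcal W(z)\mathcal P(z)$. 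Explicitly,
\[
\mathcal W(z)=\mathcal S(z)\mathcal P(z)^*\bigl(\mathcal P(z)\mathcal P(z)^*\bigr)^{-1},
\]
and by symmetry $\mathcal W(z)^{-1}=\mathcal P(z)\mathcal S(z)^*\bigl(\mathcal S(z)\mathcal S(z)^*\bigr)^{-1}$. It remains to show that $\mathcal W(z)$ is a matrix polynomial with constant nonzero determinant.

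The key step, and the expected obstacle, is polynomiality of $\mathcal W(z)$. The formula above only gives a function that is real-analytic in $z$, not holomorphic, because of the adjoints. I would instead work row by row, using that $\mathcal P(z)$ is left prime: by (\ref{i-t-m-b}) it has full rank $d$ at every $z\in\mathbb C$. Hence, by the Smith form, $\mathcal P(z)$ has a polynomial right inverse, that is, there is $\mathcal G(z)\in\mathbb C^{2d\times d}[z]$ with $\mathcal P(z)\mathcal G(z)=\mathsf I_d$; equivalently, $\mathcal P(z)$ can be completed to a unimodular $2d\times 2d$ matrix polynomial. Then $\mathcal S(z)\mathcal G(z)=\mathcal W(z)\mathcal P(z)\mathcal G(z)=\mathcal W(z)$ for every $z$, so $\mathcal W(z)=\mathcal S(z)\mathcal G(z)$ is a polynomial. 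By symmetry, $\mathcal W(z)^{-1}=\mathcal P(z)\mathcal H(z)$, with $\mathcal H(z)$ a polynomial right inverse of $\mathcal S(z)$, is also a polynomial. Then $\det\mathcal W(z)$ and $1/\det\mathcal W(z)$ are both polynomials, so $\det\mathcal W(z)$ is a nonzero constant and $\mathcal W(z)$ is unimodular.

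Thus $\mathcal P(z)\sim\mathcal S(z)$ by Definition~\ref{def-bbPeq}. Conditions (\ref{i-t-m-c}) and (\ref{i-t-m-d}) are not needed here, since both polynomials are already assumed to be in $\mathbb P_{\mathsf Q}$; only (\ref{i-t-m-b}) and the surjectivity of $\mathsf b$ enter.
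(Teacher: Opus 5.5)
Your proof is correct and follows the same route as the paper. Surjectivity of \(\mathsf b\) turns \(T_{\mathcal P}(z)=T_{\mathcal S}(z)\) into \(\nul\mathcal P(z)=\nul\mathcal S(z)\), and unimodular equivalence then follows from the full-rank condition (\ref{i-t-m-b}) alone. The only difference is that the paper cites Lemma~\ref{le-eqnuls} for this last step, whereas you prove it directly with polynomial right inverses obtained from the Smith form; that self-contained argument is valid.
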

\begin{proof}
The ``if'' part of the proof follows from Definition~\ref{def-bbPeq} and the fact that \(\mathcal{W}(z)\) is unimodular.

For the ``only if'' part, assume that \(T_{\mathcal{P}}(z) = T_{\mathcal{S}}(z)\) for all \(z \in \mathbb{C}\).
Since \(\mathsf{b}: S^{*} \to \mathbb{C}^{2d}\) is a surjection, it follows from this assumption that for each \(z \in \mathbb{C}\) we have \(\nul \mathcal{P}(z) = \nul  \mathcal{S}(z)\). By Lemma~\ref{le-eqnuls} there exists a unimodular \(d\times d\) polynomial \(\mathcal{W}(z)\) such that \(\mathcal{W}(z) \mathcal{S}(z) \equiv \mathcal{P}(z)\); that is, \(\mathcal{P}(z) \sim \mathcal{S}(z)\).
\end{proof}

\begin{definition} \label{def-sp-BEP}
Let $\BEP =\BEP\mkern-2mu\bigl(S,\mathsf b,\mathcal P(z)\bigr)$ and let
$\lambda\in\overline{\mathbb C}$.
\begin{enumerate}
\renewcommand*\theenumi{\alph{enumi}}
\renewcommand*\labelenumi{\rm{(\theenumi)}}
\setlength{\itemsep}{1pt}

\item \label{def-sp-BEP-i1}
If $\lambda\in\rho\bigl(T_{S,\mathsf b,\mathcal P}(\lambda)\bigr)$, then $\lambda$ is said to belong to the \emph{resolvent set} $\rho(\BEP)$ of the $\BEP$ and it is called a regular point of it. In this case, the resolvent operator of the $\BEP$ at $\lambda$ is, by definition, the resolvent operator of $T_{S,\mathsf b,\mathcal P}(\lambda)$ at $\lambda$.

\item \label{def-sp-BEP-i2}
If $\lambda\in\sigc\bigl(T_{S,\mathsf b,\mathcal P}(\lambda)\bigr)$, then $\lambda$ is said to belong to the \emph{continuous spectrum} $\sigc(\BEP)$ of the $\BEP$.

\item \label{def-sp-BEP-i3}
If $\lambda\in\sigp\bigl(T_{S,\mathsf b,\mathcal P}(\lambda)\bigr)$, then $\lambda$ is said to belong to the \emph{point spectrum} $\sigp(\BEP)$ of the $\BEP$, and is called an \emph{eigenvalue} of it. The eigenelements of the $\BEP$ at $\lambda$ are, by definition, the eigenelements of $T_{S,\mathsf b,\mathcal P}(\lambda)$ corresponding to $\lambda$.

\item \label{def-sp-BEP-i4}
If $\lambda\in\sigr\bigl(T_{S,\mathsf b,\mathcal P}(\lambda)\bigr)$, then $\lambda$ is said to belong to the \emph{residual spectrum} $\sigr(\BEP)$ of the $\BEP$.

\end{enumerate}
\end{definition}

\begin{corollary} \label{co-TPadjoint}
If $\mathcal{P}(z)\in \mathbb{P}_{\mathsf Q}$, then in
$(\mathfrak H, \kip_{\mathfrak H})$
\begin{equation}\label{eq-TPadjoint}
T_{S,\mathsf b,\mathcal P}(z)^{*}=T_{S,\mathsf b,\mathcal P}(z^*) 
\quad  \text{for all} \quad z\in \overline{\mathbb C}.
\end{equation}
In particular, for $z \in \mathbb{R}\cup\{\infty\}$ the relation $T_{S,\mathsf b,\mathcal P}(z)$ is self-adjoint in $\mathfrak H$, and
\[
\sigr(\mathrm{BEP})\subseteq \mathbb C\setminus \mathbb R.
\]
\end{corollary}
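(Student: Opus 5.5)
The plan is to reduce the statement to Theorem~\ref{list}, applied pointwise in $z$ with the constant matrix $\mathsf A=\mathcal P(z)$ (or $\mathsf P_\infty$ when $z=\infty$).

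First fix $z\in\mathbb C$. By Definition~\ref{def-bbP}(\ref{i-t-m-b}), $\mathcal P(z)$ is a $d\times 2d$ matrix of full rank $d$. Hence Theorem~\ref{list}(\ref{list-iA}), with $\delta=2d$ and $\tau=d$ and the full rank matrix $\mathsf A=\mathcal P(z)$, shows that $T_{\mathcal P}(z)$ is a closed extension of $S$ with $S\subset T_{\mathcal P}(z)\subset S^*$ and $\dim T_{\mathcal P}(z)/S=d$. Property (\ref{i-t-m-a}) then says precisely that $\mathcal P(z^*)\mathsf Q^{-1}\mathcal P(z)^*=\mathsf 0$ (replace $z$ by $z^*$ in (\ref{i-t-m-a})). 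Also $\mathcal P(z^*)$ has full rank $d=\tau$. Theorem~\ref{list}(\ref{list-iB})(\ref{list-iB-i2}) applies with $\mathsf C=\mathcal P(z^*)$ and gives $T_{\mathcal P}(z)^*=\{\{f,g\}\in S^*:\mathcal P(z^*)\mathsf b(\{f,g\})=0\}=T_{\mathcal P}(z^*)$. This proves \eqref{eq-TPadjoint} on $\mathbb C$.

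For $z=\infty$, take $\mathsf A=\mathsf P_\infty$, which has rank $d$ by (\ref{i-t-m-c}). We must check that $\mathsf P_\infty\mathsf Q^{-1}\mathsf P_\infty^*=\mathsf 0$. Multiply (\ref{i-t-m-a}) on the left by $\mathcal D(z)^{-1}=\diag(z^{-\mu_1},\ldots,z^{-\mu_d})$ and on the right by $(\mathcal D(z^*)^{-1})^*$. Letting $z\to\infty$ along the reals, so that $z^*=z$, gives the claim, since $\mathcal D(z)^{-1}\mathcal P(z)\to\mathsf P_\infty$ by \eqref{Sinfinity}. An alternative route is to use the row-reversal: $\mathsf P_\infty=\mathcal R(0)$ with $\mathcal R(z)\in\mathbb P_{\mathsf Q}$ by Lemma~\ref{reverse}, and apply (\ref{i-t-m-a}) to $\mathcal R$ at $z=0$. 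Theorem~\ref{list}(\ref{list-iB})(\ref{list-iB-i3}) then gives $T_{\mathcal P}(\infty)=T_{\mathcal P}(\infty)^*$, which is \eqref{eq-TPadjoint} at $\infty=\infty^*$.

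Self-adjointness for real $z$ is immediate from \eqref{eq-TPadjoint}, since then $z=z^*$. For the residual spectrum, let $\lambda\in\mathbb R\cup\{\infty\}$. By definition, $\lambda\in\sigr(\BEP)$ means $\lambda\in\sigr(T_{\mathcal P}(\lambda))$. Since $T_{\mathcal P}(\lambda)$ is self-adjoint, \eqref{eq-sr} of Lemma~\ref{lemR} excludes this. Hence $\sigr(\BEP)\subseteq\mathbb C\setminus\mathbb R$. The only step needing care is the limit argument at $\infty$, and the row-reversal route via Lemma~\ref{reverse}(\ref{reverse-i1}) disposes of it cleanly.
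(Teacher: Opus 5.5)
Your proof is correct and follows essentially the same route as the paper. It applies Theorem~\ref{list}(\ref{list-iA}) and (\ref{list-iB}) pointwise with $\mathsf A=\mathcal P(z)$ and the complementary matrix $\mathcal P(z^*)$, justified by property (\ref{i-t-m-a}) with $z$ replaced by $z^*$, and then uses \eqref{eq-sr} of Lemma~\ref{lemR} for the residual spectrum. Your explicit check that $\mathsf P_\infty\mathsf Q^{-1}\mathsf P_\infty^*=\mathsf 0$ at $z=\infty$, via $\mathcal R(0)=\mathsf P_\infty$ and Lemma~\ref{reverse}, fills in a detail that the paper leaves implicit.
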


\begin{proof}
The equality \eqref{eq-TPadjoint} follows from Theorem~\ref{list}(\ref{list-iA}) and~(\ref{list-iB}) with $T=T_{S,\mathsf b,\mathcal P}(z)$, $\mathsf{A} = \mathcal{P}(z)$, and $\mathsf{B} = \mathcal{P}(z^*)$, and from Definition~\ref{def-bbP} in which $z$ is replaced by $z^*$. It implies that $T_{S,\mathsf b,\mathcal P}(\lambda)$ is self-adjoint for $\lambda\in \mathbb R\cup\{\infty\}$. This in turn, together with \eqref{eq-sr} in Lemma~\ref{lemR} and Definition~\ref{def-sp-BEP}(\ref{def-sp-BEP-i4}), implies the inclusion.
\end{proof}

\smallskip

In the preceding definition the spectral sets of $\BEP$ are defined in terms of the spectral notions of the linear relation $T_{\mathcal P}(\lambda) = T_{S,\mathsf b,\mathcal P}(\lambda)$ at the same parameter value $\lambda$.  This is natural, since solvability of \eqref{BEPsys2} for a given $h$ is precisely the
solvability of $\bigl(T_{\mathcal P}(\lambda)-\lambda I_{\mathfrak H}\bigr)f=h$.

For Jordan chains, however, the situation is subtler. The relation $T_{\mathcal P}(\lambda)$ captures the boundary condition only at the fixed point $\lambda$, while, it turns out, the algebraic structure of the corresponding generalized eigenspace for the BEP depends on the derivatives of $\mathcal P(z)$ at $\lambda$, and the generalized eigenvectors need not belong to $\dom T_{\mathcal P}(\lambda)$.

\begin{definition} \label{def-Jc} 
For $\lambda \in \mathbb{C}$ we say that a sequence
\begin{equation} \label{eqJc}
f_0, f_1, \ldots, f_n \in \mathfrak{H}
\end{equation}
forms a \emph{Jordan chain} for the $\BEP$ at the eigenvalue $\lambda$ if $f_0\neq 0$ and with $f_{-1}=0$  the following relations hold
for all $k \in \{0,1,\ldots,n\}$:
\begin{equation*}
\{f_k, \lambda f_k + f_{k-1}\} \in S^*  \quad \text{and} \quad
\sum_{j=0}^k
\frac{1}{j!}\mathcal{P}^{(j)}\mkern-1mu(\lambda) \mkern1mu \mathsf{b}\mkern-1mu\bigl(\bigl\{f_{k-j}, \lambda f_{k-j}+f_{k-j-1}\bigr\}\bigr) = 0.
\end{equation*}
The vector $f_0$ in \eqref{eqJc} is an eigenvector and the vectors $f_1, \ldots, f_n$ are called \emph{generalized eigenvectors} of  the $\BEP$ at $\lambda$.
The number $n+1$ is called the \emph{maximal length of the Jordan
chain} if the sequence \eqref{eqJc} cannot be extended to a longer Jordan chain, that is, if $f_n \not\in \ran(S^*-\lambda I_{\mathfrak H})$ or, if $\{g,f_n\} \in S^*-\lambda I_{\mathfrak H}$ for some $g \in \mathfrak H$, then
\begin{equation*}
\mathcal{P}(\lambda)\mathsf{b}\bigl(\{g, \lambda g + f_n\}\bigr)
+ \sum_{j=1}^{n+1} \frac{1}{j!}\mathcal{P}^{(j)}\mkern-1mu (\lambda) \mkern1mu
\mathsf{b}\mkern-1mu\bigl(\bigl\{f_{n+1-j}, \lambda f_{n+1-j}+f_{n-j}\bigr\}\bigr) \neq 0.
\end{equation*}
\end{definition}

\smallskip

Next we define Jordan chains of the $\BEP\mkern-2mu\bigl(S,\mathsf b,\mathcal P(z)\bigr)$ at $\infty$. Since derivatives of $\mathcal P(z)$ at $\infty$ are not directly available, we will first show that a related boundary eigenvalue problem which involves \(S^{-1}\) and the row-reversal \(\mathcal{R}(z)\) of \(\mathcal{P}(z)\) is in some sense inverse of the boundary problem $\BEP\mkern-2mu\bigl(S,\mathsf{b},\mathcal{P}(z)\bigr)$.

It is convenient to introduce the involution  \(\iota : \mathfrak{H}^2 \to \mathfrak{H}^2\) defined by \(\iota\{f,g\} = \{g,f\}\) for all \(\{f,g\} \in \mathfrak{H}^2\). Then the inverse of \(R\) is \(\iota R\).

If $\mathsf{b}\colon S^*\to \mathbb{C}^{2d}$ is a boundary mapping of the closed symmetric linear relation $S$ in a Krein space  $\mathfrak{H}$ and $\mathsf{b}$ has Gram matrix $\mathsf{Q}$, then $\mathsf{b}\mkern0.25mu\iota \colon (\iota S)^* = S^{-*}\to \mathbb{C}^{2d}$ is a boundary mapping of $\iota S$ with Gram matrix $-\mathsf{Q}$.

The following equalities hold for every \(\lambda \in \overline{\mathbb{C}}\):
\begin{align*}
\bigl( T_{S,\mathsf{b},\mathcal{P}}(\lambda) \bigr)^{-1}
& = \iota \bigl( T_{S,\mathsf{b},\mathcal{P}}(\lambda) \bigr)\\
& = \iota\mkern0.5mu\mathsf{b}^{-1}\bigl( \nul \mathcal{P}(\lambda) \bigr) \\
& = (\mathsf{b}\mkern0.25mu\iota)^{-1}\bigl( \nul \mathcal{P}(\lambda) \bigr) \\
& = (\mathsf{b}\mkern0.25mu\iota)^{-1}\bigl( \nul \mathcal{R}(1/\lambda) \bigr).
\end{align*}
Hence
\begin{equation} \label{eq-la1ola}
\bigl( T_{S,\mathsf{b},\mathcal{P}}(\lambda) \bigr)^{-1}
= T_{S^{^{-1}}\mkern-5mu, \mkern2mu\mathsf{b}\mkern1mu\iota,\mathcal{R} }(1/\lambda) \quad \text{for all} \quad \lambda \in \overline{\mathbb{C}}.
\end{equation}
By the spectral mapping theorem, the spectral nature of $\lambda\in\overline{\mathbb C}$ for $T_{S,\mathsf b,\mathcal P}(\lambda)$ coincides with the spectral nature of $1/\lambda$ for $\bigl(T_{S,\mathsf b,\mathcal P}(\lambda)\bigr)^{-1}$. Hence, by \eqref{eq-la1ola}, it also coincides with the spectral nature of $1/\lambda$ for \(T_{S^{^{-1}}\mkern-5mu, \mkern2mu\mathsf{b}\mkern1mu\iota, \mathcal{R} }(1/\lambda)\).

This relation motivates the next definition: Jordan chains at $\infty$ for the boundary eigenvalue problem $\BEP\mkern-2mu\bigl(S,\mathsf b,\mathcal P(z)\bigr)$ are defined as the Jordan chains at \(0\) for the related, in some sense inverse boundary eigenvalue problem \(\BEP\mkern-2mu\bigl(S^{-1}, \mathsf{b}\mkern1mu\iota, \mathcal{R}(z)\bigr)\).

\begin{definition}
We say that the sequence \eqref{eqJc} forms a \emph{Jordan chain for the {\rm BEP} at the eigenvalue $\infty$} if $f_0\neq 0$ and with $f_{-1}=0$ the following relations hold for all $k \in \{0,1,\ldots,n\}$:
\begin{equation*}
\{f_{k-1},  f_k \} \in S^*  \quad \text{and} \quad
\sum_{j=0}^k \frac{1}{j!}\mathcal R^{(j)}(0)
\mathsf{b}\bigl(\{f_{k-j-1},  f_{k-j}\}\bigr)  = 0,
\end{equation*}
where $\mathcal R(z)$ is the row-reversal polynomial of $\mathcal{P}(z) \in \mathbb{P}_{\mathsf{Q}}$.
In this case the vector $f_0$ in \eqref{eqJc} is an eigenvector and the vectors $f_1, \ldots, f_n$ are called \emph{generalized eigenvectors} of  the BEP at $\infty$.
The number $n+1$ is called the \emph{maximal length of the Jordan chain} if the sequence \eqref{eqJc} cannot be extended to a longer Jordan chain, that is, if
$f_n \not\in \dom S^*$
or, if $\{f_n,g\} \in S^*$ for some $g \in \mathfrak H$, then
\begin{equation*}
\mathcal R(0)\mathsf{b}\bigl(\{f_n,g\}\bigr)
+ \sum_{j=1}^{n+1} \frac{1}{j!}\mathcal R^{(j)}(0)
\mathsf{b}\bigl(\{f_{n-j}, f_{n+1-j}\}\bigr) \neq 0.
\end{equation*}
\end{definition}

The sets $\rho(\BEP)$, $\sigc(\BEP)$ and $\sigp(\BEP)$ stay the same if $\mathcal{P}(z)$ in the boundary eigenvalue problem $\BEP\mkern-2mu\bigl(S, \mathsf b, \mathcal{P}(z)\bigr)$ is replaced by an equivalent polynomial in $\mathbb{P}_{\mathsf{Q}}$. The following lemma shows that the Jordan chains of the boundary eigenvalue problems with equivalent matrix polynomials also remain the same.

\begin{lemma} \label{Chains}
Assume that \(\mathcal P(z),\mathcal S(z)\in\mathbb P_{\mathsf Q}\) are equivalent, and let \(\lambda\in\overline{\mathbb C}\). Then the sequence
$f_0, f_1, \ldots, f_n \in \mathfrak H$ forms a (maximal) Jordan chain for the $\BEP\mkern-2mu\bigl(S, \mathsf{b}, \mathcal{P}(z)\bigr)$ at $\lambda$ if and only if it forms a (maximal) Jordan chain for the $\BEP\mkern-2mu\bigl(S, \mathsf b, \mathcal{S}(z)\bigr)$ at $\lambda$.
\end{lemma}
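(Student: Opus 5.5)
By symmetry of $\sim$ (Lemma~\ref{4statements}(\ref{4statements-i1})) it suffices to prove one implication, say from $\mathcal P(z)$ to $\mathcal S(z)$. We treat $\lambda\in\mathbb C$ first and then reduce $\lambda=\infty$ to the finite case $\lambda=0$.

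\emph{Finite $\lambda$.} Write $\mathcal S(z)=\mathcal W(z)\mathcal P(z)$ with $\mathcal W(z)$ unimodular. For a sequence $f_0,\dots,f_n$ with $\{f_k,\lambda f_k+f_{k-1}\}\in S^*$, set $x_k=\mathsf b(\{f_k,\lambda f_k+f_{k-1}\})\in\mathbb C^{2d}$ and $x(z)=\sum_{k=0}^n x_k(z-\lambda)^k$. The Jordan chain conditions for $\mathcal P(z)$ say exactly that the Taylor coefficients of order $0,\dots,n$ at $\lambda$ of $\mathcal P(z)x(z)$ vanish, that is,
\[
\mathcal P(z)x(z)=O\bigl((z-\lambda)^{n+1}\bigr).
\]
By the Leibniz rule the coefficient of order $k$ of $\mathcal S(z)x(z)=\mathcal W(z)\bigl(\mathcal P(z)x(z)\bigr)$ equals $\sum_{i}\frac1{i!}\mathcal W^{(i)}(\lambda)\,c_{k-i}$, where $c_j$ is the coefficient of order $j$ of $\mathcal P(z)x(z)$. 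Hence vanishing of $c_0,\dots,c_n$ implies vanishing of the coefficients of order $0,\dots,n$ of $\mathcal S(z)x(z)$. The converse uses $\mathcal W(z)^{-1}$, which is again a polynomial.

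\emph{Maximality.} The chain is maximal if and only if, for every $g$ with $\{g,\lambda g+f_n\}\in S^*$, the order-$(n+1)$ coefficient of $\mathcal P(z)\bigl(x(z)+y(z-\lambda)^{n+1}\bigr)$ is nonzero, where $y=\mathsf b(\{g,\lambda g+f_n\})$. Once the coefficients $c_0,\dots,c_n$ vanish, the order-$(n+1)$ coefficient for $\mathcal S(z)$ is $\mathcal W(\lambda)$ times the one for $\mathcal P(z)$. Since $\mathcal W(\lambda)$ is invertible, nonvanishing is preserved in both directions.

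\emph{$\lambda=\infty$.} By definition, the Jordan chains at $\infty$ for $\mathcal P(z)$ (respectively $\mathcal S(z)$) are the conditions at $0$ built from the row-reversals $\mathcal R(z)$ (respectively $\mathcal T(z)$), with the pairs $\{f_{k-1},f_k\}\in S^*$. By Lemma~\ref{reverse}(\ref{reverse-i2}), $\mathcal R(z)\sim\mathcal T(z)$, so $\mathcal T(z)=\mathcal V(z)\mathcal R(z)$ with $\mathcal V(z)$ unimodular. The finite-point argument above, carried out at the point $0$ with $x_k=\mathsf b(\{f_{k-1},f_k\})$ (and with $g$ such that $\{f_n,g\}\in S^*$ for maximality), applies verbatim.

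The only step needing care is setting up the generating-polynomial reformulation correctly, in particular matching the indices in the sum $\sum_j\frac1{j!}\mathcal P^{(j)}(\lambda)\,\mathsf b(\cdots_{k-j})$ with the Cauchy product. Once that is done, the rest is the Leibniz rule plus the invertibility of $\mathcal W(\lambda)$ and $\mathcal V(0)$.
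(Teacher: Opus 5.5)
Your proof is correct and takes essentially the same route as the paper. For finite $\lambda$ you use the Leibniz rule with the unimodular factor $\mathcal W(z)$, and $\mathcal W(z)^{-1}$ for the converse. For $\lambda=\infty$ you reduce to the point $0$ via the row-reversals, which are equivalent by Lemma~\ref{reverse}(\ref{reverse-i2}).

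Your generating-polynomial formulation is a repackaging of the paper's index computation. The explicit $\mathcal W(\lambda)$-factor argument for maximality is also valid. The paper gets maximality more directly by applying the already-proved equivalence to the extended sequence $f_0,\ldots,f_n,g$.
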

\begin{proof} 
We first consider the case $\lambda \in \mathbb C$. Assume $\mathcal{S}(z)=\mathcal W(z)\mathcal{P}(z)$ where $\mathcal W(z)$ is unimodular. Since $\mathcal W(z)^{-1}$ is also unimodular, it suffices to prove the ``only if'' part. Assume $f_0, f_1, \ldots, f_n \in \mathfrak{H}$ form a Jordan chain for the $\BEP\mkern-2mu\bigl(S, \mathsf b, \mathcal{P}(z)\bigr)$ at the eigenvalue $\lambda$. Then $\bigl\{f_k,\lambda f_k+f_{k-1}\bigr\} \in S^*$ and
\[
\sum_{j=0}^k \frac{1}{j!} \mathcal{P}^{(j)}\mkern-2mu(\lambda)\mathsf{b}_{k-j} = 0 
\quad \text{where} \quad 
\mathsf{b}_k = \mathsf{b}\bigl(\bigl\{f_k, \lambda f_k+f_{k-1}\bigr\}\bigr), \quad k \in \{0,1, \ldots, n\}.
\]
The proposition is proved when we show these equalities also hold if we replace $\mathcal{P}(z)$ by $\mathcal{S}(z)$. We calculate
\begin{align*}
\sum_{j=0}^k\frac{1}{j!} \mathcal{S}^{(j)}\mkern-2mu(\lambda)\mathsf{b}_{k-j}
& = \sum_{j=0}^k\sum_{l=0}^j \frac{1}{l!\,(j-l)!} \mathcal{W}^{(l)}\mkern-2mu(\lambda)\mathcal{P}^{(j-l)}\mkern-2mu(\lambda) \mathsf{b}_{k-j}\\
& = \sum_{l=0}^k \frac{1}{l!} \mathcal{W}^{(l)}\mkern-2mu(\lambda)
\Biggl( \sum_{j = l}^k \frac{1}{(j-l)!} \mathcal{P}^{(j-l)}\mkern-2mu(\lambda) \mathsf{b}_{k-j} \Biggr) \\
& = \sum_{l=0}^k \frac{1}{l!} \mathcal{W}^{(l)}\mkern-2mu(\lambda)
\Biggl(
\sum_{i = 0}^{k-l} \frac{1}{i!} \mathcal{P}^{(i)}\mkern-2mu(\lambda) \mathsf{b}_{k-l-i} \Biggr) \\
&=0.
\end{align*}
The last equality holds because for each $l \in \{0,1, \ldots, k\}$  the sum in the big parenthesis on the right is $0$.

For $\lambda=\infty$ the argument is the same after replacing $\mathcal P(z)$ and
$\mathcal S(z)$ by their row-reversals, which are equivalent by Lemma~\ref{reverse}.
Maximality is preserved as well, since a Jordan chain is maximal if and only if it cannot be extended by one further vector, and the above equivalence holds for chains of arbitrary finite length.
\end{proof}

\section{Shtraus subspaces and the class \texorpdfstring{$\mathbb{A}_{S,\mathsf{b}}$}{A{S,b}} of self-adjoint extensions} \label{classA}

\subsection{Definitions and basic properties} \label{subs-classA}

\begin{definition} \label{def-Sts}
Let $\bigl(\widetilde{\mathfrak{H}},\kip_{\widetilde{\mathfrak{H}}}\bigr)$ be a Krein space containing $\bigl(\mathfrak{H},\kip_{\mathfrak{H}}\bigr)$ as a Krein subspace. For a self-adjoint relation $\widetilde{A}$ in $\bigl(\widetilde{\mathfrak{H}},\kip_{\widetilde{\mathfrak{H}}}\bigr)$ we define the \emph{family of Shtraus subspaces} in $\mathfrak H$ associated with $\widetilde A$ by
\begin{equation} \label{Tagain}
\TtA(z) =
\left\{
\begin{array}{ll}
\Bigl\{
\{\widetilde P_\mathfrak{H} \widetilde f, \widetilde P_\mathfrak H \widetilde g\}\,:\, \{\widetilde f, \widetilde g\} \in \widetilde A,\ \widetilde g - z  \widetilde f \in \mathfrak H
\Bigr\}, &  z \in \mathbb C,
\\[3mm]
\Bigl\{
\{ \widetilde f, \widetilde P_\mathfrak H \widetilde g\}\,:\, \{\widetilde f, \widetilde g\} \in \widetilde A,\ \widetilde f \in \mathfrak H
\Bigr\} = \widetilde P_\mathfrak H \widetilde A\bigl.\bigr|_{\mathfrak H}, & z=\infty,
\end{array}
\right.
\end{equation}
where $\widetilde{P}_{\mathfrak{H}}$ is the orthogonal projection in $\bigl(\widetilde{\mathfrak{H}},\kip_{\widetilde{\mathfrak{H}}}\bigr)$ onto $\mathfrak{H}$.
\end{definition}

These subspaces are named after A.V. Shtraus \cite{Str65}, \cite{Str70}. Using the algebra of sets one can show that
\begin{equation} \label{S1}
\bigl(\TtA(z)-z I_{\mathfrak H} \bigr)^{-1} =
\widetilde{P}_{\mathfrak H}\bigl(\widetilde{A} - z I_{\widetilde{\mathfrak H}}\bigr)^{-1}\bigl.\bigr|_{\mathfrak H} \qquad \text{for all} \qquad z \in \mathbb{C}.
\end{equation}
Note that
\begin{equation}\label{incl}
\widetilde A  \cap \mathfrak H^2 \subseteq \bigcap_{z \in \overline{\mathbb C}} \TtA(z).
\end{equation}
If $\widetilde A$ in $\widetilde{\mathfrak H}$ is a self-adjoint extension of $S$ in $\mathfrak H$, which we consider in this paper, then $S\subseteq \TtA(z) \subseteq S^*$, and because of these inclusions $\TtA(z)$ is also called a \emph{Shtraus extension} of $S$. Since $S$ has finite defect numbers it is a closed subspace of $\mathfrak H^2$ for each $z \in \overline{\mathbb C}$.

\begin{definition} \label{def-bbA}
Let $S$ be a closed symmetric linear relation in a Krein space $\bigl(\mathfrak{H}, \kip_{\mathfrak H}\bigr)$ with equal defect numbers $d \in \mathbb{N}$ and adjoint $S^*$. Let $\mathsf{b}: S^* \rightarrow \mathbb C^{2d}$ be a boundary mapping for $S$ with \(2d\times 2d\) Gram matrix $\mathsf{Q}$. By $\mathbb{A}_{S,\mathsf{b}}$ we denote the set of all self-adjoint relations \(\widetilde{A}\) in a Krein space \(\bigl(\widetilde{\mathfrak H}, \kip_{\widetilde{\mathfrak{H}}}\bigr)\) such that
\begin{enumerate}
\renewcommand*\theenumi{\alph{enumi}}
\renewcommand*\labelenumi{\rm{(\theenumi)}}
\setlength{\itemsep}{1pt}

\item \label{def-bbA-i1}
$\bigl(\mathfrak H, \kip_{\mathfrak H}\bigr)$ is a Krein subspace of \(\bigl(\widetilde{\mathfrak H}, \kip_{\widetilde{\mathfrak{H}}}\bigr)\).

\item \label{def-bbA-i2}
\(\widetilde{A}\) is an extension of \(S\).

\item \label{def-bbA-i3}
The orthogonal complement \(\widehat{\mathfrak{H}}\) of \(\mathfrak{H}\) in \(\widetilde{\mathfrak{H}}\) is finite dimensional. 

\item \label{def-bbA-i4}
$\widehat{S} = \widetilde A \cap \widehat{\mathfrak H}^2$  is a (closed symmetric) operator without eigenvalues.
\end{enumerate}
\end{definition}

\begin{definition} \label{def-bbAeq}
Self-adjoint extensions $\widetilde{A}_1$ and $\widetilde{A}_2$ of $S$ in $\mathbb{A}_{S,\mathsf{b}}$ are called \emph{equivalent}, in notation $\widetilde{A}_1 \sim \widetilde{A}_2$, if (in self-evident notation) there exists an isomorphism $\Psi: \widehat{\mathfrak H}_1 \rightarrow \widehat{\mathfrak H}_2$ such that
\begin{equation} \label{eq-IPsiA1eq}
(I_{\mathfrak H} \oplus \Psi) \widetilde{A}_1 = \widetilde{A}_2(I_{\mathfrak{H}} \oplus \Psi).
\end{equation}
The equivalence class relative to \(\sim\) for which $\widetilde{A} \in \mathbb{A}_{S,\mathsf{b}}$ is a representative is denoted by \([\widetilde{A}]\). We denote the set of all \(\sim\)-equivalence classes in  \(\mathbb{A}_{S,\mathsf{b}}\) by \(\mathbb{A}_{S,\mathsf{b}}/\mathord{\sim}\).
\end{definition}

\begin{lemma} \label{le-relpr}
If $\widetilde{A}_1 \sim \widetilde{A}_2$ as in Definition~{\rm\ref{def-bbAeq}}, then \(\widetilde{A}_1\cap \mathfrak H^{2}=\widetilde{A}_2\cap \mathfrak{H}^{2}\) and \(\Psi \widehat{S}_1 = \widehat{S}_2 \Psi\) where \(\widehat{S}_j = \widetilde{A}_j\cap \widehat{\mathfrak H}_j^{2}\) for \(j\in\{1,2\}\).
\end{lemma}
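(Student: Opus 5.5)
The plan is to unpack \eqref{eq-IPsiA1eq} elementwise. First I want to see how $I_{\mathfrak H}\oplus\Psi$ acts on pairs. As a relation, $(I_{\mathfrak H}\oplus\Psi)\widetilde A_1$ is the composition: it consists of all $\{\widetilde f,\widetilde k\}$ with $\{\widetilde f,\widetilde g\}\in\widetilde A_1$ and $\widetilde k=(I_{\mathfrak H}\oplus\Psi)\widetilde g$. Likewise $\widetilde A_2(I_{\mathfrak H}\oplus\Psi)$ consists of all $\{\widetilde h,\widetilde k\}$ with $\{(I_{\mathfrak H}\oplus\Psi)\widetilde h,\widetilde k\}\in\widetilde A_2$.

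Set $U=I_{\mathfrak H}\oplus\Psi:\widetilde{\mathfrak H}_1\to\widetilde{\mathfrak H}_2$. It is a bijection because $\Psi$ is an isomorphism between the finite-dimensional spaces $\widehat{\mathfrak H}_1$ and $\widehat{\mathfrak H}_2$. Comparing the two descriptions, \eqref{eq-IPsiA1eq} says
\[
\{\widetilde f,\widetilde g\}\in\widetilde A_1 \iff \{U\widetilde f,U\widetilde g\}\in\widetilde A_2 .
\]
To check this I take $\{\widetilde f,\widetilde g\}\in\widetilde A_1$, so that $\{\widetilde f,U\widetilde g\}$ lies in the left-hand side. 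By \eqref{eq-IPsiA1eq} it then lies in the right-hand side, which means $\{U\widetilde f,U\widetilde g\}\in\widetilde A_2$. For the converse, given $\{\widetilde f_2,\widetilde g_2\}\in\widetilde A_2$, I write $\widetilde f_2=U\widetilde f$ and $\widetilde g_2=U\widetilde g$ and run the same argument backwards. So the key identity is $\widetilde A_2=\{\{U\widetilde f,U\widetilde g\}:\{\widetilde f,\widetilde g\}\in\widetilde A_1\}$.

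Both claimed identities then follow from the fact that $U$ maps $\mathfrak H$ to itself identically and maps $\widehat{\mathfrak H}_1$ onto $\widehat{\mathfrak H}_2$ via $\Psi$.

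\emph{First claim.} Let $\{f,g\}\in\widetilde A_1\cap\mathfrak H^2$. Then $\{Uf,Ug\}=\{f,g\}\in\widetilde A_2$, so $\widetilde A_1\cap\mathfrak H^2\subseteq\widetilde A_2\cap\mathfrak H^2$. The reverse inclusion uses $U^{-1}=I_{\mathfrak H}\oplus\Psi^{-1}$, which restricts to the identity on $\mathfrak H$.

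\emph{Second claim.} Let $\{\widehat f,\widehat g\}\in\widetilde A_1\cap\widehat{\mathfrak H}_1^2=\widehat S_1$. Then $\{\Psi\widehat f,\Psi\widehat g\}\in\widetilde A_2\cap\widehat{\mathfrak H}_2^2=\widehat S_2$. Again $U^{-1}$ gives the converse, so
\[
\widehat S_2=\bigl\{\{\Psi\widehat f,\Psi\widehat g\}:\{\widehat f,\widehat g\}\in\widehat S_1\bigr\}.
\]
Reading this relationally: $\{\widehat f,\widehat k\}\in\Psi\widehat S_1$ means $\widehat k=\Psi\widehat g$ for some $\{\widehat f,\widehat g\}\in\widehat S_1$. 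Likewise $\{\widehat f,\widehat k\}\in\widehat S_2\Psi$ means $\{\Psi\widehat f,\widehat k\}\in\widehat S_2$. These two conditions are equivalent by the displayed description of $\widehat S_2$, so $\Psi\widehat S_1=\widehat S_2\Psi$.

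The only delicate point is interpreting the products of relations in \eqref{eq-IPsiA1eq} consistently. Everything else is routine, since $U$ is bijective and respects the orthogonal decompositions of $\widetilde{\mathfrak H}_1$ and $\widetilde{\mathfrak H}_2$.
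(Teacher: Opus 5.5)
Your proposal is correct and follows essentially the paper's argument: the paper likewise turns \eqref{eq-IPsiA1eq} into the elementwise description \eqref{eq-4st-i2} of $\widetilde A_2$, reads off both intersections from it, and then converts the resulting description of $\widehat S_2$ back into $\Psi\widehat S_1=\widehat S_2\Psi$. The only difference is that the paper isolates once the general equivalence \eqref{eq-relpr} for a bijection $\Phi$ and applies it twice (to $I_{\mathfrak H}\oplus\Psi$ and to $\Psi$). You instead verify that equivalence by hand each time, using the injectivity of $U$ and of $\Psi$ implicitly in the converse directions.
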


\begin{proof}
Let \(\mathfrak{X}\) and \(\mathfrak{Y}\) be nonempty sets, \(A\) a relation in \(\mathfrak{X}\), \(B\) a relation in \(\mathfrak{Y}\) and let \(\Phi: \mathfrak{X} \to \mathfrak{Y}\) be a bijection. Then
\begin{equation} \label{eq-relpr}
\Phi A = B \Phi  \quad \Leftrightarrow \quad  
B = \Bigl\{ \{\Phi x_1, \Phi x_2\} \in \mathfrak{Y}^2 : \{x_1, x_2\} \in A \Bigr\}.
\end{equation}
The proof is a verification of the set equality \(A = \Phi^{-1}B\Phi\): \(\{x_1,x_2\}\in A\) if and only if \(\{x_1,y_1\}\in \Phi\) and \(\{y_1,y_2\}\in B\) and \(\{y_2,x_2\}\in \Phi^{-1}\), which is equivalent to \(y_j = \Phi x_j, j \in\{1,2\}\) and \(\{y_1,y_2\}\in B\), that is  \(\{\Phi x_1,\Phi x_2\}\in B\). Applying the equivalence in \eqref{eq-relpr} to \eqref{eq-IPsiA1eq} we obtain that \eqref{eq-IPsiA1eq} is equivalent to
\begin{equation} \label{eq-4st-i2}
\widetilde{A}_2 = \Biggl\{
{\scriptstyle
\left\{  \begin{bmatrix} f \\[1.5pt]  \Psi \widehat{f} \end{bmatrix} ,
 \begin{bmatrix} g \\[1.5pt] \Psi \widehat{g} \end{bmatrix} \right\}}
 \, : \,
{\scriptstyle
\left\{  \begin{bmatrix} f \\[1.5pt]  \widehat{f} \end{bmatrix} ,
 \begin{bmatrix} g \\[1.5pt]  \widehat{g} \end{bmatrix} \right\}} \in \widetilde{A}_1
  \Biggr\}.
\end{equation}
From \eqref{eq-4st-i2}, and thus from \eqref{eq-IPsiA1eq}, it follows that $\widetilde{A}_2\cap \mathfrak H^{2}=\widetilde{A}_1\cap \mathfrak{H}^{2}$
and
\[
\widehat{S}_2 = \left\{ \{\Psi \widehat{f},\Psi \widehat{g}\} \ : \ \{\widehat{f},\widehat{g}\}\in \widehat{S}_1 \right\},
\]
which, by \eqref{eq-relpr}, is equivalent to \(\Psi \widehat{S}_1 = \widehat{S}_2 \Psi\).
\end{proof}

In the next proposition we collect basic properties of the relations in \(\mathbb{A}_{S,\mathsf{b}}\). Part~(\ref{pr-bbAbasics-i2}) characterizes the canonical self-adjoint extensions of \(S\) in \(\mathfrak{H}\),
and part~(\ref{pr-bbAbasics-i3}) introduces a special representation of a self-adjoint relation in \(\mathbb{A}_{S,\mathsf{b}}\) and characterizes the corresponding Shtraus subspaces.

\begin{proposition} \label{pr-bbAbasics}
Let \(\widetilde{A} \in \mathbb{A}_{S,\mathsf{b}}\). The following statements hold.

\begin{enumerate}
\renewcommand*\theenumi{\Roman{enumi}}
\renewcommand*\labelenumi{\rm{(\theenumi)}}
\setlength{\itemsep}{2pt}
\item \label{pr-bbAbasics-i1}
\(S_0 := \widetilde{A} \cap \mathfrak{H}^2\) is a closed symmetric linear relation extension of \(S\) in \(\mathfrak{H}\) with equal defect numbers \(d_0\) where \(0 \leq d_0 \leq d\). The closed symmetric linear relation \(\widehat{S}\) in \(\widehat{\mathfrak{H}}\) has the same defect numbers \(d_0\).

\item \label{pr-bbAbasics-i2}
The following six statements are equivalent.

\hspace*{-15pt} \begin{minipage}[ht]{0.6\linewidth}
\begin{enumerate}
\renewcommand*\theenumii{\alph{enumii}}
\renewcommand*\labelenumii{\rm{(\theenumii)}}
\setlength{\itemsep}{1pt}
\item \label{pr-bbA-i2-i1}
\(d_0 = 0\), that is, \(S_0\) is self-adjoint.

\item \label{pr-bbA-i2-i2}
\(\TtA(z) \equiv \widetilde{A}\).

\item \label{pr-bbA-i2-i3}
\(\TtA(z)\) is constant.

 \end{enumerate}
\end{minipage}
\begin{minipage}[ht]{0.3\linewidth}
\begin{enumerate}
\setcounter{enumii}{3}
\renewcommand*\theenumii{\alph{enumii}}
\renewcommand*\labelenumii{\rm{(\theenumii)}}
\setlength{\itemsep}{1pt}

\item \label{pr-bbA-i2-i4}
\(\widehat{\mathfrak H} = \{0\}\).
\item  \label{pr-bbA-i2-i5}
\(S_0 = \widetilde{A}\).
\item  \label{pr-bbA-i2-i6}
\([\widetilde{A}] = \{ \widetilde{A} \}\).
\end{enumerate}
\end{minipage}

\item \label{pr-bbAbasics-i3}
Assume that \(0 \lt d_0 \leq d\). Then \(S_0 \subset S_0^{*}\). In the notation from Theorem~{\rm\ref{list}(\ref{list-iC})(\ref{list-iC-i2})} with \(T=S_0\), namely the \(2d_0\times 2d\) matrix \(\mathsf B_0\) and
the boundary mapping \(\mathsf b_0=\mathsf B_0\mathsf b|_{S_0^{*}}\) with Gram matrix
\(\mathsf Q_0=\bigl(\mathsf B_0\mathsf Q^{-1}\mathsf B_0^*\bigr)^{-1}\), the following statements hold:

\begin{enumerate}
\renewcommand*\theenumii{\roman{enumii}}
\renewcommand*\labelenumii{\rm{(\theenumii)}}
\setlength{\itemsep}{1pt}
\item \label{pr-bbAbasics-i3-i1}
There exists a unique boundary mapping \(\widehat{\mathsf{b}}:\widehat{S}^{*} \to \mathbb{C}^{2d_0}\) for \(\widehat{S}\) with Gram matrix \(-\mathsf{Q}_0\) such that
\begin{equation} \label{eq-tAhb}
\widetilde{A} =  \Biggl\{
{\scriptstyle \left\{  \begin{bmatrix} f \\[1.5pt] \widehat{f} \end{bmatrix} ,
 \begin{bmatrix}
g \\[1.5pt]  \widehat{g} \end{bmatrix} \right\}}
 \, :
 \begin{array}{l}
 \{f,g\} \in  S_0^{*}, \ \ \{ \widehat{f},  \widehat{g}\} \in  \widehat{S}^{*}, \\[2.5pt]
 \mathsf{b}_0(\{f, g\})
 + \widehat{\mathsf{b}} (\{ \widehat{f}, \widehat{g}\}) = 0
 \end{array} \mkern-5mu
  \Biggr\}.
\end{equation}
Moreover, the equality in \eqref{eq-tAhb} establishes a bijection between all \(\widetilde{A} \in \mathbb{A}_{S,\mathsf{b}}\) such that \(S_0 = \widetilde{A} \cap \mathfrak{H}^2\) and \(\widehat{S} = \widetilde{A} \cap \widehat{\mathfrak{H}}^2\) and all boundary mappings \(\widehat{\mathsf{b}}:\widehat{S}^{*} \to \mathbb{C}^{2d_0}\) for \(\widehat{S}\) with Gram matrix \(-\mathsf{Q}_0\).

\item \label{pr-bbAbasics-i3-i2}
Let \(\widehat{\mathcal{P}}(z) \in \mathbb{P}_{\mathsf{Q}_0}\) be a \(d_0\times 2d_0\) matrix polynomial and assume additionally that $\widehat{\mu}_j = \deg \widehat{\mathcal{P}}(z)\mkern-1mu\bigl.\bigr|_j \in \mathbb{N}$ for $j \in \{1, \ldots,d_0\}$, and \(\widehat{\mu} = \bigl(\widehat{\mu}_1, \ldots, \widehat{\mu}_{d_0}\bigr)\). Then the family of Shtraus subspaces associated with $\widetilde{A}$ defined in \eqref{eq-tAhb} is given by the formula
\begin{equation}\label{eq-SPbbA}
\TtA(z) =
\Bigl\{\{f,g\}\in S_0^{*}\,:\,\widehat{\mathcal{P}}(z) \mathsf{b}_0(\{f,g\}) = 0 \Bigr\} \ \ \text{for all} \ \ z \in \mathbb{C} 
\end{equation}
if and only if the quadruple  \(\bigl(\mathfrak{C}_{\widehat{\mu}}, K_{\mathsf{Q}_0,\widehat{\mathcal{P}}}, S_{\widehat{\mu}}, \mathsf{b}_{\widehat{\mu},\widehat{\mathcal{P}}}\bigr)\) is a model for the quadruple \(\bigl(\widehat{\mathfrak{H}}, \kip_{\widehat{\mathfrak{H}}}, \widehat{S}, \widehat{\mathsf{b}}\bigr)\).  Moreover, if \eqref{eq-SPbbA} holds, then 
\[
\TtA(\infty) = \Bigl\{ \{f,g\}\in S_0^{*}\,:\,\widehat{\mathsf{P}}_\infty \mathsf{b}_0(\{f,g\}) = 0 \Bigr\}. 
\]
\end{enumerate}
\end{enumerate}
\end{proposition}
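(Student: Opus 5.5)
The plan is to establish the three parts in order, using the Coupling Theorem~\ref{Coupling} from Appendix~\ref{AppA} as the main tool, together with Theorem~\ref{list} and the Model Theorem~\ref{t-poso}.

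For (\ref{pr-bbAbasics-i1}), closedness and symmetry of \(S_0=\widetilde A\cap\mathfrak H^2\) follow directly from the closedness and self-adjointness of \(\widetilde A\). Since \(S\subseteq S_0\subseteq S^*\) and \(\dim S^*/S=2d<\infty\), the defect numbers of \(S_0\) are finite and at most \(d\). The equality of the defect numbers of \(S_0\) and \(\widehat S\) should come from the structure of a self-adjoint relation in \(\mathfrak H\oplus\widehat{\mathfrak H}\). Concretely, I will invoke the Coupling Theorem~\ref{Coupling}, which should represent \(\widetilde A\) as a coupling of \(S_0\) and \(\widehat S\). Its hypotheses are that \(\widetilde A\cap\mathfrak H^2=S_0\) and \(\widetilde A\cap\widehat{\mathfrak H}^2=\widehat S\) are the traces; the statement should say that both traces have equal defect numbers and that these coincide.

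For (\ref{pr-bbAbasics-i2}), I will argue around the following implications:
\begin{itemize}
\item (d)\(\Rightarrow\)(e)\(\Rightarrow\)(a) is immediate.
\item (a)\(\Rightarrow\)(d): if \(d_0=0\) then \(\widehat S\) is self-adjoint in the finite-dimensional space \(\widehat{\mathfrak H}\). By Definition~\ref{def-bbA}(\ref{def-bbA-i4}) it is an operator without eigenvalues, but a self-adjoint operator (relation) in a nonzero finite-dimensional Krein space always has an eigenvalue in \(\overline{\mathbb C}\). Hence \(\widehat{\mathfrak H}=\{0\}\).
\item (d)\(\Rightarrow\)(b)\(\Rightarrow\)(c) is trivial from \eqref{Tagain}.
\item (c)\(\Rightarrow\)(a): from \eqref{S1}, a constant \(T=\TtA(z)\) has resolvent equal to the compressed resolvent for all nonreal \(z\), hence it is self-adjoint. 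Moreover \(\bigcap_z\TtA(z)\) equals \(S_0\); this is where I use (\ref{def-bbA-i4}), namely that \(\widehat S\) has no eigenvalues. Therefore \(S_0=T\) is self-adjoint.
\item (d)\(\Leftrightarrow\)(f): if \(\widehat{\mathfrak H}=\{0\}\), only \(\Psi=0\) exists. Conversely, if \(\widehat{\mathfrak H}\neq\{0\}\), I compose with a nontrivial isomorphism \(\Psi\), for instance a nonzero scalar multiple of the identity other than \(1\), chosen so that it does not commute with \(\widetilde A\). That such a choice exists needs a short argument, or else I choose \(\widehat{\mathfrak H}_2\) to be a genuinely different space, which is the simpler route.
\end{itemize}

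For (\ref{pr-bbAbasics-i3}), \(S_0\subset S_0^*\) is strict since \(d_0>0\). Theorem~\ref{list}(\ref{list-iC}) with \(T=S_0\) supplies \(\mathsf B_0\), \(\mathsf b_0\) and \(\mathsf Q_0\). Part (i) is then the Coupling Theorem applied to \(\mathfrak H\oplus\widehat{\mathfrak H}\): every self-adjoint extension with prescribed traces \(S_0,\widehat S\) is described by boundary conditions \(\mathsf b_0+\widehat{\mathsf b}=0\). The Gram matrix must be \(-\mathsf Q_0\) because the Lagrange identity for \(\widetilde A\) must vanish, and \(\widehat{\mathsf b}\) is unique because \(\mathsf b_0\) is surjective.

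Part (ii) reduces to the Model Theorem~\ref{t-poso}. I will compute \(\TtA(z)\) from \eqref{eq-tAhb}: the condition \(\widetilde g-z\widetilde f\in\mathfrak H\) means \(\{\widehat f,z\widehat f\}\in\widehat S^*\), so
\[
\TtA(z)=\bigl\{\{f,g\}\in S_0^*:\mathsf b_0(\{f,g\})\in\widehat{\mathsf b}\bigl(\nul(\widehat S^*-z)\bigr)\bigr\}.
\]
Thus \eqref{eq-SPbbA} is equivalent to
\[
\widehat{\mathsf b}\bigl(\nul(\widehat S^*-z)\bigr)=\nul\widehat{\mathcal P}(z)\quad\text{for all } z,
\]
and the Model Theorem characterizes exactly this condition by the model \(\bigl(\mathfrak C_{\widehat\mu},K_{\mathsf Q_0,\widehat{\mathcal P}},S_{\widehat\mu},\mathsf b_{\widehat\mu,\widehat{\mathcal P}}\bigr)\).

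The \(\infty\) formula should follow either by passing to the row-reversal via \(\iota\), as in \eqref{eq-la1ola}, or by a limiting argument using \(\widehat{\mathsf P}_\infty\). The main obstacle is this last identification, in particular matching the convention at \(\infty\) and the sign \(-\mathsf Q_0\) with the exact form of the Model Theorem.
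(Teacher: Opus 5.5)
Parts (I) and (III) follow the paper's route: Theorem~\ref{Coupling}(\ref{Coupling-i1}) for (I), Theorem~\ref{Coupling}(\ref{Coupling-i4}) for (III)(i), and the Model Theorem for (III)(ii). The genuine gap is your proof of (c)$\Rightarrow$(a) in (II), which rests on two unproved claims. First, you say a constant $T=\TtA(z)$ is self-adjoint ``by \eqref{S1}''. But $\widetilde A$ is self-adjoint in a Krein space, and nothing in Definition~\ref{def-bbA} guarantees $\rho(\widetilde A)\neq\emptyset$. So \eqref{S1} is only an identity of relations; it gives you no bounded compressed resolvent whose adjoint you could take. What self-adjointness of $\widetilde A$ gives directly is $\TtA(z^*)\subseteq\TtA(z)^*$, so a constant $T$ is only known to be symmetric. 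Second, you say $\bigcap_z\TtA(z)=S_0$ ``because $\widehat S$ has no eigenvalues''. Absence of eigenvalues only makes the exit component $\widehat f_z$ of a lift $\{f\oplus\widehat f_z,\, g\oplus z\widehat f_z\}\in\widetilde A$ unique. Forcing $\widehat f_z=0$ needs more, e.g.\ $\bigcap_z\ran(\widehat S-z)=\{0\}$, and that depends on $\dim\widehat{\mathfrak H}<\infty$, not merely on the absence of eigenvalues. In the paper this identity is Proposition~\ref{th-S0Ss}, proved only afterwards and by means of (III)(ii). The paper avoids both issues by noting that (III) does not depend on (II) and arguing contrapositively. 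If $d_0>0$, then (III)(ii) with Theorem~\ref{t-poso} gives $\TtA(z)=\mathsf b_0^{-1}\bigl(\nul\widehat{\mathcal P}(z)\bigr)$. Each $\nul\widehat{\mathcal P}(z)$ has dimension $d_0>0$, while by \eqref{eq-i-t-m-e} (all row degrees are positive) their intersection over more than $\deg\widehat{\mathcal P}(z)$ points is $\{0\}$. Since $\mathsf b_0$ is surjective, $\TtA(z)$ cannot be constant.

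Several further steps need repair.
\begin{itemize}
\item In (f)$\Rightarrow$(a), $\Psi$ must be an isomorphism of Krein spaces, so within $\widehat{\mathfrak H}$ only unimodular scalars $e^{-\iu\theta}$ are admissible. For $|c|\neq 1$ the transported relation is not even symmetric when $d_0>0$. The inequality $\widetilde A_\theta\neq\widetilde A$ then follows from uniqueness of $\widehat{\mathsf b}$ in (III)(i), since $e^{\iu\theta}\widehat{\mathsf b}\neq\widehat{\mathsf b}$; this is Corollary~\ref{co-EqV}.
\item Your relabelling alternative is a set-theoretic artifact. With two copies of $\{0\}$ it would also refute (a)$\Rightarrow$(f) unless $\mathfrak H\oplus\{0\}$ is identified with $\mathfrak H$. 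It does not show the intended fact that $[\widetilde A]$ is infinite inside $\mathfrak H\oplus\widehat{\mathfrak H}$.
\item In (I), Theorem~\ref{Coupling}(\ref{Coupling-i1}) yields only the cross equalities $d_0^{\pm}=\widehat d^{\,\mp}$. You must add that $\widehat S$, acting in a finite-dimensional space, has equal defect numbers.
\item In (III)(ii), ``the Model Theorem characterizes exactly this condition'' hides the only-if direction. Take a model polynomial $\mathcal P_1(z)$ from Theorem~\ref{t-poso}. Lemma~\ref{le-reeqv} gives $\nul\mathcal P_1(z)=\widehat{\mathsf b}\bigl(\widehat S^*\cap zI_{\widehat{\mathfrak H}}\bigr)=\nul\widehat{\mathcal P}(z)$. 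Lemma~\ref{le-eqnuls} gives a unimodular $\mathcal W(z)$ with $\widehat{\mathcal P}(z)=\mathcal W(z)\mathcal P_1(z)$, and Lemma~\ref{4statements} gives equal row degrees and \eqref{coeff}. Then the uniqueness clause of Theorem~\ref{t-poso} applies. This chain is Theorem~\ref{th-SfMt}, which is what the paper cites.
\item The formula at $\infty$ is not an obstacle. Your own computation at $z=\infty$ gives $\TtA(\infty)=\mathsf b_0^{-1}\bigl(\widehat{\mathsf b}(\widehat S^*\cap(\{0\}\times\widehat{\mathfrak H}))\bigr)$, and Lemma~\ref{le-reeqv} with $\lambda=\infty$ identifies the inner set with $\nul\widehat{\mathsf P}_\infty$. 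The signs already match, since Theorem~\ref{t-poso} is stated for Gram matrix $-\mathsf Q$ and $\mathcal P(z)\in\mathbb P_{\mathsf Q}$.
\end{itemize}
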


\begin{proof}
(\ref{pr-bbAbasics-i1}) The claims that \(S_0\) and \(\widehat{S}\) are closed symmetric linear relations follow from the definitions. Since $S\subset S_{0}\subset S_{0}^{*}\subset S^{*}$, the defect numbers of $S_{0}$ are finite and since $\widetilde{A}$ is a self-adjoint extension of $S_0$ in $\mathfrak{H}\oplus \widehat{\mathfrak{H}}$ with $\dim \widehat{\mathfrak{H}} \in \mathbb{N}$, they are equal. As in the proposition we denote them by $d_{0}$. The inclusions mentioned earlier imply \(d_{0} \in \{0,\ldots,d\}\). Since \(\widetilde{A}\) is a self-adjoint extension of \(S_0\oplus\widehat{S}\), the Coupling Theorem~\ref{Coupling}(\ref{Coupling-i1}) implies that the defect numbers of \(\widehat{S}\) are also \(d_0\).

\smallskip

(\ref{pr-bbAbasics-i2})
If \(S_0\) is self-adjoint, then, by Theorem~\ref{Coupling}(\ref{Coupling-i2}), \(\widehat{S} = \widetilde{A}\cap \widehat{\mathfrak{H}}^2\) is also self-adjoint. The only self-adjoint operator without eigenvalues is the trivial one \(\{0,0\}\) in the trivial space \(\widehat{\mathfrak{H}} = \{0\}\). Thus (\ref{pr-bbA-i2-i1})$\Rightarrow$(\ref{pr-bbA-i2-i4}).
If \(\widehat{\mathfrak H} = \{0\}\), then \(\widetilde{A} \subset \mathfrak{H}^2\), and hence \(S_0 = \widetilde{A}\). So, (\ref{pr-bbA-i2-i4})$\Rightarrow$(\ref{pr-bbA-i2-i5}).
If \(S_0=\widetilde{A}\), then \(S_0\) is self-adjoint since \(\widetilde{A}\) is. Hence (\ref{pr-bbA-i2-i5})$\Rightarrow$(\ref{pr-bbA-i2-i1}).

Assume that \(\widehat{\mathfrak H} = \{0\}\) and \(\widetilde{A}_1 \in [\widetilde{A}]\). By Definition~\ref{def-bbAeq} there exists an isomorphism $\Psi: \widehat{\mathfrak H} \rightarrow \widehat{\mathfrak H}_1$ such that $\Psi\widehat{S}=\widehat{S}_1\Psi$ and
$(I_{\mathfrak H} \oplus \Psi) \widetilde{A} = \widetilde{A}_1(I_{\mathfrak{H}} \oplus \Psi)$. Since \(\widehat{\mathfrak H} = \{0\}\), we have \(\widehat{\mathfrak H}_1 = \{0\}\) and \(\Psi = 0\). Hence $I_{\mathfrak H} \widetilde{A} = \widetilde{A}_1 I_{\mathfrak{H}}$, implying \(\widetilde{A}_1 = \widetilde{A}\). Thus \([\widetilde{A}] = \{\widetilde{A}\}\). This proves  (\ref{pr-bbA-i2-i1})$\Rightarrow$(\ref{pr-bbA-i2-i6}). The contrapositive of (\ref{pr-bbA-i2-i6})$\Rightarrow$(\ref{pr-bbA-i2-i1}) follows from Corollary~\ref{co-EqV}.

If (\ref{pr-bbA-i2-i4}) holds, then \(P_{\mathfrak{H}} = I_{\mathfrak{H}}\) and (\ref{pr-bbA-i2-i2}) follows from Definition~\ref{def-Sts}, hence  (\ref{pr-bbA-i2-i4})$\Rightarrow$(\ref{pr-bbA-i2-i2}), while (\ref{pr-bbA-i2-i2})$\Rightarrow$(\ref{pr-bbA-i2-i3}) is straightforward. The contrapositive of (\ref{pr-bbA-i2-i3})$\Rightarrow$(\ref{pr-bbA-i2-i1}) follows from part (\ref{pr-bbAbasics-i3})(\ref{pr-bbAbasics-i3-i2}) of this proof, completing the proof of (\ref{pr-bbAbasics-i2}).

\smallskip

(\ref{pr-bbAbasics-i3})(\ref{pr-bbAbasics-i3-i1}) follows from Theorem~\ref{Coupling}(\ref{Coupling-i4}).

\smallskip

(\ref{pr-bbAbasics-i3})(\ref{pr-bbAbasics-i3-i2}) follows from Theorem~\ref{th-SfMt} applied to the closed symmetric linear relations \(S_0\) in the Krein space \(\mathfrak{H}\) and \(\widehat{S}\) in the Krein space \(\widehat{\mathfrak{H}}\).
\end{proof}

\begin{remark} \label{rmk-nota}
Although this is not emphasized in our notation, the class $\mathbb{A}_{S,\mathsf{b}}$ of self-adjoint relations is defined with five fixed objects: the Krein space $(\mathfrak H,\kip_{\mathfrak{H}})$, a closed symmetric linear relation $S$ in $\mathfrak H$ with equal defect numbers \(d \in \mathbb{N}\), and its boundary mapping $\mathsf{b}:S^*\to\mathbb{C}^{2d}$ with Gram matrix $\mathsf{Q}$. For each $\widetilde{A} \in \mathbb{A}_{S,\mathsf{b}}$, Definition~\ref{def-bbA} associates three objects with $\widetilde{A}$: the extended Krein space $(\widetilde{\mathfrak{H}},\kip_{\widetilde{\mathfrak{H}}})$, the finite-dimensional Pontryagin exit space $(\widehat{\mathfrak{H}},\kip_{\widehat{\mathfrak{H}}})$ such that $\widetilde{\mathfrak{H}} = \mathfrak{H} \oplus \widehat{\mathfrak{H}}$, and the trace \(\widehat{S} = \widetilde{A} \cap \widehat{\mathfrak{H}}^2\) of $\widetilde{A}$ in the exit space. Proposition~\ref{pr-bbAbasics} associates five further objects with $\widetilde A$: the trace $S_0 = \widetilde{A}\cap\mathfrak{H}^2$ of $\widetilde{A}$ in $\mathfrak{H}$, its equal defect numbers \(d_0 \in \{0\}\cup\mathbb{N}\), its boundary mapping $\mathsf{b}_0:S_0^{*}\to \mathbb{C}^{2d_0}$ with Gram matrix $\mathsf{Q}_0$, and the boundary mapping $\widehat{\mathsf{b}}:\widehat{S}^{*}\to\mathbb{C}^{2d_0}$ with Gram matrix $-\mathsf{Q}_0$. In summary:

\smallskip
\renewcommand{\arraystretch}{1.4}
\noindent
\begin{tabular}%
{|@{\hspace{3pt}}p{0.19\textwidth}|%
@{\hspace{3pt}}p{0.285\textwidth}|@{\hspace{3pt}}p{0.455\textwidth}|}
\hline
 & \(\mathbb{A}_{S,\mathsf{b}}\) & \(\widetilde{A} \in \mathbb{A}_{S,\mathsf{b}}\) 
 \\ \hline
Definition~\ref{def-bbA} &
\(\bigl(\mathfrak{H},\kip_{\mathfrak{H}}\bigr), \ S, \ d, \ \mathsf{b}, \ \mathsf{Q}\) &
\(\bigl(\widetilde{\mathfrak{H}},\kip_{\widetilde{\mathfrak{H}}}\bigr), \
\widetilde{\mathfrak{H}} = \mathfrak{H}\oplus \widehat{\mathfrak{H}}, \ \widehat{S} = \widetilde{A} \cap \widehat{\mathfrak{H}}^2\) \\ \hline
Proposition~\ref{pr-bbAbasics} &  & \(S_0 = \widetilde{A} \cap \mathfrak{H}^2, \ d_0, \ \mathsf{b}_0, \ \mathsf{Q}_0, \ \widehat{\mathsf{b}}\)  \\ \hline
\end{tabular}

\smallskip\noindent In the remainder of the paper, we use the notation summarized in the table above for the thirteen objects associated with  $\widetilde{A}\in \mathbb{A}_{S,\mathsf{b}}$. Whenever we attach an index to $\widetilde{A} \in \mathbb{A}_{S,\mathsf{b}}$, we attach the same index to each of the corresponding objects as we already did in Definition~\ref{def-bbAeq}.
\end{remark}

The following proposition shows that the inclusion \eqref{incl} is an equality if \(\widetilde{A} \in \mathbb{A}_{S,\mathsf b}\). Note that it is not a priori clear that the right-hand side of \eqref{nonsymm} is symmetric. General equalities of this type where the right-hand side is symmetric can be found in \cite[Theorem~3.4 and Corollary]{DLdS86}, see also \cite{Br} and \cite{Mc}.

\begin{proposition} \label{th-S0Ss}
For every \(\widetilde{A} \in \mathbb{A}_{S,\mathsf b}\) we have
\begin{equation}\label{nonsymm}
\widetilde{A} \cap \mathfrak{H}^2 = \bigcap_{z \in \mathbb W} \TtA(z),
\end{equation}
where $\mathbb{W} \subseteq \mathbb{C}$ with $\# \mathbb{W} \gt \dim \widehat{\mathfrak H}$.
\end{proposition}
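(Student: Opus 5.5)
The inclusion ``$\subseteq$'' is \eqref{incl}, so only ``$\supseteq$'' requires work. Let $\{f,g\}\in \TtA(z)$ for every $z\in\mathbb W$. For each $z\in\mathbb W$ there is then an element
\[
\Bigl\{\begin{bmatrix} f\\ \widehat f_z\end{bmatrix},\begin{bmatrix} g\\ \widehat g_z\end{bmatrix}\Bigr\}\in\widetilde A
\quad\text{with}\quad \widehat g_z=z\widehat f_z .
\]
Hence $\widehat f_z\in\widehat{\mathfrak H}$, and the condition $\widetilde g-z\widetilde f\in\mathfrak H$ reduces to $\widehat g_z - z\widehat f_z=0$. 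We must show that $\widehat f_z=0$ for at least one $z$, because then $\{f,g\}\in\widetilde A\cap\mathfrak H^2$.

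First we dispose of the trivial case. If $d_0=0$, then by Proposition~\ref{pr-bbAbasics}(\ref{pr-bbAbasics-i2}) we have $\widehat{\mathfrak H}=\{0\}$ and there is nothing to prove.

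Assume now $d_0>0$ and use the representation \eqref{eq-tAhb}. In that representation $\{f,g\}\in S_0^*$ and $\{\widehat f_z,z\widehat f_z\}\in\widehat S^*$, with
\[
\mathsf b_0(\{f,g\})=-\widehat{\mathsf b}(\{\widehat f_z,z\widehat f_z\})
\quad\text{for all}\quad z\in\mathbb W .
\]
Set $c=\mathsf b_0(\{f,g\})\in\mathbb C^{2d_0}$. We must show that $c=0$, since $\{f,g\}\in S_0^*$ together with $\mathsf b_0(\{f,g\})=0$ gives $\{f,g\}\in S_0$.

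The key point is the structure of $\widehat S$, an operator without eigenvalues in the finite-dimensional space $\widehat{\mathfrak H}$. For each $z$, the set $\nul(\widehat S^*-z)$ is a defect space of dimension $d_0$. Since $\widehat S$ has no eigenvalues, the map
\[
\widehat f\mapsto \widehat{\mathsf b}(\{\widehat f,z\widehat f\})
\]
is injective on $\nul(\widehat S^*-z)$: its kernel consists of elements $\{\widehat f,z\widehat f\}\in\widehat S$, which must be zero. Therefore we have to show the following.

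\textbf{Claim.} If a fixed vector $c$ lies in $\widehat{\mathsf b}\bigl(\widehat{\mathcal N}_z\bigr)$, where $\widehat{\mathcal N}_z$ denotes $\{\{\widehat f, z\widehat f\}\in\widehat S^*\}$, for more than $\dim\widehat{\mathfrak H}$ distinct values of $z$, then $c=0$.

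The tool for the claim is the Model Theorem~\ref{t-poso}. It lets us take $\widehat S=S_\mu$, multiplication by $z$ in a canonical space $\mathfrak C_\mu$ of vector polynomials with $\dim\mathfrak C_\mu=\sum_j\mu_j=\dim\widehat{\mathfrak H}$, where all $\mu_j\ge1$. In this model the defect elements at $z$ are given explicitly by the reproducing kernel. Equivalently, $\widehat{\mathsf b}(\widehat{\mathcal N}_z)=\nul\widehat{\mathcal P}(z)$ for a suitable $\widehat{\mathcal P}(z)\in\mathbb P_{\mathsf Q_0}$ of degree at most $\dim\widehat{\mathfrak H}$ and with all row degrees $\ge1$; compare Proposition~\ref{pr-bbAbasics}(\ref{pr-bbAbasics-i3})(\ref{pr-bbAbasics-i3-i2}), where \eqref{eq-SPbbA} expresses exactly this. (The sign is irrelevant, since these are subspaces.)

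Then
\[
c\in\bigcap_{z\in\mathbb W}\nul\widehat{\mathcal P}(z),
\]
and \eqref{={0}} of Remark~\ref{rem2}, i.e.\ \eqref{eq-i-t-m-e} with no zero row degrees, gives $c=0$. This uses $\#\mathbb W>\dim\widehat{\mathfrak H}\ge\deg\widehat{\mathcal P}$.

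We expect the main obstacle to be justifying this identification of $\widehat{\mathsf b}(\widehat{\mathcal N}_z)$ with $\nul\widehat{\mathcal P}(z)$ for a polynomial whose degree is bounded by $\dim\widehat{\mathfrak H}$ and whose row degrees are all positive. This is where the hypothesis that $\widehat S$ has no eigenvalues enters, via the Model Theorem.

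A more elementary fallback avoids the model. Because $\widehat S$ has no eigenvalues, the relation $\widehat S^*$ on finite-dimensional $\widehat{\mathfrak H}$ yields $\widehat f_z$ depending polynomially on $z$ once $c$ is fixed. The identity $\widehat{\mathsf b}(\{\widehat f_z,z\widehat f_z\})=-c$ for $\dim\widehat{\mathfrak H}+1$ values of $z$ then forces it for all $z$. Finally, the Lagrange identity applied with a real $z$ and a nonreal $z$ gives $c^*\mathsf Q_0 c$-type vanishing, leading to $c=0$.
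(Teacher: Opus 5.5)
Your main argument is correct and is essentially the paper's proof. After the trivial case $d_0=0$, you use \eqref{eq-tAhb} and the Model Theorem identity $\nul\widehat{\mathcal P}(z)=\widehat{\mathsf b}\bigl(\widehat S^*\cap zI_{\widehat{\mathfrak H}}\bigr)$ (which underlies \eqref{eq-SPbbA}) to reduce to $\mathsf b_0(\{f,g\})\in\bigcap_{z\in\mathbb W}\nul\widehat{\mathcal P}(z)=\{0\}$ with $\deg\widehat{\mathcal P}(z)=\widehat\mu_1\le\dim\widehat{\mathfrak H}$, exactly as the paper does via $\TtA(z)=\mathsf b_0^{-1}\bigl(\nul\widehat{\mathcal P}(z)\bigr)$; your ``elementary fallback'' is unnecessary, and as sketched it would not close, since $c^*\mathsf Q_0c=0$ does not force $c=0$ for the indefinite matrix $\mathsf Q_0$.
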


\begin{proof}
Set $S_0 = \widetilde A \cap \mathfrak{H}^2$. Then \(S_0\) is a closed symmetric linear  relation in $\bigl(\mathfrak{H},\kip_{\mathfrak{H}}\bigr)$ with defect numbers equal to \(d_0\) where \(0 \leq d_0 \leq d\). If $d_0 = 0$, then \(\widehat{\mathfrak H} = \{0\}\) and by Proposition~\ref{pr-bbAbasics}(\ref{pr-bbAbasics-i2}) we have $\widetilde{A} = \widetilde{A} \cap \mathfrak{H}^2 = \TtA(z)$ for all \(z \in \mathbb{C}\). Thus \eqref{nonsymm} holds trivially.

If \(0 \lt d_0 \leq d\), the defect numbers of \(\widehat{S}\) are also \(d_0\) and \(\widetilde{A}\) is a self-adjoint coupling of \(S_0\) and \(\widehat{S}\).  By Proposition~\ref{pr-bbAbasics}(\ref{pr-bbAbasics-i3})(\ref{pr-bbAbasics-i3-i2}) there exists a \(d_0\times 2 d_0\) matrix polynomial \(\widehat{\mathcal{P}}(z)\) such that
\[
\TtA(z) =
\Bigl\{\{f,g\}\in S_0^{*}\,:\,\widehat{\mathcal{P}}(z) \mathsf{b}_0(\{f,g\})=0\Bigr\}
= \mathsf{b}_0^{-1}\mkern-2mu\Bigl(\nul \bigl(\widehat{\mathcal{P}}(z)\bigr)\Bigr).
\]
Consequently, for arbitrary nonempty subset \(\mathbb{W} \subseteq \mathbb{C}\) we have
\[
\bigcap_{z \in \mathbb W}\TtA(z) =
\mathsf{b}_0^{-1}\mkern-2mu\biggl(
\bigcap_{z \in \mathbb W} \nul \bigl(\widehat{\mathcal{P}}(z)\bigr) \biggr).
\]
By \eqref{={0}},
\[
\bigcap_{z \in \mathbb W} \nul \bigl(\widehat{\mathcal{P}}(z)\bigr) =\{0\}
\]
whenever \(\#\mathbb{W} \gt \deg\widehat{\mathcal{P}}(z)\). By the construction of \(\widehat{\mathcal{P}}(z)\) we have \(\deg\widehat{\mathcal{P}}(z)\leq \dim \widehat{\mathfrak H}\). Therefore for \(\mathbb{W} \subseteq \mathbb{C}\) with \(\#\mathbb{W} \gt \dim \widehat{\mathfrak H}\) we have
\begin{equation*}
\bigcap_{z \in \mathbb W}\TtA(z)
= \mathsf{b}_0^{-1}\mkern-2mu\Bigl(\nul \bigl(\widehat{\mathcal{P}}(z)\bigr)\Bigr)
=  \mathsf{b}_0^{-1}\mkern-2mu\bigl(\{0\}\bigr) = S_0. \qedhere
\end{equation*}
\end{proof}

\subsection{Shtraus extensions and the equivalence relation} \label{subs-mainth}

We now come to the main theorem of this section. It shows that $\widetilde{A}_1 \sim \widetilde{A}_2$ if and only if $T_{\widetilde{A}_1}(z)\equiv T_{\widetilde{A}_2}(z)$. In addition, item (\ref{th-EqSt-i3}) strengthens Lemma~\ref{le-relpr}: after adding the third condition in (\ref{th-EqSt-i3}), the implication from that lemma becomes an equivalence.

\begin{theorem} \label{th-EqSt}
Let $\widetilde A_1, \widetilde A_2 \in \mathbb{A}_{S,\mathsf{b}}$. The following statements are equivalent.
\begin{enumerate}
 \renewcommand*\theenumi{\roman{enumi}}
\renewcommand*\labelenumi{\rm{(\theenumi)}}
\setlength{\itemsep}{1pt}

\item \label{th-EqSt-i1}
\(T_{\mkern-1mu\widetilde{A}_1}\mkern-2.5mu(z) = T_{\mkern-1mu\widetilde{A}_2}\mkern-2.5mu(z)\) \ for all $z \in \mathbb{C}$.

\item \label{th-EqSt-i2}
\(\widetilde A_1 \sim \widetilde A_2\).

\item \label{th-EqSt-i3}
\(\widetilde{A}_1 \cap \mathfrak{H}^2 = \widetilde{A}_2 \cap \mathfrak{H}^2\) and there exists an isomorphism \(\Theta:\widehat{\mathfrak{H}}_1 \to \widehat{\mathfrak{H}}_2\) such that  \(\Theta \widehat{S}_1 = \widehat{S}_2 \Theta\) and for all \(\{\widehat{f},\widehat{g}\} \in \widehat{S}_1^*\) we have
\(\widehat{\mathsf{b}}_2\mkern-1mu ( \{\Theta \widehat{f}, \Theta \widehat{g}\} )
= \widehat{\mathsf{b}}_1\mkern-1mu (\{\widehat{f},\widehat{g}\})\).
\end{enumerate}

If one (and hence all) of the above statements holds, then $T_{\widetilde{A}_1}\mkern-2mu(\infty) = T_{\widetilde{A}_2}\mkern-2mu(\infty)$.
\end{theorem}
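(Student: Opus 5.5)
We prove the cycle (\ref{th-EqSt-i2})$\Rightarrow$(\ref{th-EqSt-i1})$\Rightarrow$(\ref{th-EqSt-i3})$\Rightarrow$(\ref{th-EqSt-i2}), and then deduce the statement at $\infty$.

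\emph{(\ref{th-EqSt-i2})$\Rightarrow$(\ref{th-EqSt-i1}).} Suppose \eqref{eq-IPsiA1eq} holds. Then $\widetilde{A}_2$ is given by \eqref{eq-4st-i2}. An element $\{\widetilde f,\widetilde g\}\in\widetilde A_1$ with components $\{f,\widehat f\}$, $\{g,\widehat g\}$ satisfies $\widetilde g-z\widetilde f\in\mathfrak H$ if and only if $\widehat g-z\widehat f=0$. Since $\Psi$ is linear and injective, this holds if and only if $\Psi\widehat g-z\Psi\widehat f=0$. The $\mathfrak H$-components are untouched by $I_{\mathfrak H}\oplus\Psi$, so Definition~\ref{def-Sts} gives $T_{\widetilde A_1}(z)=T_{\widetilde A_2}(z)$ for $z\in\mathbb C$. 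The same computation, using the condition $\widehat f=0\Leftrightarrow\Psi\widehat f=0$, gives equality at $z=\infty$. This already proves the final assertion of the theorem under (\ref{th-EqSt-i2}), and hence under all three statements once the cycle is closed.

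\emph{(\ref{th-EqSt-i1})$\Rightarrow$(\ref{th-EqSt-i3}).} Choose $\mathbb W\subseteq\mathbb C$ with $\#\mathbb W>\max\{\dim\widehat{\mathfrak H}_1,\dim\widehat{\mathfrak H}_2\}$. Proposition~\ref{th-S0Ss} then gives $S_{0,1}=\bigcap_{z\in\mathbb W}T_{\widetilde A_1}(z)=\bigcap_{z\in\mathbb W}T_{\widetilde A_2}(z)=S_{0,2}=:S_0$. Consequently $d_0$, $\mathsf B_0$, $\mathsf b_0$ and $\mathsf Q_0$ are common to both relations. If $d_0=0$, Proposition~\ref{pr-bbAbasics}(\ref{pr-bbAbasics-i2}) gives $\widehat{\mathfrak H}_1=\widehat{\mathfrak H}_2=\{0\}$, and we take $\Theta=0$.

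If $d_0>0$, we use the model machinery behind Proposition~\ref{pr-bbAbasics}(\ref{pr-bbAbasics-i3})(\ref{pr-bbAbasics-i3-i2}), namely Theorem~\ref{th-SfMt} and the Model Theorem~\ref{t-poso}. These provide a polynomial $\widehat{\mathcal P}_1(z)\in\mathbb P_{\mathsf Q_0}$ with positive row degrees such that \eqref{eq-SPbbA} holds for $\widetilde A_1$ and the model quadruple built from $\widehat{\mathcal P}_1$ models $(\widehat{\mathfrak H}_1,\kip,\widehat S_1,\widehat{\mathsf b}_1)$. They provide likewise a polynomial $\widehat{\mathcal P}_2(z)$ for $\widetilde A_2$. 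Because $\mathsf b_0$ is surjective, the equality $T_{\widetilde A_1}(z)=T_{\widetilde A_2}(z)$ gives $\nul\widehat{\mathcal P}_1(z)=\nul\widehat{\mathcal P}_2(z)$ for all $z$. Lemma~\ref{le-eqnuls} (as used in Lemma~\ref{le-PeSSts}) then gives $\widehat{\mathcal P}_2=\mathcal W\widehat{\mathcal P}_1$ with $\mathcal W$ unimodular, and by Lemma~\ref{4statements}(\ref{4statements-i2}) the row degrees $\widehat\mu$ coincide.

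\emph{Main obstacle.} The key step is to show that equivalent polynomials produce isomorphic models. Concretely, we need the map $\mathfrak C_{\widehat\mu}\to\mathfrak C_{\widehat\mu}$ induced by $\mathcal W$ (for instance on kernel functions, $K_{\mathsf Q_0,\widehat{\mathcal P}_1}(\cdot,w)u\mapsto K_{\mathsf Q_0,\widehat{\mathcal P}_2}(\cdot,w)\mathcal W(w)^{-*}u$, or equivalently on the polynomial space via \eqref{coeff}) to be an isometry. It must intertwine $S_{\widehat\mu}$ with itself and carry $\mathsf b_{\widehat\mu,\widehat{\mathcal P}_1}$ to $\mathsf b_{\widehat\mu,\widehat{\mathcal P}_2}$. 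The plan is to check this first on the reproducing kernels, where $\mathcal W(z)$ factors out. Composing this isometry with the two model isomorphisms yields $\Theta$ with $\Theta\widehat S_1=\widehat S_2\Theta$ and $\widehat{\mathsf b}_2\{\Theta\widehat f,\Theta\widehat g\}=\widehat{\mathsf b}_1\{\widehat f,\widehat g\}$.

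\emph{(\ref{th-EqSt-i3})$\Rightarrow$(\ref{th-EqSt-i2}).} Set $\Psi=\Theta$. Since $S_0$ is common, representation \eqref{eq-tAhb} for $\widetilde A_1$ and $\widetilde A_2$ uses the same $\mathsf b_0$. The intertwining and boundary identities imply that $\{\widehat f,\widehat g\}\in\widehat S_1^*$ if and only if $\{\Theta\widehat f,\Theta\widehat g\}\in\widehat S_2^*$; the identity $\Theta\widehat S_1=\widehat S_2\Theta$ carries over to the adjoints because $\Theta$ is unitary. Substituting into \eqref{eq-tAhb} shows that $\widetilde A_2$ is given by \eqref{eq-4st-i2} with $\Psi=\Theta$, which is equivalent to \eqref{eq-IPsiA1eq} by \eqref{eq-relpr}.
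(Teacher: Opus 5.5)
Your proposal is correct. Its skeleton is the paper's: a common $S_0$ via Proposition~\ref{th-S0Ss}, model polynomials from Theorem~\ref{t-poso}, composition of modeling isomorphisms, and substitution into \eqref{eq-tAhb} for (iii)$\Rightarrow$(ii). You organize it differently in three respects.

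First, the paper proves (i)$\Leftrightarrow$(ii) and (ii)$\Leftrightarrow$(iii), while you run the cycle (ii)$\Rightarrow$(i)$\Rightarrow$(iii)$\Rightarrow$(ii).

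Second, in the step from (i), the paper takes one polynomial $\widehat{\mathcal P}$ modeling $\widehat S_1$. It then invokes the ``only if'' part of Proposition~\ref{pr-bbAbasics}(\ref{pr-bbAbasics-i3})(\ref{pr-bbAbasics-i3-i2}) to conclude that the same quadruple also models $(\widehat{\mathfrak H}_2,\kip_{\widehat{\mathfrak H}_2},\widehat S_2,\widehat{\mathsf b}_2)$. That ``only if'' part, proved in Theorem~\ref{th-SfMt}, rests on the uniqueness clause of the Model Theorem, which is exactly your ``main obstacle''. You instead take two model polynomials, relate them by a unimodular $\mathcal W(z)$, and build the isometry by hand. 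That step is easier than you make it sound, because the map is just $f\mapsto\mathcal W f$:
\begin{itemize}
\item $K_{\mathsf Q_0,\widehat{\mathcal P}_2}(z,w)=\mathcal W(z)K_{\mathsf Q_0,\widehat{\mathcal P}_1}(z,w)\mathcal W(w)^*$, so $f\mapsto\mathcal W f$ sends $K_{\mathsf Q_0,\widehat{\mathcal P}_1}(\cdot,w)u$ to $K_{\mathsf Q_0,\widehat{\mathcal P}_2}(\cdot,w)\mathcal W(w)^{-*}u$ and preserves inner products.
\item It maps $\mathfrak C_{\widehat\mu}$ bijectively onto itself, by \eqref{coeff} for $\mathcal W$ and $\mathcal W^{-1}$.
\item It commutes with multiplication by $z$.
\item $zf-g\equiv\widehat{\mathcal P}_1(z)c$ implies $z\mathcal Wf-\mathcal Wg\equiv\widehat{\mathcal P}_2(z)c$, so boundary values are preserved.
\end{itemize}
Your version therefore makes explicit what the paper delegates to Theorem~\ref{th-SfMt}.

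Third, you obtain $T_{\widetilde A_1}(\infty)=T_{\widetilde A_2}(\infty)$ directly from (ii), via $\widehat f=0\Leftrightarrow\Psi\widehat f=0$. This is more elementary than the paper's appeal to the last statement of Proposition~\ref{pr-bbAbasics}(\ref{pr-bbAbasics-i3})(\ref{pr-bbAbasics-i3-i2}), the $\widehat{\mathsf P}_\infty$ formula.

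One small omission: in (iii)$\Rightarrow$(ii) the representation \eqref{eq-tAhb} is only available for $d_0>0$. The case $d_0=0$ needs a separate line: then $\widetilde A_1=S_0=\widetilde A_2$ by Proposition~\ref{pr-bbAbasics}(\ref{pr-bbAbasics-i2}).
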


\begin{proof}
First assume that \(\widetilde{A}_1\) satisfies any of the six equivalent conditions in Proposition~\ref{pr-bbAbasics}(\ref{pr-bbAbasics-i2}). Then the equivalences (\ref{th-EqSt-i1})$\Leftrightarrow$(\ref{th-EqSt-i2})$\Leftrightarrow$ (\ref{th-EqSt-i3}) follow from Proposition~\ref{pr-bbAbasics}(\ref{pr-bbAbasics-i2}).

Let $\widetilde{A}_1, \widetilde{A}_2 \in \mathbb{A}_{S,\mathsf{b}}$ and assume that neither \(\widetilde{A}_1\cap \mathfrak{H}^2\) nor  \(\widetilde{A}_2\cap \mathfrak{H}^2\) is self-adjoint.

In the proof that follows we will use the fact that the equality in~(\ref{eq-IPsiA1eq}) is equivalent to \eqref{eq-4st-i2}
where $\Psi: \widehat{\mathfrak H}_1 \rightarrow \widehat{\mathfrak H}_2$ is an isomorphism for which $\Psi\widehat S_1 = \widehat S_2\Psi$ holds.

(\ref{th-EqSt-i1})$\Rightarrow$(\ref{th-EqSt-i2}). Assume (\ref{th-EqSt-i1}). By Proposition~\ref{th-S0Ss} we have
\[
\widetilde{A}_1 \cap \mathfrak{H}^2 = \widetilde{A}_2 \cap \mathfrak{H}^2,
\]
and we denote this closed symmetric linear relation by \(S_0\). By Proposition~\ref{pr-bbAbasics}(\ref{pr-bbAbasics-i3})(\ref{pr-bbAbasics-i3-i1}) there exists a boundary mapping $\widehat{\mathsf{b}}_k: \widehat{S}^{*}_k \rightarrow \mathbb{C}^{2d_0}$ for $\widehat{S}_k$ with Gram matrix $-\mathsf{Q}_0$ such that
\begin{equation} \label{eq-tA1tA2}
\widetilde{A}_k =  \Biggl\{
{\scriptstyle \left\{  \begin{bmatrix} f \\[1.5pt] \widehat{f} \end{bmatrix} ,
 \begin{bmatrix}
g \\[1.5pt]  \widehat{g} \end{bmatrix} \right\}}
 \, : \,
 \begin{array}{l}
 \{f,g\} \in  S_0^{*}, \ \ \{ \widehat{f}, \widehat{g}\} \in  \widehat{S}_k^*,  \\[2.5pt]
 \mathsf{b}_0\mkern-1mu (\{f, g\}) + \widehat{\mathsf{b}}_k\mkern-1mu (\{ \widehat{f}, \widehat{g}\}) = 0
 \end{array}
  \mkern-3mu \Biggr\}, \quad k \in \{1,2\}.
\end{equation}

By the Model Theorem~\ref{t-poso} there exists a \(d_0\times 2d_0\) matrix polynomial  \(\widehat{\mathcal{P}}(z)\) such that the quadruple  \(\bigl(\mathfrak{C}_{\widehat{\mu}}, K_{\mathsf{Q}_0,\widehat{\mathcal{P}}}, S_{\widehat{\mu}}, \mathsf{b}_{\widehat{\mu},\widehat{\mathcal{P}}}\bigr)\) is a model for \(\bigl(\widehat{\mathfrak{H}}_1, \kip_{\widehat{\mathfrak{H}}_1}, \widehat{S}_1, \widehat{\mathsf{b}}_1\bigr)\). Let \(\Phi_1:\widehat{\mathfrak{H}}_1 \to \mathfrak{C}_{\widehat{\mu}}\) be the corresponding (modeling) isomorphism. The ``if'' part of Proposition~\ref{pr-bbAbasics}(\ref{pr-bbAbasics-i3})(\ref{pr-bbAbasics-i3-i2}) yields
\begin{equation}\label{eq-tA1ThP}
T_{\widetilde{A}_1}\mkern-3mu(z) =
\Bigl\{\{f,g\}\in S_0^{*}\,:\,\widehat{\mathcal{P}}(z)\mathsf b_0(\{f,g\}) = 0\Bigr\}.
\end{equation}
The ``only if'' part of Proposition~\ref{pr-bbAbasics}(\ref{pr-bbAbasics-i3})(\ref{pr-bbAbasics-i3-i2}), the assumption \(T_{\widetilde{A}_1}\mkern-2.5mu(z) = T_{\widetilde{A}_2}\mkern-2.5mu(z)\) \ for all $z \in \mathbb{C}$, and \eqref{eq-tA1ThP} imply that \(\bigl(\mathfrak{C}_{\widehat{\mu}}, K_{\mathsf{Q}_0,\widehat{\mathcal{P}}}, S_{\widehat{\mu}}, \mathsf{b}_{\widehat{\mu},\widehat{\mathcal{P}}}\bigr)\) is a model for \(\bigl(\widehat{\mathfrak{H}}_2, \kip_{\widehat{\mathfrak{H}}_2}, \widehat{S}_2, \widehat{\mathsf{b}}_2\bigr)\) as well. Let \(\Phi_2:\widehat{\mathfrak{H}}_2 \to \mathfrak{C}_{\widehat{\mu}}\) be the corresponding isomorphism. Set $\Psi = (\Phi_2)^{-1} \Phi_1 : \widehat{\mathfrak{H}}_1 \rightarrow \widehat{\mathfrak{H}}_2$. Then \(\Psi\) is an isomorphism such that \(\{\Psi \widehat{f}, \Psi \widehat{g}\} \in \widehat{S}_2^{*}\) if and only if \(\{\widehat{f},\widehat{g}\}\in \widehat{S}_1^*\), and
\(\widehat{\mathsf{b}}_2\bigl(\{\Psi \widehat{f}, \Psi \widehat{g}\}\bigr) = \widehat{\mathsf{b}}_1\bigl(\{\widehat{f},\widehat{g}\}\bigr)\) for all
\(\{\widehat{f},\widehat{g}\}\in \widehat{S}_1^*\). Therefore, using \eqref{eq-tA1tA2}, first with \(k=2\) and then with \(k=1\) at the end we get
\begin{align*}
  \widetilde{A}_2 & = \Biggl\{
{\scriptstyle \left\{  \begin{bmatrix} f \\[1.5pt] \Psi \widehat{f} \end{bmatrix} ,
 \begin{bmatrix}
g \\[1.5pt] \Psi \widehat{g} \end{bmatrix} \right\}}
 \, : \,
 \begin{array}{l}
 \{f,g\} \in  S_0^{*}, \ \ \{ \widehat{f},  \widehat{g}\} \in  \widehat{S}_1^{*},  \\[2.5pt]
 \mathsf{b}_0\mkern-1mu (\{f, g\}) + \widehat{\mathsf{b}}_2\mkern-1mu (\{ \Psi\widehat{f}, \Psi\widehat{g}\}) = 0
 \end{array}
  \mkern-3mu \Biggr\} \\
   & = \Biggl\{
{\scriptstyle \left\{  \begin{bmatrix} f \\[1.5pt] \Psi \widehat{f} \end{bmatrix} ,
 \begin{bmatrix}
g \\[1.5pt] \Psi \widehat{g} \end{bmatrix} \right\}}
 \, : \,
 \begin{array}{l}
 \{f,g\} \in  S_0^{*}, \ \ \{ \widehat{f},  \widehat{g}\} \in  \widehat{S}_1^{*}, \\[2.5pt]
 \mathsf{b}_0\mkern-1mu (\{f, g\}) + \widehat{\mathsf{b}}_1\mkern-1mu (\{\widehat{f},\widehat{g}\}) = 0
 \end{array}
  \mkern-3mu \Biggr\} \\
 & = \Biggl\{
{\scriptstyle
\left\{  \begin{bmatrix} f \\[1.5pt]  \Psi \widehat{f} \end{bmatrix} ,
 \begin{bmatrix} g \\[1.5pt] \Psi \widehat{g} \end{bmatrix} \right\}}
 \, : \,
{\scriptstyle
\left\{  \begin{bmatrix} f \\[1.5pt]  \widehat{f} \end{bmatrix} ,
 \begin{bmatrix} g \\[1.5pt]  \widehat{g} \end{bmatrix} \right\}} \in \widetilde{A}_1
  \Biggr\}.
\end{align*}
This proves \eqref{eq-4st-i2} which is equivalent to \eqref{eq-IPsiA1eq}. Hence (\ref{th-EqSt-i2}) is proved.

\smallskip

(\ref{th-EqSt-i2})$\Rightarrow$(\ref{th-EqSt-i1}).
Assume (\ref{th-EqSt-i2}). Let $z \in \mathbb{C}$ be arbitrary. By \eqref{eq-4st-i2}, $\{f,g\} \in T_{\widetilde{A}_2}\mkern-2.5mu(z)$ is equivalent to the statement: there exists
${\scriptstyle \left\{\begin{bmatrix} f \\[1.5pt]  \widehat{f}_1 \end{bmatrix},
\begin{bmatrix} g \\[1.5pt]  \widehat{g}_1 \end{bmatrix} \right\}} \in \widetilde{A}_1$ with $z \Psi \widehat{f}_1- \Psi \widehat{g}_1=0$. Since \(\Psi\) is an isomorphism, the last statement is equivalent to: there exists
${\scriptstyle \left\{\begin{bmatrix} f \\[1.5pt]  \widehat{f}_1 \end{bmatrix},
\begin{bmatrix} g \\[1.5pt]  \widehat{g}_1 \end{bmatrix} \right\}} \in \widetilde{A}_1$  with $z \widehat{f}_1-\widehat{g}_1=0$, which in turn is equivalent to $\{f,g\} \in T_{\widetilde{A}_1}\mkern-2.5mu(z)$. Since \(z \in \mathbb{C}\) was arbitrary, (\ref{th-EqSt-i1}) is proved.

\smallskip

(\ref{th-EqSt-i2})$\Rightarrow$(\ref{th-EqSt-i3}). Assume (\ref{th-EqSt-i2}). By Lemma~\ref{le-relpr}, \(\widetilde{A}_1 \cap \mathfrak{H}^2 = \widetilde{A}_2 \cap \mathfrak{H}^2\) and  \eqref{eq-tA1tA2} holds. Let \(\Psi:\widehat{\mathfrak{H}}_1 \to \widehat{\mathfrak{H}}_2\) be the isomorphism for which \eqref{eq-IPsiA1eq} and  \eqref{eq-4st-i2} hold. By \eqref{eq-4st-i2}, ${\scriptstyle
\left\{  \begin{bmatrix} f \\[1.5pt]  \Psi \widehat{f} \end{bmatrix} ,
 \begin{bmatrix} g \\[1.5pt] \Psi \widehat{g} \end{bmatrix} \right\}} \in \widetilde{A}_2$
if and only if \(\{f,g\} \in S_0^{*}, \{\widehat{f},\widehat{g}\} \in \widehat{S}_1^{*}\) and
\begin{equation}\label{eq-aux1}
\mathsf{b}_0(\{f,g\}) + \widehat{\mathsf{b}}_1(\{\widehat{f},\widehat{g}\}) = 0.
\end{equation}
By \eqref{eq-tA1tA2} with \(k=2\) we have ${\scriptstyle
\left\{  \begin{bmatrix} f \\[1.5pt]  \Psi \widehat{f} \end{bmatrix} ,
 \begin{bmatrix} g \\[1.5pt] \Psi \widehat{g} \end{bmatrix} \right\}} \in \widetilde{A}_2$
if and only if \(\{f,g\} \in S_0^{*}\), \(\{\Psi\widehat{f},\Psi\widehat{g}\} \in \widehat{S}_2^{*}\) and
\begin{equation}\label{eq-aux2}
\mathsf{b}_0(\{f,g\}) + \widehat{\mathsf{b}}_2(\{\Psi\widehat{f},\Psi\widehat{g}\}) = 0.
\end{equation}
Hence \(\{\widehat{f},\widehat{g}\} \in \widehat{S}_1^{*}\) if and only if \(\{\Psi\widehat{f},\Psi\widehat{g}\} \in \widehat{S}_2^{*}\) and both \eqref{eq-aux1} and \eqref{eq-aux2} hold. Therefore \(\Psi\) has all the properties required of \(\Theta\) in (\ref{th-EqSt-i3}). Thus (\ref{th-EqSt-i3}) is proved.

\smallskip

(\ref{th-EqSt-i3})$\Rightarrow$(\ref{th-EqSt-i2}). Assume (\ref{th-EqSt-i3}) and set  \(S_0 = \widetilde{A}_1 \cap \mathfrak{H}^2 = \widetilde{A}_2 \cap \mathfrak{H}^2\). By Proposition~\ref{pr-bbAbasics}(\ref{pr-bbAbasics-i3})(\ref{pr-bbAbasics-i3-i1}), \eqref{eq-tA1tA2} holds. By \eqref{eq-tA1tA2} with \(k=1\) we have ${\scriptstyle
\left\{  \begin{bmatrix} f \\[1.5pt]  \widehat{f} \end{bmatrix} ,
 \begin{bmatrix} g \\[1.5pt] \widehat{g} \end{bmatrix} \right\}} \in \widetilde{A}_1$
if and only if \(\{f,g\} \in S_0^{*}, \{\widehat{f},\widehat{g}\} \in \widehat{S}_1^{*}\) and
\begin{equation}\label{eq-aux3}
\mathsf{b}_0(\{f,g\}) + \widehat{\mathsf{b}}_1(\{\widehat{f},\widehat{g}\}) = 0.
\end{equation}
Since \(\Theta:\widehat{\mathfrak{H}}_1 \to \widehat{\mathfrak{H}}_2\) is an isomorphism, \(\{\widehat{f},\widehat{g}\} \in \widehat{S}_1^{*}\) if and only if \(\{\Theta \widehat{f},\Theta \widehat{g}\} \in \widehat{S}_2^{*}\), and by the boundary mapping property of \(\Theta\), \eqref{eq-aux3} is equivalent to
\begin{equation*}  
\mathsf{b}_0(\{f,g\}) + \widehat{\mathsf{b}}_2\bigl(\{\Theta\widehat{f},\Theta\widehat{g}\}\bigr) = 0,
\end{equation*}
which is by \eqref{eq-tA1tA2} with \(k=2\)  equivalent to ${\scriptstyle
\left\{  \begin{bmatrix} f \\[1.5pt]  \Theta \widehat{f} \end{bmatrix} ,
 \begin{bmatrix} g \\[1.5pt] \Theta \widehat{g} \end{bmatrix} \right\}} \in \widetilde{A}_2$. Thus we have proved that ${\scriptstyle
\left\{  \begin{bmatrix} f \\[1.5pt]  \widehat{f} \end{bmatrix} ,
\begin{bmatrix} g \\[1.5pt] \widehat{g} \end{bmatrix} \right\}} \in \widetilde{A}_1$ if and only if ${\scriptstyle
\left\{  \begin{bmatrix} f \\[1.5pt]  \Theta \widehat{f} \end{bmatrix} ,
 \begin{bmatrix} g \\[1.5pt] \Theta \widehat{g} \end{bmatrix} \right\}} \in \widetilde{A}_2$. That is, \eqref{eq-4st-i2} holds with \(\Psi = \Theta\), which implies (\ref{th-EqSt-i2}).

Assume (\ref{th-EqSt-i1}). Then the equality $T_{\widetilde{A}_1}\mkern-2mu(\infty) = T_{\widetilde{A}_2}\mkern-2mu(\infty)$ follows from the last statement in Proposition~\ref{pr-bbAbasics}(\ref{pr-bbAbasics-i3})(\ref{pr-bbAbasics-i3-i2}).
\end{proof}

In the next corollary we complete the proof of Proposition~\ref{pr-bbAbasics}(\ref{pr-bbAbasics-i2}) by proving the contrapositive of (\ref{pr-bbA-i2-i6})$\Rightarrow$(\ref{pr-bbA-i2-i1}). That is, we prove that each equivalence class \([\widetilde{A}]\) in \(\mathbb{A}_{S,\mathsf b}\) is an infinite set provided that \(S_0\) is not self-adjoint.

\begin{corollary} \label{co-EqV}
Let \(\widetilde{A} \in \mathbb A_{S,\mathsf b}\), assume \(d_0 \gt 0\) (see Remark~{\rm\ref{rmk-nota}}) and let \(\widetilde{A}\) be represented by \eqref{eq-tAhb},
as in Proposition~{\rm\ref{pr-bbAbasics}(\ref{pr-bbAbasics-i3})(\ref{pr-bbAbasics-i3-i1})}. For every \(\theta \in (0,2\pi)\) set
\begin{equation*}  
\widetilde{A}_\theta =  \Biggl\{
{\scriptstyle \left\{  \begin{bmatrix} f \\[1.5pt] \widehat{f} \end{bmatrix} ,
 \begin{bmatrix}
g \\[1.5pt]  \widehat{g} \end{bmatrix} \right\}}
 \, :
 \begin{array}{l}
 \{f,g\} \in  S_0^{*}, \ \ \{ \widehat{f},  \widehat{g}\} \in  \widehat{S}^{*}, \\[2.5pt]
 \mathsf{b}_0(\{f, g\})
 + e^{\iu \theta} \mkern1.5mu \widehat{\mathsf{b}} (\{ \widehat{f}, \widehat{g}\}) = 0
 \end{array}
  \Biggr\}.
\end{equation*}
Then \(\widetilde{A}_\theta\in \mathbb{A}_{S,\mathsf{b}}\), \(S_0 = \widetilde{A}_\theta \cap \mathfrak{H}^2\), \(\widehat{S} = \widetilde{A}_\theta \cap \widehat{\mathfrak{H}}^2\), \(\widetilde{A}_\theta\neq \widetilde{A}\) and \(\widetilde{A}_\theta\sim \widetilde{A}\).
\end{corollary}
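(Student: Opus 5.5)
Set $\widehat{\mathsf b}_\theta = e^{\iu\theta}\widehat{\mathsf b}$. The plan is to show that $\widetilde{A}_\theta$ is again a coupling of $S_0$ and $\widehat S$ of the form \eqref{eq-tAhb}, with $\widehat{\mathsf b}$ replaced by $\widehat{\mathsf b}_\theta$, and then apply the bijection in Proposition~\ref{pr-bbAbasics}(\ref{pr-bbAbasics-i3})(\ref{pr-bbAbasics-i3-i1}) and Theorem~\ref{th-EqSt}.

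\textbf{Step 1.} I first check that $\widehat{\mathsf b}_\theta$ is a boundary mapping for $\widehat S$ with Gram matrix $-\mathsf Q_0$. Multiplying by the unimodular scalar $e^{\iu\theta}$ preserves linearity and surjectivity, and it keeps the kernel equal to $\widehat S$. In the Lagrange identity the right-hand side becomes $\overline{e^{\iu\theta}}e^{\iu\theta}\,\widehat{\mathsf b}(\cdot)^*(-\mathsf Q_0)\widehat{\mathsf b}(\cdot)$, which is unchanged.

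\textbf{Step 2.} By the ``moreover'' bijection in Proposition~\ref{pr-bbAbasics}(\ref{pr-bbAbasics-i3})(\ref{pr-bbAbasics-i3-i1}), equivalently Theorem~\ref{Coupling}(\ref{Coupling-i4}), the relation $\widetilde{A}_\theta$ is a self-adjoint relation in $\widetilde{\mathfrak H}$ belonging to $\mathbb A_{S,\mathsf b}$, with $\widetilde{A}_\theta\cap\mathfrak H^2=S_0$ and $\widetilde{A}_\theta\cap\widehat{\mathfrak H}^2=\widehat S$. Conditions (\ref{def-bbA-i1})--(\ref{def-bbA-i4}) of Definition~\ref{def-bbA} hold because the spaces are unchanged, $\widetilde A_\theta\supseteq S_0\oplus\widehat S\supseteq S$, and $\widehat S$ is unchanged and has no eigenvalues.

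\textbf{Step 3.} Since this correspondence is a bijection, $\widetilde A_\theta=\widetilde A$ would force $e^{\iu\theta}\widehat{\mathsf b}=\widehat{\mathsf b}$. That is impossible: $\widehat{\mathsf b}$ is surjective onto $\mathbb C^{2d_0}\neq\{0\}$ (because $d_0>0$), and $e^{\iu\theta}\neq1$ for $\theta\in(0,2\pi)$. Hence $\widetilde A_\theta\neq\widetilde A$.

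\textbf{Step 4 (main step).} For the equivalence I use Theorem~\ref{th-EqSt}, (\ref{th-EqSt-i3})$\Rightarrow$(\ref{th-EqSt-i2}). The natural candidate isomorphism $\Theta$ of $\widehat{\mathfrak H}$ must satisfy
\begin{equation*}
\Theta\widehat S=\widehat S\Theta
\quad\text{and}\quad
e^{\iu\theta}\,\widehat{\mathsf b}(\{\Theta\widehat f,\Theta\widehat g\})=\widehat{\mathsf b}(\{\widehat f,\widehat g\}).
\end{equation*}
A scalar multiple $c I$ does not work: $\widehat{\mathsf b}$ is not linear in a way that lets a scalar pass through as conjugation, and $cI$ multiplies $\widehat{\mathsf b}$ by $c$, so one needs $c=e^{-\iu\theta}$. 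Then $\Theta=e^{-\iu\theta}I$ is a linear bijection commuting with $\widehat S$ and it transforms $\widehat{\mathsf b}$ correctly.

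The remaining concern is whether ``isomorphism'' in Definition~\ref{def-bbAeq} requires preservation of the inner product. Since $|e^{-\iu\theta}|=1$, $\Theta$ is in fact unitary, so this concern goes away. Rather than citing Theorem~\ref{th-EqSt}, I would verify \eqref{eq-4st-i2} directly. With $\Psi=e^{-\iu\theta}I$, an element $\bigl\{[f;\widehat f],[g;\widehat g]\bigr\}$ lies in $\widetilde A_\theta$ if and only if $\mathsf b_0(\{f,g\})+\widehat{\mathsf b}(\{e^{\iu\theta}\widehat f,e^{\iu\theta}\widehat g\})=0$, that is, if and only if $\bigl\{[f;e^{\iu\theta}\widehat f],[g;e^{\iu\theta}\widehat g]\bigr\}\in\widetilde A$. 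This is exactly $(I_{\mathfrak H}\oplus\Psi)\widetilde A=\widetilde A_\theta(I_{\mathfrak H}\oplus\Psi)$, so $\widetilde A_\theta\sim\widetilde A$.
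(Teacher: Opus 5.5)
Your proposal is correct and follows the paper's proof. The first four claims come from the bijection in Proposition~\ref{pr-bbAbasics}(\ref{pr-bbAbasics-i3})(\ref{pr-bbAbasics-i3-i1}) applied to the boundary mapping $e^{\iu\theta}\widehat{\mathsf b}$, and the equivalence comes from the unitary map $\Theta=e^{-\iu\theta}I$ on $\widehat{\mathfrak H}$; your direct check of \eqref{eq-4st-i2} is just the proof of Theorem~\ref{th-EqSt}(\ref{th-EqSt-i3})$\Rightarrow$(\ref{th-EqSt-i2}) written out in this special case, and the only blemish is the garbled sentence in Step~4 saying ``a scalar multiple $cI$ does not work'' right before you correctly use one, which you should delete.
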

\begin{proof}
Since \(e^{\iu \theta} \mkern1.5mu \widehat{\mathsf{b}}: \widehat{S}^{*} \to \mathbb{C}^{2d_0}\) is a boundary mapping for \(\widehat{S}\) with Gram matrix \(-\mathsf{Q}_0\), the claims \(\widetilde{A}_\theta\in \mathbb{A}_{S,\mathsf{b}}\), \(S_0 = \widetilde{A}_\theta \cap \mathfrak{H}^2\), \(\widehat{S} = \widetilde{A}_\theta \cap \widehat{\mathfrak{H}}^2\), and \(\widetilde{A}_\theta\neq \widetilde{A}\) follow from the last statement in Proposition~\ref{pr-bbAbasics}(\ref{pr-bbAbasics-i3})(\ref{pr-bbAbasics-i3-i1}).

The claim \(\widetilde{A}_\theta\sim \widetilde{A}\) follows from Theorem~\ref{th-EqSt}(\ref{th-EqSt-i3})$\Rightarrow$(\ref{th-EqSt-i2}) since \(\Theta:\widehat{\mathfrak{H}}\to\widehat{\mathfrak{H}}\) defined by  \(\Theta\widehat{f} = e^{-\iu \theta}\widehat{f}\) for \(\widehat{f}\in \widehat{\mathfrak{H}}\) is an isomorphism on \(\widehat{\mathfrak{H}}\) such that for all \(\{\widehat{f}, \widehat{g}\} \in \widehat{S}^{*}\) we have
\[
e^{\iu \theta} \mkern1.5mu \widehat{\mathsf{b}} \bigl(\{\Theta\widehat{f}, \Theta\widehat{g}\}\bigr) = e^{\iu \theta} \mkern1.5mu \widehat{\mathsf{b}} \bigl(\{e^{-\iu \theta}\widehat{f}, e^{-\iu \theta}\widehat{g}\}\bigr) = \widehat{\mathsf{b}} \bigl(\{\widehat{f}, \widehat{g}\}\bigr),
\]
where the last equality holds since \(\widehat{\mathsf{b}}: \widehat{S}^{*}\to \mathbb{C}^{2d_0}\) is linear.
\end{proof}

\section{Linearization} \label{linearization}

\begin{definition} \label{def-lin}
Let \(\mathcal{P}(z) \in \mathbb{P}_{\mathsf{Q}}\) and $\widetilde{A} \in \mathbb{A}_{S,\mathsf{b}}$. We say that \(\widetilde{A}\) is a \emph{linearization of the boundary eigenvalue problem} $\BEP\mkern-2mu\bigl(S, \mathsf b, \mathcal{P}(z)\bigr)$ if the following identity holds:
\begin{equation}\label{lin1}
\TtA(z) =
\Big\{ \{f,g\}\in S^* \, :\, \mathcal{P}(z)\mathsf b(\{f,g\})=0\Big\}, \quad z \in \overline{\mathbb{C}}.
\end{equation}
If \eqref{lin1} holds, we also say that \(\widetilde{A}\) is a \emph{linearization of the family} $T_{\mathcal{P}}(z)$, $z \in \overline{\mathbb C}$, from \eqref{preShtraus}.
\end{definition}

Theorem~\ref{th-EqSt} implies that if $\widetilde A$ is a linearization of the
$\BEP\mkern-2mu\bigl(S,\mathsf b,\mathcal P(z)\bigr)$, then every member of the
equivalence class $[\widetilde A]$ in $\mathbb A_{S,\mathsf b}$ is also a
linearization. Moreover, by Lemma~\ref{le-PeSSts}, if each member of
\([\widetilde{A}]\) linearizes the $\BEP\mkern-2mu\bigl(S,\mathsf b,\mathcal P(z)\bigr)$, then it also linearizes the $\BEP\mkern-2mu\bigl(S,\mathsf b,\mathcal S(z)\bigr)$ for every $\mathcal S(z) \in [\mathcal{P}(z)]$ in ${\mathbb P}_{\mathsf Q}$. Thus linearization is a relation between the quotient sets $\mathbb A_{S,\mathsf b}/\mathord{\sim}$ and ${\mathbb P}_{\mathsf Q}/\mathord{\sim}$.
The next theorem shows that this relation is remarkably simple.

Given a boundary eigenvalue problem $\BEP\mkern-2mu\bigl(S,\mathsf{b},\mathcal{P}(z)\bigr)$,  the most important part of the next theorem is the construction of the linearization in  \eqref{eq-tA3c}; see Steps 3b and 3c for details.

\begin{theorem} \label{X}
The binary relation
\begin{equation*}  
\Xi = \biggl\{
\bigl\{  \bigl[\widetilde{A}\bigr], \bigl[\mathcal{P}(z)\bigr] \bigr\} : \mkern-5mu
 \begin{array}{l}
 \widetilde A \in \mathbb{A}_{S,\mathsf{b}},\ \ \mathcal P(z) \in \mathbb{P}_{\mathsf{Q}},
 \\[2.5pt]
T_{\widetilde{A}}(z)=T_{S,\mathsf{b}, \mathcal P}(z) \ \text{for all}\ z \in \mathbb C
 \end{array}\mkern-2mu\biggr\}
\end{equation*}
is a bijection between the equivalence classes in \(\mathbb{A}_{S,\mathsf{b}}/\mathord{\sim}\) and $\mathbb{P}_{\mathsf{Q}}/\mathord{\sim}$.
\end{theorem}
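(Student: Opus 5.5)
We will show that $\Xi$ is a well-defined, single-valued relation, that it is injective in both directions, and that its domain and range are all of $\mathbb{A}_{S,\mathsf b}/\mathord{\sim}$ and $\mathbb{P}_{\mathsf Q}/\mathord{\sim}$.

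\emph{Step 1: well-definedness and injectivity.} Theorem~\ref{th-EqSt} shows that $T_{\widetilde A}(z)$, $z\in\mathbb C$, depends only on $[\widetilde A]$. Lemma~\ref{le-PeSSts} shows that $T_{S,\mathsf b,\mathcal P}(z)$ depends only on $[\mathcal P(z)]$. Hence membership in $\Xi$ is independent of representatives. Suppose $\{[\widetilde A],[\mathcal P]\},\{[\widetilde A],[\mathcal S]\}\in\Xi$. Then $T_{\mathcal P}(z)=T_{\mathcal S}(z)$ for all $z\in\mathbb C$, so $\mathcal P\sim\mathcal S$ by Lemma~\ref{le-PeSSts}. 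Suppose instead $\{[\widetilde A_1],[\mathcal P]\},\{[\widetilde A_2],[\mathcal P]\}\in\Xi$. Then $T_{\widetilde A_1}\equiv T_{\widetilde A_2}$, so $\widetilde A_1\sim\widetilde A_2$ by Theorem~\ref{th-EqSt}(\ref{th-EqSt-i1})$\Rightarrow$(\ref{th-EqSt-i2}). It therefore remains to prove surjectivity in both directions.

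\emph{Step 2: every $\widetilde A$ has a partner $\mathcal P$.} Let $\widetilde A\in\mathbb A_{S,\mathsf b}$ and let $S_0$, $d_0$, $\mathsf b_0$, $\mathsf Q_0$, $\widehat{\mathsf b}$ be as in Remark~\ref{rmk-nota}.

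If $d_0=0$, then $\widetilde A=S_0$ is a canonical self-adjoint extension of $S$ and $T_{\widetilde A}(z)\equiv\widetilde A$. Theorem~\ref{list}(\ref{list-iB})(\ref{list-iB-i3}) gives a constant full-rank $d\times 2d$ matrix $\mathsf P$ with $\mathsf P\mathsf Q^{-1}\mathsf P^*=\mathsf 0$ and $\widetilde A=T_{S,\mathsf b,\mathsf P}$. Clearly $\mathsf P\in\mathbb P_{\mathsf Q}$.

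If $d_0>0$, describe $S_0\subset S_0^*$ by Theorem~\ref{list} with $T=S_0$, $\tau=d-d_0$. This gives matrices $\mathsf B$ and $\mathsf B_0$ such that
\[
S_0^*=\bigl\{\{f,g\}\in S^*: \mathsf B\mathsf b(\{f,g\})=0\bigr\}
\]
and $\mathsf b_0=\mathsf B_0\mathsf b|_{S_0^*}$. By the Model Theorem~\ref{t-poso}, the quadruple $(\widehat{\mathfrak H},\kip_{\widehat{\mathfrak H}},\widehat S,\widehat{\mathsf b})$ has a model with some $\widehat{\mathcal P}(z)\in\mathbb P_{\mathsf Q_0}$ whose row degrees are all $\geq1$; this uses that $\widehat S$ has no eigenvalues. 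Proposition~\ref{pr-bbAbasics}(\ref{pr-bbAbasics-i3})(\ref{pr-bbAbasics-i3-i2}) then gives $T_{\widetilde A}(z)=\mathsf b_0^{-1}(\nul\widehat{\mathcal P}(z))$. Define
\[
\mathcal P(z)=\begin{bmatrix}\widehat{\mathcal P}(z)&\mathsf 0\\ \mathsf 0&\mathsf I_{d-d_0}\end{bmatrix}\begin{bmatrix}\mathsf B_0\\ \mathsf B\end{bmatrix}.
\]
Then $T_{\widetilde A}(z)=T_{S,\mathsf b,\mathcal P}(z)$ for all $z\in\mathbb C$. We then verify $\mathcal P\in\mathbb P_{\mathsf Q}$:
\begin{itemize}
\item[(a)] follows from $\mathsf B\mathsf Q^{-1}\mathsf B^*=\mathsf 0$, $\mathsf B_0\mathsf Q^{-1}\mathsf B^*=\mathsf 0$ and $\widehat{\mathcal P}\mathsf Q_0^{-1}\widehat{\mathcal P}^*=\mathsf 0$, the last identity holding with $z^*$ as in Definition~\ref{def-bbP};
\item[(b)] and (c) follow from the full rank of $\begin{bmatrix}\mathsf B_0\\ \mathsf B\end{bmatrix}$ together with (b) and (c) for $\widehat{\mathcal P}$, noting that $\mathsf P_\infty$ is the same block product with $\widehat{\mathcal P}(z)$ replaced by $\widehat{\mathsf P}_\infty$;
\item[(d)] holds after permuting rows so that the row degrees are nonincreasing; the bottom rows have degree $0$.
\end{itemize}

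\emph{Step 3: every $\mathcal P$ has a partner $\widetilde A$.} This is the constructive heart of the proof. Let $\mathcal P\in\mathbb P_{\mathsf Q}$, and let $d-d_0$ be the number of rows of degree $0$.

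If $d_0=0$, then $\mathcal P=\mathsf P$ is constant. Put $\widetilde A=T_{S,\mathsf b,\mathsf P}$ and $\widehat{\mathfrak H}=\{0\}$; this $\widetilde A$ is self-adjoint by Corollary~\ref{co-TPadjoint}.

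If $0<d_0<d$, apply Theorem~\ref{modelP} to obtain $\mathcal W$, $\widehat{\mathcal P}\in\mathbb P_{\mathsf Q_0}$ and $\mathsf B_0$ satisfying \eqref{reprP}. If $d_0=d$, take $\mathsf B_0=\mathsf I_{2d}$ and $\widehat{\mathcal P}=\mathcal P$ instead. Let $S_0$ and $S_0^*$ be the relations determined by $\begin{bmatrix}\mathsf B_0\\ \mathsf B\end{bmatrix}$ and by $\mathsf B$, respectively, as in Theorem~\ref{list}(\ref{list-iC}); then $\mathsf b_0=\mathsf B_0\mathsf b|_{S_0^*}$ has Gram matrix $\mathsf Q_0$. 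Next form the model quadruple
\[
\bigl(\mathfrak C_{\widehat\mu},K_{\mathsf Q_0,\widehat{\mathcal P}},S_{\widehat\mu},\mathsf b_{\widehat\mu,\widehat{\mathcal P}}\bigr).
\]
By the Reproducing Kernel Canonical Subspace Theorem~\ref{t-fCrks}, $\mathfrak C_{\widehat\mu}$ with the kernel inner product is a finite-dimensional Pontryagin space. The operator $S_{\widehat\mu}$ is closed and symmetric, has no eigenvalues, and has defect numbers $d_0$. Moreover, $\mathsf b_{\widehat\mu,\widehat{\mathcal P}}$ is a boundary mapping for $S_{\widehat\mu}$ with Gram matrix $-\mathsf Q_0$.

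Define $\widetilde A$ in $\mathfrak H\oplus\mathfrak C_{\widehat\mu}$ by the coupling formula \eqref{eq-tAhb} with $\widehat{\mathsf b}=\mathsf b_{\widehat\mu,\widehat{\mathcal P}}$. By Theorem~\ref{Coupling}, $\widetilde A$ is a self-adjoint extension of $S$ satisfying $\widetilde A\cap\mathfrak H^2=S_0$ and $\widetilde A\cap\widehat{\mathfrak H}^2=S_{\widehat\mu}$. Hence $\widetilde A\in\mathbb A_{S,\mathsf b}$. Proposition~\ref{pr-bbAbasics}(\ref{pr-bbAbasics-i3})(\ref{pr-bbAbasics-i3-i2}) then yields
\[
T_{\widetilde A}(z)=\mathsf b_0^{-1}\bigl(\nul\widehat{\mathcal P}(z)\bigr),
\]
and by \eqref{reprP} and the unimodularity of $\mathcal W$ this equals $T_{S,\mathsf b,\mathcal P}(z)$.

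The main obstacle is Step 3: checking that the model data $(\mathfrak C_{\widehat\mu},S_{\widehat\mu},\mathsf b_{\widehat\mu,\widehat{\mathcal P}})$ fit the hypotheses of the Coupling Theorem with exactly the Gram matrix $-\mathsf Q_0$, and that the row degrees of $\widehat{\mathcal P}$ are all $\geq1$ as required by Proposition~\ref{pr-bbAbasics}(\ref{pr-bbAbasics-i3})(\ref{pr-bbAbasics-i3-i2}). Both points rest on Theorem~\ref{modelP}(\ref{abcd-c3})--(\ref{abcd-c7}) and the Model Theorem, and I would verify them first.
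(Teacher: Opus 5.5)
Your proposal is correct and follows the paper's proof essentially step by step. It obtains single-valuedness and injectivity from Theorem~\ref{th-EqSt} and Lemma~\ref{le-PeSSts}, shows totality by assembling $\mathcal P(z)$ from $\mathsf B_0$, $\mathsf B$ and a model polynomial $\widehat{\mathcal P}(z)$, and proves surjectivity via Theorem~\ref{modelP} followed by coupling with $\bigl(\mathfrak C_{\widehat\mu},K_{\mathsf Q_0,\widehat{\mathcal P}},S_{\widehat\mu},\mathsf b_{\widehat\mu,\widehat{\mathcal P}}\bigr)$. The differences are cosmetic: you merge the paper's Steps~1 and~4. Also, in your Step~2 the case $d_0=d$ (where $\tau=0$ lies outside Theorem~\ref{list} as stated) should be handled by the remark following that theorem with $\mathsf B_0=\mathsf I_{2d}$, as the paper does in its Step~2b.
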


\begin{proof}
\emph{Step~1.}
The relation \(\Xi\) is single-valued. Let 
\[
\bigl\{\bigl[\widetilde{A}\bigr], \bigl[\mathcal{P}(z)\bigr] \bigr\}, \bigl\{\bigl[\widetilde{B}\bigr], \bigl[\mathcal{S}(z)\bigr] \bigr\} \in \Xi 
\]
and assume that \(\bigl[\widetilde{A}\bigr] = \bigl[\widetilde{B}\bigr]\). Then \(\widetilde{A}\sim\widetilde{B}\), and by the definition of \(\Xi\),  \(T_{\widetilde{A}}(z) \equiv T_{\mathcal{P}}(z)\),  and \(T_{\widetilde{B}}(z) \equiv T_{\mathcal{S}}(z)\).
Theorem~\ref{th-EqSt}(\ref{th-EqSt-i2})$\Rightarrow$(\ref{th-EqSt-i1}) yields \(T_{\widetilde{A}}(z) \equiv T_{\widetilde{B}}(z)\). Hence \(T_{\mathcal{P}}(z) \equiv T_{\mathcal{S}}(z)\). By  Lemma~\ref{le-PeSSts}, \(\mathcal{P}(z)\sim \mathcal{S}(z)\). Therefore \(\bigl[\mathcal{P}(z)\bigr] = \bigl[\mathcal{S}(z)\bigr]\).

\smallskip

\noindent\emph{Step~2.}
In this step we prove that the domain of \(\Xi\) is \(\mathbb{A}_{S,\mathsf{b}}/\mathord{\sim}\), that is, \(\Xi\) is total. Let $\widetilde{A} \in \mathbb{A}_{S,\mathsf{b}}$. We show that it is a linearization of a $\BEP\mkern-2mu\bigl(S, \mathsf b, \mathcal{P}(z)\bigr)$ for some $\mathcal{P}(z) \in \mathbb{P}_{\mathsf{Q}}$. As before, set $S_0 = \widetilde{A} \cap \mathfrak{H}^2$. It is a closed symmetric extension of $S$ in $\mathfrak{H}$  with equal defect numbers $d_0$ with $0 \leq d_0 \leq d$.

\smallskip
\noindent\emph{Step~2a.} If $d_0 = 0$, then by Proposition~\ref{pr-bbAbasics}(\ref{pr-bbAbasics-i2}) $\widetilde{A} = S_0$ is a self-adjoint extension of \(S\) and by Theorem~\ref{list}(\ref{list-iB})(\ref{list-iB-i3}) has the form $\widetilde{A} = \bigl\{\{f,g\}\,:\, \mathsf{P} \mathsf{b}(\{f,g\}) = 0\bigr\}$ where $\mathsf{P}$ is a constant $d \times 2d$ matrix satisfying $\rank \mathsf P=d$ and $\mathsf P \mathsf Q^{-1} \mathsf{P}^* = \mathsf{0}$. Hence $\{ [\widetilde{A}],[\mathsf P] \} \in \Xi$.

\smallskip

\noindent\emph{Step~2b.} If $d_0=d$, then the argument of Step~2c applies with the block $\mathsf B$ absent, $\mathsf B_0=\mathsf I_{2d}$, $\mathsf b_0=\mathsf b$,  $\mathsf Q_0 = \mathsf Q$, and $\widehat{\mathcal P}(z) = \mathcal P(z)\in\mathbb P_{\mathsf Q}$. Hence \eqref{eq-SPL} below becomes
\[
\TtA(z)=\Bigl\{\{f,g\}\in S^*:\, \mathcal{P}(z)\mathsf b(\{f,g\})=0 \Bigr\}.
\]
Therefore \(\{[\widetilde A],[\mathcal P(z)]\}\in\Xi\).

\smallskip
\noindent\emph{Step~2c.}
Assume $0< d_0 < d$. We apply Theorem~\ref{list} with
\begin{equation*}  
 T=S_0,\ d^\pm=d, \ \ \delta=2d, \ \  \tau = d-d_0, \ \  e^{\pm} = d_0, \ \   \text{and} \ \ \rho = 2d_0.
\end{equation*}
By Theorem~\ref{list}(\ref{list-iA})(\ref{list-iic}) there exists a \((d-d_0)\times2d\) matrix \(\mathsf{B}\) of full rank such that
\begin{equation}\label{eq-S0sB}
S_0^{*} = \Bigl\{ \{f,g\} \in S^* : \mathsf{B} \mathsf{b}(\{f,g\}) = 0 \Bigr\}.
\end{equation}
Let \(\widehat{\mathcal{P}}(z) \in \mathbb{P}_{\mathsf{Q}_0}\) be the \(d_0 \times 2d_0\) matrix polynomial from Proposition~\ref{pr-bbAbasics}(\ref{pr-bbAbasics-i3})(\ref{pr-bbAbasics-i3-i2}) such that
\begin{equation}\label{eq-SPL}
\TtA(z) =
\Bigl\{\{f,g\}\in S_0^{*}\,:\,\widehat{\mathcal{P}}(z)\mathsf{b}_0(\{f,g\}) = 0\Bigr\}.
\end{equation}

Recall that by Theorem~\ref{list}(\ref{list-iC})(\ref{list-iC-i2}) there exists a \(2d_0\times 2d\) matrix \(\mathsf{B}_0\) such that the \((d+d_0)\times 2d\) block matrix $\begin{bmatrix} \mathsf{B}_0 \\ \mathsf{B}\end{bmatrix}$ is of full rank,
\begin{equation} \label{eq-S0B0B}
S_0 = \biggl\{ \{f,g\} \in S^* : \begin{bmatrix} \mathsf{B}_0 \\ \mathsf{B}\end{bmatrix} \mathsf{b}(\{f,g\}) = 0 \biggr\},
\end{equation}
$\mathsf{Q}_0 = \bigl(\mathsf{B}_0\mathsf{Q}^{-1}\mathsf{B}_0^*\bigr)^{-1}$ and \(\mathsf{b}_0 = \mathsf{B}_0 \mathsf{b}\bigl.\bigr|_{S_0^*}\). Using the last equality we can write \eqref{eq-S0sB} and \eqref{eq-SPL} together as follows
\begin{equation*} 
\TtA(z) =
\biggl\{\{f,g\}\in S^*\,:\,\begin{bmatrix} \widehat{\mathcal{P}}(z) \mathsf{B}_0 \\ \mathsf{B}\end{bmatrix} \mathsf{b}(\{f,g\}) = 0\biggr\}.
\end{equation*}

Define the $d \times 2d$ matrix polynomial $\mathcal{P}(z)$ by
\begin{equation}\label{reprP1}
\mathcal{P}(z) =
 \begin{bmatrix}  \widehat{\mathcal{P}}(z) & \mathsf{0} \\[5pt] \mathsf{0} & \mathsf{I}_{d-d_0} \end{bmatrix}
\begin{bmatrix}  \mathsf{B}_0 \\[5pt] \mathsf{B}  \end{bmatrix},
\end{equation}
as in Theorem~\ref{modelP}. Then
\begin{equation} \label{eq-SPL1}
\TtA(z) =
\Bigl\{\{f,g\}\in S^*\,:\, \mathcal{P}(z) \mathsf{b}(\{f,g\}) = 0\Bigr\}.
\end{equation}
From Theorem~\ref{list} we know $\mathsf{B} \mathsf{Q}^{-1}\mathsf{B}^*=0$ (see (\ref{list-iB})(\ref{list-iB-i4})) and $\mathsf{B} \mathsf{Q}^{-1}\mathsf{B}_0^*=0$
(see (\ref{list-iC})(\ref{list-iC-i2})(\ref{list-iC-i2-i2})). Therefore
\begin{align*}
\mathcal{P}(z) \mathsf{Q}^{-1} \mathcal{P}(z^*)^*
  & =
\begin{bmatrix}  \widehat{\mathcal{P}}(z) & \mathsf{0} \\[5pt] \mathsf{0} & \mathsf{I}_{d-d_0} \end{bmatrix}
\begin{bmatrix}  \mathsf{B}_0 \\[5pt] \mathsf{B}  \end{bmatrix} \mathsf{Q}^{-1}
\begin{bmatrix}  \mathsf{B}_0^* & \mathsf{B}^*  \end{bmatrix}
\begin{bmatrix}  \widehat{\mathcal{P}}(z^*)^* & \mathsf{0} \\[5pt] \mathsf{0} & \mathsf{I}_{d-d_0} \end{bmatrix} \\
& =
\begin{bmatrix}  \widehat{\mathcal{P}}(z) & \mathsf{0} \\[5pt] \mathsf{0} & \mathsf{I}_{d-d_0} \end{bmatrix}
\begin{bmatrix}  \mathsf{B}_0\mathsf{Q}^{-1}\mathsf{B}_0^* &  \mathsf{B}_0\mathsf{Q}^{-1}\mathsf{B}^*  \\[5pt]
\mathsf{B}\mathsf{Q}^{-1}\mathsf{B}_0^* & \mathsf{B}\mathsf{Q}^{-1}\mathsf{B}^*   \end{bmatrix}
\begin{bmatrix}  \widehat{\mathcal{P}}(z^*)^* & \mathsf{0} \\[5pt] \mathsf{0} & \mathsf{I}_{d-d_0} \end{bmatrix} \\
& =
\begin{bmatrix}  \widehat{\mathcal{P}}(z) & \mathsf{0} \\[5pt] \mathsf{0} & \mathsf{I}_{d-d_0} \end{bmatrix}
\begin{bmatrix}  \mathsf{Q}_0^{-1} &  \mathsf{0} \\[5pt] \mathsf{0} & \mathsf{0} \end{bmatrix}
\begin{bmatrix}  \widehat{\mathcal{P}}(z^*)^* & \mathsf{0} \\[5pt] \mathsf{0} & \mathsf{I}_{d-d_0} \end{bmatrix} \\
& =
\begin{bmatrix}
\widehat{\mathcal{P}}(z) \mathsf{Q}_0^{-1} \widehat{\mathcal{P}}(z^*)^* & \mathsf{0} \\[5pt]
\mathsf{0} & \mathsf{0}
\end{bmatrix}  \\
& = \mathsf{0}.
\end{align*}
The last equality holds because \(\widehat{\mathcal{P}}(z)\in \mathbb{P}_{\mathsf{Q}_0}\).

Since the range of the matrix $\begin{bmatrix} \mathsf{B}_0 \\ \mathsf{B}\end{bmatrix}$ is \(\mathbb{C}^{d+d_0}\) and  \(\widehat{\mathcal{P}}(z)\in \mathbb{P}_{\mathsf{Q}_0}\), for every \(z \in \mathbb{C}\) the \(d\times(d+d_0)\) matrix \(\begin{bmatrix}  \widehat{\mathcal{P}}(z) & \mathsf{0} \\[5pt] \mathsf{0} & \mathsf{I}_{d-d_0} \end{bmatrix}\) has full range, and therefore the matrix in \eqref{reprP1} has full range.

Let \(\mu_j = \deg \widehat{\mathcal{P}}(z)\bigl.\bigr|_j\) for \(j \in \{1,\ldots,d_0\}\). Since \(\widehat{\mathcal{P}}(z)\in \mathbb{P}_{\mathsf{Q}_0}\), its degrees satisfy \(\mu_1 \geq \cdots \geq \mu_{d_0}\) and the \(d_0\times 2d_0\) matrix
\[
\widehat{\mathsf{P}}_{\mkern-2mu\infty} = \lim_{z\to \infty} \diag\bigl(z^{-\mu_1}, \ldots,z^{-\mu_{d_0}}\bigr)\widehat{\mathcal{P}}(z)
\]
has full rank. By definition \eqref{reprP1} of \(\mathcal{P}(z)\) we have \(\deg \mathcal{P}(z)\bigl.\bigr|_j \leq \mu_j\) for \(j \in \{1,\ldots,d_0\}\) and \(\deg \mathcal{P}(z)\bigl.\bigr|_j = 0\) for \(j \in \{d_0+1,\ldots,d\}\). Therefore
\[
\lim_{z\to \infty} \diag\bigl(z^{-\mu_1}, \ldots,z^{-\mu_{d_0}}, 1,\ldots,1\bigr)\mathcal{P}(z) =
\begin{bmatrix}  \widehat{\mathsf{P}}_{\mkern-2mu\infty} & \mathsf{0} \\[5pt] \mathsf{0} & \mathsf{I}_{d-d_0} \end{bmatrix} \begin{bmatrix} \mathsf{B}_0 \\[5pt] \mathsf{B}\end{bmatrix}
\]
and a reasoning as in the preceding paragraph yields that the limit matrix on the right has full rank. Therefore \(\deg \mathcal{P}(z)\bigl.\bigr|_j = \mu_j\) for \(j \in \{1,\ldots,d_0\}\) and \(\mathsf{P}_{\mkern-2mu\infty} = \begin{bmatrix} \widehat{\mathsf{P}}_{\mkern-2mu\infty} \mathsf{B}_0 \\ \mathsf{B}\end{bmatrix}\). Hence $\mathcal{P}(z) \in \mathbb{P}_{\mathsf{Q}}$.
Now \eqref{eq-SPL1} implies $\bigl\{[\widetilde A],[\mathcal P(z)] \bigr\} \in \Xi$ and hence $\dom \Xi =\mathbb{A}_{S,\mathsf{b}}/\mathord{\sim}$.

\medskip
\noindent\emph{Step~3.}
In this step we show that $\Xi$ is surjective. Consider $\mathcal{P}(z) \in \mathbb{P}_{\mathsf{Q}}$ and set $\mu_j=\deg \mathcal{P}(z)\bigl.\bigr|_j$, $j \in \{1, \ldots,d\}$.  Let $d_0 \in \{0, \ldots, d\}$ be such that
\[
d-d_0 = \#\big\{ j\in \{1,\ldots, d\}\,:\, \mu_j=0 \big\}.
\]

\smallskip
\noindent\emph{Step~3a.}
If $d_0=0$, then $\mathcal P(z)=\mathsf P_\infty$ and since $\rank \mathsf P_\infty =d$ and $\mathsf P_\infty \mathsf Q^{-1} \mathsf{P}_\infty^* = 0$,
\[
\widetilde{A} =\Big\{ \{f,g\}\in S^*\,:\, \mathsf{P}_\infty \mathsf{b}(\{f,g\}) = 0 \Big\}
\]
is a canonical self-adjoint extension of $S$; see Theorem~\ref{list}(\ref{list-iB})(\ref{list-iB-i3}). It trivially belongs to $\mathbb{A}_{S,\mathsf{b}}$ with $\widehat{\mathfrak H}=\{0\}$ and $\widehat S=\big\{\{0,0\}\big\}$, and $\bigl\{ [\widetilde A], [\mathsf{P}_\infty] \bigr\} \in \Xi$.

\smallskip
\noindent\emph{Step~3b.}
If $d_0 = d$, then all $\mu_j\geq 1$. We apply Theorem~\ref{Coupling}(\ref{Coupling-i4}) to the Krein space \(\mathfrak{H}\), the closed symmetric linear relation \(S\), its boundary mapping \(\mathsf{b}:S^* \to \mathbb{C}^{2d}\) with Gram matrix \(\mathsf{Q}\) and the Pontryagin space \(\bigl(\mathfrak{C}_\mu, K_{\mathsf{Q},\mathcal{P}}\bigr)\), a symmetric operator \(S_\mu\) without eigenvalues and its boundary mapping \(\mathsf{b}_{\mu,\mathcal{P}}\) with Gram matrix \(-\mathsf{Q}\) introduced in Theorem~\ref{t-fCrks}, and define in
\(\mathfrak{H}\oplus\mathfrak{C}_\mu\)
\begin{equation*}  
\widetilde{A} =  \Biggl\{
{\scriptstyle \left\{  \begin{bmatrix} f \\[1.5pt] f_\mu \end{bmatrix} ,
 \begin{bmatrix}
g \\[1.5pt]  g_\mu \end{bmatrix} \right\}}
 \, : \,
 \begin{array}{l}
 \{f,g\} \in  S^*, \ \ \{ f_\mu,  g_\mu\} \in  S_\mu^*, \\[2.5pt]
 \mathsf{b}(\{f, g\}) + \mathsf{b}_{\mu,\mathcal{P}}\mkern-0.5mu (\{ f_\mu, g_\mu\}) = 0
 \end{array}
  \Biggr\}.
\end{equation*}

By Theorem~\ref{Coupling}(\ref{Coupling-i4}) \(\widetilde{A}\) is a self-adjoint extension of \(S\oplus S_\mu\), \(S = \widetilde{A}\cap \mathfrak{H}^2\) and \(S_\mu = \widetilde{A}\cap\mathfrak{C}_\mu^2\). Thus \(\widetilde{A} \in \mathbb{A}_{S,\mathsf{b}}\), and, by the ``only if'' part of Proposition~\ref{pr-bbAbasics}(\ref{pr-bbAbasics-i3})(\ref{pr-bbAbasics-i3-i2}) we have \(\bigl\{\bigl[\widetilde{A}\bigr],\bigl[\mathcal{P}(z)\bigr]\bigr\}\in \Xi\).

\smallskip
\noindent\emph{Step~3c.}
It remains to consider the case $0 < d_0 < d$. Let $\mathcal P(z)$ be factored as in Theorem~\ref{modelP}. Then, by Remark~\ref{rem1}, the matrix polynomial on  the right-hand side of \eqref{reprP} belongs to $[\mathcal P(z)]$. With \(\mathsf{B}_0\) and \(\mathsf{B}\) from \eqref{reprP} we define a closed symmetric linear relation \(S_0\) by \eqref{eq-S0B0B}. Then, by Theorem~\ref{list}, the adjoint of \(S_0\) is given by
\begin{equation} \label{S0*}
S_0^{*}= \Bigl\{ \{f,g\}\in S^*\,:\, \mathsf B\mathsf b(\{f,g\}) = 0 \Bigr\},
\end{equation}
and, by Theorem~\ref{list}(\ref{list-iC})(\ref{list-iC-i2}),  $\mathsf{b}_0 = \mathsf{B}_0 \mathsf{b}|_{S_0^{*}}$ is a boundary mapping for \(S_0\) with Gram matrix $\mathsf{Q}_0 = \bigl(\mathsf{B}_0\mathsf{Q}^{-1}\mathsf{B}_0^*\bigr)^{-1}$.

By Theorem~\ref{modelP} the \(d_0\times 2d_0\) matrix polynomial \(\widehat{\mathcal{P}}(z)\) from \eqref{reprP} belongs to \(\mathbb{P}_{\mathsf{Q}_0}\), its row degrees satisfy \(\mu_1 \geq \cdots \geq \mu_{d_0} \geq 1\) and we set \(\widehat{\mu} = (\mu_1,\ldots,\mu_{d_0})\). Now we proceed as in Step~3b of this proof, almost verbatim in the next paragraph.

We apply Theorem~\ref{Coupling}(\ref{Coupling-i4}) to the Krein space \(\mathfrak{H}\), the closed symmetric linear relation \(S_0\), its boundary mapping \(\mathsf{b}_0:S_0^* \to \mathbb{C}^{2d_0}\) with Gram matrix \(\mathsf{Q}_0\) and the Pontryagin space \(\bigl(\mathfrak{C}_{\widehat{\mu}}, K_{\mathsf{Q}_0,\widehat{\mathcal{P}}}\bigr)\), a symmetric operator \(S_{\widehat{\mu}}\) without eigenvalues and its boundary mapping \(\mathsf{b}_{\widehat{\mu},\widehat{\mathcal{P}}}\) with Gram matrix \(-\mathsf{Q}_0\) introduced in Theorem~\ref{t-fCrks}, and define in \(\mathfrak{H}\oplus\mathfrak{C}_{\widehat{\mu}}\)
\begin{equation} \label{eq-tA3c}
\widetilde{A} =  \Biggl\{
{\scriptstyle \left\{  \begin{bmatrix} f \\[1.5pt] f_{\widehat{\mu}} \end{bmatrix} ,
 \begin{bmatrix}
g \\[1.5pt]  g_{\widehat{\mu}} \end{bmatrix} \right\}}
 \, : \,
 \begin{array}{l}
 \{f,g\} \in  S_0^{*}, \ \ \{ f_{\widehat{\mu}},  g_{\widehat{\mu}}\} \in  S_{\widehat{\mu}}^*, \\[2.5pt]
 \mathsf{b}_0(\{f, g\}) + \mathsf{b}_{\widehat{\mu},\widehat{\mathcal{P}}}\mkern-0.5mu (\{ f_{\widehat{\mu}}, g_{\widehat{\mu}}\}) = 0
 \end{array}
 \mkern-3mu \Biggr\}.
\end{equation}
By Theorem~\ref{Coupling}(\ref{Coupling-i4}), \(\widetilde{A}\) is a self-adjoint extension of \(S_0\oplus S_{\widehat{\mu}}\), \(S_0 = \widetilde{A}\cap \mathfrak{H}^2\) and \(S_{\widehat{\mu}} = \widetilde{A}\cap\mathfrak{C}_{\widehat{\mu}}^2\). Hence \(\widetilde{A} \in \mathbb{A}_{S_0,\mathsf{b}_0}\), and consequently \(\widetilde{A} \in \mathbb{A}_{S,\mathsf{b}}\). By the ``only if'' part of  Proposition~\ref{pr-bbAbasics}(\ref{pr-bbAbasics-i3-i2}) we have
\begin{equation} \label{eq-SP1}
\TtA(z) =
\Bigl\{\{f,g\}\in S_0^{*}\,:\, \widehat{\mathcal{P}}(z) \mathsf{b}_0(\{f,g\}) = 0\Bigr\}.
\end{equation}
Recall that $\mathsf{b}_0 = \mathsf{B}_0 \mathsf{b}|_{S_0^{*}}$ and write \eqref{S0*} and \eqref{eq-SP1} together as
\begin{equation*} 
\TtA(z) =
\biggl\{\{f,g\}\in S^*\,:\,\begin{bmatrix}  \mathcal{P}(z) \mathsf{B}_0 \\ \mathsf{B}\end{bmatrix} \mathsf{b}(\{f,g\}) = 0\biggr\},
\end{equation*}
or, in the spirit of \eqref{reprP},
\begin{equation*} 
\TtA(z) =
\Biggl\{\{f,g\}\in S^*\,:\,
\begin{bmatrix} \widehat{\mathcal{P}}(z) & \mathsf{0} \\[5pt] \mathsf{0} & \mathsf{I}_{d-d_0} \end{bmatrix}
\begin{bmatrix}  \mathsf{B}_0 \\[5pt] \mathsf{B}  \end{bmatrix} \mathsf{b}(\{f,g\}) = 0\Biggr\},
\end{equation*}
that is,
\begin{equation*}
\TtA(z) =
\Bigl\{\{f,g\}\in S^*\,:\, \mathcal{W}(z)\mathcal{P}(z) \mathsf{b}(\{f,g\}) = 0\Bigr\}.
\end{equation*}
Since \(\mathcal{W}(z)\) is unimodular, the last equality proves  \(\bigl\{\bigl[\widetilde{A}\bigr],\bigl[\mathcal{P}(z)\bigr]\bigr\}\in \Xi\).

\smallskip
\noindent\emph{Step~4.} The relation \(\Xi\) is injective. Let \(\bigl\{\bigl[\widetilde{A}\bigr], \bigl[\mathcal{P}(z)\bigr] \bigr\}, \bigl\{\bigl[\widetilde{B}\bigr], \bigl[\mathcal{S}(z)\bigr] \bigr\}\) be in \(\Xi\) and assume that \(\bigl[\mathcal{P}(z)\bigr] = \bigl[\mathcal{S}(z)\bigr]\). Then \(\mathcal{P}(z)\sim\mathcal{S}(z)\), \(T_{\widetilde{A}}(z) \equiv T_{\mathcal{P}}(z)\),  and \(T_{\widetilde{B}}(z) \equiv T_{\mathcal{S}}(z)\).
Lemma~\ref{le-PeSSts} yields \(T_{\mathcal{P}}(z) \equiv T_{\mathcal{S}}(z)\), and thus \(T_{\widetilde{A}}(z) \equiv T_{\widetilde{B}}(z)\). By Theorem~\ref{th-EqSt}(\ref{th-EqSt-i1})$\Rightarrow$(\ref{th-EqSt-i2}),  \(\widetilde{A} \sim \widetilde{B}\). Hence \(\bigl[\widetilde{A}\bigr] = \bigl[\widetilde{B}\bigr]\), proving injectivity.

This completes the proof that \(\Xi\) is a bijection between \(\mathbb{A}_{S,\mathsf{b}}/\mathord{\sim}\) and \(\mathbb{P}_{\mathsf{Q}}/\mathord{\sim}\).
\end{proof}

Theorem~\ref{X} and Corollary~\ref{co-TPadjoint} imply the following result.

\begin{corollary}
If \(\widetilde{A} \in \mathbb A_{S,\mathsf b}\), then $\TtA(z^*) = \TtA(z)^*$ for \(z\in\overline{\mathbb{C}}\).  In particular, $\TtA(z)$ with $z \in \mathbb R\cup\{\infty\}$ is a self-adjoint relation in $\mathfrak H$.
\end{corollary}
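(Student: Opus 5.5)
The plan is to reduce the statement to Corollary~\ref{co-TPadjoint} by way of Theorem~\ref{X}. Fix \(\widetilde{A} \in \mathbb{A}_{S,\mathsf b}\). Theorem~\ref{X} says the relation \(\Xi\) is total on \(\mathbb{A}_{S,\mathsf b}/\mathord{\sim}\). Hence there is \(\mathcal{P}(z) \in \mathbb{P}_{\mathsf Q}\) with \(\TtA(z) = T_{S,\mathsf b,\mathcal P}(z)\) for all \(z \in \mathbb{C}\). Since \(\Xi\) is defined through equivalence classes, I will take \(\mathcal{P}(z)\) to be the representative produced in Step~2 of that proof, where the identity was established for \(\widetilde{A}\) itself. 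Consequently no appeal to Theorem~\ref{th-EqSt} is needed to transfer the identity between members of \([\widetilde A]\).

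For \(z \in \mathbb{C}\), note that \(z^* \in \mathbb{C}\) as well. Then Corollary~\ref{co-TPadjoint} gives
\[
\TtA(z)^* = T_{S,\mathsf b,\mathcal P}(z)^* = T_{S,\mathsf b,\mathcal P}(z^*) = \TtA(z^*).
\]

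The only real issue is \(z=\infty\), because \(\Xi\) only records equality for \(z \in \mathbb{C}\). I will verify \(\TtA(\infty) = T_{S,\mathsf b,\mathcal P}(\infty)\) case by case, following Step~2 of the proof of Theorem~\ref{X}.

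\emph{Case \(d_0 = 0\).} Here \(\widetilde{A} \subseteq \mathfrak{H}^2\) is constant in \(z\), and \(\mathcal{P}(z) = \mathsf{P} = \mathsf{P}_\infty\). Both sides therefore equal \(\widetilde{A}\), which is self-adjoint.

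\emph{Case \(d_0 > 0\).} The last statement of Proposition~\ref{pr-bbAbasics}(\ref{pr-bbAbasics-i3})(\ref{pr-bbAbasics-i3-i2}) gives
\[
\TtA(\infty) = \bigl\{\{f,g\} \in S_0^* : \widehat{\mathsf P}_\infty \mathsf b_0(\{f,g\}) = 0\bigr\}.
\]
Combining this with \eqref{eq-S0sB} and \(\mathsf b_0 = \mathsf B_0 \mathsf b|_{S_0^*}\), the condition becomes
\[
\begin{bmatrix} \widehat{\mathsf P}_\infty \mathsf B_0 \\ \mathsf B \end{bmatrix} \mathsf b(\{f,g\}) = 0 .
\]
Step~2c computed that this block matrix is exactly \(\mathsf P_\infty\), and in the subcase \(d_0 = d\) it reduces to \(\widehat{\mathsf P}_\infty = \mathsf P_\infty\). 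Hence \(\TtA(\infty) = T_{S,\mathsf b,\mathcal P}(\infty)\).

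With this in hand, Corollary~\ref{co-TPadjoint} at \(z=\infty\), using \(\infty^* = \infty\), gives \(\TtA(\infty)^* = \TtA(\infty)\).

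The self-adjointness claim for \(z \in \mathbb{R} \cup \{\infty\}\) is then immediate from \(z^* = z\). The main step requiring care is the \(\infty\) case. It depends on the explicit representative \(\mathcal{P}(z)\) from Step~2 rather than on the abstract bijection, and I will lean on the formula for \(\mathsf P_\infty\) obtained there.
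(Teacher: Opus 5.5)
Your proposal is correct and follows the paper's route. Theorem~\ref{X} supplies $\mathcal P(z)\in\mathbb P_{\mathsf Q}$ with $\TtA(z)=T_{S,\mathsf b,\mathcal P}(z)$ on $\mathbb C$, and Corollary~\ref{co-TPadjoint} then gives the adjoint identity; your separate check at $z=\infty$, via the last statement of Proposition~\ref{pr-bbAbasics}(\ref{pr-bbAbasics-i3})(\ref{pr-bbAbasics-i3-i2}) and the formula for $\mathsf P_\infty$ from Step~2c, fills in a point the paper leaves implicit, since $\Xi$ only records equality for $z\in\mathbb C$.
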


When restricted to the class \(\mathbb A_{S,\mathsf b}\) 
\cite[Theorem~7.4]{DLdS84}, referred to in the Introduction, becomes a consequence of
Theorem~\ref{X}. Indeed, written in terms of a binary relation it reads as: The binary relation
\begin{equation*}
\Biggl\{
{ \bigl\{  T(z) ,[\mathcal P(z)] \bigr\}
}
 : \mkern-5mu
 \begin{array}{l}
 T(z) \in \mathbb{M},\ \ \mathcal P(z) \in \mathbb P_\mathsf Q
 \\[2.5pt]
 T(z) = T_{S, \mathsf b, \mathcal P}(z) \ \text{for all}\ z \in \mathbb C
 \end{array}\mkern-2mu\Biggr\}
\end{equation*}
with $\mathbb{M} = \bigl\{\TtA(z)\,:\,\widetilde A \in \mathbb A_{S, \mathsf b} \bigr\}$ is a bijection between $\mathbb M$ and $\mathbb P_\mathsf Q/\sim$.

We conclude this section with a proposition that is, in some sense,  dual to Corollary~\ref{co-EqV}.

\begin{proposition} \label{co-EqVdual}
Let \(\widetilde{A} \in \mathbb A_{S,\mathsf b}\), set \(S_0 = \widetilde{A} \cap \mathfrak{H}^2\), \(\widehat{S} = \widetilde{A} \cap \widehat{\mathfrak{H}}^2\) and assume \(d_0 \gt 0\). Then there exists a relation \(\widetilde{A}_1 \in \mathbb A_{S,\mathsf b}\) with \(\widehat{\mathfrak H}_1 = \widehat{\mathfrak H}\) such that  \(\widetilde{A}_1 \cap \mathfrak{H}^2 = S_0\), \(\widetilde{A}_1 \cap \widehat{\mathfrak{H}}^2 = \widehat{S}\) and \(\widetilde{A}_1\sim\widetilde{A}\) does not hold.
\end{proposition}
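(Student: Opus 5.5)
We will keep $S_0$, $\widehat{S}$ and the exit space $\widehat{\mathfrak H}$ fixed and change only the boundary mapping $\widehat{\mathsf b}$ in the representation \eqref{eq-tAhb}. By Proposition~\ref{pr-bbAbasics}(\ref{pr-bbAbasics-i3})(\ref{pr-bbAbasics-i3-i1}) we may write $\widetilde A$ as in \eqref{eq-tAhb}. That proposition also gives a bijection between boundary mappings for $\widehat S$ with Gram matrix $-\mathsf Q_0$ and relations in $\mathbb A_{S,\mathsf b}$ with traces $S_0$ and $\widehat S$. So any boundary mapping $\widehat{\mathsf b}_1$ for $\widehat S$ with Gram matrix $-\mathsf Q_0$ defines, via \eqref{eq-tAhb}, a relation $\widetilde A_1\in\mathbb A_{S,\mathsf b}$ with $\widehat{\mathfrak H}_1=\widehat{\mathfrak H}$, $\widetilde A_1\cap\mathfrak H^2=S_0$ and $\widetilde A_1\cap\widehat{\mathfrak H}^2=\widehat S$. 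It remains to choose $\widehat{\mathsf b}_1$ so that $\widetilde A_1\not\sim\widetilde A$.

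By Theorem~\ref{th-EqSt}, it suffices to arrange $T_{\widetilde A_1}(z)\neq\TtA(z)$ for some $z\in\mathbb C$. The natural choice is $\widehat{\mathsf b}_1=\mathsf U\widehat{\mathsf b}$, where $\mathsf U$ is an invertible $2d_0\times 2d_0$ matrix with $\mathsf U^*\mathsf Q_0\mathsf U=\mathsf Q_0$, i.e.\ $\mathsf U$ is $\mathsf Q_0$-unitary. Then $\widehat{\mathsf b}_1$ is again surjective with kernel $\widehat S$, and the Lagrange identity holds with Gram matrix $-\mathsf Q_0$. By the Model Theorem~\ref{t-poso} and Proposition~\ref{pr-bbAbasics}(\ref{pr-bbAbasics-i3})(\ref{pr-bbAbasics-i3-i2}), there is $\widehat{\mathcal P}(z)\in\mathbb P_{\mathsf Q_0}$ with $\TtA(z)=\mathsf b_0^{-1}(\nul\widehat{\mathcal P}(z))$. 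We must compute $T_{\widetilde A_1}(z)$. Note that
\[
\mathsf b_0+\mathsf U\widehat{\mathsf b}=0 \iff \mathsf U^{-1}\mathsf b_0+\widehat{\mathsf b}=0 .
\]
Also, $\mathsf U^{-1}\mathsf b_0$ is a boundary mapping for $S_0$ with Gram matrix $\mathsf U^*\mathsf Q_0\mathsf U=\mathsf Q_0$. Applying Theorem~\ref{th-SfMt} / Proposition~\ref{pr-bbAbasics}(\ref{pr-bbAbasics-i3})(\ref{pr-bbAbasics-i3-i2}) to the pair with $\mathsf b_0$ replaced by $\mathsf U^{-1}\mathsf b_0$, we expect
\[
T_{\widetilde A_1}(z)=\bigl\{\{f,g\}\in S_0^*:\widehat{\mathcal P}(z)\mathsf U^{-1}\mathsf b_0(\{f,g\})=0\bigr\}.
\]
Since $\mathsf b_0$ maps $S_0^*$ onto $\mathbb C^{2d_0}$, equality of the two families for all $z$ is equivalent to $\nul\widehat{\mathcal P}(z)\mathsf U^{-1}=\nul\widehat{\mathcal P}(z)$ for all $z$, i.e.\ (by Lemma~\ref{le-eqnuls}) $\widehat{\mathcal P}(z)\mathsf U^{-1}=\mathcal V(z)\widehat{\mathcal P}(z)$ for some unimodular $\mathcal V(z)$.

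The main obstacle is to choose a $\mathsf Q_0$-unitary $\mathsf U$ for which this fails. We try to make $\mathsf U$ move the leading row space. By \eqref{={0}}, $\bigcap_z\nul\widehat{\mathcal P}(z)=\{0\}$, so the row space of the coefficient matrix is all of $\mathbb C^{2d_0}$ and $\widehat{\mathcal P}(z)$ is genuinely non-constant. If $\nul\widehat{\mathcal P}(z)$ were $\mathsf U$-invariant for every $\mathsf Q_0$-unitary $\mathsf U$, then in particular it would be invariant under a $\mathsf Q_0$-unitary group acting transitively on the (Lagrangian) $\mathsf Q_0$-neutral $d_0$-dimensional subspaces. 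Each $\nul\widehat{\mathcal P}(z)$, $z\in\mathbb R$, is such a subspace, since Definition~\ref{def-bbP}(\ref{i-t-m-a}),(\ref{i-t-m-b}) hold. A transitive group cannot fix such a subspace, so this is impossible. Concretely, we pick $z_0\in\mathbb R$ and a $\mathsf Q_0$-unitary $\mathsf U$ with $\mathsf U\,\nul\widehat{\mathcal P}(z_0)\neq\nul\widehat{\mathcal P}(z_0)$; for instance, $\mathsf U$ can map $\nul\widehat{\mathcal P}(z_0)$ to a complementary neutral subspace, which is possible by Witt's theorem for the form $\mathsf Q_0$. Then $T_{\widetilde A_1}(z_0)\neq\TtA(z_0)$, and Theorem~\ref{th-EqSt}(\ref{th-EqSt-i2})$\Rightarrow$(\ref{th-EqSt-i1}) gives $\widetilde A_1\not\sim\widetilde A$.

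The delicate points to verify are the formula for $T_{\widetilde A_1}(z)$, via the model with the boundary mapping $\mathsf U^{-1}\mathsf b_0$, and the correct direction of $\mathsf U$ versus $\mathsf U^{-1}$. Neither affects the existence argument.
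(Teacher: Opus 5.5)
Your proof is correct. Its construction is the paper's: you perturb the coupling condition to $\mathsf b_0+\mathsf U\widehat{\mathsf b}=0$ with $\mathsf U^*\mathsf Q_0\mathsf U=\mathsf Q_0$, and you detect non-equivalence through the Lagrangian $\nul\widehat{\mathcal P}(z_0)$ at a real point (the paper takes $z_0=0$). The route differs in two places.

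\textbf{How non-equivalence is certified.} The paper imports the external criterion \cite[Theorem~4.5]{CDCAOT}: $\widetilde A_{\mathsf V}\sim\widetilde A$ if and only if $\mathcal W(z)\widehat{\mathcal P}(z)=\widehat{\mathcal P}(z)\mathsf V$ for some unimodular $\mathcal W(z)$. It then uses only the consequence of this at $z=0$. You stay inside the paper instead. Theorem~\ref{th-SfMt}, applied with $\mathsf b_1=\mathsf U^{-1}\mathsf b_0$ (Gram matrix $\mathsf U^*\mathsf Q_0\mathsf U=\mathsf Q_0$) and the unchanged $\widehat{\mathsf b}$, gives $T_{\widetilde A_1}(z)=\mathsf b_0^{-1}\bigl(\mathsf U\nul\widehat{\mathcal P}(z)\bigr)$. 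Theorem~\ref{th-EqSt} then turns $T_{\widetilde A_1}(z_0)\neq\TtA(z_0)$ into $\widetilde A_1\not\sim\widetilde A$. The formula you only say you ``expect'' is fully justified. The ``if'' part of Theorem~\ref{th-SfMt} applies verbatim, because the model hypothesis involves only $(\widehat{\mathfrak H},\widehat S,\widehat{\mathsf b})$, which you left untouched. The $\mathsf U$ versus $\mathsf U^{-1}$ question is also harmless, since $\mathsf U\mathfrak M\neq\mathfrak M$ if and only if $\mathsf U^{-1}\mathfrak M\neq\mathfrak M$. Your route is self-contained, and combined with Lemma~\ref{le-eqnuls} it recovers the full if-and-only-if criterion, not just the necessary condition the paper uses.

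\textbf{How the $\mathsf Q_0$-unitary matrix is chosen.} Your appeal to Witt's theorem is valid. A complementary Lagrangian exists, any linear bijection between two neutral subspaces is an isometry, and Witt extends it to a $\mathsf Q_0$-unitary map. The ``transitive group'' sentence before it is only a heuristic; the Witt argument is what carries the proof. The paper's explicit choice $\mathsf J=\sgn\mathsf Q_0$ does the same job more cheaply. It is $\mathsf Q_0$-unitary, and if $x$ and $\mathsf Jx$ both lie in the neutral space $\nul\widehat{\mathsf P}_0$, then $x^*|\mathsf Q_0|x=0$, so $x=0$. You could adopt it and drop Witt's theorem altogether.
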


\begin{proof}
Let \(\widehat{\mathcal{P}}(z) \in \mathbb{P}_{\mathsf{Q}_0}\) be such that the quadruple  \(\bigl(\mathfrak{C}_{\widehat{\mu}}, K_{\mathsf{Q}_0,\widehat{\mathcal{P}}}, S_{\widehat{\mu}}, \mathsf{b}_{\widehat{\mu},\widehat{\mathcal{P}}}\bigr)\) is a model for \(\bigl(\widehat{\mathfrak{H}}, \kip_{\widehat{\mathfrak{H}}}, \widehat{S}, \widehat{\mathsf{b}}\bigr)\). Assume that \(\widetilde{A}\) is represented by \eqref{eq-tAhb}. By Theorem~\ref{Coupling}(\ref{Coupling-i3}) the formula
\begin{equation} \label{eq-noneqA}
\widetilde{A}_{\mathsf{V}} =  \Biggl\{
{\scriptstyle \left\{  \begin{bmatrix} f \\[1.5pt] \widehat{f} \end{bmatrix} ,
 \begin{bmatrix}
g \\[1.5pt]  \widehat{g} \end{bmatrix} \right\}}
 \, :
 \begin{array}{l}
 \{f,g\} \in  S_0^{*}, \ \ \{ \widehat{f},  \widehat{g}\} \in  \widehat{S}^{*}, \\[2.5pt]
 \mathsf{b}_0(\{f, g\})
 + \mathsf{V}\widehat{\mathsf{b}} (\{ \widehat{f}, \widehat{g}\}) = 0
 \end{array} \mkern-5mu
  \Biggr\},
\end{equation}
gives a bijection between the set of all canonical self-adjoint extensions \(\widetilde{A}_{\mathsf{V}}\) of \(S_0\oplus \widehat{S}\) in \(\widetilde{\mathfrak{H}} = \mathfrak{H} \oplus \widehat{\mathfrak{H}}\) such that \(\widetilde{A}_{\mathsf{V}} \cap \mathfrak{H}^2 = S_0\), \(\widetilde{A}_{\mathsf{V}} \cap \widehat{\mathfrak{H}}^2 = \widehat{S}\) and the set of all \(2d_0\times 2 d_0\) matrices \(\mathsf{V}\) such that \(\mathsf{V}^* \mathsf{Q}_0 \mathsf{V} = \mathsf{Q}_0\). Since in this bijection \(\widetilde{A} = \widetilde{A}_{\mathsf{I}_{2 d_0}}\), \cite[Theorem~4.5]{CDCAOT} yields that  \(\widetilde{A}_{\mathsf{V}} \sim \widetilde{A}\) if and only if there exists a \(d_0\times d_0\) unimodular matrix polynomial \(\mathcal{W}(z)\) such that \(\mathcal{W}(z) \widehat{\mathcal{P}}(z) = \widehat{\mathcal{P}}(z) \mathsf{V}\) for all \(z \in \mathbb{C}\). In particular, for \(z=0\) we have
\begin{equation} \label{eq-impl}
\widetilde{A}_{\mathsf{V}} \sim \widetilde{A} \quad \Rightarrow \quad \mathsf{W} \widehat{\mathsf{P}}_{\mkern-2mu0} = \widehat{\mathsf{P}}_{\mkern-2mu0} \mathsf{V} \quad \Rightarrow \quad \nul \widehat{\mathsf{P}}_{\mkern-2mu0} = \nul(\widehat{\mathsf{P}}_{\mkern-2mu0} \mathsf{V}),
\end{equation}
where \(\mathsf{W} = \mathcal{W}(0)\) is an invertible \(d_0\times d_0\) matrix and \(\widehat{\mathsf{P}}_{\mkern-2mu0} = \widehat{\mathcal{P}}(0)\) is a \(d_0\times 2 d_0\) matrix such that \(\widehat{\mathsf{P}}_{\mkern-2mu0} \mathsf{Q}_0^{-1} \widehat{\mathsf{P}}_{\mkern-2mu0}^* = \mathsf{0}\).

Since \(\widehat{\mathsf{P}}_{\mkern-2mu0}\) has full rank, \(\widehat{\mathsf{P}}_{\mkern-2mu0} \mathsf{Q}_0^{-1} \widehat{\mathsf{P}}_{\mkern-2mu0}^* = \mathsf{0}\) implies that
\(
\nul \widehat{\mathsf{P}}_{\mkern-2mu0} = \ran \bigl(\mathsf{Q}_0^{-1} \widehat{\mathsf{P}}_{\mkern-2mu0}^* \bigr)
\)
is a maximal neutral subspace of the Pontryagin space \(\bigl(\mathbb{C}^{2d_0},\kip_{\mathsf{Q}_0}\bigr)\) with the inner product \([x,y]_{\mathsf{Q}_0} = y^* \mathsf{Q}_0 x\). A fundamental symmetry on this space is \(\mathsf{J} = \sgn \mathsf{Q}_0\), the matrix sign function. Clearly, \(\widehat{\mathsf{P}}_{\mkern-2mu0} \mathsf{J} \in \mathbb{P}_{\mathsf{Q}_0}\) and \(\nul (\widehat{\mathsf{P}}_{\mkern-2mu0} \mathsf{J}) = \mathsf{J}\nul \widehat{\mathsf{P}}_{\mkern-2mu0}\) is another maximal neutral subspace of this Pontryagin space.
Since \(\nul \widehat{\mathsf{P}}_{\mkern-2mu0}\) is neutral,  if \(x, \mathsf{J} x \in \nul \widehat{\mathsf{P}}_{\mkern-2mu0}\),  then \([\mathsf{J}x,x]_{\mathsf{Q}_0} = x^* |\mathsf{Q}_0| x = 0\), implying \(x=0\). Hence
\[
\bigl(\nul \widehat{\mathsf{P}}_{\mkern-2mu0} \bigr) \cap \bigl(\nul (\widehat{\mathsf{P}}_{\mkern-2mu0} \mathsf{J})\bigr) = \{0\}.
\]
Therefore \(\nul( \widehat{\mathsf{P}}_{\mkern-2mu0} \mathsf{J}) \neq \nul \widehat{\mathsf{P}}_{\mkern-2mu0}\). With \(\mathsf{V} = \mathsf{J}\) we have \(\mathsf{V}^* \mathsf{Q}_0 \mathsf{V} = \mathsf{Q}_0\) and \(\nul \widehat{\mathsf{P}}_{\mkern-2mu0} \neq \nul(\widehat{\mathsf{P}}_{\mkern-2mu0} \mathsf{V})\). Thus by the contrapositive of \eqref{eq-impl} it follows that $\widetilde A_1 = \widetilde A_{\mathsf V}$ defined by \eqref{eq-noneqA} is not similar to $\widetilde A$.
\end{proof}

\section{Spectral equivalence}\label{speceq}

\begin{definition}
Let \(\widetilde{A} \in \mathbb{A}_{S,\mathsf{b}}\) and \(\mathcal{P}(z) \in \mathbb{P}_{\mathsf{Q}}\). The self-adjoint relation $\widetilde A$ is said to be {\em spectrally equivalent} to the boundary eigenvalue problem $\BEP\mkern-2mu\bigl(S,\mathsf{b},\mathcal{P}(z)\bigr)$ if the following equalities hold:
\begin{equation*}
\rho( \BEP )=\rho(\widetilde A),\ \
\sigc( \BEP ) = \sigc(\widetilde A), \ \
\sigp( \BEP )=\sigp(\widetilde A), \ \
\sigr( \BEP )=\sigr(\widetilde A),
\end{equation*}
and the Jordan chains of the $\BEP$ at its eigenvalue $\lambda \in \overline{\mathbb{C}}$ correspond bijectively to the Jordan chains of $\widetilde{A}$ at $\lambda$ as defined in Subsection~\ref{partition}.
\end{definition}

In this section we prove that the linearization \(\widetilde A\) defined in Definition~\ref{def-lin} is spectrally equivalent to a boundary eigenvalue problem which it linearizes provided that \(\sigp(\widetilde A) = \sigp(\widetilde A)^*\). In this case, \(\sigr( \BEP ) = \sigr(\widetilde A) = \emptyset\).  A major step in proving this is provided by the following statements, some of which are known; see for example \cite[p.~1]{DLdS86} and \cite[Theorem~7.4]{DLdS93}.

\begin{proposition}\label{pr-rs}
Let $\widetilde A \in \mathbb{A}_{S,\mathsf{b}}$ and  let $\TtA(z)$, $z \in \overline{\mathbb{C}}$, be given by \eqref{Tagain}.
The following statements hold for all $\lambda \in \overline{\mathbb{C}}$.
\begin{enumerate}
\renewcommand*\theenumi{\alph{enumi}}
\renewcommand*\labelenumi{\rm{(\theenumi)}}
\setlength{\itemsep}{1pt}
\item \label{pr-rs-i21}
$ \lambda \in \sigp(\widetilde{A})$ if and only if $\lambda \in \sigp\bigl(\TtA(\lambda)\bigr)$ and
\begin{alignat*}{2}
\widetilde{P}_{\mathfrak H} \nul\bigl(\widetilde{A} - \lambda I_{\widetilde{\mathfrak H}}\bigr)
& =\nul\bigl(\TtA(\lambda)-\lambda I_{\mathfrak H}\bigr) & \quad  &\text{if}\quad  \lambda \in \mathbb C,\\
\widetilde P_{\mathfrak H}\mul\widetilde A &=\mul \TtA(\infty) & \quad  &\text{if}\quad  \lambda = \infty.
\end{alignat*}

\item  \label{pr-rs-i22}
If $\lambda \in \rho(\widetilde A)$, then $\lambda \in \rho\bigl(\TtA(\lambda)\bigr)$.
\end{enumerate}

\noindent Also assume that \(\widetilde{A}\) satisfies {\rm(\ref{pr-rs-i11})-(\ref{pr-rs-i13})} from Lemma~{\rm\ref{lemR}}. The following equalities hold for all $\lambda \in \overline{\mathbb{C}}$.

\begin{enumerate}
\renewcommand*\theenumi{\alph{enumi}}
\renewcommand*\labelenumi{\rm{(\theenumi)}}
\setcounter{enumi}{2}
\setlength{\itemsep}{1pt}
\item \label{pr-rs-i23}
$\lambda \in \rho(\widetilde A)$
 if and only if $\lambda \in  \rho\bigl(\TtA(\lambda)\bigr) $.

\item \label{pr-rs-i24}
$\lambda \in \sigc(\widetilde A)$ if and only if $\lambda \in \sigc\bigl(\TtA(\lambda)\bigr)$.

\item\label{pr-rs-i25}
$\lambda \in \rho\bigl(\TtA(\lambda)\bigr)
 \cup \sigp\bigl(\TtA(\lambda)\bigr)
 \cup \sigc\bigl(\TtA(\lambda)\bigr)$ (disjoint union), that is,  $\lambda \notin \sigr\bigl(\TtA(\lambda)\bigr)$.
\end{enumerate}
\end{proposition}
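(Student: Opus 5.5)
The plan is to work directly from the definition \eqref{Tagain} of $\TtA(z)$ and the compressed resolvent formula \eqref{S1}. Two further ingredients will do the work: condition (\ref{def-bbA-i4}) of Definition~\ref{def-bbA}, that $\widehat S=\widetilde A\cap\widehat{\mathfrak H}^2$ is an operator without eigenvalues, and the finite dimensionality of $\widehat{\mathfrak H}$. The case $\lambda=\infty$ I will reduce throughout to $\lambda=0$ by passing to inverses. Here $\widetilde A^{-1}=\iota\widetilde A$ is a self-adjoint extension of $S^{-1}$ with the same exit space, and $(\iota\widetilde A)\cap\widehat{\mathfrak H}^2=\iota\widehat S$ has no eigenvalue $0$ or $\infty$. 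Moreover, directly from \eqref{Tagain}, $T_{\widetilde A^{-1}}(0)=\bigl(\TtA(\infty)\bigr)^{-1}$. Should $\widehat S^{-1}$ fail to have no eigenvalues at other points, I will instead repeat the finite-$\lambda$ argument verbatim at $\infty$, using only that $\widehat S$ is an operator.

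For (\ref{pr-rs-i21}) with $\lambda\in\mathbb C$, I start from an eigenvector: if $\{\widetilde f,\lambda\widetilde f\}\in\widetilde A$ with $\widetilde f\neq0$, then $\widetilde g-\lambda\widetilde f=0\in\mathfrak H$, so $\{\widetilde P_{\mathfrak H}\widetilde f,\lambda\widetilde P_{\mathfrak H}\widetilde f\}\in\TtA(\lambda)$. Here $\widetilde P_{\mathfrak H}\widetilde f\neq0$, since otherwise $\{\widetilde f,\lambda\widetilde f\}\in\widehat S$ would make $\lambda$ an eigenvalue of $\widehat S$. For the converse, take $\{f,\lambda f\}\in\TtA(\lambda)$. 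It comes from some $\{\widetilde f,\widetilde g\}\in\widetilde A$ with $\widetilde g-\lambda\widetilde f\in\mathfrak H$ and $\widetilde P_{\mathfrak H}(\widetilde g-\lambda\widetilde f)=\lambda f-\lambda f=0$, hence $\widetilde g=\lambda\widetilde f$. These two directions give the equality of the projected null space and $\nul(\TtA(\lambda)-\lambda I_{\mathfrak H})$. At $\infty$ the same argument applies to $\{0,\widetilde g\}\in\widetilde A$: if $\widetilde P_{\mathfrak H}\widetilde g=0$ then $\{0,\widetilde g\}\in\widehat S$, and since $\widehat S$ is an operator this forces $\widetilde g=0$. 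For (\ref{pr-rs-i22}) with $\lambda\in\rho(\widetilde A)\cap\mathbb C$, \eqref{S1} shows that $(\TtA(\lambda)-\lambda I_{\mathfrak H})^{-1}=\widetilde P_{\mathfrak H}(\widetilde A-\lambda)^{-1}|_{\mathfrak H}$ is an everywhere defined bounded operator. At $\infty$, $\widetilde A$ is then a bounded everywhere defined operator, so $\TtA(\infty)=\widetilde P_{\mathfrak H}\widetilde A|_{\mathfrak H}$ is one too.

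Now assume $\sigp(\widetilde A)=\sigp(\widetilde A)^*$. I prove (\ref{pr-rs-i25}) first, using the Corollary after Theorem~\ref{X}, $\TtA(\lambda)^*=\TtA(\lambda^*)$, together with \eqref{wellknown}. Suppose $\lambda\notin\sigp(\TtA(\lambda))$. Then $\lambda\notin\sigp(\widetilde A)$ by (\ref{pr-rs-i21}), hence $\lambda^*\notin\sigp(\widetilde A)$ by symmetry, hence $\nul(\TtA(\lambda^*)-\lambda^*)=\{0\}$ by (\ref{pr-rs-i21}) again. So $\overline{\ran}(\TtA(\lambda)-\lambda)=\mathfrak H$, which means $\lambda\notin\sigr(\TtA(\lambda))$.

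The main step is the converse in (\ref{pr-rs-i23}). Suppose $\lambda\in\rho(\TtA(\lambda))$. Then $\lambda\notin\sigp(\widetilde A)$ by (\ref{pr-rs-i21}), and $\lambda\notin\sigr(\widetilde A)$ by Lemma~\ref{lemR}, so $\widetilde A-\lambda$ is injective with dense range. Every $h\in\mathfrak H$ equals $g-\lambda f$ for some $\{f,g\}\in\TtA(\lambda)$, which comes from $\{\widetilde f,\widetilde g\}\in\widetilde A$ with $\widetilde g-\lambda\widetilde f=\widetilde P_{\mathfrak H}(\widetilde g-\lambda\widetilde f)=h$. So $\mathfrak H\subseteq\ran(\widetilde A-\lambda)$. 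A subspace containing the closed subspace $\mathfrak H$, which has finite codimension $\dim\widehat{\mathfrak H}$, is closed; being dense, it equals $\widetilde{\mathfrak H}$. The closed graph theorem then gives $\lambda\in\rho(\widetilde A)$. Finally, (\ref{pr-rs-i24}) follows by elimination: both $\widetilde A$ (Lemma~\ref{lemR}) and $\TtA(\lambda)$ (by (\ref{pr-rs-i25})) partition into $\rho\cup\sigp\cup\sigc$ at the point $\lambda$, and (\ref{pr-rs-i21}) and (\ref{pr-rs-i23}) match the $\sigp$ and $\rho$ parts.
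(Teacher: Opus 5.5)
Your proof is correct. Items (a) and (b) follow the paper's argument essentially verbatim, but for (c)--(e) you take a different route.

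The paper argues as follows.
\begin{itemize}
\item For (c) it observes that $(\widetilde A-\lambda I_{\widetilde{\mathfrak H}})^{-1}$ is densely defined (Lemma~\ref{lemR}) with bounded compression \eqref{S1}. It then invokes \cite[Corollary~1.3(ii)]{ACD16} on compressions with finite-dimensional exit space.
\item For (d) it gets the density of $\ran(\TtA(\lambda)-\lambda I_{\mathfrak H})$ from \cite[Corollary~1.3(i)]{ACD16}.
\item It derives (e) last: from (a), (c), (d) for finite $\lambda$, and from Stenger's lemma (self-adjointness of $\TtA(\infty)$) at $\infty$.
\end{itemize}

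Your route differs at each of these points.
\begin{itemize}
\item For (c) you lift the surjectivity of $\TtA(\lambda)-\lambda I_{\mathfrak H}$ to $\mathfrak H\subseteq\ran(\widetilde A-\lambda I_{\widetilde{\mathfrak H}})$. You then conclude with the finite codimension of $\mathfrak H$ and the closed graph theorem, which is completely elementary.
\item You prove (e) first, from the adjoint identity $\TtA(\lambda)^*=\TtA(\lambda^*)$ of the corollary following Theorem~\ref{X}, combined with (a) at $\lambda^*$ and the symmetry of $\sigp(\widetilde A)$.
\item Item (d) then drops out by elimination.
\end{itemize}

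What each approach buys: your version avoids both \cite{ACD16} and Stenger's lemma. The price is that it leans on the whole linearization machinery (Theorem~\ref{X}, Corollary~\ref{co-TPadjoint}, the Model Theorem~\ref{t-poso}). This is legitimate, since that corollary precedes the proposition and does not use it. The paper's version uses the structure of $\mathbb A_{S,\mathsf b}$ only through item (a) and otherwise relies on general facts about compressions. So it needs no polynomial description of $\TtA(z)$.

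Two minor points.
\begin{itemize}
\item In (e) you need the general identity $\overline{\ran}(T-\lambda I_{\mathfrak H})=\nul(T^*-\lambda^* I_{\mathfrak H})^{[\perp]}$ for a closed relation $T$. Equation \eqref{wellknown}, as the paper states it, covers only self-adjoint relations, though the general identity is equally standard.
\item Your hedge at $\infty$ is unnecessary. The relation $\iota\widehat S=\widehat S^{-1}$ is again an operator without eigenvalues:
\begin{itemize}
\item $\mul\widehat S^{-1}=\nul\widehat S=\{0\}$;
\item $\nul\widehat S^{-1}=\mul\widehat S=\{0\}$;
\item an eigenvalue $\mu\neq 0$ of $\widehat S^{-1}$ would make $\mu^{-1}$ an eigenvalue of $\widehat S$.
\end{itemize}
Hence $\widetilde A^{-1}\in\mathbb A_{S^{-1},\mathsf b\iota}$, and the reduction to $\lambda=0$ goes through cleanly.
\end{itemize}
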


\begin{proof}
(\ref{pr-rs-i21}) If we write elements like $\widetilde f \in \widetilde{\mathfrak H} = \mathfrak{H} \oplus \widehat{\mathfrak H}$ in the form $\widetilde{f} = \begin{bmatrix} f \\[1.5pt] \widehat f \end{bmatrix}$ with $f \in \mathfrak{H}$ and $\widehat f \in \widehat{\mathfrak H}$, then
$\TtA(z)$ can be expressed in the form
\[
\TtA(z)=
\left\{
\begin{array}{ll}
\left\{\{f,g\} \in S^*  \,:\, {\scriptstyle
\left\{
\begin{bmatrix} f \\[1.5pt] \widehat f \end{bmatrix} ,
\begin{bmatrix} g \\[1.5pt] \widehat g \end{bmatrix} \right\}} \in \widetilde{A}, \ \widehat g-z \widehat f=0\right\},
&  z \in \mathbb{C},
\\[5mm]
\left\{\{f,g\}\in S^*   \,:\, {\scriptstyle
\left\{
\begin{bmatrix} f \\[1.5pt] \widehat f \end{bmatrix} ,
\begin{bmatrix} g \\[1.5pt] \widehat g \end{bmatrix} \right\}} \in \widetilde{A}, \ \widehat f=0\right\},
&  z=\infty.
\end{array}
\right.
\]
Assume $\lambda \in \mathbb C$. If $f \in \nul\bigl(\TtA(\lambda)-\lambda I_{\mathfrak H}\bigr)$ and $f \neq 0$, then there is a vector  $\widehat{f} \in \widehat{\mathfrak H}$ such that $\widetilde{f} = \begin{bmatrix} f \\ \widehat f\end{bmatrix} \in \nul\bigl(\widetilde A-\lambda I_{\widetilde{\mathfrak H}}\bigr)$ and $\widetilde{f} \neq 0$. Conversely, if $\widetilde{f} = \begin{bmatrix} f \\ \widehat{f}\end{bmatrix} \in \nul\bigl(\widetilde A - \lambda I_{\widetilde{\mathfrak H}}\bigr)$ and $\widetilde f \neq 0$, then $f \in \nul\bigl( \TtA(\lambda) - \lambda I_{\mathfrak H}\bigr)$. We show that $f \neq0$.
If $f=0$, then $\{\widehat f, \lambda  \widehat f\} \in  \widehat S$. Since $\sigp(\widehat S)=\emptyset$, $\widehat f=0$, whence the contradiction $0 \neq \widetilde f =0$. This shows $f \neq0$.
Now the equivalence and the equality readily follow. The case $\lambda=\infty$ can be proved in a similar way and therefore omitted.

(\ref{pr-rs-i22})
The equality \eqref{S1} with $z=\lambda \in \mathbb C$ implies the implication $\lambda \in \rho(\widetilde A) \Rightarrow \lambda \in \rho\bigl(\TtA(\lambda)\bigr)$. If $\infty \in \rho(\widetilde A)$, then $\widetilde A$ is bounded, and then, by \eqref{Tagain}, $\TtA(\infty)$ is also bounded. Hence  $\infty \in \rho\bigl( \TtA(\infty) \bigr)$.

For the next three items, assume that the equivalent conditions in Lemma~\ref{lemR}
hold for \(\widetilde A\). 

(\ref{pr-rs-i23})
Assume $\lambda \in \rho\bigl(\TtA(\lambda)\bigr)$. Then $\lambda \not \in \sigp\bigl(\TtA(\lambda)\bigr)$, hence, by (\ref{pr-rs-i21}), $\lambda \not \in \sigp(\widetilde A)$. From Lemma~\ref{lemR}(\ref{pr-rs-i13}) it follows that the closed operator $(\widetilde A-\lambda I_{\widetilde{\mathfrak H}})^{-1}$ is at least densely defined and by \eqref{S1} its compression to $\mathfrak H$ is bounded. Since $\widehat{\mathfrak H}$ is finite dimensional,  \cite[Corollary~1.3(ii)]{ACD16} implies that $(\widetilde A-\lambda I_{\widetilde{\mathfrak H}})^{-1}$ is bounded on $\widetilde{\mathfrak H}$. Hence $\lambda \in \rho(\widetilde A)$. The converse implication follows from (\ref{pr-rs-i22}). The statement for $\lambda=\infty$ can be proved along the same lines and therefore omitted.

(\ref{pr-rs-i24}) Assume $\lambda \in \mathbb C$. Consider $\lambda \in \sigc\bigl(\TtA(\lambda)\bigr)$. Then
$\lambda \not \in \sigp\bigl(\TtA(\lambda)\bigr)$, hence, by (\ref{pr-rs-i21}), $\lambda \not \in \sigp(\widetilde A)$; and $\lambda \not \in \rho\bigl(\TtA(\lambda)\bigr)$, hence, by (\ref{pr-rs-i22}), $\lambda \not \in \rho(\widetilde A)$. Now Lemma~\ref{lemR}(\ref{pr-rs-i13}) implies $\lambda \in \sigc(\widetilde A)$.
Now consider $\lambda \in \sigc(\widetilde A)$. Then $\lambda \not \in \sigp(\widetilde A)$, hence, by (\ref{pr-rs-i21}), $\lambda \not \in \sigp\bigl(\TtA(\lambda)\bigr)$; and $\lambda \not \in \rho(\widetilde A)$, hence, by (\ref{pr-rs-i23}), $\lambda \not \in \rho\bigl(\TtA(\lambda)\bigr)$; and
\[
\overline{\dom}\,(\widetilde A - \lambda I_{\widetilde{\mathfrak H}})^{-1}
= \overline{\ran}\,(\widetilde A-\lambda I_{\widetilde{\mathfrak H}})
= \widetilde{\mathfrak H},
\]
which, by \cite[Corollary~1.3 (i)]{ACD16} and \eqref{S1}, implies
\[
\mathfrak H =
\overline{\dom}\,\widetilde{P}_\mathfrak H\bigl(\widetilde{A} - \lambda  I_{\widetilde{\mathfrak H}}\bigr)^{-1}\bigl.\bigr|_{\mathfrak{H}}
= \overline{\dom}\,\bigl(\TtA(\lambda)-\lambda I_{\mathfrak H}\bigr)^{-1}
= \overline{\ran}\,\bigl(\TtA(\lambda)- \lambda I_{\mathfrak H}\bigr).
\]
Hence $\lambda \in \sigc\bigl(\TtA(\lambda)\bigr)$. The proof for the case $\lambda = \infty$ is similar and therefore omitted.

(\ref{pr-rs-i25}) For $\lambda \in \mathbb{C}$ this item follows from Lemma~\ref{lemR}(\ref{pr-rs-i13}), and items  (\ref{pr-rs-i21}), (\ref{pr-rs-i23}) and (\ref{pr-rs-i24}) above. For $\lambda=\infty$, we note that, by the
Krein space version \cite[Theorem~4.1]{ADW} of Stenger's lemma in \cite{S},
the compression $\TtA(\infty)$ is self-adjoint. Hence, by Lemma~\ref{lemR}, $\infty \notin \sigr\bigl(\TtA(\infty)\bigr)$.
\end{proof}

The following observation is related to Remark~\ref{re-defop}.

\begin{remark}
If $\widetilde{A} \in \mathbb{A}_{S,\mathsf{b}}$ is definitizable, then every $\widetilde{B} \in \mathbb{A}_{S,\mathsf{b}}$ with \(\rho\bigl(\widetilde{B}\bigr) \neq \emptyset\) is definitizable. This follows from \cite[Theorem~2.2 and Remark~2.3]{ABT} and \cite[Theorem~2.2]{B}.
\end{remark}

\begin{lemma} \label{simplercase}
Assume $\mathcal{P}(z) \in \mathbb{P}_{\mathsf{Q}}$ is represented as in Theorem~{\rm\ref{modelP}} and let $\lambda \in \overline{\mathbb C}$. For a sequence $f_0, f_1, \ldots, f_n \in \mathfrak{H}$ the following statements  are equivalent.

\begin{enumerate}
\renewcommand*\theenumi{\alph{enumi}}
\renewcommand*\labelenumi{\rm{(\theenumi)}}

\item \label{simplercase-i1}
The sequence is a (maximal) Jordan chain for the $\BEP\mkern-2mu%%
\bigl(S, \mathsf b, \mathcal P(z)\bigr)$ at $\lambda$.
\item \label{simplercase-i2}
The  sequence  is  a (maximal) Jordan chain for the $\BEP\mkern-2mu\bigl(S_0, \mathsf{b}_0, \widehat{\mathcal{P}}(z)\bigr)$ at $\lambda$.
\end{enumerate}
\noindent  These statements with $\lambda = \infty$ are also equivalent to

\begin{enumerate}
\renewcommand*\theenumi{\alph{enumi}}
\renewcommand*\labelenumi{\rm{(\theenumi)}}
\setcounter{enumi}{2}
\item \label{simplercase-i3}
The sequence is a (maximal) Jordan chain for the $\BEP\mkern-2mu\bigl(S_0^{-1}, \mathsf{b}_0\mkern0.5mu\iota, \widehat{\mathcal{R}}(z)\bigr)$ at the eigenvalue $0$, where $\iota(\{u,v\}) = \{v,u\}$, $\{u,v\} \in \mathfrak{H}^2$, and \(\mathsf{b}_0\mkern0.5mu\iota:S_0^{-*}\to \mathbb{C}^{2d_0}\) defines a boundary mapping for $S_0^{-1}$ with Gram matrix $-\mathsf{Q}_0$ and the matrix polynomial $\widehat{\mathcal{R}}(z)$ is the row-reversal of $\widehat{\mathcal{P}}(z)$.
\end{enumerate}
\end{lemma}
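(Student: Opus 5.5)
By Lemma~\ref{Chains} we may replace $\mathcal P(z)$ by the equivalent polynomial $\mathcal W(z)\mathcal P(z)$ on the right-hand side of \eqref{reprP}. For $\lambda=\infty$ we replace $\mathcal R(z)$ by the right-hand side of \eqref{reprR}; the two row-reversals are equivalent by Lemma~\ref{reverse}(\ref{reverse-i2}), and Lemma~\ref{Chains} again applies. So we assume that $\mathcal P(z)$ has the block form $\begin{bmatrix}\widehat{\mathcal P}(z)\mathsf B_0\\ \mathsf B\end{bmatrix}$. We assume the same for $\mathcal R(z)$, now with $\widehat{\mathcal R}(z)$ in place of $\widehat{\mathcal P}(z)$.

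For $\lambda\in\mathbb C$, fix $k$ and set $\mathsf b_k=\mathsf b(\{f_k,\lambda f_k+f_{k-1}\})$. The defining sum in Definition~\ref{def-Jc} then splits into two block rows. The bottom row is $\mathsf B\mathsf b_k$, since the derivatives of the constant block $\mathsf B$ vanish for $j\ge1$. The top row is $\sum_{j}\frac1{j!}\widehat{\mathcal P}^{(j)}(\lambda)\mathsf B_0\mathsf b_{k-j}$.

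We argue by induction on $k$. Suppose the chain condition holds for $\mathcal P(z)$ up to level $k$. The bottom rows give $\mathsf B\mathsf b_k=0$, so by \eqref{S0*} we get $\{f_k,\lambda f_k+f_{k-1}\}\in S_0^*$. Then $\mathsf B_0\mathsf b_{k-j}=\mathsf b_0(\{f_{k-j},\lambda f_{k-j}+f_{k-j-1}\})$, and the top rows are exactly the chain conditions for $\BEP(S_0,\mathsf b_0,\widehat{\mathcal P}(z))$. Conversely, membership in $S_0^*\subseteq S^*$ gives $\mathsf B\mathsf b_k=0$, and the top rows are the given conditions. Hence (\ref{simplercase-i1}) and (\ref{simplercase-i2}) are equivalent for chains of every length.

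Maximality is the same statement for a hypothetical extension $f_{n+1}=g$. By the equivalence for length $n+2$, an extension exists for one problem if and only if it exists for the other. One point needs checking: the non-extendability clause in Definition~\ref{def-Jc} is phrased with $S^*$ versus $S_0^*$. A candidate $g$ with $\{g,\lambda g+f_n\}\in S^*\setminus S_0^*$ automatically violates the $\mathsf B$-row of the condition, so it can never complete a chain for $\mathcal P(z)$, and the two notions of maximality agree.

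For $\lambda=\infty$ we run the identical computation with $\mathcal R(z)$, $\widehat{\mathcal R}(z)$, pairs $\{f_{k-1},f_k\}$, and derivatives at $0$. This gives (\ref{simplercase-i1})$\Leftrightarrow$(\ref{simplercase-i2}), again using \eqref{S0*} to pass between $S^*$ and $S_0^*$. For (\ref{simplercase-i2})$\Leftrightarrow$(\ref{simplercase-i3}) we unwind the definitions. The pairs $\{f_{k-1},f_k\}\in S_0^*$ are exactly the pairs $\{f_k,0\cdot f_k+f_{k-1}\}\in (S_0^{-1})^*=\iota S_0^*$. Moreover $\mathsf b_0\iota(\{f_k,f_{k-1}\})=\mathsf b_0(\{f_{k-1},f_k\})$. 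So the conditions defining a Jordan chain at $\infty$ for $\BEP(S_0,\mathsf b_0,\widehat{\mathcal P}(z))$ coincide literally with those defining a Jordan chain at $0$ for $\BEP(S_0^{-1},\mathsf b_0\iota,\widehat{\mathcal R}(z))$. The boundary-mapping claim for $\mathsf b_0\iota$ with Gram matrix $-\mathsf Q_0$ is the remark preceding \eqref{eq-la1ola}.

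The only delicate step is the bookkeeping for maximality: the extension conditions must be compared over $S^*$ versus $S_0^*$ (respectively $\dom S^*$ versus $\dom S_0^*$ at $\infty$). The $\mathsf B$-row argument above settles this.
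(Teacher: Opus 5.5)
Your proposal is correct and follows the paper's proof essentially step by step. You reduce via Lemma~\ref{Chains} and Remark~\ref{rem1} to the block forms \eqref{reprP} (resp.\ \eqref{reprR} at $\infty$), split off the constant $\mathsf B$-row to pass between $S^*$ and $S_0^*$ via \eqref{S0*}, and unwind the definitions for (b)$\Leftrightarrow$(c). Your explicit check that a candidate extension with $\{g,\lambda g+f_n\}\in S^*\setminus S_0^*$ violates the $\mathsf B$-row is a slightly more careful version of the paper's one-line maximality argument.
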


\begin{proof}
We use the convention \(f_{-1}=0\). Assume \(\lambda\in\mathbb C\). Set
\[
\mathsf b_k
=
\mathsf b\bigl(\{f_k,\lambda f_k+f_{k-1}\}\bigr),
\qquad
\mathsf b_{0k}
=
\mathsf B_0\mathsf b_k,
\qquad
k\in\{0,1,\ldots,n\}.
\]
By Remark~\ref{rem1}, the polynomial \(\mathcal P(z)\) is equivalent to the matrix polynomial on the right-hand side of \eqref{reprP}; denote this polynomial by \(\mathcal T(z)\). By Lemma~\ref{Chains}, the Jordan chains for \(\BEP\mkern-2mu\bigl(S,\mathsf b,\mathcal P(z)\bigr)\) at \(\lambda\) are unchanged if \(\mathcal P(z)\) is replaced by \(\mathcal{T}(z)\). For the \(\BEP\mkern-2mu\bigl(S,\mathsf b,\mathcal T(z)\bigr)\), the boundary equations in Definition~\ref{def-Jc} of a Jordan chain are equivalent to 
\[
\mathsf B\mathsf b_k=0,
\qquad
\sum_{j=0}^k \frac{1}{j!}
\widehat{\mathcal P}^{(j)}(\lambda)\mathsf b_{0(k-j)}=0,
\qquad
k\in\{0,1,\ldots,n\}.
\]

By the representation \eqref{S0*} of \(S_0^*\), the condition
\[
\{f_k,\lambda f_k+f_{k-1}\}\in S^*
\quad\text{and}\quad
\mathsf B\mathsf b_k=0
\]
is equivalent to \(\{f_k,\lambda f_k+f_{k-1}\}\in S_0^*\). 
Moreover, on \(S_0^*\) we have \(\mathsf b_0=\mathsf B_0\mathsf b\bigl.\bigr|_{S_0^*}\), and hence
\[
\mathsf b_0\bigl(\{f_k,\lambda f_k+f_{k-1}\}\bigr)
=
\mathsf B_0\mathsf b_k
=
\mathsf b_{0k}, \qquad k\in\{0,1,\ldots,n\}.
\]
Therefore the preceding conditions are precisely the defining conditions for the sequence \(f_0,f_1,\ldots,f_n\) to be a Jordan chain for the boundary eigenvalue problem \(\BEP\mkern-2mu\bigl(S_0,\mathsf b_0,\widehat{\mathcal P}(z)\bigr)\) at \(\lambda\). This proves the equivalence of items~(\ref{simplercase-i1}) and~(\ref{simplercase-i2}) for \(\lambda\in\mathbb C\).

For \(\lambda=\infty\), the same argument applies to the row-reversal polynomials, using the second claim in Remark~\ref{rem1} and Lemma~\ref{Chains}. 

It remains to prove (\ref{simplercase-i2})$\Leftrightarrow$(\ref{simplercase-i3}) when $\lambda =\infty$. By definition, a Jordan chain for the 
\(\BEP\mkern-2mu\bigl(S_0^{-1},\mathsf b_0\iota,\widehat{\mathcal R}(z)\bigr)\) at \(0\) satisfies \(\{f_k,f_{k-1}\}\in S_0^{-*}\) and 
\[
\sum_{j=0}^k \frac{1}{j!}
\widehat{\mathcal R}^{(j)}(0)
(\mathsf b_0\iota)\bigl(\{f_{k-j},f_{k-j-1}\}\bigr)=0.
\]
Since \(\{f_k,f_{k-1}\}\in S_0^{-*}\) if and only if \(\{f_{k-1},f_k\}\in S_0^*\) 
and 
\[
(\mathsf b_0\iota)\bigl(\{f_{k-j},f_{k-j-1}\}\bigr)
=
\mathsf b_0\bigl(\{f_{k-j-1},f_{k-j}\}\bigr),
\]
these are precisely the defining conditions for a Jordan chain of the boundary eigenvalue problem \(\BEP\mkern-2mu\bigl(S_0,\mathsf b_0,\widehat{\mathcal P}(z)\bigr)\) at \(\infty\). This proves
(\ref{simplercase-i2})\(\Leftrightarrow\)(\ref{simplercase-i3}).

Finally, the same comparisons hold for chains of arbitrary finite length. Therefore a vector \(f_{n+1}\) extends the given chain in one of the corresponding boundary eigenvalue problems if and only if it extends it in the other one. Hence maximality is preserved under the above equivalences.
\end{proof}

We are now ready to formulate the Spectral Equivalence Theorem, the main result of this section. Proposition~\ref{pr-rs} already provides information on the point spectra and, under an additional hypothesis, yields the coincidence of the resolvent sets and the continuous spectra. Hence to obtain spectral equivalence it remains to relate the Jordan chains. This is accomplished in the next theorem.

\begin{theorem} \label{th-speq} 
Let $\mathcal{P}(z) \in \mathbb{P}_{\mathsf{Q}}$ and $\widetilde{A}\in \mathbb{A}_{S,\mathsf{b}}$. If \(\bigl\{[\widetilde{A}],[\mathcal{P}(z)]\bigr\}\in\Xi\)
then the following statements hold:
\begin{enumerate}
\renewcommand*\theenumi{\Roman{enumi}}
\renewcommand*\labelenumi{\rm{(\theenumi)}}
\setlength{\itemsep}{1pt}
\item \label{th-speq-A1}
$\sigp(\BEP)=\sigp(\widetilde A)$.
\item \label{th-speq-A2}
Let $\lambda \in \overline{\mathbb C}$. There is a bijection between
the Jordan chains $f_0, f_1, \ldots, f_n \in \mathfrak H$ of the $\BEP$ at $\lambda$
and the Jordan chains $\widetilde{f}_0, \widetilde{f}_1, \ldots, \widetilde {f}_n \in \mathfrak{H} \oplus (\mathfrak{C}_{\widehat{\mu}}, K_{\mathsf{Q}_0, \widehat{\mathcal{P}}})$ of $\widetilde{A}$ at $\lambda$.  In particular, corresponding Jordan chains have the same maximal lengths. The bijection between the Jordan chains is given by the orthogonal projection ${\widetilde{P}}_\mathfrak{H}$ in $\widetilde{\mathfrak H}$ onto $\mathfrak{H}$.
    \end{enumerate}
If  $\sigp(\widetilde{A})=\sigp(\widetilde{A})^*$, then the $\BEP$ and $\widetilde A$ are spectrally equivalent, in detail:
\begin{enumerate}
\renewcommand*\theenumi{\Roman{enumi}}
\renewcommand*\labelenumi{\rm{(\theenumi)}}
\setcounter{enumi}{2}
\setlength{\itemsep}{1pt}
\item \label{th-speq-B1o}
\begin{enumerate}
\renewcommand*\theenumii{\roman{enumii}}
\renewcommand*\labelenumii{\rm{(\theenumii)}}
\setlength{\itemsep}{1pt}
\item \label{th-speq-B1}
$\rho(\BEP)=\rho\bigl(\widetilde{A}\bigr)$.
\item \label{th-speq-B2}
If {\rm (\ref{th-speq-B1})} holds and $\lambda \in \rho(\BEP)$, the solution $f \in \mathfrak H $ of the boundary eigenvalue problem $\BEP\mkern-2mu\bigl(S, \mathsf{b}, \mathcal{P}(z)\bigr)$ at $\lambda$ (see \eqref{BEPsys2} and \eqref{BEPsys2inf})  is given by
\begin{equation*}
 f= \left\{
 \begin{array}{ll} {\widetilde P}_{\mathfrak H} \bigl(\widetilde {A} - \lambda I_{\widetilde{\mathfrak H}}\bigr)^{-1}\begin{bmatrix}
h \\ 0 \end{bmatrix},& \lambda \in \mathbb C, \\[9pt]
{\widetilde{P}}_{\mathfrak{H}} \widetilde{A} \begin{bmatrix}
h \\ 0 \end{bmatrix}, & \lambda = \infty.
\end{array} \right.
\end{equation*}
\end{enumerate}

\item \label{th-speq-C} 
    $\sigc(\BEP) = \sigc(\widetilde{A})$.
\item\label{th-speq-D}  
$\overline{\mathbb{C}}=\rho(\BEP) \cup \sigc(\BEP) \cup \sigp(\BEP)$ (disjoint union), that is, \(\sigr(\BEP) = \emptyset\).
\end{enumerate}
 \end{theorem}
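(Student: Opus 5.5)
The proof reduces everything to the model situation. Since $\{[\widetilde A],[\mathcal P(z)]\}\in\Xi$, we have $\TtA(z)=T_{\mathcal P}(z)$ for all $z\in\mathbb C$. By Theorem~\ref{th-EqSt} and the last statement of Proposition~\ref{pr-bbAbasics}, this also holds at $z=\infty$. Lemma~\ref{Chains} lets us replace $\mathcal P(z)$ by an equivalent polynomial. Every spectral set of $\widetilde A$, and the claimed projection bijection of Jordan chains, is preserved under $\widetilde A\mapsto \widetilde A_2\sim\widetilde A$, because $I_{\mathfrak H}\oplus\Psi$ commutes with $\widetilde P_{\mathfrak H}$. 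We may therefore assume that $\widetilde A$ is the explicit coupling \eqref{eq-tA3c} (or its analogues in Steps~3a and~3b). In that case $\mathcal P(z)$ has the form on the right of \eqref{reprP}, and $\widehat{\mathfrak H}=\mathfrak C_{\widehat\mu}$. By Lemma~\ref{simplercase}, the Jordan chains of the $\BEP$ are those of $\BEP(S_0,\mathsf b_0,\widehat{\mathcal P}(z))$. The case $d_0=0$ is trivial: there $\widetilde A=T_{\mathcal P}(z)$ is constant and the derivatives of $\mathcal P$ vanish.

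Item (\ref{th-speq-A1}) follows from Definition~\ref{def-sp-BEP} together with Proposition~\ref{pr-rs}(\ref{pr-rs-i21}) applied to $\TtA(\lambda)=T_{\mathcal P}(\lambda)$. Items (\ref{th-speq-B1}), (\ref{th-speq-C}) and (\ref{th-speq-D}) follow likewise from Proposition~\ref{pr-rs}(\ref{pr-rs-i23})--(\ref{pr-rs-i25}), using Lemma~\ref{lemR} to convert $\sigp(\widetilde A)=\sigp(\widetilde A)^*$ into the standing hypothesis there. Item (\ref{th-speq-B2}) comes from \eqref{S1} for $\lambda\in\mathbb C$: since $f=(T_{\mathcal P}(\lambda)-\lambda)^{-1}h$, this is the compressed resolvent. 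For $\lambda=\infty$ the formula follows from the definition $\TtA(\infty)=\widetilde P_{\mathfrak H}\widetilde A|_{\mathfrak H}$, which is a bounded operator when $\infty\in\rho$.

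The main work is item (\ref{th-speq-A2}). Take a Jordan chain $\widetilde f_k=[f_k;\varphi_k]$ of $\widetilde A$ at $\lambda\in\mathbb C$, so that $\{\widetilde f_k,\lambda\widetilde f_k+\widetilde f_{k-1}\}\in\widetilde A$. By \eqref{eq-tA3c} this means three things: $\{f_k,\lambda f_k+f_{k-1}\}\in S_0^*$, $\{\varphi_k,\lambda\varphi_k+\varphi_{k-1}\}\in S_{\widehat\mu}^*$, and $\mathsf b_0(\cdot)=-\mathsf b_{\widehat\mu,\widehat{\mathcal P}}(\cdot)$. I would then use the explicit structure of the model from Theorem~\ref{t-fCrks}. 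There $S_{\widehat\mu}^*$ is described via multiplication by $z$ on vector polynomials, and $\mathsf b_{\widehat\mu,\widehat{\mathcal P}}$ is expressed through the coefficients and $\widehat{\mathcal P}$. The aim is to show that the chain condition on the $\varphi_k$ is solvable exactly when $\sum_{j}\frac1{j!}\widehat{\mathcal P}^{(j)}(\lambda)\mathsf b_0(\{f_{k-j},\dots\})=0$ for all $k$, and that the solution $\varphi_0,\dots,\varphi_n$ is then unique. Heuristically, the $\varphi_k$ are the Taylor coefficients at $\lambda$ of a function $\varphi(z)$ built from $\widehat{\mathcal P}(z)$ and the generating vector $\sum_k \mathsf b_{0k}(z-\lambda)^k$. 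Uniqueness follows since $S_{\widehat\mu}$ has no eigenvalues: two lifts differ by a Jordan chain of $\widehat S$, so their difference vanishes. Hence $\widetilde P_{\mathfrak H}$ is injective on chains. That $f_0\neq0$ follows exactly as in Proposition~\ref{pr-rs}(\ref{pr-rs-i21}). Maximality transfers because extension by one more vector is the same equivalence at length $n+2$. The case $\lambda=\infty$ reduces to $0$ via $\widetilde A^{-1}$, Lemma~\ref{simplercase}(\ref{simplercase-i3}), and row reversals.

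I expect the main obstacle to be the computation showing that the boundary condition at level $k$ in the model produces exactly the Taylor-derivative sums of $\widehat{\mathcal P}$ at $\lambda$. To handle it I would first try the reproducing kernel description $K_{\mathsf Q_0,\widehat{\mathcal P}}$. Failing that, I would prove the case $n=0$ by Proposition~\ref{pr-rs}(\ref{pr-rs-i21}) and then induct, differentiating the identity $\widehat{\mathcal P}(z)\mathsf b_0=-\widehat{\mathcal P}(z)\mathsf b_{\widehat\mu,\widehat{\mathcal P}}$ in $z$.
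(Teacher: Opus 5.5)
Your overall route is the paper's: the reduction to the model coupling \eqref{eq-tA3c}, items (I) and (III)--(V) from Proposition~\ref{pr-rs}, the passage to $\BEP\mkern-2mu\bigl(S_0,\mathsf b_0,\widehat{\mathcal P}(z)\bigr)$ via Lemma~\ref{simplercase}, and the reduction of $\lambda=\infty$ to $0$ through $\widetilde A^{-1}$. Your uniqueness-of-lift argument is sound: the difference $\psi_k$ of two lifts satisfies $\{\psi_k,\lambda\psi_k+\psi_{k-1}\}\in\widehat S$ with $\psi_{-1}=0$, so $\psi_k=0$ by induction. It even makes explicit an injectivity the paper leaves implicit, and the transfer of maximality follows from it. The gap is in the core of (II) for $\lambda\in\mathbb C$. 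You state what has to be shown but never derive the relation between consecutive chain vectors, and your fallback cannot supply it. As written, $\widehat{\mathcal P}(z)\mathsf b_{0k}=-\widehat{\mathcal P}(z)\mathsf b_{\widehat\mu k}$ is just the constant vector identity $\mathsf b_{0k}=-\mathsf b_{\widehat\mu k}$ multiplied by $\widehat{\mathcal P}(z)$, so differentiating it in $z$ gives nothing new. The reproducing kernel is also beside the point, since Jordan chains are purely algebraic and the inner product plays no role in them.

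The missing idea is to apply \eqref{Smustar} to each pair $\{\varphi_k,\lambda\varphi_k+\varphi_{k-1}\}\in S_{\widehat\mu}^*$. This gives the polynomial identities $(z-\lambda)\varphi_k(z)-\varphi_{k-1}(z)\equiv\widehat{\mathcal P}(z)\mathsf b_{\widehat\mu k}\equiv-\widehat{\mathcal P}(z)\mathsf b_{0k}$. Multiplying the $k$-th one by $(z-\lambda)^k$ and summing telescopes to $(z-\lambda)^{n+1}\varphi_n(z)\equiv-\widehat{\mathcal P}(z)\sum_{j=0}^n(z-\lambda)^j\mathsf b_{0j}$. The left side vanishes to order $n+1$ at $\lambda$, so the Taylor coefficients of order $\le n$ on the right vanish; these are exactly the sums $\sum_{j=0}^k\frac1{j!}\widehat{\mathcal P}^{(j)}(\lambda)\mathsf b_{0(k-j)}$.

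For the converse you must actually construct lifts lying in $\mathfrak C_{\widehat\mu}$, which the proposal does not do. Your generating-vector heuristic can be made precise. Put $\beta(w)=\sum_{j=0}^n\mathsf b_{0j}w^j$ and let $\varphi_k(z)$ be the coefficient of $w^k$ in $-\bigl(\widehat{\mathcal P}(z)-\widehat{\mathcal P}(\lambda+w)\bigr)\beta(w)/(z-\lambda-w)$. The difference quotient is a matrix polynomial in $(z,w)$ whose $j$-th row has $z$-degree less than $\widehat\mu_j$, so $\varphi_k\in\mathfrak C_{\widehat\mu}$. The chain conditions say precisely that $\widehat{\mathcal P}(\lambda+w)\beta(w)=O(w^{n+1})$. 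Comparing coefficients of $w^k$, $k\le n$, therefore gives $(z-\lambda)\varphi_k(z)-\varphi_{k-1}(z)\equiv-\widehat{\mathcal P}(z)\mathsf b_{0k}$, that is, $\{\varphi_k,\lambda\varphi_k+\varphi_{k-1}\}\in S_{\widehat\mu}^*$ with boundary value $-\mathsf b_{0k}$. This reproduces the paper's explicit formula for $f_{\widehat\mu k}$ (equivalently, its recursion \eqref{eq-rec}). Without this computation, or an equivalent one, item (II) remains unproved.
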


\begin{proof}
Items (\ref{th-speq-A1}), (\ref{th-speq-B1o})(\ref{th-speq-B1}), (\ref{th-speq-C}), and (\ref{th-speq-D}) follow from Proposition~\ref{pr-rs}, parts (\ref{pr-rs-i21}), (\ref{pr-rs-i23}), (\ref{pr-rs-i24}), and (\ref{pr-rs-i25}), respectively. Item (\ref{th-speq-B1o})(\ref{th-speq-B2}) follows for $\lambda \in \mathbb C$ from \eqref{S1} and for $\lambda = \infty$ from the equality $T_{\mkern-2mu\widetilde{A}}(\infty) = \widetilde{P}_{\mathfrak H}\widetilde A\bigl.\bigr|_\mathfrak H$.

Proof of (\ref{th-speq-A2}) for $\lambda \in \mathbb{C}$.

In view of Lemma~\ref{simplercase} and item (\ref{simplercase-i2}) we may replace the $\BEP$ by the $\BEP_0 =\BEP\mkern-2mu\bigl(S_0, \mathsf{b}_0, \widehat{\mathcal{P}}(z)\bigr)$ and write \(\widetilde{A}\) (see \eqref{eq-tA3c}) in the form
\begin{equation*}  
\widetilde{A}  =  \Biggl\{
{\scriptstyle \left\{  \begin{bmatrix} f \\[1.5pt] f_{\widehat{\mu}} \end{bmatrix} ,
 \begin{bmatrix}
g \\[1.5pt]  g_{\widehat{\mu}} \end{bmatrix} \right\}}
 \, : \,
 \begin{array}{l}
 \{f,g\} \in  S^*_0, \ \ \{ f_{\widehat{\mu}},  g_{\widehat{\mu}}\} \in  S_{\widehat{\mu}}^*, \\[2.5pt]
  \mathsf{b}_0(\{f, g\}) +  \mathsf{b}_{\widehat{\mu},\widehat{\mathcal{P}}}(\{ f_{\widehat{\mu}}, g_{\widehat{\mu}}\}) = 0
 \end{array}\mkern-2mu\Biggr\}.
\end{equation*}
Assume $\widetilde{f}_0, \widetilde{f}_1, \ldots, \widetilde {f}_n \in \dom \widetilde{A}$ form a Jordan chain of $\widetilde{A}$ at
$\lambda$ and write
\[
\widetilde f_k=\begin{bmatrix} f_k \\[1mm]  f_{\widehat{\mu} k} \end{bmatrix}, \quad \ k\in \{0, \ldots, n\}.
\]
Then, since $\widetilde{f}_0\neq 0$ and $S_{\widehat{\mu}} = \widetilde{A}\cap \mathfrak{C}_{\widehat{\mu}}^2$ has no eigenvalues, $f_0\neq 0$, and
\[
\bigl\{ f_k, \lambda f_k+ f_{k-1} \bigr\} \in S^*_0,
\quad
\bigl\{ f_{\widehat{\mu} k}, \lambda  f_{\widehat{\mu} k} + f_{\widehat{\mu} (k-1)} \bigr\} \in  S_{\widehat{\mu}}^* \quad \text{and} \quad
\mathsf{b}_{0k} + \mathsf{b}_{\widehat{\mu} k} = 0,
\]
where
\[
\mathsf{b}_{0k} = \mathsf{b}_0\bigl(\bigl\{f_k, \lambda f_k + f_{k-1}\bigr\}\bigr) \quad \text{and} \quad
\mathsf{b}_{\widehat{\mu} k} =
\mathsf{b}_{\widehat{\mu},\widehat{\mathcal{P}}}\mkern-2mu\bigl(\bigl\{ f_{\widehat{\mu} k}, \lambda f_{\widehat{\mu} k} + f_{\widehat{\mu} (k-1)}\bigr\}\bigr).
\]
It follows from \eqref{Smustar} and the definition of the boundary mapping following it that
\[
(z-\lambda) f_{\widehat{\mu} k}(z)-f_{\widehat{\mu} (k-1)}\mkern-1mu(z)\equiv \widehat{\mathcal{P}}(z)\mathsf{b}_{\widehat{\mu} k} \equiv - \widehat{\mathcal{P}}(z) \mathsf{b}_{0k}, \quad \ k\in \{0, \ldots, n\}.
\]
From the telescoping form of the expression on the left-hand side of this equality we obtain
\begin{equation} \label{telescope1}
(z-\lambda)^{n+1} f_{\widehat{\mu} n}(z) \equiv - \widehat{\mathcal{P}}(z)\sum_{j=0}^n (z-\lambda)^j  \mathsf b_{0j}.
\end{equation}
Next, we decompose $\widehat{\mathcal{P}}(z)$ in powers of $(z-\lambda)$:
\[
\widehat{\mathcal{P}}(z)=\sum_{j=0}^n \dfrac{1}{j!}\widehat{\mathcal{P}}^{(j)}\mkern-2mu(\lambda)(z-\lambda)^j + O\bigl((z-\lambda)^{n+1}\bigr) \quad \text{as} \quad z \rightarrow \lambda,
\]
substitute this in the right-hand side of \eqref{telescope1}
and equate the coefficients of $(z-\lambda)^k$ on both sides of the resulting equality, to get
\[
\sum_{j=0}^k \dfrac{1}{j!} 
\widehat{\mathcal{P}}^{(j)}\mkern-2mu(\lambda)\mathsf b_{0(k-j)}=0  
\quad \text{for all} \quad k\in \{0, \ldots, n\}.
\]
Hence the vectors $f_0, f_1, \ldots, f_n \in \dom S_0^{*}$ form a Jordan chain for the $\BEP_0$ at the eigenvalue $\lambda$.

Now we prove the converse.  Let $f_0, f_1, \ldots, f_n \in \dom S_0^*$ be a Jordan chain for the $\BEP_0$  at the eigenvalue $\lambda$. That is, assume that for all $k \in \{0,\ldots,n\}$ we have
\begin{equation}\label{eqpJc21}
\{f_k, \lambda f_k + f_{k-1}\}\in S_0^* \quad \text{and} \quad \sum_{j=0}^k \frac{1}{j!}\widehat{\mathcal{P}}^{(j)}\mkern-2mu(\lambda) \mathsf{b}_{0(k-j)} = 0,
\end{equation}
with $f_{-1} = 0$, $f_0 \neq 0$ and $\mathsf{b}_{0k}=\mathsf{b}_0(\{f_{k}, \lambda f_{k}+f_{k-1}\})$.

Define a sequence of $d_0 \times 1$ vector polynomials \(f_{\widehat{\mu} k}(z) \) with $k \in \{0,\ldots, n\}$ as follows
\[
f_{\widehat{\mu} k}(z) =  - \sum_{l=0}^{k} \sum_{i = 0}^{\infty} \dfrac{(z-\lambda)^i}{(l+1+i)!}\widehat{\mathcal{P}}^{(l+1+i)}\mkern-2mu(\lambda) \mathsf{b}_{0(k-l)}, \quad z \in \mathbb{C}.
\]
In the above formula and in this proof, we adopt the convention \(0^0 = 1\). Since \(\widehat{\mathcal{P}}(z)\) is a polynomial, the above sum is finite, so \(f_{\widehat{\mu} k}(z)\) is a vector polynomial in \(\mathbb{C}^{d_0}\mkern-2mu[z]\).

Let \(k = 0\). For \(z = \lambda\) we have
\[
f_{\widehat{\mu} 0}(\lambda) = - \widehat{\mathcal{P}}^{\prime}(\lambda) \mathsf{b}_{00}
\]
and for \(z \in \mathbb{C}\setminus\{ \lambda\}\)
\begin{align*}
f_{\widehat{\mu} 0}(z) & = \frac{-1}{z-\lambda} \sum_{i = 1}^{\infty} \dfrac{(z-\lambda)^i}{i!}\widehat{\mathcal{P}}^{(i)}\mkern-2mu(\lambda) \mathsf{b}_{00} \\
& = \frac{-1}{z-\lambda} \bigl(\widehat{\mathcal{P}}(z) - \widehat{\mathcal{P}}(\lambda) \bigr) \mathsf{b}_{00}.
\end{align*}
Therefore \(f_{\widehat{\mu} 0} \in \mathfrak{C}_{\widehat{\mu}}\).

For \(z = \lambda\) and $k \in \{1,\ldots, n\}$  we have a particularly simple formula
\begin{align} \nonumber
 f_{\widehat{\mu} (k-1)}\mkern-1mu(\lambda) & = - \sum_{l=0}^{k-1} \dfrac{1}{(l+1)!}\widehat{\mathcal{P}}^{(l+1)}\mkern-2mu(\lambda) \mathsf{b}_{0(k-1-l)} \\
\label{eq-simpl}
 & = - \sum_{j=1}^{k} \dfrac{1}{j!}\widehat{\mathcal{P}}^{(j)}\mkern-2mu(\lambda) \mathsf{b}_{0(k-j)} \\
 & = \widehat{\mathcal{P}}(\lambda) \mathsf{b}_{0k}, \nonumber
\end{align}
where the last equality follows from the equality in \eqref{eqpJc21}.

For all $k \in \{1,\ldots, n\}$ and all \(z\in\mathbb{C}\) we have
\begin{align*}
f_{\widehat{\mu} (k-1)}\mkern-1mu(z)-f_{\widehat{\mu} (k-1)}\mkern-1mu(\lambda)
& = - \sum_{l=0}^{k-1} \sum_{i = 1}^{\infty} \dfrac{(z-\lambda)^i}{(l+1+i)!}\widehat{\mathcal{P}}^{(l+1+i)}\mkern-2mu(\lambda) \mathsf{b}_{0(k-1-l)} \\
& = - (z-\lambda) \sum_{l=0}^{k-1} \sum_{i = 1}^{\infty} \dfrac{(z-\lambda)^{i-1}}{(l+1+i)!}\widehat{\mathcal{P}}^{(l+1+i)}\mkern-2mu(\lambda) \mkern1mu\mathsf{b}_{0(k-1-l)} \\
& = - (z-\lambda) \sum_{l=1}^{k} \sum_{j = 0}^{\infty} \dfrac{(z-\lambda)^{j}}{(l+1+j)!}\widehat{\mathcal{P}}^{(l+1+j)}\mkern-2mu(\lambda) \mkern1mu\mathsf{b}_{0(k-l)} \\
& = (z-\lambda) f_{\widehat{\mu} k}(z) + (z-\lambda) \sum_{j = 0}^{\infty} \dfrac{(z-\lambda)^{j}}{(1+j)!}\widehat{\mathcal{P}}^{(1+j)}\mkern-2mu(\lambda) \mkern1mu\mathsf{b}_{0k} \\
& = (z-\lambda) f_{\widehat{\mu} k}(z) + \bigl( \widehat{\mathcal{P}}(z) -  \widehat{\mathcal{P}}(\lambda) \bigr) \mkern1mu \mathsf{b}_{0k}.
\end{align*}
Therefore for every \(z\in\mathbb{C}\setminus\{\lambda\}\) and every $k \in \{1,\ldots, n\}$
\begin{equation}\label{eq-rec}
f_{\widehat{\mu} k}(z) = \frac{1}{z-\lambda}\Bigl(f_{\widehat{\mu} (k-1)}\mkern-1mu(z)-f_{\widehat{\mu} (k-1)}\mkern-1mu(\lambda) - \bigl( \widehat{\mathcal{P}}(z) -  \widehat{\mathcal{P}}(\lambda) \bigr)  \mathsf{b}_{0k} \Bigr) \in \mathbb{C}^{d_0}\mkern-2mu[z].
\end{equation}
As \(f_{\widehat{\mu} 0} \in \mathfrak{C}_{\widehat{\mu}}\), applying the recursion in \eqref{eq-rec} implies that \(f_{\widehat{\mu} k} \in \mathfrak{C}_{\widehat{\mu}}\) for all $k \in \{1,\ldots, n\}$.

Since we established \(f_{\widehat{\mu} (k-1)}\mkern-1mu(\lambda) = \widehat{\mathcal{P}}(\lambda) \mathsf{b}_{0k}\) in \eqref{eq-simpl}, we can rewrite \eqref{eq-rec} as follows
\begin{equation}\label{eq-dd}
(z-\lambda)f_{\widehat{\mu} k}(z)-f_{\widehat{\mu} (k-1)}\mkern-1mu(z)\equiv
-\widehat{\mathcal{P}}(z)\mathsf{b}_{0 k} \quad \text{for all} \quad k \in \{1,\ldots, n\}.
\end{equation}
It follows from \eqref{eq-dd} that for $k\in \{0,\ldots,n\}$
\[
\bigl\{ f_{\widehat{\mu} k}, \lambda f_{\widehat{\mu} k}+ f_{\widehat{\mu} (k-1)} \bigr\} \in S_{\widehat{\mu}}^* \quad \text{and} \quad
-\mathsf{b}_{0k} = \mathsf{b}_{\widehat{\mu},\widehat{\mathcal{P}}}\mkern-1.5mu\bigl(\bigl\{f_{\widehat{\mu} k}, \lambda f_{\widehat{\mu} k}+ f_{\widehat{\mu} (k-1)}\bigr\}\bigr).
\]
Hence the vectors $\widetilde{f}_0, \widetilde{f}_1, \dots \widetilde{f}_n$ with
\begin{equation*} 
\widetilde{f}_k = \begin{bmatrix}  f_k \\[1mm]  f_{\widehat{\mu} k} \end{bmatrix}, \quad k \in \{0,\ldots,n\},
\end{equation*}
belong to $\dom \widetilde{A}$ and form a Jordan chain for $\widetilde{A}$ at the eigenvalue $\lambda$.

\smallskip

We now come to the case $\lambda=\infty$. Recall from Subsection~\ref{partition}
that a sequence $\widetilde f_0,\ldots,\widetilde f_n$ is a Jordan chain of
$\widetilde A$ at $\infty$ if and only if it is a Jordan chain of $\widetilde
A^{-1}$ at $0$.

Moreover, by Lemma~\ref{simplercase}(\ref{simplercase-i3}) and the definition of Jordan chains for a $\BEP$ at $\infty$, the same sequence $f_0,\ldots,f_n$ is a Jordan chain of the $\BEP_0$ at $\infty$ if and only if it is a Jordan chain of the \(\BEP\mkern-2mu\bigl(S_0^{-1}, \mathsf{b}_0\iota, \widehat{\mathcal{R}}(z)\bigr)\) at the eigenvalue $0$, where $\widehat{\mathcal{R}}(z)$ is the row-reversal of
$\widehat{\mathcal{P}}(z)$.

A direct inspection of \eqref{Tagain} shows that, for all $z\in\overline{\mathbb{C}}$ we have \(T_{\widetilde A^{^{-1}}}\mkern-2mu(z)=\bigl(\TtA(1/z)\bigr)^{-1}\). Since $\widetilde A$ is a linearization of the $\BEP_0$, for all $z\in\overline{\mathbb{C}}$ we have $\TtA(z) = T_{S_0,\mathsf{b}_0,\widehat{\mathcal{P}}}(z)$. Hence \(T_{\widetilde A^{^{-1}}}\mkern-2mu(z)=\bigl(T_{S_0,\mathsf{b}_0,\widehat{\mathcal{P}}}(1/z)\bigr)^{-1}\)
for all $z\in\overline{\mathbb{C}}$. Together with \eqref{eq-la1ola} this yields
\[
T_{\widetilde A^{^{-1}}}\mkern-2.5mu(z) = T_{S_0^{-1}\mkern-3mu, \mkern2mu\mathsf{b}_0 \iota, \widehat{\mathcal{R}}}(z),
\qquad z\in\overline{\mathbb{C}}.
\]
Thus $\widetilde A^{-1}$ is a linearization of the $\BEP\mkern-2mu\bigl(S_0^{-1}, \mathsf{b}_0\iota, \widehat{\mathcal{R}}(z)\bigr)$. Applying the already proved case $\lambda\in\mathbb{C}$ with $\lambda=0$ to this inverse problem yields the desired bijection between Jordan chains via the projection $\widetilde P_{\mathfrak H}$. Translating it back to $\lambda=\infty$ completes the proof.
\end{proof}

\appendix

\section{Adapted previous results} \label{AppA}

For the first three items in the following Coupling Theorem we refer to \cite[Theorem~3.1]{CDCAOT}.

\begin{theorem}\label{Coupling}
For $k\in \{1,2\}$ let $S_k$ be a closed symmetric linear relation in a Krein space $(\mathfrak{H}_k, \kip_{\mathfrak{H}_k})$ with defect numbers $d^-_k, d^+_k \in \{0\}\cup\mathbb{N}$, $\delta_k =d^-_k + d^+_k$.

\begin{enumerate}
 \renewcommand*\theenumi{\roman{enumi}}
\renewcommand*\labelenumi{\rm{(\theenumi)}}
\setlength{\itemsep}{1pt} 

\item \label{Coupling-i1}
$S_1 \oplus S_2$ has a canonical self-adjoint extension $\widetilde{A}$ in the direct sum Krein space $\widetilde{\mathfrak{H}} = \mathfrak{H}_1 \oplus \mathfrak{H}_2$ such that $\widetilde{A}\cap \mathfrak{H}_k^2 = S_k$ for all $ k \in \{1,2\}$ if and only if
\begin{equation}
\label{definds} d_1^+=d_2^- \quad \text{and} \quad d_1^-=d_2^+.
\end{equation}

\item \label{Coupling-i2}
Assume that \eqref{definds} holds. If $S_1$ ($S_2$) is self-adjoint in $\mathfrak{H}_1$ ($\mathfrak{H}_2$), then $S_2$ ($S_1$) is self-adjoint, and $\widetilde{A}$ in {\rm (\ref{Coupling-i1})} is unique and given by $\widetilde{A}=S_1\oplus S_2$.

 \item \label{Coupling-i3}
Assume that \eqref{definds} holds and that $S_1$ and $S_2$ are not self-adjoint. Set $\delta=\delta_1=\delta_2\in\mathbb{N}$. Let
$\mathsf{b}_k: S^*_k \rightarrow \mathbb{C}^{\delta}$ be a boundary mapping for $S_k$ with Gram matrix $\mathsf{Q}_k$, $ k \in \{1,2\}$. The formula
\begin{equation} \label{eq-PswACDR}
\widetilde{A} =  \Biggl\{
{\scriptstyle \left\{  \begin{bmatrix} f_1 \\[1.5pt] f_2 \end{bmatrix} ,
 \begin{bmatrix}
g_1 \\[1.5pt]  g_2 \end{bmatrix} \right\}}
 \, : \,
 \begin{array}{l}
 \{f_1,g_1\} \in  S_1^*, \mkern10mu \{ f_2,  g_2\} \in  S_2^*, \\[2.5pt]
 \mathsf{b}_1\mkern-2.5mu (\{f_1, g_1\}) + \Gamma \mathsf{b}_2\mkern-1mu (\{ f_2, g_2\}) = 0
 \end{array}
  \Biggr\}
\end{equation}
gives a bijection between all canonical self-adjoint extensions $\widetilde{A}$ of $S_1 \oplus S_2$ in $\mathfrak{H}_1 \oplus \mathfrak{H}_2$ with $\widetilde{A} \cap \mathfrak{H}_k^2=S_k$, $k \in \{1,2\}$, and all $\delta \times \delta$ invertible matrices $\Gamma$  with
\begin{equation*} 
\mathsf{Q}_2+ \Gamma^*\mathsf{Q}_1\Gamma = \mathsf{0}.
\end{equation*}

 \item \label{Coupling-i4}
Assume that \eqref{definds} holds and that $S_1$ and $S_2$ are not self-adjoint. Set $\delta=\delta_1=\delta_2\in\mathbb{N}$. Let $\mathsf{b}_1: S^*_1 \rightarrow \mathbb{C}^{\delta}$ be a boundary mapping for $S_1$ with Gram matrix $\mathsf{Q}_1$. The formula
\begin{equation} \label{eq-PswACDRGisI}
\widetilde{A} =  \Biggl\{
{\scriptstyle \left\{  \begin{bmatrix} f_1 \\[1.5pt] f_2 \end{bmatrix},
 \begin{bmatrix}
g_1 \\[1.5pt]  g_2 \end{bmatrix} \right\}}
 \, : \,
\begin{array}{l}
 \{f_1,g_1\} \in  S_1^*, \mkern10mu \{ f_2,  g_2\} \in  S_2^*, \\[2.5pt]
\mathsf{b}_1\mkern-2.5mu (\{f_1, g_1\}) + \mathsf{b}_2\mkern-1mu (\{ f_2, g_2\}) = 0
 \end{array}
  \Biggr\}
\end{equation}
gives a bijection between all canonical self-adjoint extensions $\widetilde{A}$ of $S_1 \oplus S_2$ in $\mathfrak{H}_1 \oplus \mathfrak{H}_2$ with $\widetilde{A} \cap \mathfrak{H}_k^2 = S_k$, $k \in \{1,2\}$, and all boundary mappings $\mathsf{b}_2: S^*_2 \rightarrow \mathbb{C}^{\delta}$ for $S_2$ with Gram matrix $-\mathsf{Q}_1$.
\end{enumerate}
\end{theorem}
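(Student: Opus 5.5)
Items (\ref{Coupling-i1})--(\ref{Coupling-i3}) are \cite[Theorem~3.1]{CDCAOT}, as stated before the theorem, so I will simply cite them. The plan is to derive item (\ref{Coupling-i4}) from item (\ref{Coupling-i3}) by absorbing the coupling matrix $\Gamma$ into the boundary mapping of $S_2$. The basic tool is that a boundary mapping can be transformed by an invertible matrix. If $\mathsf{c}:S_2^*\to\mathbb{C}^\delta$ is a boundary mapping for $S_2$ with Gram matrix $\mathsf{Q}_2$ and $\Gamma$ is an invertible $\delta\times\delta$ matrix, then $\Gamma\mathsf{c}$ is again linear and surjective with kernel $S_2$. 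Its Lagrange identity reads
\[
(\Gamma\mathsf{c}(\{h,k\}))^*\,\mathsf{M}\,\Gamma\mathsf{c}(\{f,g\})=\mathsf{c}(\{h,k\})^*\,\Gamma^*\mathsf{M}\Gamma\,\mathsf{c}(\{f,g\}).
\]
Since $\mathsf{c}$ is surjective, $\Gamma\mathsf{c}$ has Gram matrix $\mathsf{M}$ if and only if $\Gamma^*\mathsf{M}\Gamma=\mathsf{Q}_2$. With $\mathsf{M}=-\mathsf{Q}_1$, this condition is exactly $\mathsf{Q}_2+\Gamma^*\mathsf{Q}_1\Gamma=\mathsf{0}$.

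\emph{The formula produces the right extensions.} Let $\mathsf{b}_2$ be a boundary mapping for $S_2$ with Gram matrix $-\mathsf{Q}_1$. Apply item (\ref{Coupling-i3}) with this $\mathsf{b}_2$, so that $\mathsf{Q}_2=-\mathsf{Q}_1$, and with $\Gamma=\mathsf{I}_\delta$. Then $\mathsf{Q}_2+\Gamma^*\mathsf{Q}_1\Gamma=\mathsf{0}$ holds. Hence \eqref{eq-PswACDRGisI} defines a canonical self-adjoint extension $\widetilde{A}$ of $S_1\oplus S_2$ with $\widetilde{A}\cap\mathfrak{H}_k^2=S_k$ for $k\in\{1,2\}$.

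\emph{Surjectivity.} Let $\widetilde{A}$ be such an extension. Fix an arbitrary boundary mapping $\mathsf{c}$ for $S_2$ with some Gram matrix $\mathsf{Q}_2$; one exists because $S_2$ has finite defect numbers. By item (\ref{Coupling-i3}) there is an invertible $\Gamma$ with $\mathsf{Q}_2+\Gamma^*\mathsf{Q}_1\Gamma=\mathsf{0}$ such that $\widetilde{A}$ is given by \eqref{eq-PswACDR} with $\mathsf{b}_2$ replaced by $\mathsf{c}$. Set $\mathsf{b}_2=\Gamma\mathsf{c}$. By the observation in the first paragraph, $\mathsf{b}_2$ is a boundary mapping for $S_2$ with Gram matrix $-\mathsf{Q}_1$. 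Substituting it, \eqref{eq-PswACDR} becomes \eqref{eq-PswACDRGisI}.

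\emph{Injectivity.} Suppose $\mathsf{b}_2$ and $\mathsf{b}_2'$ yield the same $\widetilde{A}$, and take $\{f_2,g_2\}\in S_2^*$. Because $\mathsf{b}_1$ is surjective, there is $\{f_1,g_1\}\in S_1^*$ with $\mathsf{b}_1(\{f_1,g_1\})=-\mathsf{b}_2(\{f_2,g_2\})$. The corresponding pair then lies in $\widetilde{A}$ by the description using $\mathsf{b}_2$. Since the description using $\mathsf{b}_2'$ gives the same $\widetilde{A}$, we also get $\mathsf{b}_1(\{f_1,g_1\})+\mathsf{b}_2'(\{f_2,g_2\})=0$. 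Comparing the two equations gives $\mathsf{b}_2=\mathsf{b}_2'$ on $S_2^*$.

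The only point that needs care is surjectivity. There one must check that $\Gamma\mathsf{c}$ really satisfies every requirement of a boundary mapping: linearity, surjectivity, kernel equal to $S_2$, and the Lagrange identity with Gram matrix $-\mathsf{Q}_1$. This reduces to the identity $\Gamma^*(-\mathsf{Q}_1)\Gamma=\mathsf{Q}_2$, which is precisely the constraint on $\Gamma$ in item (\ref{Coupling-i3}). The rest is bookkeeping.
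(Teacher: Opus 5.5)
Your proof is correct and rests on the same idea as the paper: absorb the coupling matrix $\Gamma$ of item (iii) into the boundary mapping of $S_2$. The key fact is that $\Gamma\mathsf{c}$ has Gram matrix $-\mathsf{Q}_1$ exactly when $\mathsf{Q}_2+\Gamma^*\mathsf{Q}_1\Gamma=\mathsf{0}$.

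The difference is only in bookkeeping. The paper fixes a reference mapping $\mathsf{b}_2$ with Gram matrix $-\mathsf{Q}_1$, so $\Gamma$ ranges over matrices with $\Gamma^*\mathsf{Q}_1\Gamma=\mathsf{Q}_1$. It then composes the bijection of (iii) with the parametrization $\Gamma\mapsto\Gamma\mathsf{b}_2$ of \cite[Lemma~2.1]{CDCAOT}. You instead take an arbitrary reference mapping and prove injectivity directly from the surjectivity of $\mathsf{b}_1$, which makes your argument self-contained.
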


\begin{proof}
Let $\mathsf{b}_2: S^*_2 \rightarrow \mathbb{C}^{\delta}$ be a boundary mapping for $S_2$ with Gram matrix $-\mathsf{Q}_1$. By (\ref{Coupling-i3}) formula \eqref{eq-PswACDR} establishes a bijection between all canonical self-adjoint extensions $\widetilde{A}$ of $S_1 \oplus S_2$ in $\mathfrak{H}_1 \oplus \mathfrak{H}_2$ with $\widetilde{A} \cap \mathfrak{H}_k^2=S_k$, $k \in \{1,2\}$, and all $\delta \times \delta$ invertible matrices $\Gamma$ such that $\Gamma^*\mathsf{Q}_1\Gamma = \mathsf{Q}_1$. By \cite[Lemma~2.1(ii),(iii)]{CDCAOT}, the formula \(\mathsf b=\Gamma\mathsf b_2\) provides a bijection between all boundary mappings for $S_2$ with Gram matrix $-\mathsf Q_1$ and all invertible matrices $\Gamma$ satisfying \(\Gamma^*\mathsf Q_1\Gamma=\mathsf Q_1\).
\end{proof}

\begin{definition} \label{def-Cmu}
For $d \in \mathbb{N}$ and a $d$-tuple $\mu=(\mu_1, \ldots, \mu_d) \in \mathbb{N}^d$ with $\mu_1\geq \cdots \geq \mu_d\geq 1$ we define, as in \cite{CDLAA12}, the \emph{canonical subspace $\mathfrak{C}_{\mu}$ of $\mathbb{C}^{d} [z]$}  by
\begin{equation*}
\mathfrak{C}_\mu = \Bigl\{ \bigl[p_1(z) \cdots p_{d}(z)\bigr]^\top \in \mathbb{C}^{d}\mkern-1mu[z] \,:\, \deg p_j(z)< \mu_j, j \in \{1,\ldots,d\} \Bigr\}
\end{equation*}
and denote its elements by $f_\mu, g_\mu$, \(0_\mu\), etc.  By  $S_\mu$ we denote the operator in $\mathfrak{C}_\mu$ of multiplication by $z$. Its graph is given by
\begin{equation*}  
S_\mu = \Bigl\{ \bigl\{f_\mu, g_\mu \bigr\} \in \mathfrak{C}_{\mu}^2\,:\, zf_\mu(z)-g_\mu(z)\equiv 0 \Bigr\}.
\end{equation*}

\end{definition}

The facts that $\mathfrak{C}_\mu$ is finite-dimensional and $S_\mu$ is an operator without  eigenvalues play a key role in this paper; see  Theorems~\ref{th-EqSt} and~\ref{X}.

Below we recall \cite[Theorem~2.1]{CDLAA23}. It shows that a canonical subspace $\mathfrak{C}_{\mu}$ arises as the reproducing kernel space related to a certain matrix polynomial and, together with $S_\mu$, serves as a model for any finite-dimensional Pontryagin space with a symmetric operator that has no eigenvalues;  see Theorem~\ref{t-poso}. Furthermore, this kind of matrix polynomials appears in the characterization of the Shtraus family of extensions of a closed symmetric linear relation in a Krein space; see  Theorem~\ref{th-SfMt} and  Proposition~\ref{pr-bbAbasics}(\ref{pr-bbAbasics-i3})(\ref{pr-bbAbasics-i3-i2}).

\begin{theorem} \label{t-fCrks}
Let $d \in \mathbb{N}$. Let $\mathsf{Q}$ be a self-adjoint $2d \times 2d$ matrix with $d$ positive and $d$ negative eigenvalues. Let $\mathcal{P}(z)$ be a $d \times 2d$ matrix polynomial, and for $j \in \{1, \ldots, d\}$, let $\mu_j$ denote the degree of the $j$-th row of $\mathcal{P}(z)$. Assume that $\mathcal{P}(z) \in \mathbb{P}_{\mathsf{Q}}$ with \(\mu_1 \geq \cdots \geq \mu_d \geq 1\) and set \(\mu = (\mu_1,\ldots,\mu_d)\).
Then
\begin{enumerate}
\renewcommand*\theenumi{\roman{enumi}}
\renewcommand*\labelenumi{\rm{(\theenumi)}}
\item \label{i-t-m-c1}  
The reproducing kernel Pontryagin space $\mathfrak{K}_{\mathsf{Q}, \mathcal{P}}$ with reproducing kernel defined for \(z, w \in \mathbb{C}\) by
\begin{equation} \label{eq-K_P}
K_{\mathsf{Q}, \mathcal{P}}(z,w)
=
\begin{cases}
\dfrac{\iu}{z-w^*} \mathcal{P}(z) \mathsf{Q}^{-1} \mathcal{P}(w)^*  &
\text{for} \quad  w \neq z^*,  \\[9pt]
\iu\mkern 2mu \mathcal{P}'(z) \mathsf{Q}^{-1} \mathcal{P}(z^*)^*  &
\text{for} \quad w = z^*,
\end{cases}
\end{equation}
is the canonical subspace $\mathfrak{C}_{\mu}$ of $\mathbb{C}^d\mkern-1mu[z]$.

\item \label{i-t-m-c2}  
The operator $S_\mu$ is symmetric in the Pontryagin space $\mathfrak{K}_{\mathsf{Q}, \mathcal{P}}$, its defect numbers are equal to $d$ and its adjoint is given by
\begin{equation} \label{Smustar}
S_\mu^*=\Bigl\{
\{f_\mu, g_\mu\} \in \mathfrak{C}_\mu^2 :
zf_\mu(z)-g_\mu(z) \equiv \mathcal{P}(z)c
\mkern10mu \text{for some} \mkern10mu c \in \mathbb{C}^{2d}  \Bigr\}.
\end{equation}
\item \label{i-t-m-c3}  
The linear relation
\begin{equation*}
\Big\{ \big\{\{f_\mu,g_\mu\},c \big\} \in \mathfrak{C}_\mu^2 \times \mathbb{C}^{2d} \,:\, zf_\mu(z) -g_\mu(z)\equiv \mathcal{P}(z)c\Big\}
 \end{equation*}
is (the graph of) an operator $\mathsf{b}_{\mu,\mathcal{P}}: S_\mu^* \rightarrow \mathbb{C}^{2d}$  which is a boundary mapping for $S_\mu$ with Gram matrix $-\mathsf{Q}$.
\end{enumerate}
\end{theorem}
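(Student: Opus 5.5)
The statement is the recalled \cite[Theorem~2.1]{CDLAA23}, and I would prove it in three stages: first identify the reproducing kernel space with $\mathfrak C_\mu$, then compute $S_\mu^*$, and finally read off the boundary mapping and its Gram matrix from the kernel identity.

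\emph{Stage 1: $K_{\mathsf Q,\mathcal P}$ is a polynomial kernel with values in $\mathfrak C_\mu$.} By property (\ref{i-t-m-a}) of Definition~\ref{def-bbP}, the polynomial $\mathcal P(z)\mathsf Q^{-1}\mathcal P(w)^*$ in the variables $z$ and $w^*$ vanishes on $w^*=z$. Hence it is divisible by $z-w^*$, and the quotient is a polynomial in $z$ and $w^*$. Its $j$-th row has $z$-degree at most $\mu_j-1$, so every column $K_{\mathsf Q,\mathcal P}(\cdot,w)c$ lies in $\mathfrak C_\mu$. The kernel is hermitian. Since $\mathfrak C_\mu$ is finite-dimensional, it has finitely many negative squares, so $\mathfrak K_{\mathsf Q,\mathcal P}\subseteq\mathfrak C_\mu$ is a finite-dimensional Pontryagin space.

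\emph{Stage 1, continued: equality $\mathfrak K_{\mathsf Q,\mathcal P}=\mathfrak C_\mu$.} This is the step I expect to be the main obstacle, and I would attack it as a Bezoutian nondegeneracy statement. Expand the kernel in monomials as $V(z)\mathsf M V(w)^*$, where $V(z)$ is the block monomial matrix adapted to $\mu$. Equality of the spaces is equivalent to invertibility of $\mathsf M$. Choose a basis in which $\mathsf Q^{-1}$ is split along two complementary neutral subspaces, and write $\mathcal P(z)=[\mathcal A(z)\ \mathcal B(z)]$ accordingly. Then $\mathsf M$ becomes the matrix of a Bezoutian of the pair $(\mathcal A,\mathcal B)$. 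I would then apply the classical criterion that a Bezoutian is nondegenerate exactly when the pair is left coprime and row-reduced. Here left coprimeness is $\rank\mathcal P(z)=d$ for all $z$, which is (\ref{i-t-m-b}), and row-reducedness is $\rank\mathsf P_\infty=d$, which is (\ref{i-t-m-c}). As a fallback, one can argue directly: if $f\in\mathfrak C_\mu$ annihilates all kernel functions in the coefficient pairing, use the predictable degree property of $\mathcal P$ to force $f=0$.

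\emph{Stage 2: the adjoint $S_\mu^*$.} From $(z-w^*)K(z,w)c=\iu\,\mathcal P(z)\mathsf Q^{-1}\mathcal P(w)^*c$ one gets
\[
\bigl\{K(\cdot,w)c,\; w^*K(\cdot,w)c\bigr\}\in S_\mu^*,
\]
and these pairs satisfy $zf-g=\mathcal P(z)\bigl(\iu\mathsf Q^{-1}\mathcal P(w)^*c\bigr)$. For the inclusion ``$\supseteq$'' in \eqref{Smustar}, take any pair $\{f,g\}$ with $zf-g\equiv\mathcal P(z)c$. Using the reproducing property $[f,K(\cdot,w)c']=c'^*f(w)$, verify $[f,zh]=[g,h]$ for $h\in\dom S_\mu$; this reduces to checking against kernel functions. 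For the reverse inclusion, count dimensions. For each $c\in\mathbb C^{2d}$, the pair $f=(\mathcal P(z)-\mathcal P(0))c/z$, $g=-\mathcal P(0)c$ lies in $\mathfrak C_\mu^2$, because every $\mu_j\ge1$ and the $j$-th row degree is at most $\mu_j$. So $c$ ranges over all of $\mathbb C^{2d}$. Moreover, $c$ is uniquely determined by $zf-g$, because $\bigcap_z\nul\mathcal P(z)=\{0\}$ by \eqref{eq-i-t-m-e}, there being no rows of degree $0$. Hence the right side of \eqref{Smustar} has dimension $2d$ modulo $S_\mu$ (which is the case $c=0$). Since $\dim S_\mu^*/S_\mu=2\dim\mathfrak C_\mu-2\dim S_\mu$ must match the defect count, this gives equality. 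It also shows that $S_\mu$ is symmetric with defect numbers $d$, the latter because the form in Stage 3 has signature $(d,d)$.

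\emph{Stage 3: the boundary mapping.} The uniqueness of $c$ just shown makes $\mathsf b_{\mu,\mathcal P}$ a single-valued, linear, surjective map with kernel $S_\mu$. Finally, verify the Lagrange identity with Gram matrix $-\mathsf Q$ on the spanning elements $\{K(\cdot,w)c, w^*K(\cdot,w)c\}$ together with $S_\mu$, using the kernel identity above, and extend by sesquilinearity.
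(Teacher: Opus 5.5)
The paper does not prove this theorem; it quotes it from the authors' earlier work. So your plan has to stand on its own. Several steps you carry out are correct:
\begin{itemize}
\item divisibility by $z-w^*$;
\item $\{K(\cdot,w)c,\,w^*K(\cdot,w)c\}\in S_\mu^*$;
\item $\{f,g\}\mapsto c$ is well defined and onto $\mathbb C^{2d}$ with kernel $S_\mu$;
\item the Lagrange identity with Gram matrix $-\mathsf Q$ holds on kernel pairs.
\end{itemize}
The genuine gap is the inclusion you call ``$\supseteq$'' in Stage~2: that every pair with $zf-g\equiv\mathcal P(z)c$ lies in $S_\mu^*$. It does not reduce to checking against kernel functions. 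The equality in \eqref{Smustar}, the symmetry of $S_\mu$, and the Lagrange identity for pairs from $S_\mu$ all rest on it.

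Denote the right-hand side of \eqref{Smustar} by $\mathfrak S$. The values $\iu\mathsf Q^{-1}\mathcal P(w)^*c$ span $\mathbb C^{2d}$, because $\bigcap_w\nul\mathcal P(w)=\{0\}$. So your own count shows $\mathfrak S=S_\mu+\operatorname{span}\{\{K(\cdot,w)c,\,w^*K(\cdot,w)c\}\}$. Kernel pairs lie in $S_\mu^*$ in \emph{every} reproducing kernel space. Hence $\mathfrak S\subseteq S_\mu^*$ is equivalent to $S_\mu\subseteq S_\mu^*$, which is exactly the symmetry you intend to deduce from it, so the argument is circular.

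Concretely, expand $f$ in kernel functions and use $(z-w^*)K(z,w)=\iu\mathcal P(z)\mathsf Q^{-1}\mathcal P(w)^*$. You are left with the term $[h,\mathcal P(\cdot)e]$ with $h\in\dom S_\mu$ and $e\in\nul\mathsf P_\infty$; these functions $\mathcal P(\cdot)e$ form the multivalued part of $\mathfrak S$. The reproducing property at finite points says nothing about this term. The dimension count alone only gives $\dim\mathfrak S=\dim S_\mu^*$, not equality of the two relations.

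The missing ingredient is the kernel ``at $\infty$'', which is where the row degrees enter.
\begin{itemize}
\item Multiply property~(\ref{i-t-m-a}) at real $z$ by $\diag(z^{-\mu_1},\ldots,z^{-\mu_d})$ on both sides and let $z\to\infty$. This gives $\mathsf P_\infty\mathsf Q^{-1}\mathsf P_\infty^*=\mathsf 0$, hence by (\ref{i-t-m-c}) $\nul\mathsf P_\infty=\ran\mathsf Q^{-1}\mathsf P_\infty^*$.
\item The $k$-th column of $K(z,w)$ has degree at most $\mu_k-1$ in $w^*$. Multiply the kernel identity by $\mathcal D(w)^{-*}u$, where $\mathcal D(w)=\diag(w^{\mu_1},\ldots,w^{\mu_d})$, and let $w\to\infty$. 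This yields $\mathcal P(z)\mathsf Q^{-1}\mathsf P_\infty^*u=\iu\lim_{w\to\infty}w^*K(z,w)\mathcal D(w)^{-*}u$.
\item Therefore $[h,\mathcal P(\cdot)\mathsf Q^{-1}\mathsf P_\infty^*u]=-\iu\lim_{w\to\infty}w\,u^*\mathcal D(w)^{-1}h(w)=0$ for $h\in\dom S_\mu$, since such $h$ satisfy $\deg h_j\le\mu_j-2$.
\end{itemize}
Equivalently, $f\mapsto\diag(z^{\mu_1-1},\ldots,z^{\mu_d-1})f(1/z)$ maps $(\mathfrak C_\mu,K_{\mathsf Q,\mathcal P})$ unitarily onto $(\mathfrak C_\mu,K_{-\mathsf Q,\mathcal R})$, with $\mathcal R(z)$ the row-reversal. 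It sends these functions to kernel functions at $0$ and sends $\dom S_\mu$ to functions vanishing at $0$.

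This gives $\mathfrak S\subseteq S_\mu^*$. Then $\dim S_\mu^*=2\dim\mathfrak C_\mu-\dim S_\mu=\dim\mathfrak C_\mu+d=\dim\mathfrak S$ closes Stage~2. Use $\dim S_\mu=\sum_j(\mu_j-1)$ here, not a defect count you have not yet established.

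A smaller point in Stage~1: the ``classical criterion'' as you phrase it is essentially statement (\ref{i-t-m-c1}) itself. The classical theorem says a Bezoutian of an intertwining pair is an isomorphism between polynomial models. It needs a nonsingular denominator $\mathcal A$. Identifying its model $\{f:\mathcal A^{-1}f\ \text{strictly proper}\}$ with $\mathfrak C_\mu$ further needs $\mathcal A$ row reduced with row degrees $\mu$. So the neutral splitting cannot be arbitrary. Take the second half to be the maximal neutral subspace $\nul\mathsf P_\infty$; then $\mathsf P_\infty$ restricted to the first half is invertible, and left coprimeness is exactly (\ref{i-t-m-b}).
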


The inner product on $\mathfrak{C}_\mu$, with respect to which $S_\mu$ is symmetric, is determined by the kernel \eqref{eq-K_P}. Therefore we will often denote the reproducing kernel space $\mathfrak{K}_{\mathsf{Q}, \mathcal{P}}$ by the pair $\bigl(\mathfrak{C}_\mu, K_{\mathsf{Q}, \mathcal{P}}\bigr)$.

Note that in \cite[Theorem~2.1(iii)]{CDLAA23} there is a mistake: the Gram matrix $\mathsf{Q}$ there should be replaced by $-\mathsf{Q}$, as in Theorem~{\rm~\ref{t-fCrks}}(\ref{i-t-m-c3}).

By \cite[Theorem~3.2]{CDLAA23} the polynomial $\mathcal{P}(z)$ in Theorem~\ref{t-fCrks} also has the trivial common null space property:
\begin{equation*}    
\bigcap_{z \in \mathbb{C}} \nul\mkern-1mu\mathcal{P}(z)=\{0\}.
\end{equation*}
Equivalently, the columns of $\mathcal{P}(z)$ as vector polynomials over \(\mathbb{C}\) are linearly independent.

The following two lemmas are \cite[Lemma~2.6, Lemma~2.7]{CDCAOT}.

\begin{lemma} \label{le-reeqv}
Let \(\mathcal{P}(z)\), \(\mu\), \(\mathfrak{C}_\mu\),  \(S_\mu\), \(S_\mu^*\), and \(\mathsf{b}_{\mu,\mathcal{P}}\) be as in  Theorem~{\rm\ref{t-fCrks}}, let $c \in \mathbb{C}^{2d}$ and $\lambda \in \mathbb{C}$. The following equivalences hold.
\begin{enumerate}
\renewcommand*\theenumi{\roman{enumi}}
\renewcommand*\labelenumi{\rm{(\theenumi)}}
\item \label{le-reeqv-i2}
$\mathcal{P}(\lambda) c = 0 \quad\Leftrightarrow \quad \mathcal{P}(z) c \equiv (z- \lambda) f_\mu(z)$ for some $f_\mu \in \mathfrak{C}_\mu$.

\item \label{le-reeqv-i1}
$\mkern 12mu \mathsf{P}_\infty c = 0 \quad \Leftrightarrow \quad  \mathcal{P}(z) c \in \mathfrak{C}_\mu$.

\end{enumerate}

Moreover,
\begin{equation*}
\nul \mathcal{P}(\lambda) =
\mathsf{b}_{\mu,\mathcal{P}}\mkern-1.5mu \bigl(S_\mu^* \cap \lambda I_{\mathfrak{C}_\mu} \bigr)
\quad \text{for all} \quad \lambda \in \overline{\mathbb{C}},
\end{equation*}
where \(\infty I_{\mathfrak{C}_\mu} =  \{0_\mu\} \times \mathfrak{C}_\mu\).
\end{lemma}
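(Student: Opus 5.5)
The plan is to derive both equivalences from the description of $S_\mu^*$ and $\mathsf b_{\mu,\mathcal P}$ in Theorem~\ref{t-fCrks}(\ref{i-t-m-c2})--(\ref{i-t-m-c3}), together with elementary degree counting in $\mathfrak C_\mu$. The last identity of the lemma then follows by reading off the boundary values of the pairs in $S_\mu^*\cap\lambda I_{\mathfrak C_\mu}$.

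For (\ref{le-reeqv-i2}), first suppose $\mathcal P(z)c\equiv(z-\lambda)f_\mu(z)$. Evaluating at $z=\lambda$ gives $\mathcal P(\lambda)c=0$. Conversely, suppose $\mathcal P(\lambda)c=0$. Then every entry of $\mathcal P(z)c$ vanishes at $\lambda$, so we may write $\mathcal P(z)c=(z-\lambda)f_\mu(z)$ with $f_\mu\in\mathbb C^d[z]$. The $j$-th entry of $\mathcal P(z)c$ has degree at most $\mu_j$, so the $j$-th entry of $f_\mu$ has degree less than $\mu_j$. Hence $f_\mu\in\mathfrak C_\mu$.

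For (\ref{le-reeqv-i1}), note that the coefficient of $z^{\mu_j}$ in the $j$-th entry of $\mathcal P(z)c$ is exactly $(\mathsf P_\infty c)_j$, by \eqref{Sinfinity}. Since that entry has degree at most $\mu_j$, it lies in the $j$-th component space of $\mathfrak C_\mu$ precisely when this top coefficient vanishes. This holds for all $j$ if and only if $\mathsf P_\infty c=0$.

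For the identity with $\lambda\in\mathbb C$, observe that $S_\mu^*\cap\lambda I_{\mathfrak C_\mu}$ consists of the pairs $\{f_\mu,\lambda f_\mu\}\in S_\mu^*$. By \eqref{Smustar} such a pair satisfies $(z-\lambda)f_\mu(z)\equiv\mathcal P(z)c$, and by Theorem~\ref{t-fCrks}(\ref{i-t-m-c3}) its boundary value is $c$. So by (\ref{le-reeqv-i2}) the image under $\mathsf b_{\mu,\mathcal P}$ is contained in $\nul\mathcal P(\lambda)$. Conversely, given $c\in\nul\mathcal P(\lambda)$, part (\ref{le-reeqv-i2}) produces $f_\mu\in\mathfrak C_\mu$, and then $\{f_\mu,\lambda f_\mu\}\in S_\mu^*$ with $\mathsf b_{\mu,\mathcal P}(\{f_\mu,\lambda f_\mu\})=c$.

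For $\lambda=\infty$, the relevant pairs are $\{0_\mu,g_\mu\}\in S_\mu^*$, which satisfy $-g_\mu(z)\equiv\mathcal P(z)c$ with $g_\mu\in\mathfrak C_\mu$. By (\ref{le-reeqv-i1}) this is equivalent to $\mathsf P_\infty c=0$, which gives the claim with $\nul\mathcal P(\infty)$ read as $\nul\mathsf P_\infty$.

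The only delicate point is that $\mathsf b_{\mu,\mathcal P}$ is well defined, i.e.\ that $c$ is uniquely determined by the pair. This follows from the trivial common null space property of $\mathcal P$ recalled above, and is in any case part of Theorem~\ref{t-fCrks}(\ref{i-t-m-c3}). I expect no substantial obstacle beyond keeping the convention $\mathcal P(\infty)=\mathsf P_\infty$ consistent.
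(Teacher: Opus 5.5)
Your proposal is correct. The paper gives no proof of this lemma; it imports it as \cite[Lemma~2.6]{CDCAOT}. Your direct verification is complete and self-contained:
\begin{itemize}
\item for (i), you divide by $z-\lambda$ and use the row-degree bound $\deg(\mathcal P(z)c)_j\le\mu_j$;
\item for (ii), you identify the coefficient of $z^{\mu_j}$ in $(\mathcal P(z)c)_j$ as $(\mathsf P_\infty c)_j$;
\item for the final identity, you read off boundary values using \eqref{Smustar} and Theorem~\ref{t-fCrks}(\ref{i-t-m-c3}).
\end{itemize}
Your reading $\nul\mathcal P(\infty)=\nul\mathsf P_\infty$ matches the paper's implicit convention in \eqref{BEPsys2inf} and \eqref{eq-la1ola}.
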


\begin{lemma} \label{le-eqnuls}
Let \(d \in \mathbb{N}\) and let \(\mathcal{P}(z)\) and \(\mathcal{Q}(z)\) be \(d\times 2d\) matrix polynomials of rank \(d\) for all \(z\in \mathbb{C}\). Then
\begin{equation*}  
\nul \mathcal{P}(z)\  = \nul  \mathcal{Q}(z) \quad \text{for all} \quad z\in \mathbb{C}.
\end{equation*}
if and only if there exists a unimodular \(d\times d\) matrix polynomial \(\mathcal{W}(z)\) such that
\begin{equation*}  
\mathcal{W}(z) \mathcal{P}(z) \equiv \mathcal{Q}(z).
\end{equation*}
\end{lemma}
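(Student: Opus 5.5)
The plan is to pass both polynomials to a normal form via the Smith form, in which the common null space becomes a fixed coordinate subspace, and then read off $\mathcal W(z)$ directly. The main tool is the Smith normal form over the principal ideal domain $\mathbb C[z]$.

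\emph{The ``if'' direction.} This is immediate. If $\mathcal W(z)\mathcal P(z)\equiv\mathcal Q(z)$ with $\mathcal W(z)$ unimodular, then $\mathcal W(z)$ is invertible for every $z\in\mathbb C$. Hence $\nul\mathcal Q(z)=\nul\mathcal P(z)$ for all $z$.

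\emph{Normal form for $\mathcal P(z)$.} For the converse, I first bring $\mathcal P(z)$ to a normal form. Since $\rank\mathcal P(z)=d$ for every $z\in\mathbb C$, all invariant factors in the Smith normal form of $\mathcal P(z)$ have no zeros. They are therefore nonzero constants, which can be absorbed into the unimodular factors. So there exist a unimodular $d\times d$ matrix polynomial $\mathcal U(z)$ and a unimodular $2d\times 2d$ matrix polynomial $\mathcal V(z)$ such that
\[
\mathcal P(z)=\mathcal U(z)\begin{bmatrix}\mathsf I_d & \mathsf 0\end{bmatrix}\mathcal V(z),\qquad z\in\mathbb C.
\]
Then $\mathcal P(z)\mathcal V(z)^{-1}=\begin{bmatrix}\mathcal U(z)&\mathsf 0\end{bmatrix}$, so $\nul\bigl(\mathcal P(z)\mathcal V(z)^{-1}\bigr)=\{0\}\oplus\mathbb C^d$ for every $z$. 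Here I use that $\mathcal U(z)$ is invertible and that $\mathcal V(z)^{-1}$ is again a matrix polynomial.

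\emph{Transporting $\mathcal Q(z)$.} Next I write
\[
\mathcal Q(z)\mathcal V(z)^{-1}=\begin{bmatrix}\mathcal Q_1(z)&\mathcal Q_2(z)\end{bmatrix}
\]
with $d\times d$ polynomial blocks. For every $z$ we have $\nul\bigl(\mathcal Q(z)\mathcal V(z)^{-1}\bigr)=\mathcal V(z)\nul\mathcal Q(z)$, and by hypothesis this equals $\mathcal V(z)\nul\mathcal P(z)=\{0\}\oplus\mathbb C^d$. Hence $\mathcal Q_2(z)x=0$ for all $x\in\mathbb C^d$, that is, $\mathcal Q_2(z)\equiv\mathsf 0$. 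Since $\rank\mathcal Q(z)=d$ for all $z$, the block $\mathcal Q_1(z)$ is invertible for every $z\in\mathbb C$. So $\det\mathcal Q_1(z)$ is a polynomial without zeros, hence a nonzero constant, and $\mathcal Q_1(z)$ is unimodular.

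\emph{Conclusion.} Finally I set $\mathcal W(z)=\mathcal Q_1(z)\mathcal U(z)^{-1}$, which is unimodular as a product of unimodular matrix polynomials. Then
\[
\mathcal Q(z)=\begin{bmatrix}\mathcal Q_1(z)&\mathsf 0\end{bmatrix}\mathcal V(z)=\mathcal Q_1(z)\mathcal U(z)^{-1}\,\mathcal U(z)\begin{bmatrix}\mathsf I_d&\mathsf 0\end{bmatrix}\mathcal V(z)=\mathcal W(z)\mathcal P(z),
\]
which is the claim.

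The only nontrivial input is the existence of the unimodular completion, that is, the Smith form with constant invariant factors. That step is where I expect any care to be needed: one must check that the pointwise full rank condition forces all invariant factors to be constants. This follows because the product of the first $k$ invariant factors equals, up to a nonzero constant, the gcd of the $k\times k$ minors. For $k=d$ the $d\times d$ minors have no common zero, so their gcd is a nonzero constant, and hence so is each invariant factor.
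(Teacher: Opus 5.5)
Your proof is correct and complete. The paper does not prove this lemma; it imports it as \cite[Lemma~2.7]{CDCAOT}. So there is no in-paper argument to compare against, and your Smith-form argument makes the statement self-contained. You justify the one nontrivial step correctly via the gcd of the $d\times d$ minors: pointwise full rank of $\mathcal{P}(z)$ forces all invariant factors to be nonzero constants.

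The Smith form is really used only to produce a polynomial right inverse $\mathcal{X}(z)=\mathcal{V}(z)^{-1}\begin{bmatrix}\mathsf{I}_d\\ \mathsf{0}\end{bmatrix}\mathcal{U}(z)^{-1}$, which satisfies $\mathcal{P}(z)\mathcal{X}(z)\equiv\mathsf{I}_d$. With it you can skip the block decomposition of $\mathcal{Q}(z)\mathcal{V}(z)^{-1}$ by setting $\mathcal{W}(z)=\mathcal{Q}(z)\mathcal{X}(z)$. Since $\mathsf{I}_{2d}-\mathcal{X}(z)\mathcal{P}(z)$ maps $\mathbb{C}^{2d}$ into $\nul\mathcal{P}(z)=\nul\mathcal{Q}(z)$, you get $\mathcal{Q}(z)=\mathcal{W}(z)\mathcal{P}(z)$. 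Then $\det\mathcal{W}(z)$ has no zeros because $\rank\mathcal{Q}(z)=d$, so $\mathcal{W}(z)$ is unimodular.

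Finally, in the ``only if'' direction the rank hypothesis on $\mathcal{Q}(z)$ is redundant, since $\dim\nul\mathcal{Q}(z)=\dim\nul\mathcal{P}(z)=d$. Only the pointwise full rank of one of the two polynomials is essential.
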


The following result, which we call the \emph{Model Theorem}, states that the quadruple \(\bigl(\mathfrak{C}_\mu, K_{\mathsf{Q},\mathcal{P}}, S_\mu, \mathsf{b}_{\mu,\mathcal{P}}\bigr)\) is a model for \(\bigl(\mathfrak{G}, \kip_{\mathfrak{G}}, S, \mathsf{b}\bigr)\) under the assumptions analogous to  (\ref{def-bbA-i3}) and (\ref{def-bbA-i4}) in Definition~\ref{def-bbA}: \(\dim \mathfrak{G}\in\mathbb{N}\) and \(\sigp(S)=\emptyset\).

\begin{theorem} \label{t-poso}
Let $(\mathfrak{G},\kip_{\mathfrak{G}})$ be a nonzero finite-dimensional Pontryagin space and let $S$ be a symmetric operator in $\mathfrak{G}$ without eigenvalues. Then the defect numbers of $S$  coincide and are equal to $d = \codim(\dom S)$. Moreover, $S$ is nilpotent. Let $\mathsf{b} : S^* \rightarrow \mathbb{C}^{2d}$  be a boundary map for $S$ with Gram matrix $-\mathsf{Q}$.

Let \( m \) denote the nilpotency index of \( S \), and define
\begin{equation*}
\delta_j =\dim \bigl(\dom  S^{j-1}\bigr), \quad j \in \{1,\ldots, m+1\},\quad (\text{so that}\ \delta_1=\dim \mathfrak{G}, \ \delta_{m+1}=0).
\end{equation*}
Let $\mu= (\mu_1,\ldots, \mu_d)$ be the $d$-tuple with entries
\begin{equation*}  
\mu_j = \#\big\{i \in \{1, \ldots, m\}:\delta_i -\delta_{i+1}\geq j\big\}, \quad j\in \{1, \ldots, d\},
\end{equation*}
so that $\mu_1 \geq \cdots \geq \mu_d \geq 1$.

Then there exists a $d \times 2d$ matrix polynomial $\mathcal{P}(z)$ in \(\mathbb{P}_{\mathsf{Q}}\) such that \(\mu_j = \deg \mathcal{P}(z)\bigl.\bigr|_j\) for all \(j \in \{1,\dots,d\}\), and there exists an isomorphism
\begin{equation*}
\Phi: \bigl(\mathfrak{G},\kip_{\mathfrak{G}}\bigr)
\rightarrow \bigl( \mathfrak{C}_\mu, K_{\mathsf{Q}, \mathcal{P}} \bigr)
\end{equation*}
such that:
\begin{enumerate}
\renewcommand*\theenumi{\alph{enumi}}
\renewcommand*\labelenumi{\rm{(\theenumi)}}
\setlength{\itemsep}{1pt}
\item  \label{t-poso-i2}
$\Phi S = S_{\mu}\Phi$, and
\item  \label{t-poso-i3}
\(\{\Phi f, \Phi g\} \in S_{\mu}^*\) and  \(\mathsf{b}(\{f,g\}) = \mathsf{b}_{\mu,\mathcal{P}}\mkern-1mu (\{\Phi f, \Phi g\})\) for all \(\{f,g\}\in S^*\).
\end{enumerate}

The matrix polynomial \(\mathcal{P}(z) \) is unique up to multiplication on the left by a unimodular $d \times d$ matrix polynomial $\mathcal{W}(z) = \bigl[w_{jk}(z)\bigr]_{j,k=1}^d$ satisfying \eqref{coeff}.
\end{theorem}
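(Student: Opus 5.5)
The statement to prove is the Model Theorem~\ref{t-poso}. The plan has three stages: structural facts, construction of $\mathcal P(z)$ and $\Phi$, and uniqueness.

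\emph{Structural facts.} Since $\dim\mathfrak G<\infty$, every linear relation in $\mathfrak G$ is closed. Because $S$ is an operator without eigenvalues, the chain $\mathfrak G=\dom S^0\supseteq\dom S\supseteq\dom S^2\supseteq\cdots$ must strictly decrease until it reaches $\{0\}$. Otherwise $\mathfrak D=\bigcap_j\dom S^j$ would be a nonzero $S$-invariant subspace, $S|_{\mathfrak D}$ would be a linear map of a nonzero finite-dimensional space into itself, and it would have an eigenvalue. Hence $S$ is nilpotent with some index $m$. The defect numbers are $\dim(\ran(S-z))^{\perp}=\dim\mathfrak G-\dim\dom S$ for every nonreal $z$, because $S-z$ is injective on $\dom S$. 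So both defect numbers equal $d=\codim\dom S$. The sequence $\delta_i-\delta_{i+1}$ is nonincreasing, because $S$ maps $\dom S^{i}$ injectively into $\dom S^{i-1}$ modulo the next domain. Therefore $\mu$, the conjugate partition, satisfies $\mu_1\ge\cdots\ge\mu_d\ge1$ and $\sum\mu_j=\dim\mathfrak G$.

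\emph{Construction of $\mathcal P(z)$ and $\Phi$.} I would choose a Jordan-type basis adapted to the nilpotent structure: vectors $e_1,\dots,e_d$ spanning a complement of $\dom S$ in $\mathfrak G$, arranged so that the chain generated from $e_j$ has length $\mu_j$. Concretely, pick $h_{j}\in\dom S^{\mu_j-1}$, $j\in\{1,\dots,d\}$, such that $\{S^ih_j:0\le i<\mu_j\}$ is a basis of $\mathfrak G$, with $S^{\mu_j}h_j$ undefined. Map $S^ih_j$ to $z^i\epsilon_j$ in the reversed indexing, so that $\Phi$ intertwines $S$ with multiplication by $z$ on $\mathfrak C_\mu$; this gives item~(a). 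Next, compute $S^*$ through the Lagrange identity and the Gram matrix $-\mathsf Q$. For $\{f,g\}\in S^*$ the vector polynomial $z\Phi f(z)-\Phi g(z)$ lies in a $2d$-dimensional space $\mathfrak N$ of polynomials of degree at most $\mu_j$ in the $j$-th row. I would define $\mathcal P(z)c$ as the element $z\Phi f-\Phi g$ for $\{f,g\}\in\mathsf b^{-1}(c)$. This is well defined because $\nul\mathsf b=S$ and $S$ corresponds exactly to $zf_\mu-g_\mu\equiv0$. It is linear in $c$, so $\mathcal P(z)$ is a $d\times2d$ matrix polynomial. Item~(b) then holds by construction of $\mathsf b_{\mu,\mathcal P}$ in Theorem~\ref{t-fCrks}.

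\emph{Main obstacle.} The hard step is showing that $\mathcal P\in\mathbb P_{\mathsf Q}$ and that the inner product of $\mathfrak G$ transported by $\Phi$ is exactly the one with kernel $K_{\mathsf Q,\mathcal P}$. My approach is to express the inner product via the Lagrange identity. For $f\in\mathfrak G$ and $w$ nonreal, the defect element at $w$ is obtained from $\mathcal P(w)^*$, and the kernel identity $[\Phi f,K(\cdot,w)v]=v^*\Phi f(w)$ follows from the Lagrange identity applied to $\{f,Sf\}$-type pairs in $S^*$ against the pair producing $K(\cdot,w)v$. Symmetry of $S$ together with $\mathsf b$ having Gram matrix $-\mathsf Q$ should yield $\mathcal P(z)\mathsf Q^{-1}\mathcal P(z^*)^*=0$. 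The rank condition at finite $z$ follows from surjectivity of $\mathsf b$ restricted to $S^*\cap zI$, which has dimension $d$ because $\sigp(S)=\emptyset$ forces $\dim\nul(S^*-z)=d$. The conditions $\rank\mathsf P_\infty=d$ and the exact row degrees $\mu_j$ come from $\mul S^*$, which also has dimension $d$. I expect the degree bookkeeping that shows row $j$ has degree exactly $\mu_j$ to be the most delicate part. I would first try to obtain it by comparing $\dim\mathfrak C_\mu=\sum\mu_j$ with the kernel space and applying Theorem~\ref{t-fCrks}(i).

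\emph{Uniqueness.} If two polynomials both model the same quadruple, then their families $\nul\mathcal P(\lambda)=\mathsf b_{\mu,\mathcal P}(S_\mu^*\cap\lambda I)$ from Lemma~\ref{le-reeqv} coincide. Lemma~\ref{le-eqnuls} then gives a unimodular $\mathcal W$ with $\mathcal W\mathcal P_1=\mathcal P_2$, and Lemma~\ref{4statements}(\ref{4statements-i3}) gives the bound \eqref{coeff}.
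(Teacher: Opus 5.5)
The paper gives no proof of this statement. It is only recalled in Appendix~\ref{AppA} from the authors' earlier work, so there is no in-paper argument to compare with. Judged on its own, your strategy is sound and yields slightly more than needed. For \emph{any} linear bijection $\Phi\colon\mathfrak G\to\mathfrak C_\mu$ with $\Phi S=S_\mu\Phi$ coming from a Jordan-type basis, the polynomial defined by $\mathcal P(z)\mathsf b(\{f,g\})=z(\Phi f)(z)-(\Phi g)(z)$ lies in $\mathbb P_{\mathsf Q}$ and makes $\Phi$ isometric onto $(\mathfrak C_\mu,K_{\mathsf Q,\mathcal P})$. The freedom in choosing the basis is exactly what the unimodular factor absorbs. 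The following parts are correct:
\begin{itemize}
\item your structural facts;
\item the well-definedness of $\mathcal P$, and the linear independence of its columns, since $\nul\mathsf b=S$ corresponds to $zf_\mu-g_\mu\equiv0$;
\item the uniqueness argument via Lemmas~\ref{le-reeqv}, \ref{le-eqnuls} and~\ref{4statements}(\ref{4statements-i3}).
\end{itemize}
If ``unique up to'' is also meant in the converse direction, add one remark. Multiplication by such a $\mathcal W$ maps $\mathfrak C_\mu$ onto itself, commutes with $S_\mu$, and is an isometry of $(\mathfrak C_\mu,K_{\mathsf Q,\mathcal P})$ onto $(\mathfrak C_\mu,K_{\mathsf Q,\mathcal W\mathcal P})$.

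Your ``main obstacle'' paragraph, however, is in the wrong order, and its key step fails as stated.

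\emph{Rank condition first.} For every $\lambda\in\mathbb C$, $\mathsf b$ maps the $d$-dimensional space $S^*\cap\lambda I_{\mathfrak G}$ bijectively onto $\nul\mathcal P(\lambda)$. For surjectivity: if $\mathcal P(\lambda)c=0$, then $\mathcal P(z)c/(z-\lambda)\in\mathfrak C_\mu$. Correcting any element of $\mathsf b^{-1}(c)$ by an element of $S$ then gives a pair in $S^*\cap\lambda I_{\mathfrak G}$.

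\emph{Then property~(\ref{i-t-m-a}).} Property~(\ref{i-t-m-a}) of Definition~\ref{def-bbP} does not come from symmetry of $S$. It comes from the Lagrange identity applied to $\{h_1,\lambda h_1\}$ and $\{h_2,\lambda^*h_2\}$ in $S^*$. This gives $\mathsf Q\nul\mathcal P(\lambda)\perp\nul\mathcal P(\lambda^*)$. Both spaces are $d$-dimensional, so $\nul\mathcal P(\lambda)=\mathsf Q^{-1}\ran\mathcal P(\lambda^*)^*$.

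\emph{Row degrees.} These are not delicate. Let $\mathsf M$ collect the $z^{\mu_j}$-coefficients of the rows. The same correction argument gives $\nul\mathsf M=\mathsf b\bigl(S^*\cap(\{0\}\times\mathfrak G)\bigr)$. This space is $d$-dimensional because $\mul S^*=(\dom S)^{[\perp]}$ and $\mul S=\{0\}$. Hence $\rank\mathsf M=d$, every row has degree exactly $\mu_j$, and $\mathsf P_\infty=\mathsf M$. Your fallback through Theorem~\ref{t-fCrks}(\ref{i-t-m-c1}) would be circular, since that theorem presupposes $\mathcal P\in\mathbb P_{\mathsf Q}$ with row degrees $\mu$.

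\emph{Reproducing identity.} This cannot come from pairs $\{f,Sf\}$. Those lie in $S=\nul\mathsf b$, so the Lagrange identity only shows that the kernel function is orthogonal to $\ran(S-w)$. Proceed as follows instead.
\begin{itemize}
\item By (\ref{i-t-m-a}), the function $K_{\mathsf Q,\mathcal P}(\cdot,w)v$ lies in $\mathfrak C_\mu$. Write it as $\Phi h$; then $\{h,w^*h\}\in S^*$ with $\mathsf b(\{h,w^*h\})=\iu\mathsf Q^{-1}\mathcal P(w)^*v$.
\item For \emph{arbitrary} $\{f,g\}\in S^*$, the Lagrange identity yields $[wf-g,h]_{\mathfrak G}=v^*\mathcal P(w)\mathsf b(\{f,g\})=v^*\bigl(\Phi(wf-g)\bigr)(w)$.
\item The missing ingredient is $\ran(S^*-w)=\bigl(\nul(S-w^*)\bigr)^{[\perp]}=\mathfrak G$, which again uses $\sigp(S)=\emptyset$. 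It turns the previous identity into the reproducing property on all of $\mathfrak G$.
\item Nondegeneracy then shows that the kernel functions span $\mathfrak C_\mu$. So the transported inner product is that of $\mathfrak K_{\mathsf Q,\mathcal P}$.
\end{itemize}
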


The Model Theorem implies that the positive (negative) index of $\mathfrak{G}$ is equal to the number of positive (negative) squares of the kernel
\begin{equation*}  
K_{\mathsf{Q}, \mathcal{P}}(z,w)
=
\begin{cases}
\iu\mkern 2mu \dfrac{\mathcal{P}(z) \mathsf {Q}^{-1} \mathcal{P}(w)^*}{z-w^*}
\quad & \text{for} \quad  w \neq z^*,  \\[9pt]
\iu\mkern 2mu \mathcal{P}'(z) \mathsf{Q}^{-1} \mathcal{P}(z^*)^* \quad & \text{for} \quad w = z^*,
\end{cases}
\end{equation*}
$\dim \mathfrak{G} = \sum_{j=1}^d \mu_j$ and the nilpotency index of \(S\) and \(S_\mu\) equal \(m = \mu_1 = \deg\mathcal{P}(z)\).

The Model Theorem and Lemma~\ref{le-reeqv} yield
\begin{equation*} 
\nul \mathcal{P}(\lambda) = \mathsf{b}\bigl(S^* \cap \lambda I_{\mathfrak{G}} \bigr)
\quad \text{for all} \quad \lambda \in \overline{\mathbb{C}},
\end{equation*}
where \(\infty I_{\mathfrak{G}} =  \{0\} \times \mathfrak{G}\).

The next theorem is an improvement of \cite[Theorem~4.4]{CDCAOT} adapted to the new item~(\ref{Coupling-i4}) in the Coupling Theorem~{\rm\ref{Coupling}}.

\begin{theorem} \label{th-SfMt}
In the setting of Theorem~{\rm\ref{Coupling}(\ref{Coupling-i4})}, additionally assume that \(\mathfrak{H}_2\) is finite dimensional and $S_2$ is a symmetric operator without eigenvalues. Then the defect numbers of $S_1$ and $S_2$ are all equal to $d$, so that $d = d_k^- = d_k^+$, \(k\in\{1,2\}\). The  \(2d\times 2d\) Gram matrix \(\mathsf{Q}_1\) has \(d\) positive and \(d\) negative eigenvalues. Let \(\mathcal{P}(z) \in \mathbb{P}_{\mathsf{Q}_1}\) be a \(d\times 2d\) matrix polynomial and assume that $\mu_j = \deg \mathcal{P}(z)\mkern-1mu\bigl.\bigr|_j \in \mathbb{N}$ for $j \in \{1, \ldots,d\}$. Set \(\mu = \bigl(\mu_1,\ldots,\mu_d\bigr)\), and let $\widetilde{A}$ be defined in \eqref{eq-PswACDRGisI}. Then the family of Shtraus subspaces associated with $\widetilde{A}$ is given by the formula
\begin{equation}\label{eq-SP}
\TtA(z) =
\Bigl\{\{f,g\}\in S_1^*\,:\,\mathcal{P}(z)\mathsf b_1(\{f,g\})=0\Bigr\}
\quad \text{for all} \quad z \in \mathbb{C} 
\end{equation}
if and only if the quadruple \(\bigl(\mathfrak{C}_\mu, K_{\mathsf{Q}_1,\mathcal{P}}, S_\mu, \mathsf{b}_{\mu,\mathcal{P}}\bigr)\) is a model for the quadruple  \(\bigl(\mathfrak{H}_2, \kip_{\mathfrak{H}_2}, S_2, \mathsf{b}_2\bigr)\). Moreover, if \eqref{eq-SP} holds, then 
\[
\TtA(\infty) = \Bigl\{ \{f,g\}\in S_1^*\,:\,\mathsf{P}_\infty\mathsf b_1(\{f,g\})=0 \Bigr\}.
\]
\end{theorem}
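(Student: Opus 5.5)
The plan is to reduce everything to a statement about the subspaces $N(z)=\mathsf b_2\bigl(S_2^*\cap zI_{\mathfrak H_2}\bigr)\subseteq\mathbb C^{2d}$, $z\in\overline{\mathbb C}$, where $\infty I_{\mathfrak H_2}=\{0\}\times\mathfrak H_2$.

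\emph{Preliminary claims.} By the Model Theorem~\ref{t-poso}, the symmetric operator $S_2$ without eigenvalues in the finite-dimensional space $\mathfrak H_2$ has equal defect numbers $d=\codim\dom S_2$. Then \eqref{definds} forces $d_1^\pm=d$. Since $-\mathsf Q_1$ is the Gram matrix of a boundary mapping for a relation with equal defect numbers $d$, the matrix $\mathsf Q_1$ has $d$ positive and $d$ negative eigenvalues.

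\emph{Reduction to $N(z)$.} Unwinding \eqref{Tagain} for $\widetilde A$ from \eqref{eq-PswACDRGisI}, for $z\in\mathbb C$ we have $\{f,g\}\in\TtA(z)$ if and only if $\{f,g\}\in S_1^*$ and there is $\{f_2,zf_2\}\in S_2^*$ with $\mathsf b_1(\{f,g\})=-\mathsf b_2(\{f_2,zf_2\})$. For $z=\infty$ the same holds with $\{0,g_2\}\in S_2^*$ in place of $\{f_2,zf_2\}$. Hence
\[
\TtA(z)=\mathsf b_1^{-1}\bigl(N(z)\bigr),\qquad z\in\overline{\mathbb C}.
\]
Because $\mathsf b_1$ is surjective, \eqref{eq-SP} is equivalent to $N(z)=\nul\mathcal P(z)$ for all $z\in\mathbb C$.

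\emph{``If'' direction.} Suppose the quadruple built from $\mathcal P(z)$ is a model for $(\mathfrak H_2,\kip_{\mathfrak H_2},S_2,\mathsf b_2)$. The modeling isomorphism carries $S_2^*\cap zI$ onto $S_\mu^*\cap zI$ and intertwines $\mathsf b_2$ with $\mathsf b_{\mu,\mathcal P}$. Lemma~\ref{le-reeqv} then gives $N(z)=\nul\mathcal P(z)$ for all $z\in\overline{\mathbb C}$. This proves \eqref{eq-SP}, and at $z=\infty$ it also yields the stated formula for $\TtA(\infty)$ via $\nul\mathsf P_\infty$.

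\emph{``Only if'' direction.} Apply Theorem~\ref{t-poso} to $(\mathfrak H_2,S_2,\mathsf b_2)$ with Gram matrix $-\mathsf Q_1$. This produces some $\mathcal P_2(z)\in\mathbb P_{\mathsf Q_1}$ and a model $(\mathfrak C_{\mu'},K_{\mathsf Q_1,\mathcal P_2},S_{\mu'},\mathsf b_{\mu',\mathcal P_2})$. By the ``if'' part, $N(z)=\nul\mathcal P_2(z)$, and \eqref{eq-SP} gives $N(z)=\nul\mathcal P(z)$, so $\nul\mathcal P_2(z)=\nul\mathcal P(z)$ for all $z\in\mathbb C$. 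Lemma~\ref{le-eqnuls} yields a unimodular $\mathcal W(z)$ with $\mathcal P(z)\equiv\mathcal W(z)\mathcal P_2(z)$. Lemma~\ref{4statements}(\ref{4statements-i2}) gives $\mu'=\mu$, and \eqref{coeff} holds for $\mathcal W$ and $\mathcal W^{-1}$.

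It then suffices to show that $M:f_\mu(z)\mapsto\mathcal W(z)f_\mu(z)$ is an isomorphism of the model for $\mathcal P_2$ onto the model for $\mathcal P$; composing $M$ with the first modeling isomorphism gives the required model. The needed properties are as follows.
\begin{itemize}
\item[(1)] $M$ maps $\mathfrak C_\mu$ bijectively onto itself. This holds because \eqref{coeff} gives $\deg w_{jk}\le\mu_j-\mu_k$ and the same bound for $\mathcal W^{-1}$.
\item[(2)] $M$ commutes with multiplication by $z$, so $MS_\mu=S_\mu M$.
\item[(3)] If $zf_\mu-g_\mu\equiv\mathcal P_2c$, then $z\,Mf_\mu-Mg_\mu\equiv\mathcal Pc$. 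Hence $\mathsf b_{\mu,\mathcal P}(\{Mf_\mu,Mg_\mu\})=\mathsf b_{\mu,\mathcal P_2}(\{f_\mu,g_\mu\})$.
\item[(4)] $M$ is unitary from $K_{\mathsf Q_1,\mathcal P_2}$ to $K_{\mathsf Q_1,\mathcal P}$. This follows from $K_{\mathsf Q_1,\mathcal P}(z,w)=\mathcal W(z)K_{\mathsf Q_1,\mathcal P_2}(z,w)\mathcal W(w)^*$ and the standard fact that multiplication by an invertible $\mathcal W$ maps one reproducing kernel space isometrically onto the other, since it sends kernel functions to kernel functions.
\end{itemize}

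I expect the main obstacle to be item (4): checking the isometry and the exact matching of the two spaces. Verifying it on kernel functions and extending by linearity should suffice, because the spaces are finite dimensional and spanned by kernel functions.
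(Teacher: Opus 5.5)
Your proposal is correct and follows the paper's route. You obtain a model polynomial $\mathcal P_2(z)$ from Theorem~\ref{t-poso} and use the ``if'' direction to get $\nul\mathcal P_2(z)=\nul\mathcal P(z)$; this gives a unimodular $\mathcal W(z)$ with $\mathcal P=\mathcal W\mathcal P_2$ satisfying \eqref{coeff}, and you then transfer the model. The differences are only in detail: you derive the ``if'' part from Lemma~\ref{le-reeqv} via $\TtA(z)=\mathsf b_1^{-1}\bigl(\mathsf b_2(S_2^*\cap zI_{\mathfrak H_2})\bigr)$ rather than citing Theorem~4.4 of [CDCAOT], and you check the model transfer by hand through $K_{\mathsf Q_1,\mathcal P}(z,w)=\mathcal W(z)K_{\mathsf Q_1,\mathcal P_2}(z,w)\mathcal W(w)^*$, where the paper invokes the uniqueness clause of the Model Theorem.
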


\begin{proof}
The first claim about the defect numbers of \(S_1\) and \(S_2\) and the ``if'' part of the theorem follows from \cite[Theorem~4.4]{CDCAOT} with the boundary mapping \(\mathsf{b}_2\) chosen to have Gram matrix \(\mathsf{Q}_2 = -\mathsf{Q}_1\).

To prove the ``only if'' part, let \(\mathcal{S}(z) \in \mathbb{P}_{\mathsf{Q}_1}\) and assume that
\begin{equation}\label{eq-SPS}
\TtA(z) =
\Bigl\{\{f,g\}\in S_1^*\,:\, \mathcal{S}(z) \mkern2.5mu \mathsf{b}_1\mkern-2mu(\{f,g\}) = 0\Bigr\}
\quad \text{for all} \quad z \in \mathbb{C}.
\end{equation}
Let \(\mathcal{P}(z) \in \mathbb{P}_{\mathsf{Q}_1}\) be a \(d\times 2d\) matrix polynomial such that $\mu_j=\deg \mathcal{P}(z)\mkern-1mu\bigl.\bigr|_j \in \mathbb{N}$ for $j \in \{1, \ldots,d\}$. By \cite[Theorem~4.4]{CDCAOT} and (\ref{eq-PswACDRGisI}), \eqref{eq-SP} also holds. As \(\mathcal{S}(z), \mathcal{P}(z) \in \mathbb{P}_{\mathsf{Q}_1}\), applying Lemma~\ref{le-PeSSts} to \eqref{eq-SPS} and \eqref{eq-SP} yields that \(\mathcal{S}(z) \sim \mathcal{P}(z)\). That is, there exists a unimodular \(d\times d\) matrix polynomial \(\mathcal{W}(z)\) such that \(\mathcal{W}(z) \mathcal{S}(z) \equiv \mathcal{P}(z)\). It follows from Lemma~\ref{4statements}(\ref{4statements-i2}) and (\ref{4statements-i3}) that \(\deg\mathcal{S}(z)\bigl.\bigr|_j = \mu_j\) for all \(j\in\{1,\ldots,d\}\) and for the entry $w_{jk}(z)$ in the \(j\)th row and the \(k\)th column of \(\mathcal{W}(z)\) we have
\begin{equation*}
\deg w_{jk}(z) \leq \mu_j - \mu_k  \quad \text{for all} \quad j,k \in \{1, \ldots, d\}.
\end{equation*}
The uniqueness part of the Model Theorem~\ref{t-poso} implies that  \(\bigl(\mathfrak{C}_{\mu}, K_{\mathsf{Q}_1,\mathcal{S}}, S_{\mu}, \mathsf{b}_{\mu,\mathcal{S}}\bigr)\) is a model for the quadruple \(\bigl(\mathfrak{H}_2, \kip_{\mathfrak{H}_2}, S_2, \mathsf{b}_2\bigr)\).

The claim about \(\TtA(\infty)\) follows from \cite[Theorem~4.4]{CDCAOT}.
\end{proof}

\section{Proof of Theorem~\ref{modelP}} \label{proof}

The following lemma is a variant of \cite[Lemma~2.5]{CDLAA12}.
\begin{lemma} \label{le-scks}
Let $n \in\mathbb{N}$ and $(\mathfrak{L},\kip)$ be a finite-dimensional inner  product space such that $\dim \mathfrak{L} \leq 2n$. If $\mathfrak{N}$ is a neutral subspace of $(\mathfrak{L},\kip)$, $\dim\mathfrak{N}=n$, and $\mathfrak{N}\cap\mathfrak{L}^{[\perp]} = \{0\}$, then $\dim \mathfrak{L} = 2n$, $(\mathfrak{L},\kip)$ is a Krein space  with positive and negative index equal to $n$.
\end{lemma}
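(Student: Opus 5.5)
The plan is to work with a fundamental decomposition of the nondegenerate part of $\mathfrak L$ together with a dimension count. Let $\mathfrak L^{[\perp]}$ denote the isotropic part of $\mathfrak L$. Since $\mathfrak L$ is finite dimensional, choose a complement $\mathfrak L_1$ with $\mathfrak L=\mathfrak L_1\dotplus\mathfrak L^{[\perp]}$. Then $\mathfrak L_1$ is nondegenerate, hence a finite-dimensional Krein space with indices $\kappa_+,\kappa_-$ and $\kappa_++\kappa_-=\dim\mathfrak L_1$.

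Let $\pi:\mathfrak L\to\mathfrak L_1$ be the projection along $\mathfrak L^{[\perp]}$. Because $\mathfrak N\cap\mathfrak L^{[\perp]}=\{0\}$, the restriction $\pi|_{\mathfrak N}$ is injective. It also preserves inner products, since $[x+y,u+v]=[x,u]$ whenever $y,v\in\mathfrak L^{[\perp]}$. Hence $\pi\mathfrak N$ is an $n$-dimensional neutral subspace of the nondegenerate space $\mathfrak L_1$.

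Next we use the standard bound for neutral subspaces of a nondegenerate finite-dimensional space: a neutral subspace has dimension at most $\min\{\kappa_+,\kappa_-\}$. Indeed, a neutral subspace meets the $\kappa_+$-dimensional positive part of a fundamental decomposition trivially, so its dimension is at most $\dim\mathfrak L_1-\kappa_+=\kappa_-$. Symmetrically, it is at most $\kappa_+$.

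It follows that $n\le\kappa_\pm$, and therefore
\[
2n\le\kappa_++\kappa_-=\dim\mathfrak L_1\le\dim\mathfrak L\le 2n .
\]
All these inequalities are equalities. Thus $\dim\mathfrak L^{[\perp]}=0$, so $\mathfrak L$ is nondegenerate and hence a Krein space. Moreover $\dim\mathfrak L=2n$ and $\kappa_+=\kappa_-=n$.

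The only step needing care is the projection argument: one must check that $\pi$ preserves neutrality and injectivity on $\mathfrak N$. Both follow directly from the hypothesis $\mathfrak N\cap\mathfrak L^{[\perp]}=\{0\}$ and the fact that $\mathfrak L^{[\perp]}$ is orthogonal to all of $\mathfrak L$. No earlier result from the paper is needed beyond elementary facts about finite-dimensional indefinite inner product spaces.
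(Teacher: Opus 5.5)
Your proof is correct and takes essentially the paper's route. The paper passes to the quotient $\mathfrak L/\mathfrak L^{\circ}$ by the isotropic part instead of choosing a complement $\mathfrak L_1$, and it phrases the key bound as injectivity of a fundamental projection on a neutral subspace, which is equivalent to your observation that a neutral subspace meets the positive part trivially; the closing squeeze $2n\le\kappa_++\kappa_-\le\dim\mathfrak L\le 2n$ is the same.
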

\begin{proof}
Let $\mathfrak{L}^{\circ} = \mathfrak{L}\cap\mathfrak{L}^{[\perp]}$ be the isotropic part of $\mathfrak{L}$. Then $\kip$ is well defined on the factor space $\mathfrak{L}/\mathfrak{L}^{\circ}$ and $\bigl(\mathfrak{L}/\mathfrak{L}^{\circ},\kip\bigr)$ is a Krein space. Denote by $n_-$ the negative and by $n_+$ the positive index of the Krein space  $\bigl(\mathfrak{L}/\mathfrak{L}^{\circ},\kip\bigr)$. The factor subspace $\mathfrak{N}/\mathfrak{L}^{\circ}$ is a neutral subspace of this factor Krein space and $\dim \bigl(\mathfrak{N}/\mathfrak{L}^{\circ}\bigr) = n$. Since the restriction of a fundamental projection on a neutral subspace in a Krein space is injective, we have $n \leq n_-$ and $n \leq n_+$. Therefore
\[
2n \leq n_- + n_+ = \dim \bigl(\mathfrak{L}/\mathfrak{L}^{\circ}\bigr) \leq \dim \mathfrak{L} \leq 2n.
\]
This yields $\dim \mathfrak{L} = 2n$, $\mathfrak{L}^{\circ} = \{0\}$, hence $(\mathfrak{L},\kip)$ is a Krein space. Since $n \leq n_-$ and $n \leq n_+$,
\[
2 n \leq 2 \min\{n_-,n_+\} \leq n_- + n_+ = 2 n.
\]
Consequently, $n = n_- = n_+$.
\end{proof}

\begin{proof}[Proof of Theorem {\rm \ref{modelP}}] \emph{Step~1.} We derive a formula for $\mathcal W(z)$.
Let $\mathcal{S}(z)$ be the $d_0\times 2d$ matrix polynomial of degree $p$ such that with $\mathsf{B}$ introduced in the theorem we have
\[
\mathcal{P}(z) = \begin{bmatrix}  \mathcal S(z) \\ \mathsf{B}  \end{bmatrix}.
\]
From the relationships  between the coefficients of the polynomials $\mathcal{P}(z)$ and $\mathcal S(z)$:
\begin{equation}\label{eq-CPCS}
\mathsf{P}_{\infty} = \begin{bmatrix}  \mathsf{S}_{\infty} \\ \mathsf{B} \end{bmatrix}, \quad \mathsf{P}_0 = \begin{bmatrix}  \mathsf{S}_0 \\ \mathsf{B} \end{bmatrix}, \quad \mathsf{P}_k = \begin{bmatrix}  \mathsf{S}_k \\ \mathsf{0} \end{bmatrix}, \quad k \in \{1,\ldots,p\},
\end{equation}
and the definitions of $\mathsf{C}_{\mathcal{P}}$  and $\mathsf{C}_{\mathcal{S}}$ we obtain
\[
\row \begin{bmatrix} \mathsf{S}_{\infty} \\ \mathsf{B} \end{bmatrix}=\row \mathsf{P}_{\infty} \subset \row \mathsf{C}_{\mathcal{P}}=\row \begin{bmatrix}  \mathsf{C}_{\mathcal S} \\ \mathsf{B} \end{bmatrix}.
\]
The space on the left has dimension $\rank \mathsf{P}_\infty=d$ and the space on the right has dimension $\rank \mathsf{C}_{\mathcal{P}}=d+d_0$. The latter follows from the Rank-Nullity Theorem and by applying \eqref{eq-kTzkT} and \cite[Theorem~3.2]{CDLAA23} which yield
\begin{equation}\label{dimofkers}
\dim \nul \mathsf{C}_{\mathcal{P}} = \dim \Bigl(\bigcap_{z\in \mathbb C}\nul \mathcal{P}(z)\Bigr) = d-d_0.
\end{equation}
The $d$ linearly independent rows of $\begin{bmatrix} \mathsf{S}_{\infty} \\ \mathsf{B} \end{bmatrix}$ can be extended to a basis
of the space $\row \begin{bmatrix}  \mathsf{C}_{\mathcal S} \\ \mathsf{B} \end{bmatrix}$ by selecting a suitable $d_0\times 2d$ submatrix $\mathsf{S}_b$ of $\mathsf{C}_{\mathcal{S}}$ such that the rows of the $(d+d_0) \times 2d$ matrix
\begin{equation*}\label{eq-3mb}
 \begin{bmatrix}  \mathsf{S}_{\infty} \\ \mathsf{S}_b \\ \mathsf{B} \end{bmatrix}
\end{equation*}
are linearly independent.
By \eqref{eq-CPCS} , for every $k \in\{0,\ldots,p\}$ we have
\[
\row \mathsf{S}_k \subset \row \mathsf{C}_{\mathcal{S}} \subset \row \mathsf{C}_{\mathcal{P}} = \row  \begin{bmatrix}  \mathsf{S}_{\infty} \\ \mathsf{S}_b \\ \mathsf{B} \end{bmatrix},
\]
hence for every $k \in\{0,\ldots,p\}$ there exists a unique $d_0\times(d-d_0)$ matrix $\mathsf{A}_k$ such that the rows of the $d_0 \times 2d$ matrix
\begin{equation}\label{eq-dTk}
\mathsf{T}_k = \mathsf{S}_k - \mathsf{A}_k \mathsf{B}
\end{equation}
belong to $\row \begin{bmatrix}  \mathsf{S}_{\infty} \\ \mathsf{S}_b \end{bmatrix}$. For all $k \in \{1,\ldots,d_0\}$ and all $l \in \mathbb{N}$ such that $\mu_k \lt l \leq p$ we have
$\bigl.\mathsf{S}_{\mu_k}\mkern-2mu\bigr|_{k} = \bigl.\mathsf{S}_{\infty}\mkern-2mu\bigr|_{k}$ and $\bigl.\mathsf{S}_{l}\mkern-1mu\bigr|_{k} = \mathsf{0},$
hence for the same values of $k$ and $l$
\begin{equation}\label{rowA}
\bigl.\mathsf{A}_{\mu_k}\mkern-2mu\bigr|_{k} = \mathsf{0} \quad \text{and} \quad
\bigl.\mathsf{A}_{l}\mkern-1mu\bigr|_{k} = \mathsf{0}.
\end{equation}
We define the $d_0\times 2d$ matrix polynomial $\mathcal T(z)$, the $d_0\times (d-d_0)$ matrix polynomial $\mathcal A(z)$ and the unimodular $d \times d$ matrix polynomial $\mathcal W(z)$ by
\[
\mathcal T(z)=\sum_{k=1}^p \mathsf T_k z^k, \quad \mathcal A(z)=-\sum_{k=1}^p \mathsf A_k z^k \quad \text{and} \quad \mathcal W(z) =  \begin{bmatrix}
\mathsf{I}_{d_0} & \mathcal A(z) \\[6pt]
\mathsf{0} & \mathsf{I}_{d-d_0}  \end{bmatrix}.
\]
It follows from \eqref{eq-dTk} that
\begin{equation}\label{equalities}
\mathcal{T}(z) = \mathcal{S}(z) + \mathcal{A}(z)\mathsf{B}
\quad \text{and} \quad
\mathcal{W}(z)\mathcal{P}(z) = \mathcal{W}(z)\begin{bmatrix} \mathcal{S}(z) \\ \mathsf{B} \end{bmatrix}=
\begin{bmatrix} \mathcal T(z) \\ \mathsf B \end{bmatrix}.
\end{equation}
The relations \eqref{rowA} imply that $\deg \mathcal{A}(z)\mkern-1mu\bigl.\bigr|_k < \mu_k$, $k \in \{1, \ldots, d_0\}$. Hence $\mathcal W(z)$ satisfies
\eqref{coeff}, and by applying \eqref{Sinfinity} to both sides of the first equality in \eqref{equalities} we see that
 $\mathsf{T}_{\infty} = \mathsf{S}_{\infty}$,
hence
\begin{equation}\label{ranks}
\rank \mathsf T_\infty = \rank \mathsf S_\infty=d_0.
\end{equation}
The second equality in \eqref{equalities} and assumption (\ref{i-t-m-a}) in Definition~\ref{def-bbP} imply that for all $z \in \mathbb{C}$ we have
\begin{equation}\label{eq-cTQiMs}
\mathcal T(z)\mathsf{Q}^{-1}\mathcal T(z^*)^*  = \mathsf{0}
\end{equation}
and $
\mathcal T(z)\mathsf{Q}^{-1}\mathsf{B}^* =  \mathsf{0}$ which implies that
\begin{equation}\label{eq-cTQiMs2}
\ran\mathsf{B}^* \subseteq \mathsf{Q} \Bigl(\bigcap_{z \in \mathbb C} \nul \mathcal T(z) \Bigr).
\end{equation}

\noindent\emph{Step~2.}  We prove \eqref{reprP} by showing that $\mathcal{T}(z)=\widehat{\mathcal{P}}(z) \mathsf{B}_0$. Since the matrix polynomial $\mathcal W(z)$ is unimodular and by \eqref{eq-kTzkT}, we have
\begin{equation}\label{fourkers} \nul \mathsf C_{\mathcal W \mathcal{P}}=
\bigcap_{z\in \mathbb C}\nul \bigl(\mathcal W(z) \mathcal{P}(z) \bigr) = \bigcap_{z\in \mathbb C}\nul \mathcal{P}(z)=\nul \mathsf C_{\mathcal{P}},
\end{equation}
hence by \eqref{dimofkers} and the Rank-Nullity Theorem, $\rank \mathsf{C}_{\mathcal W \mathcal{P}} = d+d_0$. The second equality in  \eqref{equalities}, the equality  $\rank \mathsf B=d-d_0$ and basic properties of the rank of a matrix imply
\begin{multline*}
d + d_0 = \rank \mathsf{C}_{\mathcal W\mathcal{P}}
  = \rank\! \begin{bmatrix}  \mathsf{C}_{\mathcal T} \\[2pt] \mathsf{B} \end{bmatrix}
=\rank\left(\begin{bmatrix}  \mathsf{C}_{\mathcal T} \\[2pt] \mathsf{0} \end{bmatrix}+ \begin{bmatrix}  \mathsf{0} \\[2pt] \mathsf{B} \end{bmatrix}\right) \\ \leq \rank \mathsf{C}_{\mathcal T} + \rank \mathsf{B} = \rank \mathsf{C}_{\mathcal T} + d-d_0,
\end{multline*}
and consequently, $\rank \mathsf{C}_{\mathcal T} \geq 2d_0$.
The definition \eqref{eq-dTk} of the matrix coefficients $\mathsf{T}_k$, $k\in \{1,\ldots,p\}$, implies that
$
\row \mathsf{C}_{\mathcal T} \subseteq
\row \begin{bmatrix}  \mathsf{S}_{\infty} \\[4pt] \mathsf{S}_b \end{bmatrix}$,
hence
$
\rank \mathsf{C}_{\mathcal T} \leq 2d_0$, and therefore
\begin{equation}\label{eq-rTeq}
\rank \mathsf{C}_{\mathcal T} = 2d_0.
\end{equation}

Now consider the subspace $\mathfrak{K}_{\mathcal T}$ of $\mathbb C^{2d}$ defined by
\begin{equation}\label{eq-defKT}
\mathfrak{K}_{\mathcal T}  = \bigcap_{z\in \mathbb C}\nul \mathcal{T}(z) 
= \nul \mathsf{C}_{\mathcal T}
\end{equation}
and denote by $(\mathfrak{K}_{\mathcal T})^{\perp}$ its orthogonal complement in the Euclidean space $\mathbb{C}^{2d}$.
On account of the Rank-Nullity Theorem and \eqref{eq-rTeq}
\begin{equation}\label{eq-dimKT}
\dim \mathfrak{K}_{\mathcal T} =  2(d-d_0),
\end{equation}
hence
\[
\dim\, (\mathfrak{K}_{\mathcal T})^{\perp} = 2 d_0.
\]

Let $\mathsf B_0$ be a $2d_0\times 2d$ matrix such that the $2d_0$  columns of its adjoint $\mathsf{B}_0^*$ form a basis for $(\mathfrak{K}_{\mathcal{T}})^{\perp}$. Taking the orthogonal complement in \eqref{eq-defKT}
yields
\begin{equation}\label{eq-K1peP1}
\ran \mathsf{B}_0^*  = (\mathfrak{K}_{\mathcal T})^{\perp} =
\sum_{z\in \mathbb{C}} \ran \mathcal{T}(z)^*.
\end{equation}
Notice that the $2d\times 2d$ matrix $\mathsf{B}_0^* \bigl(\mathsf{B}_0\mathsf{B}_0^*\bigr)^{-1}\mathsf{B}_0$ is the orthogonal projection in the Euclidean space $\mathbb{C}^{2d}$ onto the subspace $(\mathfrak{K}_{\mathcal T})^{\perp}$. By \eqref{eq-K1peP1}, for all $z \in \mathbb{C}$
\begin{equation} \label{eq-prT}
\mathsf{B}_0^* \bigl(\mathsf{B}_0\mathsf{B}_0^*\bigr)^{-1}\mathsf{B}_0 \mathcal{T}(z)^* = \mathcal T(z)^*.
\end{equation}
Define the $d_0 \times 2d_0$ matrix polynomial $\widehat{\mathcal{P}}(z)$ by
\[
\widehat{\mathcal{P}}(z) = \mathcal T(z) \mathsf{B}_0^* \bigl(\mathsf{B}_0\mathsf{B}_0^*\bigr)^{-1},
\]
then, taking the adjoint in \eqref{eq-prT}, we obtain
\begin{equation} \label{eq-P0T}
\widehat{\mathcal{P}}(z) \mathsf{B}_0 = \mathcal T(z) \quad \text{for all}\ z \in \mathbb{C}.
\end{equation}

\noindent\emph{Step~3.}
Consider the Krein space $\bigl( \mathbb{C}^{2d}, \kip_{\mathsf{Q}^{-1}} \bigr)$ with inner product
\[
[u,v]_{\mathsf{Q}^{-1}} = v^* \mathsf{Q}^{-1} u \quad \text{for all} \quad u, v \in \mathbb{C}^{2d}
\]
and positive and negative index equal to $d$. We show that $(\mathfrak{K}_{\mathcal T})^{\perp}$
and $\mathsf{Q}\mathfrak{K}_{\mathcal T}$ are complementary regular subspaces in this space. For this we apply  Lemma~\ref{le-scks} to the inner product space
\[
(\mathfrak{L}, \kip ) = \bigl(\mathsf{Q} \mathfrak{K}_{\mathcal T}, \kip_{\mathsf{Q}^{-1}} \bigr).
\]
By \eqref {eq-dimKT} $\dim \mathfrak L=2(d-d_0)$, and by the definition of $\mathsf B$ and assumption (\ref{i-t-m-a}) in Definition~\ref{def-bbP} we have that $\mathfrak{N} = \ran
\mathsf{B}^*$ is a neutral subspace of $(\mathfrak{L}, \kip )$ and $\dim \mathfrak{N} = d-d_0$. To prove the remaining condition $\mathfrak{N}\cap\mathfrak{L}^{[\perp]} = \{0\}$ in Lemma~\ref{le-scks} we use that $\mathfrak L^{[\perp]}\subseteq \mathfrak L^{[\perp]_{\mathsf Q^{-1}}}=(\mathfrak{K}_{\mathcal T})^{\perp}$ and prove the more general equality
\begin{equation}\label{eq-M*nip}
\bigl(\ran \mathsf{B}^* \bigr) \cap   (\mathfrak{K}_{\mathcal T})^{\perp} = \{0\}.
\end{equation}
Equivalently,
\begin{equation} \label{eq-M*nip1}
(\nul \mathsf{B} )  + \mathfrak{K}_{\mathcal T} = \mathbb{C}^{2d}.
\end{equation}
Notice that by \eqref{fourkers} and \eqref{dimofkers}
\[
\dim \bigl((\nul \mathsf{B}) \cap \mathfrak{K}_{\mathcal T}\bigr) = \dim \nul \begin{bmatrix} \mathsf C_{\mathcal T} \\ \mathsf B \end{bmatrix}= \dim \nul \mathsf{C}_{\mathcal W \mathcal{P}}=\dim \nul \mathsf{C}_{\mathcal{P}}=d-d_0.
\]
This equality, \eqref{eq-dimKT} and the fact that $(d-d_0)\times 2d$ matrix $\mathsf{B}$ has full rank $d-d_0$, imply
\begin{align*}
  \dim\bigl( (\nul \mathsf{B})  + \mathfrak{K}_{\mathcal T} \bigr)
  & = \dim  \nul \mathsf{B}  + \dim \mathfrak{K}_{\mathcal T} - \dim\bigl((\nul \mathsf{B}) \cap \mathfrak{K}_{\mathcal T}\bigr)  \\
 &= d+d_0 + 2(d-d_0) - (d-d_0)  \\
 & = 2d.
\end{align*}
Thus \eqref{eq-M*nip1} holds, and hence \eqref{eq-M*nip} holds as well.
Lemma~\ref{le-scks}
yields that $\mathsf{Q} \mathfrak{K}_{\mathcal T}$ is a Krein subspace of the Krein space $\bigl( \mathbb{C}^{2d}, \kip_{{\mathsf Q}^{-1}} \bigr)$ with positive and negative index equal to $d-d_0$. Consequently, its orthogonal complement $(\mathsf Q \mathfrak{K}_{\mathcal T})^{[\perp]}=(\mathfrak{K}_{\mathcal T})^{\perp}$ is a Krein subspace of $\bigl( \mathbb{C}^{2d}, \kip_{\mathsf{Q}^{-1}} \bigr)$ with positive and negative index equal to $d_0$.

\smallskip

\noindent{\em Step~4.} Finally we prove claims (\ref{abcd-c1})-(\ref{abcd-c7}).

Claims (\ref{abcd-c1}) and (\ref{abcd-c2}) follow from the first equality in \eqref{eq-K1peP1}, the inclusion $\ran \mathsf{B}^* \subset \mathsf Q \mathfrak K_{\mathcal T}$ from \eqref{eq-cTQiMs2} and the fact that the subspaces
$(\mathfrak{K}_{\mathcal T})^{\perp}$ and $\mathsf{Q}\mathfrak{K}_{\mathcal T}$ are complementary regular subspaces of the Krein space $\bigl( \mathbb{C}^{2d}, \kip_{\mathsf{Q}^{-1}} \bigr)$ proved in \emph{Step} 3.

Claim (\ref{abcd-c3}) follows from the fact that the columns of $\mathsf{B}_0^*$ form a basis of the Krein subspace $(\mathfrak{K}_{\mathcal T})^{\perp}$ of the Krein space $\bigl( \mathbb{C}^{2d}, \kip_{\mathsf{Q}^{-1}} \bigr)$.

Claim (\ref{abcd-c4}) follows from \eqref{eq-cTQiMs} and \eqref{eq-P0T}.

Claim (\ref{abcd-c5}) follows from \eqref{eq-P0T} and assumption (\ref{i-t-m-b}) in Definition~\ref{def-bbP}.

Claims (\ref{abcd-c6}) and (\ref{abcd-c7}) follow from \eqref{ranks} and \eqref{eq-P0T}.
\end{proof}

\section*{Declarations}

\paragraph{\textbf{Funding}}
No funds, grants, or other support was received.

\paragraph{\textbf{Competing interests}}
The authors have no relevant financial or nonfinancial interests to disclose.

\paragraph{\textbf{Data Availability}}  
\noindent No datasets were generated or analyzed during this study.

\end{document}